\numberwithin{equation}{section}
\newtheorem{thm}[equation]{Theorem}
\newtheorem*{thm*}{Theorem}
\newtheorem{cor}[equation]{Corollary}
\newtheorem{lem}[equation]{Lemma}
\newtheorem{prop}[equation]{Proposition}
\theoremstyle{definition}
\newtheorem{rem}[equation]{Remark}
\newtheorem{hyp}[equation]{Hypotheses}
\newtheorem{op}[equation]{Optimistic Comparison}
\newcommand{\bfgreek}[1]{\bm{\@nameuse{up#1}}}
\newcommand{\triv}{{\mathbf{1}}}
\def\R{\mathbb R}
\def\Z{\mathbb Z}
\def\ZZ{\mathbb Z}
\def\A{\mathbb A}
\def\Q{\mathbb Q}
\def\QQ{\mathbb Q}
\def\C{\mathbb C}
\def\E{\mathcal E}
\def\K{\mathcal K}
\def\J{\mathcal J}
\def\ira{\stackrel{\sim}{\rightarrow}}
\def\hra{\hookrightarrow}
\def\ra{\rightarrow}
\def\g{\mathfrak g}
\def\a{\mathfrak a}
\def\mc{\mathbf c}
\def\O{\mathcal O}
\def\CC{\mathcal C}
\def\CE{\mathcal E}
\def\CW{\mathcal W}
\def\k{\mathfrak k}
\def\p{\mathfrak p}
\def\<{\langle}
\def\>{\rangle}
\def\triv{\mathbf 1}
\title[]%
{Whittaker periods, motivic periods, and special values of tensor product $L$-functions}
\author{Harald Grobner and Michael Harris}
\address{Harald Grobner: Fakult\"at f\"ur Mathematik\\ Universit\"at Wien\\ Oskar--Morgenstern--Platz 1\\ A-1090 Wien\\Austria}
\email{harald.grobner@univie.ac.at}
\address{Michael Harris: Univ Paris Diderot, Sorbonne Paris Cit\'e, UMR 7586, Institut de
Math\'ematiques de Jussieu-Paris Rive Gauche, Case 247, 4 place Jussieu
F-75005, Paris, France;~\\
Sorbonne Universit\'es, UPMC Univ Paris 06, UMR 7586, IMJ-PRG, F-75005
Paris, France;~\\
CNRS, UMR7586, IMJ-PRG, F-75013 Paris, France;~  \\ Department of
Mathematics, Columbia University, New York, NY  10027, USA. }
\email{}
\keywords{}
\subjclass[2010]{Primary: ; Secondary: }
\thanks{H.G. is supported in parts by the Austrian Science Fund (FWF) Erwin Schr\"odinger grant, J 3076-N13, and the FWF-project, project number P 25974-N25.}
\thanks{The research leading to these results has received funding from the European Research Council under the European Community's Seventh Framework Programme (FP7/2007-2013) / ERC Grant agreement n¡ 290766 (AAMOT)}
\begin{document}

\begin{abstract} Let $\K$ be an imaginary quadratic field.  Let $\Pi$ and $\Pi'$ be irreducible generic cohomological automorphic
representation of $GL(n)/{\K}$ and $GL(n-1)/{\K}$, respectively.  Each of them can be given two natural rational structures over number
fields.  One is defined by the rational structure on topological cohomology, the other is given
in terms of the Whittaker model.  The ratio between these rational structures is called
a {\it Whittaker period}. An argument presented by Mahnkopf and Raghuram shows that, at least
if $\Pi$ is cuspidal and the weights of $\Pi$ and $\Pi'$ are in a standard relative position,
the critical values of the Rankin-Selberg product $L(s,\Pi \times \Pi')$
are essentially algebraic multiples of the product of the Whittaker periods of $\Pi$ and $\Pi'$.
We show that, under certain regularity and polarization hypotheses, the Whittaker period of a cuspidal $\Pi$
can be given a motivic interpretation, and can also be related to a critical value of
the adjoint $L$-function of related automorphic representations of unitary groups.  The resulting expressions for critical values of
the Rankin-Selberg and adjoint $L$-functions are compatible with Deligne's conjecture.

\end{abstract}
\maketitle

\setcounter{tocdepth}{1}
\tableofcontents

\section{Introduction}

$L$-functions can be attached both to automorphic representations and to arithmetic objects such as
Galois representations or motives, and one implication of the Langlands program is that $L$-functions
of the second kind are examples of $L$-functions of the first kind.  Very few results of arithmetic
interest can be proved about the second kind of $L$-functions until they have been identified
with automorphic $L$-functions.  For example, there is an extraordinarily deep web of conjectures
relating the values at integer points of arithmetic (motivic) $L$-functions to cohomological
invariants of the corresponding geometric (motivic) objects.  In practically all the instances \footnote{Whether or not the analytic formulas
for special values of the Riemann zeta function and Dirichlet $L$-series should be considered exceptions
is beyond the scope of this introduction.}  where these
conjectures have been proved, automorphic methods have proved indispensable.

At the same time, there is a growing number of results on special values of automorphic
$L$-functions that make no direct reference to arithmetic.  Instead, the special values are
written as algebraic multiples of complex invariants defined by means of representation theory.
Examples relevant to the present paper include \cite{mahnk}, \cite{raghuram-imrn}, \cite{ragh-gln} and \cite{grob-ragh}, where the complex invariants
are defined for representations of cohomological type by reference to the uniqueness of Whittaker
or Shalika models.  The present paper continues this series, proving a version of the
main results of \cite{mahnk}, \cite{raghuram-imrn} and \cite{ragh-gln} for the Rankin-Selberg $L$-function of a
pair $(\Pi, \Pi')$ of a cuspdial automorphic representation $\Pi$ of $GL_n$ and a cuspidal {\it or} abelian automorphic representation $\Pi'$ of $GL_{n-1}$, where the general linear groups are over an imaginary quadratic field $\K$. Here, the notion ``abelian automorphic'' refers to a representation, which is a tempered Eisenstein representation induced from a Borel subgroup of $GL_{n-1}$. In view of Raghuram's recent preprint  \cite{ragh-gln}, which we
received after writing a first version of this paper, the inclusion of such automorphic representations $\Pi'$ is the new feature of this result. 
The theorem applies in particular when $\Pi$ and $\Pi'$ are obtained by base change from cohomological cuspidal representations $\pi$ and $\pi'$ of unitary groups.  As in \cite{mahnk}, \cite{raghuram-imrn} and \cite{ragh-gln} the critical values of these $L$-functions are then expressed in terms of {\it Whittaker periods}, which are purely representation-theoretic invariants.   Here is a statement.  In what follows, $p(\Pi)$ and $p(\Pi')$ are
the Whittaker periods, which belong to $\C^{\times}$; this and the remaining notation will be explained in the subsequent sections.

\begin{thm}\label{thm:whittaker-periods-general}
Let $\Pi=BC(\pi)\|\cdot\|^{\textbf{\sf m}}$ be a cuspidal automorphic representation of $GL_n(\A_\K)$, ${\textbf{\sf m}}\in\Z$, and let $\Pi'=BC(\pi')$ be a cuspidal or abelian automorphic representation of $GL_{n-1}(\A_\K)$ obtained by base change from unitary groups as in Sections \ref{sect:Pi} and \ref{sect:Pi'}. In particular, $\Pi$ is cohomological with respect to $E_\mu$ and $\Pi'$ is cohomological with respect to $E_\lambda$.
We assume the parameters $\mu$ and $\lambda$ satisfy hypothesis \ref{hyp:coeff}.  Then the following holds:
\begin{enumerate}
\item For all critical values $\tfrac12+m\in\textrm{\emph{Crit}}(\Pi\times\Pi')$ with $m\geq 0$ and every $\sigma\in {\rm Aut}(\C)$,
$$
\sigma\left(
\frac{L(\tfrac 12+m,\Pi_f \times \Pi'_f)}{p(\Pi)p(\Pi')p(m,\Pi_\infty,\Pi'_\infty) \mathcal{G}(\omega_{\Pi'_{f,0}})}\right) \ = \
\frac{L(\tfrac 12+m,{}^\sigma\Pi_f \times {}^\sigma\Pi'_f)}{p({}^\sigma\Pi)p({}^\sigma\Pi')p(m,{}^\sigma\Pi_\infty,{}^\sigma\Pi'_\infty) \mathcal{G}(\omega_{{}^\sigma\Pi'_{f,0}})}.
$$
\item
$$
L(\tfrac 12+m,\Pi_f \times \Pi'_f) \ \sim_{\Q(\Pi_f)\Q(\Pi'_f)} p(\Pi)p(\Pi')p(m,\Pi_\infty,\Pi'_\infty) \mathcal{G}(\omega_{\Pi'_{f,0}}),
$$
where ``$\sim_{\Q(\Pi_f)\Q(\Pi'_f)}$'' means up to multiplication by an element in the composition of number fields $\Q(\Pi_f)\Q(\Pi'_f)$.
\end{enumerate}
\end{thm}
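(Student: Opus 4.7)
The plan is to adapt the Mahnkopf--Raghuram cohomological treatment of the $GL_n\times GL_{n-1}$ Rankin--Selberg integral (\cite{mahnk,raghuram-imrn,ragh-gln}) to the imaginary quadratic field $\K$, and critically to allow $\Pi'$ to be abelian (i.e.\ tempered Eisenstein induced from the Borel) in addition to cuspidal. Statement (2) would follow formally from (1): once the $\sigma$-equivariant identity is known, taking the fixed field in $\C$ under all $\sigma\in\mathrm{Aut}(\C)$ fixing $\Pi_f$ and $\Pi_f'$ shows that the ratio of the $L$-value to the displayed periods lies in $\Q(\Pi_f)\Q(\Pi_f')$. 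Thus the entire task reduces to proving part (1).

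The first main step is to realize $\Pi$ and $\Pi'$ as direct summands of the (cuspidal or Eisenstein) cohomology of the locally symmetric spaces for $G_n=GL_n/\K$ and $G_{n-1}=GL_{n-1}/\K$, with coefficients in $E_\mu$ and $E_\lambda$ respectively, in the bottom degrees where $\Pi_\infty$ and $\Pi'_\infty$ contribute to relative Lie algebra cohomology. Uniqueness of the Whittaker model, combined with the $\Q(\Pi_f)$- (resp.\ $\Q(\Pi'_f)$-) rational structure on this cohomology --- proved for cuspidal cohomology in the cited references, and to be extended to Eisenstein cohomology in the abelian case via Harder's techniques --- defines the Whittaker periods $p(\Pi),p(\Pi')\in\C^\times$ as the ratios between the cohomological and Whittaker rational structures. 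The second main step is to rewrite the Jacquet--Piatetski-Shapiro--Shalika zeta integral
$$
Z(s,\varphi_\Pi,\varphi_{\Pi'})=\int_{GL_{n-1}(\K)\backslash GL_{n-1}(\A_\K)} \varphi_\Pi\!\begin{pmatrix} g & \\ & 1\end{pmatrix}\varphi_{\Pi'}(g)\,\|\det g\|^{s-1/2}\,dg
$$
at $s=\tfrac12+m$ as a cup product on the locally symmetric space of $G_{n-1}$. Hypothesis \ref{hyp:coeff} ensures that the branching from $G_n$ to $G_{n-1}$ supplies a nonzero invariant vector in the appropriate $K_\infty$-type, so the resulting cohomological pairing is well defined and nonvanishing; it exhibits $L(\tfrac12+m,\Pi_f\times\Pi_f')$ as a $\sigma$-equivariant pairing of rational classes, up to the archimedean and Gauss-sum factors. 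The archimedean factor is by definition $p(m,\Pi_\infty,\Pi'_\infty)$, and the normalization of the Whittaker section at the places where $\Pi'$ is ramified introduces the Gauss sum $\mathcal{G}(\omega_{\Pi'_{f,0}})$, exactly matching the statement of (1).

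The principal obstacle is the abelian case of $\Pi'$. For cuspidal $\Pi'$ all of the above data live in cuspidal cohomology, where Clozel-type rationality applies directly. For a tempered Eisenstein $\Pi'$, one must instead (a) produce a canonical class in the Eisenstein cohomology of $G_{n-1}$ representing $\Pi'$, with a $\Q(\Pi_f')$-rational structure compatible with the Whittaker model --- this uses that the inducing Hecke characters on the Borel of $GL_{n-1}$ inherit rational structures via Harder's theory of Eisenstein cohomology --- and (b) verify that the unfolded zeta integral applied to this Eisenstein class still computes $L(s,\Pi\times\Pi')$ without residual contamination, which is feasible because $\Pi$ is cuspidal so the pairing unfolds against $\varphi_\Pi$ through the constant-term expansion of the Eisenstein series defining $\Pi'$. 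Establishing (a) and (b) rigorously, and confirming that the resulting Whittaker period $p(\Pi')$ is $\sigma$-equivariant in precisely the sense required by (1), is the heart of the extra work beyond \cite{mahnk,raghuram-imrn,ragh-gln}.
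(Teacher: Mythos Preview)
Your overall strategy is correct and matches the paper's approach: realize $\Pi_f$ and $\Pi'_f$ in bottom-degree (cuspidal resp.\ Eisenstein) cohomology, compare the Whittaker and cohomological rational structures to define $p(\Pi),p(\Pi')$, interpret the Rankin--Selberg integral at $s=\tfrac12+m$ as a cup-product/Poincar\'e pairing, and use $\sigma$-equivariance of the whole diagram.  However, two points in your sketch need correction.

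\textbf{Sun's non-vanishing theorem is missing.}  You write that Hypothesis~\ref{hyp:coeff} ``supplies a nonzero invariant vector in the appropriate $K_\infty$-type, so the resulting cohomological pairing is well defined and nonvanishing''.  The branching hypothesis only guarantees the existence of the $G'(\C)$-equivariant map $\mathcal{T}:E_\mu\otimes E_\lambda\to\C$, hence well-definedness of the pairing.  The non-vanishing of the archimedean constant $c(\tfrac12+m,\Pi_\infty,\Pi'_\infty)$---whose inverse is by definition $p(m,\Pi_\infty,\Pi'_\infty)$---is a genuinely deep result of Binyong Sun (Thm.~\ref{thm:sun} in the paper).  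Without it the diagram computes $0$ regardless of the $L$-value and the identity in (1) is vacuous.  The paper invokes Sun's theorem explicitly in the statement of Thm.~\ref{thm:whittaker-periods}.

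\textbf{The unfolding is misdescribed.}  You say the integral ``unfolds against $\varphi_\Pi$ through the constant-term expansion of the Eisenstein series defining $\Pi'$''.  In fact the $GL_n\times GL_{n-1}$ Rankin--Selberg integral unfolds via the \emph{Whittaker} expansion of the cusp form $\varphi_\Pi$; the automorphic form $\varphi_{\Pi'}$ enters only through its global Whittaker function, and the constant term of the Eisenstein series plays no role here.  Convergence for all $s$ holds because $\varphi_\Pi$ is rapidly decreasing (cuspidal) against the moderate growth of $\varphi_{\Pi'}$.  This is precisely why the paper remarks, after proving Thm.~\ref{thm:whittaker-periods}, that ``at no time in the proof did we use the assumption that $\Pi'$ is of abelian type'': once $\Pi'$ is shown to be globally $\psi$-generic (Sect.~\ref{sect:Pi'}) and once the $\sigma$-equivariant rational structures on $W(\Pi'_f)$ and on the Eisenstein cohomology are in place (Prop.~\ref{lem:rational-whittaker} and Sect.~\ref{sect:cuspEisinj}), the zeta-integral computation is literally identical to the cuspidal case.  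So your ``principal obstacle (b)'' is not an obstacle; the genuine extra work is your (a) together with Sun's theorem.
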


The main purpose of the present paper is to bring this purely automorphic result closer to
the motivic expression for the critical values of $L(s,\Pi\times \Pi')$ embodied in Deligne's
conjecture \cite{deligne}.  The main step in the proof relies in fact on the extension of the results of \cite{mahnk}, \cite{raghuram-imrn}, \cite{ragh-gln} to the case, when $\Pi'$ is allowed to be abelian automorphic. In that case, the $L$-function factors as a product of $n-1$ $L$-functions of $GL_n$ of the type considered in \cite{harcrelle} (completed by \cite{har2007}).  There, the special values are expressed in terms of Petersson norms of arithmetically normalized
holomorphic automorphic forms on Shimura varieties attached to unitary groups.  While Deligne's conjecture provides
an expression of the critical values in terms of Deligne periods, which are defined in terms of motives, and whose
status is therefore at least partially hypothetical, the Petersson norms are invariants of authentically arithmetic
objects.  The purpose of Section \ref{sect:motives} is to provide plausible reasons to identify Deligne's periods
in our situation with the expressions derived using \cite{harcrelle}.

It should be noted that \cite{harcrelle} was written with applications to self-dual cohomological automorphic representations in mind,
although many of the results are valid more generally.  For this reason, the standing self-duality
hypothesis of \cite{harcrelle} is sometimes invoked tacitly, simplifying some statements but making it
difficult for the reader (including the author of \cite{harcrelle}, at more than 15 years' distance) to determine which
statements need to be modified to treat the general case.  We have therefore attempted in Section \ref{sect:motives},
and especially in \ref{unitaryperiods}, to provide complete statements with precise parameters.  The authors
hope that definitive statements, over general CM fields, will be available in the near future.

When $\Pi'$ is an Eisenstein representation as in Thm.\ \ref{thm:whittaker-periods}, its Whittaker invariant
 $p(\Pi')$ can also be identified with a motivic period, using Shahidi's calculation of Whittaker coefficients for
 Eisenstein series.   A similar identification was already exploited in \cite{mahnk}, for Eisenstein classes
 attached to other parabolic subgroups. In our case, the motivic period $p(\Pi')$ exactly cancels the terms in
 the expression calculated in \cite{harcrelle}, yielding an expression (Thm.\ \ref{heckethm}) for $p(\Pi)$ solely in terms of Petersson norms of
 holomorphic forms on Shimura varieties, multiplied by purely archimedean invariants that have not yet been
 calculated explicitly:

\begin{thm}\label{thm:67}
Let $\Pi$ be a cuspidal automorphic representation of $GL_n(\A_\K)$, which can be obtained by base change from unitary groups of all signatures (i.e., more precisely, satisfies Hypothesis \ref{descent}). Assume that $\Pi$ is cohomological with respect to the irreducible, finite-dimensional, algebraic representation $E_\mu$ of highest weight $\mu_G=(\mu_1,...,\mu_n;-\mu_n,...,-\mu_1)$ and suppose that $\mu_i - \mu_{i+1} \geq 2$ for all $i$, so that there is a highest weight $\lambda_G=(\lambda;\lambda^{\sf v})$ for $G'_\infty$ and an integer $m > 0$ such that $s_0 = \tfrac12 +m$ is a critical value of $L(s,\Pi\otimes \Pi')$ for any cuspidal automorphic representation $\Pi'$ of $GL_{n-1}(\A_\K)$, which is cohomological with respect to $E_\lambda$.  Suppose that $\Pi$ is obtained by base change from unitary groups of all signatures (i.e., satisfies Hypothesis \ref{descent}). Then there is a non-zero constant $Z(\Pi_{\infty})$, which depends only on the local representation $\Pi_{\infty}$ and such that
$$p(\Pi)  \sim_{\K E(\Pi)}
Z(\Pi_{\infty})\prod_{j = 1}^{n-1} P^{(j)}(\Pi).$$
\end{thm}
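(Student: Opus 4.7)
My plan is to choose an auxiliary abelian automorphic $\Pi'$ to which Theorem \ref{thm:whittaker-periods-general} applies, and then to compare the two independent expressions for the same critical value $L(\tfrac12+m,\Pi_f\times\Pi'_f)$: the Whittaker-period formula on one hand, and the factorization-plus-\cite{harcrelle} expression in terms of Petersson norms of holomorphic forms on unitary Shimura varieties on the other. The ingredients that differ between the two sides---archimedean periods, Gauss sums, and Hecke $L$-values---must cancel against $p(\Pi')$, $\mathcal{G}(\omega_{\Pi'_{f,0}})$ and $p(m,\Pi_\infty,\Pi'_\infty)$, leaving an identity for $p(\Pi)$ of the advertised shape.

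Concretely, the regularity hypothesis $\mu_i-\mu_{i+1}\geq 2$ is exactly what permits an interlacing weight $\lambda$ for $GL_{n-1}$ satisfying Hypothesis \ref{hyp:coeff} relative to $\mu$. I take $\Pi'=BC(\pi')$ to be a tempered Eisenstein representation induced from a Borel subgroup by algebraic Hecke characters $\chi_1,\ldots,\chi_{n-1}$ of $\K^\times$, chosen so that $\Pi'$ is cohomological of type $E_\lambda$, descends from a suitable unitary group, and so that $s_0=\tfrac12+m$ remains critical. For this $\Pi'$ the Rankin--Selberg $L$-function factors as
$$L(s,\Pi\times\Pi')\;=\;\prod_{j=1}^{n-1} L(s+s_j,\Pi\otimes\chi_j)$$
with explicit shifts $s_j$ depending on $\Pi'$. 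Because $\Pi$ satisfies Hypothesis \ref{descent}, each factor $L(\tfrac12+m,\Pi\otimes\chi_j)$ falls under the main theorem of \cite{harcrelle} (as completed in \cite{har2007}), and is therefore, up to $\K E(\Pi)\Q(\chi_j)$, the product of the Petersson norm $P^{(j)}(\Pi)$ of an arithmetically normalized holomorphic form on the signature-$(j,n-j)$ Shimura variety, a Gauss sum $\mathcal{G}(\chi_{j,f})$, and an explicit archimedean constant depending only on $\Pi_\infty$ and $\chi_{j,\infty}$.

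On the automorphic side, Shahidi's formula for the Whittaker coefficient of the Borel Eisenstein series expresses $p(\Pi')$, up to $\Q(\Pi'_f)$, in terms of Gauss sums of the $\chi_j$'s and the values $L(1,\chi_i\chi_j^{-1})$ coming from the constant term; the latter are algebraic multiples over $\K$ of powers of $(2\pi i)$ and Gauss sums by the classical theory of Hecke $L$-values of algebraic characters. Consequently $p(\Pi')\cdot\mathcal{G}(\omega_{\Pi'_{f,0}})$ reproduces, up to $\K E(\Pi)$-rationality, exactly the ``character'' factors appearing in the \cite{harcrelle}-expression for $\prod_j L(\tfrac12+m,\Pi\otimes\chi_j)$. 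Substituting into Theorem \ref{thm:whittaker-periods-general}(2) and cancelling, all $\chi_j$-dependent factors drop out, and one obtains
$$p(\Pi)\;\sim_{\K E(\Pi)}\;Z(\Pi_\infty)\prod_{j=1}^{n-1} P^{(j)}(\Pi),$$
where $Z(\Pi_\infty)$ is the ratio of the archimedean constants from \cite{harcrelle} to $p(m,\Pi_\infty,\Pi'_\infty)$.

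The main obstacle will be verifying that $Z(\Pi_\infty)$ depends only on $\Pi_\infty$, not on the auxiliary $\Pi'_\infty$ or on $m$. Since the left-hand side is manifestly independent of $\Pi'$, this independence is forced by the identity, but it must be established directly in order to treat $Z(\Pi_\infty)$ as a well-defined archimedean invariant of $\Pi$. This reduces to a careful analysis of how $p(m,\Pi_\infty,\Pi'_\infty)$ and the archimedean zeta integrals implicit in \cite{harcrelle} each depend on the parameters of $\Pi'_\infty$, and a precise matching of those dependencies; this is the content of Sections \ref{sect:motives} and \ref{unitaryperiods}. A subsidiary difficulty is the rational bookkeeping: ensuring that the various factors involving $\Q(\chi_j)$, $\Q(\Pi'_f)$ and critical values of Hecke $L$-functions over $\K$ are all absorbed into the single field $\K E(\Pi)$ appearing on the right.
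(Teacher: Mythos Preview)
Your proposal is correct and follows essentially the same approach as the paper's proof of Theorem~\ref{heckethm}: choose an abelian $\Pi'$, compare the Whittaker-period formula (Theorem~\ref{thm:whittaker-periods}) with the factorization via \cite{harcrelle} (equations \eqref{abelianeven}--\eqref{abelianodd}), use Shahidi's computation of $p(\Pi')$ (Corollary~\ref{WhitEis2}), and cancel the character periods to obtain \eqref{pW}. The two ``obstacles'' you flag are handled in the paper exactly by the easy arguments you already identify: independence of $Z(\Pi_\infty)$ from $m$ and $\Pi'_\infty$ is simply \emph{forced} by the identity (no direct analysis needed), and the field $E(\Pi')$ is removed by letting $\Pi'$ vary over abelian representations with fixed parameter $\lambda$.
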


The factors $P^{(j)}(\Pi)$ are the arithmetic Petersson norms mentioned above. Invoking Hypothesis \ref{descent} guarantees that they can be defined.  The constant
$Z(\Pi_{\infty})$, like the expression $p(m,\Pi_\infty,\Pi'_\infty)$ in Thm.\ \ref{thm:whittaker-periods-general}, is well-defined up to a non-zero scalar in $\K$.

It should be mentioned that the latter theorem (or equivalently Thm.\ \ref{heckethm}) is conditional on a global non-vanishing hypothesis. This hypothesis appears to be extremely difficult in general, but fortunately can always be satisfied provided the infinitesimal character of $\Pi$ is sufficiently regular.

It should also be mentioned that the main contribution to the archimedean invariants just mentioned is a
 Rankin-Selberg zeta integral involving explicit cohomological vectors.  For nearly 40 years, results of
 this type, for $GL_n$ with $n > 2$, were presented subject to a local non-vanishing hypothesis, which
 in our present situation comes down to the claim that the archimedean zeta integral does not equal zero for
 the choice of vectors forced upon us by the method.  Just last year, however, Binyong Sun discovered an
 abstract representation-theoretic method that allowed him to prove that such integrals never vanish.
 In his companion paper \cite{sun} he applies this method to cohomological Rankin-Selberg zeta
 integrals for $GL_n(\R)\times GL_{n-1}(\R)$ and for  $GL_n(\C)\times GL_{n-1}(\C)$.  Without Sun's
 breakthrough, the results of the present paper would still be conditioned on the non-vanishing
 hypothesis.

 In summary, this allows us to prove our main theorem (Thm.\ \ref{tensorproduct})
 expressing critical values of Rankin-Selberg $L$-functions in terms of the cohomology of algebraic
 varieties:

\begin{thm}\label{mainthm}  Let $\Pi$ and $\Pi'$ be cuspidal automorphic representations of $GL_n(\A_{\K})$ and
$GL_{n-1}(\A_{\K})$ which are cohomological with respect to $E_{\mu}$ and $E_{\lambda}$, respectively,
assumed to satisfy the equivalent conditions of Lem.\ \ref{lem:coeff} (i.e., hypothesis \ref{hyp:coeff} for $\textbf{\sf m}=0$).   We assume
$\Pi = BC(\pi)$ and $\Pi' = BC(\pi')$ are obtained by base change from definite unitary groups and let $E(\Pi)$ (resp.\ $E(\Pi')$) be the fields of definition of the corresponding motives $M(\Pi)$ (resp.\ $M(\Pi')$).
Suppose
\begin{enumerate}
\item  $\mu_i - \mu_{i+1} \geq 2$ for all $i$ and $\lambda_j - \lambda_{j+1} \geq 2$ for all $j$.
\item Both $\Pi$ and $\Pi'$ can be obtained by base change from holomorphic discrete series representations of unitary groups of arbitrary signature (i.e., satisfy Hypothesis \ref{descent}).
\end{enumerate}
Then for every critical point $s_0 =\tfrac12+ m$ of $L(s,\Pi\times\Pi')$ with $m \geq 0$,
$$L(\tfrac12 +m,\Pi_f \times \Pi'_f) \ \sim_{\K E(\Pi) E(\Pi')} p(m,\Pi_\infty,\Pi'_\infty) Z(\Pi_{\infty})Z(\Pi'_{\infty}) \prod_{j = 1}^{n-1} P^{(j)}(\Pi)
\prod_{k = 1}^{n-2} P^{(k)}(\Pi'),$$
where ``$\sim_{\K E(\Pi) E(\Pi')}$'' means up to multiplication by an element in the composition of number fields $\K E(\Pi) E(\Pi')$.
Equivalently, for every critical point $s_0 = n-1 + m$ of $L(s,R(M(\Pi)\otimes M(\Pi')))$ with $m \geq 0$,
$$
L(n-1+m,R(M(\Pi) \otimes M(\Pi'))) \sim_{\K E(\Pi) E(\Pi')} p(m,\Pi_\infty,\Pi'_\infty) Z(\Pi_{\infty})Z(\Pi'_{\infty}) \prod_{j = 1}^{n-1} P^{(j)}(\Pi)\prod_{k = 1}^{n-2} P^{(k)}(\Pi'). $$
\end{thm}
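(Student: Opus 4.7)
The plan is to assemble the theorem by chaining together Theorem \ref{thm:whittaker-periods-general} applied to $(\Pi,\Pi')$ with Theorem \ref{thm:67} applied separately to each of $\Pi$ and $\Pi'$, and then to reinterpret the resulting product motivically.

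First I would apply part (2) of Theorem \ref{thm:whittaker-periods-general} with $\textbf{\sf m}=0$; all its hypotheses are satisfied because $\Pi$ and $\Pi'$ are cuspidal base changes from unitary groups and hypothesis \ref{hyp:coeff} holds by assumption. This yields
\[
L(\tfrac12+m,\Pi_f\times\Pi'_f)\ \sim_{\Q(\Pi_f)\Q(\Pi'_f)}\ p(\Pi)\,p(\Pi')\,p(m,\Pi_\infty,\Pi'_\infty)\,\mathcal{G}(\omega_{\Pi'_{f,0}}).
\]
Next I invoke Theorem \ref{thm:67} for $\Pi$, whose hypotheses --- the regularity $\mu_i-\mu_{i+1}\geq 2$ and the descent Hypothesis \ref{descent} --- are exactly assumptions (1) and (2) of the present theorem, producing $p(\Pi)\sim_{\K E(\Pi)} Z(\Pi_\infty)\prod_{j=1}^{n-1}P^{(j)}(\Pi)$. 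The crucial step is to apply the same theorem to $\Pi'$: since $\Pi'$ is itself cuspidal and (by hypothesis (2)) a base change from unitary groups in $n-1$ variables of all signatures, and since $\lambda_j-\lambda_{j+1}\geq 2$ by hypothesis (1), the analog of Theorem \ref{thm:67} with $n-1$ in place of $n$ yields $p(\Pi')\sim_{\K E(\Pi')} Z(\Pi'_\infty)\prod_{k=1}^{n-2}P^{(k)}(\Pi')$. Finally, the Gauss sum $\mathcal{G}(\omega_{\Pi'_{f,0}})$ attached to a finite-order twist of the central character of $\Pi'_f$ lies in $\Q(\Pi'_f)\subset E(\Pi')$, so it can be absorbed into the $\K E(\Pi')$ ambiguity.

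Combining these three period identities and enlarging the coefficient field from $\Q(\Pi_f)\Q(\Pi'_f)$ to $\K E(\Pi)E(\Pi')$ gives the first claimed formula. The equivalent motivic formulation is then immediate from the standard shift relating $L(s,\Pi\times\Pi')$ to $L(s, R(M(\Pi)\otimes M(\Pi')))$: the tensor product of motives has motivic weight $2n-3$, so that the critical point $s_0 = n-1+m$ of the motivic $L$-function corresponds exactly to $\tfrac12+m$ of its automorphic counterpart.

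The principal obstacle is the second application of Theorem \ref{thm:67}: one must verify that the argument in the $GL_n$ setting transports symmetrically to $GL_{n-1}$, including the global non-vanishing hypothesis (now invoked for $\Pi'$) and Sun's archimedean non-vanishing for the pair $GL_{n-1}(\C)\times GL_{n-2}(\C)$. The paper's setup supplies the auxiliary abelian representation $\Pi''$ on $GL_{n-2}$ to which Theorem \ref{thm:whittaker-periods-general} can be applied in its Eisenstein form, precisely as was done for the pair $(\Pi,\Pi')$ with $\Pi'$ abelian. Once this symmetric treatment is granted, what remains is routine bookkeeping in the composite field of rationality.
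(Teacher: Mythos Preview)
Your approach is essentially identical to the paper's: the theorem (restated as Thm.~\ref{tensorproduct}) is obtained precisely by combining Thm.~\ref{thm:whittaker-periods-general} with Thm.~\ref{heckethm} (which is Thm.~\ref{thm:67}) applied to $\Pi$ and, with $n$ replaced by $n-1$, to $\Pi'$.

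One correction: your disposal of the Gau\ss{} sum is not quite right. It is \emph{not} true in general that $\mathcal{G}(\omega_{\Pi'_{f,0}})\in\Q(\Pi'_f)$; Gau\ss{} sums of Dirichlet characters typically introduce roots of unity beyond the rationality field of the character. The correct argument, given in Remark~\ref{nogauss}, is that since $\Pi'=BC(\pi')$ is a base change from a unitary group, its central character is itself a twisted base change, so its restriction $\omega_{\Pi'_{0}}$ to $\A^\times$ is \emph{trivial}; hence $\mathcal{G}(\omega_{\Pi'_{f,0}})=1$ and the term simply disappears.
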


For the convenience of the reader, we wrote the above theorem from the automorphic, as well as from the motivic point of view.
It is shown in Optimistic Comparison \ \ref{comparison} that
the period that appears in Thm. \ \ref{mainthm}, namely
$$ \prod_{j = 1}^{n-1} P^{(j)}(\Pi)
\prod_{k = 1}^{n-2}P^{(k)}(\Pi'), $$
is at least formally consistent with Deligne's conjecture, provided the Hodge types of the motives
attached to $\Pi$ and $\Pi'$ satisfy the relations derived from Lemma \ \ref{lem:coeff}.  The article
\cite{haradj} computes Deligne periods for general Rankin-Selberg products and obtains
different products of the same basic invariants  $P^{(j)}(\Pi)$ and $P^{(k)}(\Pi')$
(under the identifications of Optimistic Comparison \ \ref{comparison}).  The final section
explains how one might hope to express the critical values of general Rankin-Selberg products,
using normalized Rankin-Selberg integrals of Eisenstein cohomology classes. \\\\

Consider the constant $u(m,\Pi_\infty,\Pi'_\infty):=p(m,\Pi_\infty,\Pi'_\infty) Z(\Pi_{\infty})Z(\Pi'_{\infty})$ appearing in Thm.\ \ref{mainthm}.  The individual
terms in the product depend on normalizations of archimedean Whittaker models, as do the Whittaker periods $p(\Pi)$ and $p(\Pi')$
themselves, but the product is independent of normalizations, up to $\K^{\times}$.  In subsequent work, Lin Jie has identified
$u(m,\Pi_\infty,\Pi'_\infty)$ as the explicit power of $2 \pi i$ predicted by the automorphic version of Deligne's conjecture described
in section \ref{nnminus1}; see Theorem \ref{lin} for a precise statement.

In the absence of the regularity hypothesis, we can prove a rather different kind of result, using Wei Zhang's
recent proof of a version of Neal Harris's Ichino-Ikeda conjecture for unitary groups.  Suppose
$\Pi$ and $\Pi'$ satisfy the hypotheses of \cite{weizhang}; in particular, $\Pi$ and $\Pi'$
are both supercuspidal at some prime of $\K$ split over $\Q$.   Then the conclusion of
Thm.\ \ref{thm:whittaker-periods-general} holds, with $p(\Pi)$ and $p(\Pi')$ replaced by the
values at $s = 1$ of the adjoint $L$-functions of $\pi$ and $\pi'$ respectively, and with slightly
different archimedean factors (See Cor.\ \ref{Ladjoint}).

\begin{thm}
Let $\Pi$ and $\Pi'$ be cuspidal automorphic representations of $GL_n(\A_{\K})$ and $GL_{n-1}(\A_{\K})$ which are cohomological with respect to $E_{\mu}$ and $E_{\lambda}$, respectively, assumed to satisfy the equivalent conditions of Lem.\ \ref{lem:coeff}.  We assume
$\Pi = BC(\pi)$ and $\Pi' = BC(\pi')$ are obtained by base change from definite unitary groups and that the irreducible unitary cuspidal automorphic representations $\pi$ and $\pi'$ satisfy the hypotheses of \cite{weizhang} (more precisely, hypothesis \ref{hypotheses}).
Then for every critical point $s_0 = \tfrac12 + m$ of $L(s,\Pi\times\Pi')$ with $m\geq 0$, there is a non-zero complex constant $a(m,\Pi_{\infty},\Pi'_{\infty})$,
depending only on the archimedean components of $\Pi$ and $\Pi'$, and an integer $a(n)$ only depending on $n$, such that,
$$L(\tfrac12 +m,\Pi_f \times \Pi'_f) \ \sim_{\Q(\varepsilon_\K) \Q(\pi)\Q(\pi')} a(m,\Pi_\infty,\Pi'_\infty) \mathcal{G}(\varepsilon_{\K,f})^{a(n)}L(1,\pi_f,Ad)L(1,\pi'_f,Ad),$$
where ``$\sim_{\Q(\varepsilon_\K) \Q(\pi)\Q(\pi')}$'' means up to multiplication by an element in the composition of number fields $\Q(\varepsilon_\K) \Q(\pi)\Q(\pi')$.
Equivalently, for every critical point $s_0 = n-1 + m$ of $L(s,R(M(\Pi)\otimes M(\Pi')))$ with $m\geq 0$,
$$L(n-1+m,R(M(\Pi) \otimes M(\Pi'_f))) \ \sim_{\Q(\varepsilon_\K) \Q(\pi)\Q(\pi')} a(m,\Pi_\infty,\Pi'_\infty) \mathcal{G}(\varepsilon_{\K,f})^{a(n)}L(1,\pi_f,Ad)L(1,\pi'_f,Ad).$$
\end{thm}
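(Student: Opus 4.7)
The plan is to derive the theorem from Thm.\ \ref{thm:whittaker-periods-general} by substituting for the Whittaker periods $p(\Pi)$ and $p(\Pi')$ a period identity coming from Wei Zhang's proof of the refined Gan--Gross--Prasad (Ichino--Ikeda) conjecture for unitary groups; this substitution is essentially the content of Cor.\ \ref{Ladjoint} flagged in the introduction, and the present theorem is the resulting formula for critical values of the Rankin--Selberg $L$-function. The starting point is Thm.\ \ref{thm:whittaker-periods-general}, which for every critical point $s_0 = \tfrac12 + m$ with $m \geq 0$ gives
$$L(\tfrac12 + m, \Pi_f \times \Pi'_f) \sim_{\Q(\Pi_f)\Q(\Pi'_f)} p(\Pi)\, p(\Pi')\, p(m, \Pi_\infty, \Pi'_\infty)\, \mathcal{G}(\omega_{\Pi'_{f,0}}).$$

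The next step is to express $p(\Pi)\,p(\Pi')$ in terms of the adjoint $L$-values. Under hypothesis \ref{hypotheses}, \cite{weizhang} establishes an explicit identity between the Gan--Gross--Prasad period integral on $U(n)\times U(n-1)$ and the ratio $L(\tfrac12, BC(\pi)\times BC(\pi'))/[L(1,\pi,Ad)L(1,\pi',Ad)]$, together with explicit local factors. Because $\pi$ and $\pi'$ live on \emph{definite} unitary groups, the period integral is a finite algebraic sum of values of automorphic forms on a zero-dimensional Shimura set, hence purely algebraic in the coefficient field $\Q(\pi)\Q(\pi')$. Combining Zhang's identity with the definition of the Whittaker periods as the ratios between the cohomological and Whittaker rational structures on the base changes $\Pi=BC(\pi)$ and $\Pi'=BC(\pi')$, and keeping track of the Petersson inner products on the unitary side, yields --- modulo the composite field $\Q(\varepsilon_\K)\Q(\pi)\Q(\pi')$ --- an identity of the form
$$p(\Pi)\, p(\Pi')\, \mathcal{G}(\omega_{\Pi'_{f,0}}) \ \sim\ c(\Pi_\infty, \Pi'_\infty)\, \mathcal{G}(\varepsilon_{\K,f})^{a(n)}\, L(1,\pi_f,Ad)\, L(1,\pi'_f,Ad),$$
where the power of $\mathcal{G}(\varepsilon_{\K,f})$ records the mismatch between the linear rational structure on $GL_n(\A_\K)$-representations and the structure inherited from the unitary groups through base change. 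Substituting this into Thm.\ \ref{thm:whittaker-periods-general}, absorbing all archimedean constants into $a(m, \Pi_\infty, \Pi'_\infty) := c(\Pi_\infty,\Pi'_\infty)\,p(m,\Pi_\infty,\Pi'_\infty)$, and observing that $\Q(\Pi_f)\subset \Q(\pi)$ and $\Q(\Pi'_f)\subset \Q(\pi')$ (because both arise by base change) yields the desired relation over $\Q(\varepsilon_\K)\Q(\pi)\Q(\pi')$. The motivic reformulation then follows from the standard translation $s=\tfrac12+m \longleftrightarrow s=n-1+m$ between the automorphic and motivic normalizations of the Rankin--Selberg $L$-function, together with the identifications $E(\Pi)=\Q(\pi)$ and $E(\Pi')=\Q(\pi')$.

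The main obstacle is the second step, namely the Galois-equivariant bookkeeping of the ratio between the Whittaker rational structure on the base change and the arithmetic Petersson product on the definite unitary group. This requires a careful accounting of local archimedean constants --- now available thanks to Sun's nonvanishing theorem quoted in the introduction --- and of the contributions of ramified local factors in Zhang's formula. The Gauss-sum exponent $a(n)$, which depends only on $n$, must emerge as precisely the power needed to convert between the arithmetic structures on the two sides of base change; pinning it down requires a dimension count comparing the Hodge-theoretic and automorphic rationality data, and verifying that all remaining local discrepancies are absorbed into the coefficient field $\Q(\varepsilon_\K)\Q(\pi)\Q(\pi')$.
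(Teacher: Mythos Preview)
Your overall strategy---start from Thm.\ \ref{thm:whittaker-periods-general} and replace the Whittaker periods by adjoint $L$-values via Wei Zhang's theorem---is the right one, and it matches the paper's approach. But the way you carry out the second step has a genuine gap.

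You apply Zhang's formula directly to the \emph{given} pair $(\pi,\pi')$ to extract
\[
p(\Pi)\,p(\Pi')\ \sim\ c(\Pi_\infty,\Pi'_\infty)\,\mathcal{G}(\varepsilon_{\K,f})^{a(n)}\,L(1,\pi_f,Ad)\,L(1,\pi'_f,Ad).
\]
But Zhang's identity only relates the \emph{central} value $L(\tfrac12,\Pi\times\Pi')$ to the adjoint $L$-values, and nothing in the hypotheses guarantees $L(\tfrac12,\Pi\times\Pi')\neq 0$. If the central value vanishes, both sides of the comparison at $m=0$ are zero and you cannot solve for $p(\Pi)p(\Pi')$; yet the theorem must still hold for critical points $m>0$ where the $L$-value may well be nonzero. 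Your argument gives no information in that case.

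The paper avoids this by proving a \emph{separate} period relation for each factor (Thm.\ \ref{whittakeradjoint}):
\[
p(BC(\pi))\ \sim_{\Q(\varepsilon_\K)\Q(\pi)}\ a(\pi_\infty)\,\mathcal{G}(\varepsilon_{\K,f})^{g(n)}\,L(1,\pi_f,Ad),
\]
established by induction on $n$. The inductive step uses Prop.\ \ref{nozero} to choose an \emph{auxiliary} $\pi'$ (not the one in the theorem) for which $L(\tfrac12,BC(\pi)\times BC(\pi'))\neq 0$, so that Zhang's formula combined with Thm.\ \ref{thm:whittaker-periods-general} at $m=0$ and the inductive hypothesis for $n-1$ yields the relation for $p(BC(\pi))$ alone. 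Once $p(\Pi)$ and $p(\Pi')$ are each controlled independently, one substitutes into Thm.\ \ref{thm:whittaker-periods-general} and sets $a(n):=g(n)+g(n-1)$, $a(m,\Pi_\infty,\Pi'_\infty):=a(\pi_\infty)a(\pi'_\infty)p(m,\Pi_\infty,\Pi'_\infty)$. This inductive separation is also what makes it transparent that $a(n)$ depends only on $n$, a point your sketch leaves to an unspecified ``dimension count''.
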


The Whittaker period invariants $p(\Pi)$ are defined by comparing
two rational structures on $\Pi_f$, one coming from the global realization of $\Pi_f$ in the cohomology
of the locally symmetric space $S_n$ attached to $GL_n$, the other from the Whittaker model.  It is well-known,
however, that $\Pi_f$ occurs in a range of cohomological degrees $b_n \leq i \leq c_n$, and each one gives rise to one (or more)
period invariants.  Our $p(\Pi)$ is attached to the lowest cohomological degree $b_n$;
the proof of Theorem \ \ref{thm:whittaker-periods-general} is based on the numerical coincidence
that $1+b_n + b_{n-1}$ equals the dimension of $S_{n-1}$.  In \cite{urban}, Eric Urban showed that, when $n = 2$,
the Whittaker period attached to  $\Pi$ in the top cohomological degree $c_2$ is related to a non-critical
special value of one of the two Asai $L$-functions attached to $\Pi$.  It can be deduced from the
arguments of \cite{weizhang} and the work of Jacquet, Rallis, and Flicker cited there that the phenomenon
Urban discovered remains valid for general $n$; this is the subject of our forthcoming article with E. Lapid
\cite{GHL}.

We gratefully acknowledge the help of F. Shahidi and W. Zhang, for their answers to our
questions about different steps in the proof of the main theorem.  A. Raghuram helped us clear
up an apparent contradiction between our results and Deligne's conjecture.  We are especially grateful
to Binyong Sun, for proving the non-vanishing theorem that makes our results unconditional and that
will inevitably make the whole field of special values considerably more attractive.

\section{Unitary groups and base change}
\subsection{Some standing assumptions and notation}
\subsubsection{Number fields}
Let $\K/\Q$ denote an imaginary quadratic extension of $\Q$. We write $S(\K)$ for the set of places of $\K$ and $S$ for the set of places of $\Q$. The ring of adeles over $\K$ (resp.\ over $\Q$) is denoted $\A_\K$ (resp.\ $\A$). The ring of integers of $\K$ is denoted $\mathcal O$ and we let $\mathfrak{D}_\K$ be the different of $\K/\Q$, i.e., $\mathfrak{D}_\K^{-1} = \{x \in \K : Tr_{\K/\Q}(x \O) \subset \Z\}$, where $Tr_{\K/\Q}$ is the trace map from $\K$ to $\Q$. The normalized absolute value of $\A_\K$ is denoted $\|\cdot\|$. We extend the Hecke character
$\varepsilon_{\K}: \Q^\times\backslash\A^\times\ra\C^\times$, associated to the extension $\K/\Q$ via class field theory, to a Hecke character $\eta:\K^\times\backslash\A^\times_\K\ra\C^\times$. It is easy to see that $\eta$ is unitary and $\eta_\infty(z)=z^t\bar z^{-t}$ for a certain $t=t(\eta)\in\tfrac 12+\Z$.

For convenience, we will also fix, once and for all, a non-trivial, continuous, additive character $\psi:\K\backslash \A_\K\ra\C^\times$. To this end, let $\psi_{\Q}$ be the additive character of ${\mathbb Q} \backslash {\mathbb A}$, as in Tate's thesis, namely, $\psi_{\Q}(x) = e^{2\pi i \Lambda(x)}$ with the $\Lambda = \sum_{p \leq \infty} \Lambda_p$, where $\Lambda_{\infty}(x) = -x$ for $x \in \R$ and $\Lambda_p(x_p)$, $x_p \in \Q_p$, is the finite, negative tail of $x_p$'s $p$-adic expansion. That is, if $x_p=\sum_{j\geq v}a_jp^j$, then $\Lambda_p(x_p)=\sum_{j= v}^{-1}a_jp^j$. If we write
$\psi_{\Q} =  \psi_{\R} \otimes \otimes_p \psi_{\Q_p}$,
then $\psi_{\R}(x) = e^{-2\pi ix}$ and $\psi_{\Q_p}$ is trivial on $\Z_p$ and non-trivial on $p^{-1}\Z_p$.

Now, we define $\psi: = \psi_{\Q} \circ Tr_{\K/\Q}$. If $\psi = \otimes_w \psi_w$, then the local characters are determined analogously. In particular, if $\mathfrak{D}_\K = \prod_{\wp} \wp^{r_{\wp}}$, the product running over
all prime ideals $\wp\subset\mathcal O$, then the conductor of the local character $\psi_{w}$ is $\wp_w^{-r_{\wp}}$, i.e., $\psi_{w}$ is trivial on $\wp_w^{-r_{\wp}}$ and non-trivial on $\wp_w^{-r_{\wp}-1}.$ We let $S(\psi)=\{\wp \nmid\mathfrak D_\K\}$.

\subsubsection{Hermitian spaces and unitary groups}
We let $V$ be an $n$-dimensional Hermitian space over $\K$, relative to the non-trivial element in Gal$(\K/\Q)$. Depending on the parity of $n$, we define
$$\gamma:=\left\{\begin{array}{ll}
 \triv & \textrm{if $n$ is even} \\
 \eta & \textrm{if $n$ is odd}
\end{array}
\right.$$
Hence, $\gamma_\infty(z)=z^c\bar z^{-c}$, where $c=0$, if $n$ is even and $c=t\in \tfrac12+\Z$, if $n$ is odd.

We let $V'\subset V$ be a subspace of codimension one, on which the restriction of the Hermitian form is non-degenerate so that $V=V'\oplus V'^\bot$. At infinity, we want $V$ (and $V'$) to be definite.\\
We let $H:=U(V)$ and $H':=U(V')$ be the corresponding unitary groups over $\Q$. In particular, $H(\R)\cong U(n)$, the compact unitary group of rank $n$ and similar $H'(\R)\cong U(n-1)$.

Furthermore, we let $G:=GL_n/\K$ and $G':=GL_{n-1}/\K$. We also define real Lie groups $K:=GU(V)(\R)$ (resp.\ $K':=GU(V')(\R)$), which we view inside $G(\C)$ (resp.\ $G'(\C)$) by their natural embedding.

The Lie algebra of a real Lie group is denoted by the same but gothic, lower case letter.

\subsubsection{Finite-dimensional representations of real groups}
Let $E^{\sf unt}_\mu$ (resp.\ $E^{\sf unt}_\lambda)$) be an irreducible, finite-dimensional, algebraic representation of $H(\R)$ (resp.\ $H'(\R)$) on a complex vector space. Having fixed a maximal $\Q$-split torus in $H(\C)\cong GL_n(\C)$ (resp.\ $H'(\C)\cong GL_{n-1}(\C)$) and an ordering on the set of its dominant algebraic characters, we may think of $E^{\sf unt}_\mu$ (resp.\ $E^{\sf unt}_\lambda)$) as being given by its highest weight $\mu$ (resp.\ $\lambda$). We may arrange that $\mu=(\mu_1,...,\mu_n)$, where $\mu_1\geq...\geq\mu_n$, $\mu_i\in\Z$ for all $1\leq i\leq n$ and similar $\lambda=(\lambda_1,...,\lambda_{n-1})$ with $\lambda_1\geq...\geq\lambda_{n-1}$, $\lambda_i\in\Z$ for all $1\leq i\leq n-1$.

Furthermore, we write $E^{\sf unt}_\mu\otimes E^{\sf unt}_{\mu^{\sf v}}$ (resp.\ $E^{\sf unt}_\lambda\otimes E^{\sf unt}_{\lambda^{\sf v}}$) for the irreducible, finite-dimensional, algebraic representation of the {\it real} Lie group $G(\C)$ (resp.\ $G'(\C)$) of highest weight $(\mu,\mu^{\sf v})$ (resp.\ $(\lambda,\lambda^{\sf v})$). (Here the check ``${\sf v}$'' denotes taking the contragredient representation.) It is isomorphic to its complex conjugate contragredient.

\begin{lem}[See, e.g., \cite{goodman-wall}, Thm.\ 8.1.1]\label{lem:coeff}
With the above notation, the following assertions are equivalent:
\begin{enumerate}
\item $\mu_1\geq -\lambda_{n-1}\geq \mu_2\geq -\lambda_{n-2}\geq...\geq -\lambda_1\geq \mu_n$
\item \emph{Hom}$_{H'(\C)}(E^{\sf unt}_\mu\otimes E^{\sf unt}_\lambda,\C)\neq 0$
\end{enumerate}
If any of these two conditions is satisfied, then also
\emph{Hom}$_{G'(\C)}((E^{\sf unt}_\mu\otimes E^{\sf unt}_{\mu^{\sf v}})\otimes (E^{\sf unt}_\lambda\otimes E^{\sf unt}_{\lambda^{\sf v}}),\C)\neq 0$
\end{lem}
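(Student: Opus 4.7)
The plan is to reduce both assertions to the classical Gelfand--Tsetlin branching rule for $GL_n(\C)\supset GL_{n-1}(\C)$, which is the content of the cited Theorem~8.1.1 of Goodman--Wallach: an irreducible algebraic representation $E_\nu$ of $GL_{n-1}(\C)$ with dominant highest weight $\nu$ occurs in $E_\mu|_{GL_{n-1}(\C)}$ with multiplicity $1$ whenever the interlacing $\mu_i\geq\nu_i\geq \mu_{i+1}$ holds for $i=1,\dots,n-1$, and with multiplicity $0$ otherwise.

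First I would establish (1)$\Leftrightarrow$(2). Via the standard adjunction and the identification $(E^{\sf unt}_\lambda)^*\cong E^{\sf unt}_{\lambda^{\sf v}}$,
\[
\Hom_{H'(\C)}(E^{\sf unt}_\mu\otimes E^{\sf unt}_\lambda,\C)\;\cong\;\Hom_{H'(\C)}(E^{\sf unt}_\mu, E^{\sf unt}_{\lambda^{\sf v}}),
\]
which measures the multiplicity of the irreducible of highest weight $\lambda^{\sf v}=(-\lambda_{n-1},\dots,-\lambda_1)$ inside the restriction of $E^{\sf unt}_\mu$ from $H(\C)\cong GL_n(\C)$ to $H'(\C)\cong GL_{n-1}(\C)$. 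Applying the branching rule with $\nu=\lambda^{\sf v}$ yields the chain
\[
\mu_1\geq -\lambda_{n-1}\geq \mu_2\geq -\lambda_{n-2}\geq\cdots\geq -\lambda_1\geq \mu_n,
\]
which is precisely condition (1).

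For the final assertion I use that $G'=\operatorname{Res}_{\K/\Q}GL_{n-1}$, so the complexification of the real Lie group $G'(\C)$ is naturally $GL_{n-1}(\C)\times GL_{n-1}(\C)$, with the two factors acting on the two slots of the external tensor product $E^{\sf unt}_\lambda\otimes E^{\sf unt}_{\lambda^{\sf v}}$, and analogously after restricting $E^{\sf unt}_\mu\otimes E^{\sf unt}_{\mu^{\sf v}}$ from $G(\C)$. The space of $G'(\C)$-invariants therefore factors as
\[
\Hom_{H'(\C)}(E^{\sf unt}_\mu\otimes E^{\sf unt}_\lambda,\C)\;\otimes\;\Hom_{H'(\C)}(E^{\sf unt}_{\mu^{\sf v}}\otimes E^{\sf unt}_{\lambda^{\sf v}},\C).
\]
The first factor is non-zero by (1)$\Rightarrow$(2); the second is controlled by condition (1) for $(\mu^{\sf v},\lambda^{\sf v})$, which reads $-\mu_n\geq\lambda_1\geq -\mu_{n-1}\geq\cdots\geq \lambda_{n-1}\geq -\mu_1$, and this becomes (1) for $(\mu,\lambda)$ after negating each entry and reversing the order. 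Hence both factors are non-zero.

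The only real obstacle is bookkeeping: unraveling the two $GL_{n-1}(\C)$-factors inside $G'(\C)$ arising from restriction of scalars and verifying the self-duality of the interlacing chain under $(\mu,\lambda)\mapsto(\mu^{\sf v},\lambda^{\sf v})$. The genuine mathematical input is entirely the classical $GL_n\supset GL_{n-1}$ branching rule.
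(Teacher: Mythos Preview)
The paper does not give its own proof of this lemma; it simply cites Goodman--Wallach, Thm.\ 8.1.1, as the source. Your proposal correctly unpacks that citation: the equivalence (1)$\Leftrightarrow$(2) is exactly the $GL_n\supset GL_{n-1}$ branching rule applied after the adjunction $\Hom_{H'(\C)}(E^{\sf unt}_\mu\otimes E^{\sf unt}_\lambda,\C)\cong \Hom_{H'(\C)}(E^{\sf unt}_\mu,E^{\sf unt}_{\lambda^{\sf v}})$, and the final assertion follows by factoring the $\Hom$ over the real Lie group $G'(\C)$ through its complexification $GL_{n-1}(\C)\times GL_{n-1}(\C)$ and observing that the interlacing condition is invariant under $(\mu,\lambda)\mapsto(\mu^{\sf v},\lambda^{\sf v})$. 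This is precisely the intended argument.
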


\subsection{Base change}\label{sect:cuspautom}
We let $\pi$ (resp.\ $\pi'$) be an irreducible, unitary cuspidal automorphic representation of $H(\A)$ (resp.\ $H'(\A)$). We shall always assume that $\pi$ (resp.\ $\pi'$) is cohomological with respect to $E^{\sf unt}_\mu$ (resp.\ $E^{\sf unt}_\lambda)$). Since $H(\R)$ and $H'(\R)$ are both compact and connected, this simply means that $\pi_\infty\cong E^{\sf unt}_{\mu^{\sf v}}$ and $\pi'_\infty\cong E^{\sf unt}_{\lambda^{\sf v}}$. We let $S(\pi)$ (resp.\ $S(\pi')$) be the set of all finite places of $\Q$, where $\pi$ (resp.\ $\pi'$) ramifies, together with the infinite place of $\Q$. The following is essentially a theorem of Labesse:

\begin{thm}[Base Change]\label{thm:base_change}
There exist irreducible automorphic representations $\Pi$ of $G(\A_\K)$ and $\Pi'$ of $G'(\A_\K)$, which are of the form
$$\Pi\cong \Pi_1\boxplus ...\boxplus\Pi_s\quad\quad\textrm{and}\quad\quad \Pi'\cong \Pi'_1\boxplus ...\boxplus\Pi'_r$$
where $\Pi_j$ (resp.\ $\Pi'_j$) is a square-integrable automorphic representation of $GL_{s_j}(\A_\K)$ (resp.\ $GL_{r_j}(\A_\K)$), for all $j$, such that:
\begin{itemize}
\item For all $p\notin S(\pi)$, the representation $\Pi_p:=\otimes_{w|p}\Pi_w$ is the local base change of $\pi_p$.
\item The archimedean component $\Pi_\infty$ of $\Pi$ is cohomological with respect to $E^{\sf unt}_\mu\otimes E^{\sf unt}_{\mu^{\sf v}}$, i.e.,  \\ $H^*(\g,K,\Pi_\infty\otimes (E^{\sf unt}_\mu\otimes E^{\sf unt}_{\mu^{\sf v}}))\neq 0$.
\end{itemize}
and analogously
\begin{itemize}
\item For all $p\notin S(\pi')$, the representation $\Pi'_p:=\otimes_{w|p}\Pi'_w$ is the local base change of $\pi'_p$.
\item The archimedean component $\Pi'_\infty$ of $\Pi'$ is cohomological with respect to $E^{\sf unt}_\lambda\otimes E^{\sf unt}_{\lambda^{\sf v}}$, i.e., \\ $H^*(\g',K',\Pi'_\infty\otimes (E^{\sf unt}_\lambda\otimes E^{\sf unt}_{\lambda^{\sf v}}))\neq 0$.
\end{itemize}
If all the isobaric summands $\Pi_j$ (resp.\ $\Pi'_j$) are cuspidal, then $\Pi_\infty$ (resp.\ $\Pi'_\infty$) is the local base change of $\pi_\infty$ (resp.\ $\pi'_\infty$).
\end{thm}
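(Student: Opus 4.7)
The overall plan is to apply Labesse's stable base change for unitary groups, which produces a weak base change to $GL_n/\K$ (respectively $GL_{n-1}/\K$) via a comparison of the stable trace formula on $H$ with the twisted (by the non-trivial element of $\mathrm{Gal}(\K/\Q)$) stable trace formula on $G=\mathrm{Res}_{\K/\Q}GL_n$. For a cuspidal automorphic representation $\pi$ of $H(\A)$ one obtains an automorphic representation $\Pi$ of $G(\A_\K)$ whose Satake parameters at every prime $p\notin S(\pi)$ coincide with the local base change of those of $\pi_p$, and similarly for $\pi'$. The key input from Labesse is that one can choose matching test functions and transfer them (via Waldspurger's transfer and the fundamental lemma, now theorems) between $H(\A)$ and $G(\A_\Q)$, together with the stabilization needed to isolate $\pi$ in the spectral side.

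To exhibit $\Pi$ as an isobaric sum of square-integrable representations, I would invoke the Moeglin–Waldspurger classification of the discrete spectrum of $GL_n$ together with Jacquet–Shalika's strong multiplicity one and rigidity of isobaric sums: any automorphic representation of $GL_n(\A_\K)$ whose local parameters at almost all primes are determined by $\pi$ is an isobaric sum $\Pi_1\boxplus\cdots\boxplus\Pi_s$ with each $\Pi_j$ a discrete automorphic representation of some $GL_{s_j}(\A_\K)$. Exactly the same argument applies to $\pi'$, producing $\Pi'\cong\Pi'_1\boxplus\cdots\boxplus\Pi'_r$.

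For the archimedean assertion, I use that $H(\R)\cong U(n)$ is compact so $\pi_\infty\cong E^{\sf unt}_{\mu^{\sf v}}$ is a finite-dimensional representation with highest weight $\mu^{\sf v}$. The local base change map sends this to the cohomological principal series of $G(\R)=GL_n(\C)$ whose Langlands parameter is (up to the standard shift) the one obtained by restricting the $L$-parameter of $E^{\sf unt}_{\mu^{\sf v}}$ to $W_\C$; concretely, $\Pi_\infty$ is the full induced representation from the Borel of characters $z\mapsto z^{a_i}\bar z^{b_i}$ where the exponents match the Harish–Chandra parameter of $E^{\sf unt}_{\mu^{\sf v}}$. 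A standard $(\g,K)$-cohomology computation (Borel–Wallach or Vogan–Zuckerman) then shows $H^*(\g,K,\Pi_\infty\otimes(E^{\sf unt}_\mu\otimes E^{\sf unt}_{\mu^{\sf v}}))\neq 0$, giving the required cohomologicality. The same works for $\Pi'_\infty$. When every $\Pi_j$ (respectively $\Pi'_j$) is cuspidal, $\Pi$ itself is irreducible and the base change is strong at every place, so in particular $\Pi_\infty$ is literally the local base change of $\pi_\infty$.

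The main obstacle I expect is the endoscopic case, where $\Pi$ may fail to be cuspidal and one must ensure that each isobaric summand $\Pi_j$ is square-integrable (not merely an Eisenstein constituent) and that the archimedean component of the resulting isobaric sum still has the correct infinitesimal character to be cohomological with respect to the prescribed coefficient system. Controlling this requires the descent/classification results of Mok and Kaletha–Minguez–Shin–White for unitary groups to pin down the Arthur parameter of $\pi$, combined with careful bookkeeping of the infinitesimal character at infinity; the compactness of $H(\R)$ simplifies matters considerably since the $A$-parameter at infinity is forced by $\mu$.
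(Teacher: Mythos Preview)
Your approach is correct and aligns with the paper's own: the result is Labesse's base-change theorem for unitary groups (Cor.~5.3 of \cite{labesse-book}), and the paper's proof consists of nothing more than that citation together with the remark that, strictly speaking, Labesse's argument is written for totally real base fields $F\neq\Q$ but ``there is no doubt'' it goes through here (see \cite{labesse-book}, Remarque~5.2). Your sketch of the trace-formula comparison, the isobaric decomposition via Jacquet--Shalika and M\oe glin--Waldspurger, and the archimedean analysis is an accurate expansion of what lies behind Labesse's corollary.

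One minor comment: your final paragraph about the endoscopic case and the need for Mok or Kaletha--Minguez--Shin--White overstates the difficulty for the theorem \emph{as stated}. Labesse's weak base change already produces $\Pi$ as an isobaric sum of discrete (hence square-integrable) automorphic representations of smaller $GL_{s_j}$, without requiring the full endoscopic classification; and the archimedean infinitesimal character is pinned down by the unramified local base change at almost all finite places together with strong multiplicity one, so cohomologicality with respect to $E^{\sf unt}_\mu\otimes E^{\sf unt}_{\mu^{\sf v}}$ follows. The later classification results are useful for sharper statements (e.g.\ multiplicity one for $\pi$, or the converse descent direction) but are not needed here. You should also flag, as the paper does, the base-field caveat about $\Q$ versus a larger totally real field.
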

\begin{proof}
This is essentially, Labesse Cor.\ \cite{labesse-book} 5.3. Strictly speaking, the proof given in the aforementioned reference only works for totally real number fields $F$ different from $\Q$. However, there is no doubt that the theorem holds in the above form. See \cite{labesse-book}, Remarque 5.2.
\end{proof}

\subsection{Descent to unitary groups and base change}

When $BC(\pi)$ is cuspidal, it is expected, and has been proved in a great many cases (cf. \cite{labesse-book})
that $\pi$ occurs {\it with multiplicity one} in the discrete spectrum of $H$.  Moreover, if $H^{\flat}$ is any inner
form of $H$ then $\Pi$ descends to an $L$-packet  $\{\pi^{\flat}\}$ of discrete (cuspidal) automorphic
representations of $H^{\flat}$ whose archimedean components lie
in the discrete series; again it is expected that each member of the packet occurs with multiplicity one.   

Suppose $H^{\flat}(\R) \cong U(r,s)$, the unitary group of signature $(r,s)$.  Then any $\pi^{\flat}$ on $H^{\flat}$
whose base change is $BC(\pi)$, and whose archimedean component is a holomorphic (discrete series) representation, will
be denoted $\pi_{r,s}$.  Thus, $\pi = \pi_{n,0}$ or $\pi_{0,n}$. It doesn't matter whether $H$ is
positive or negative definite; but when $rs > 0$, the difference between $\pi_{r,s}$ and $\pi_{s,r}$ needs to be
respected, because they define holomorphic automorphic forms on non-isomorphic Shimura varieties (attached
to the similitude groups containing $H^{\flat}$).  The normalizations relevant to these Shimura varieties
can be found in \cite{harcrelle}.

To $\pi_{r,s}$ we associate a period invariant $P^{(s)}(BC(\pi)) \in \C^{\times}$, as in \S 4.2 of \cite{haradj}.
Roughly speaking, $P^{(s)}(BC(\pi))$ is the square of the Petersson norm of an arithmetically normalized holomorphic
automorphic form in $\pi_{r,s}$.   Precise definitions are in \cite{harcrelle}; see also section \ref{unitaryperiods} below,
especially Hypothesis \ref{descent}, as well as Remark \ref{substitutes}.

\subsection{The cuspidal automorphic representation $\Pi$}\label{sect:Pi}
Recall the automorphic representation $BC(\pi)$ from Thm.\ \ref{thm:base_change}. We make the additional assumption that it is cuspidal and take an arbitrary, but henceforth fixed, integer ${\textbf{\textbf{\sf m}}}\in\Z$. We define
$$\Pi:=BC(\pi)\|\cdot\|^{\textbf{\sf m}},$$
suppressing its dependence on ${\textbf{\sf m}}$. By assumption, $\Pi$ is cuspidal automorphic and hence $\Pi$ is automatically globally $\psi^{-1}$-generic. We denote by $W(\Pi_f)$ the corresponding Whittaker model of $\Pi_f$, again suppressing the dependence on the fixed additive character $\psi^{-1}$. By Thm.\ \ref{thm:base_change}, $\Pi_\infty$ is cohomological with respect to the finite-dimensional, algebraic representation
$$E_{\mu}:=(E^{\sf unt}_\mu\cdot\textrm{det}^{-\textbf{\sf m}})\otimes (E^{\sf unt}_{\mu^{\sf v}}\cdot\textrm{det}^{-\textbf{\sf m}})=E^{\sf unt}_{\mu-{\textbf{\sf m}}}\otimes E^{\sf unt}_{\mu^{\sf v}-{\textbf{\sf m}}}$$
of $G(\C)$ of highest weight $\mu_G:=(\mu-{\textbf{\sf m}},\mu^{\sf v}-{\textbf{\sf m}})$, hence, we have
$$\Pi_\infty\cong {\rm Ind}_{B(\C)}^{G(\C)}[z_1^{\ell_1+{\textbf{\sf m}}}\bar z_1^{-\ell_1+{\textbf{\sf m}}}\otimes...\otimes z_{n}^{\ell_{n}+{\textbf{\sf m}}}\bar z_{n}^{-\ell_{n}+{\textbf{\sf m}}}],$$
where $B=TN$ is the standard Borel subgroup of $G$, and
$$\ell_j=-\mu_{n-j+1}+\frac{n+1}{2} -j.$$
In analogy to the case of unitary groups, we let $S(\Pi)$ be the set of finite places of $\K$, where $\Pi$ ramifies, together with the infinite place of $\K$.

\subsection{The abelian automorphic representation $\Pi'$}\label{sect:Pi'}
Now recall the automorphic representations $BC(\pi')$ from Thm.\ \ref{thm:base_change}. We set
$\Pi':=BC(\pi')$ and make the following additional assumption: We suppose that $\Pi'\cong \Pi'_1\boxplus ...\boxplus\Pi'_r$ is abelian (and therefore not square-integrable), i.e.,
$$\Pi'_j=BC(\chi_j)\cdot\gamma\quad\quad 1\leq j\leq n-1,$$
where $BC(\chi_j)$ is the base change of an algebraic Hecke character $\chi_j: U(1)(\Q)\backslash U(1)(\A)\ra\C^\times$. More explicitly, $BC(\chi_j)(g)=\chi_j(g/\eta(g))$ for all $g\in\A^\times_\K$. At the archimedean place of $\Q$ (resp.\ $\K$) we obtain $\chi_{j,\infty}(e^{i\theta})=e^{ik_j\theta}$, $e^{i\theta}\in U(1)$, (resp.\ $BC(\chi_j)_\infty(z)=z^{k_j} \bar z^{-k_j}$, $z\in\C^\times$) for $k_j\in\Z$. Therefore,
$$\Pi'_\infty\cong {\rm Ind}_{B'(\C)}^{G'(\C)}[z_1^{k_1+c}\bar z_1^{-k_1-c}\otimes...\otimes z_{n-1}^{k_{n-1}+c}\bar z_{n-1}^{-k_{n-1}-c}],$$
where $B'=T'N'$ is the standard Borel subgroup of $G'$ and so, since $\Pi'_\infty$ is cohomological with respect to the finite-dimensional, algebraic representation
$$E_\lambda:=E^{\sf unt}_\lambda\otimes E^{\sf unt}_{\lambda^{\sf v}}$$ of $G'(\C)$ of highest weight $\lambda_{G'}:=(\lambda,\lambda^{\sf v})$, necessarily,
$$k_j=-\lambda_{n-j}+\frac n2 -j-c.$$
Hence, we can further see that $k_1 > k_2 > ... > k_{n-1}$. In particular, this implies, using Shahidi \cite{shahidi-book} Prop.\ 7.1.3, Thm.\ 3.5.12 and Rem.\ 3.5.14, that $\Pi'$ is globally $\psi$-generic, i.e., the $\psi$-Whittaker coefficient of $\Pi'$ defines a non-vanishing intertwining $\Pi'\hra$ Ind$_{N'(\A)}^{G'(\A)}[\psi]$ (unnormalized induction). As for $\Pi$, we denote by $W(\Pi'_f)$ the corresponding Whittaker model of $\Pi'_f$ and let $S(\Pi')$ be the set of finite places of $\K$, where $\Pi'$ ramifies, together with the infinite place of $\K$.\\

We will make the following assumption, valid throughout the paper:

\begin{hyp}\label{hyp:coeff}
The highest weights $\mu$ and $\lambda$ satisfy
$$\mu_1\pm {\textbf{\sf m}}\geq -\lambda_{n-1}\geq \mu_2\pm {\textbf{\sf m}}\geq -\lambda_{n-2}\geq...\geq -\lambda_1\geq \mu_n\pm {\textbf{\sf m}}$$
\end{hyp}

Lem.\ \ref{lem:coeff} then guarantees that Hom$_{G'(\C)}(E_\mu\otimes E_\lambda,\C)\neq 0$, which will be important from Sect.\ \ref{sect:critical} on.

\subsection{An action of ${\rm Aut}(\C)$}\label{sect:autc-act}
For a moment, let $k\geq 1$ be any integer and let $E$ be an irreducible, finite-dimensional, algebraic representation of $GL_k/\K$. As a representation of the real Lie group $GL_k(\C)$, $E$ factors as $E=E_\iota\otimes E_{\bar\iota}$, for a fixed embedding $\iota:\K\hra\C$. For a given $\sigma\in$ Aut$(\C)$, we define the $GL_k(\C)$-representation
$${}^\sigma\! E:=E_{\sigma\circ\iota}\otimes E_{\overline{\sigma\circ\iota}}.$$
Hence, $^\sigma\! E$ is identical to $E$, if $\sigma\circ\iota=\iota$ and it is the representation ${}^\sigma\! E =E_{\bar\iota}\otimes E_\iota$, otherwise, i.e., if $\sigma\circ\iota=\bar\iota$. Furthermore, for a representation of $GL_k(\C)$, which is induced from the Borel subgroup $B_k=T_kN_k$ of $GL_k$, $\Delta_\infty={\rm Ind}_{B_k(\C)}^{GL_k(\C)}[\omega_1\otimes...\otimes\omega_k]$, we let
$${}^\sigma\Delta_\infty:={\rm Ind}_{B_k(\C)}^{GL_k(\C)}[{}^\sigma\!\omega_1\otimes...\otimes{}^\sigma\!\omega_k].$$
Note that if $\Delta_\infty$ is cohomological with respect to $E$, then ${}^\sigma\Delta_\infty$ is cohomological with respect to ${}^\sigma\! E$.\\
If $\Delta_f$ (resp.\ $\Delta_w$, $w\in S(\K)_f$) is an irreducible representation of $GL_k(\A_f)$ (resp.\ $GL_k(\K_w)$), then we denote by ${}^\sigma\Delta_f$ (resp.\ ${}^\sigma\Delta_w$) the representation defined by Waldspurger in \cite{waldsp}, I.1. Analogously, if $\delta_\infty$ is an irreducible representation of $H(\R)$ or $H'(\R)$, we let $^\sigma\! \delta_\infty:=\delta_\infty$, since there is only the identity embedding of $\Q$ into $\C$. For representations $\delta_f$ (resp.\ $\delta_p$, $p\in S_f$) of $H(\A_f)$ or $H'(\A_f)$ (resp.\ $H(\Q_p)$ or $H'(\Q_p)$), we use again the definition of Waldspurger, \cite{waldsp}, I.1.

If $\delta$ is any of the above representations, we denote by $\mathfrak S(\delta):=\{\sigma\in {\rm Aut}(\C)| {}^\sigma\delta\cong\delta\}$ and define the rationality field of $\delta$ to be
\begin{center}
\framebox{$\Q(\delta):=\{z\in\C| \sigma(z)=z \quad\forall \sigma\in\mathfrak S(\delta)\}.$}
\end{center}
See again Waldspurger, \cite{waldsp}, I.1. We say that a representation $\delta$ is defined over a field $F$, if there is an $F$-subspace $\delta_0$ of the representation space of $\delta$, which is invariant under the group action and such that the natural map $\delta_0\otimes_F\C\ra\delta$ is an isomorphism.

\begin{prop}\label{prop:sigmatwistabelian}
For all $\sigma\in$\emph{Aut}$(\C)$, there is an isomorphism
$${}^\sigma\Pi'\cong \boxplus_{j=1}^{n-1} {}^\sigma\!BC(\chi_j) {}^\sigma\!\!\left(\gamma\|\cdot\|^{\frac{n-2j}{2}}\right)\|\cdot\|^{\frac{2j-n}{2}}. $$
In other words, ${}^\sigma\Pi'$ is again the isobaric automorphic sum of Hecke characters. Similarly, ${}^\sigma\Pi$ is again a cuspidal automorphic representation.
\end{prop}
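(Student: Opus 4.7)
The plan is to reduce the computation of ${}^\sigma\Pi'$ to an explicit calculation at the level of parabolic induction. Writing $\tau_j := BC(\chi_j) \cdot \gamma$ for the $j$-th unitary Hecke character, the Langlands classification in the tempered generic case gives
$$\Pi' \cong {\rm Ind}_{B'(\A)}^{G'(\A)}\big[\tau_1 \otimes \cdots \otimes \tau_{n-1}\big] \cong {\rm ind}_{B'(\A)}^{G'(\A)}\Big[\tau_1\|\cdot\|^{(n-2)/2} \otimes \cdots \otimes \tau_{n-1}\|\cdot\|^{(2-n)/2}\Big],$$
where the second line is the unnormalized induction obtained by absorbing $\delta_{B'}^{1/2}$ into each slot. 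A key observation is that although $\gamma$ itself is not algebraic when $n$ is odd (its infinity type involves $t \in \tfrac12+\Z$) and the shift $\|\cdot\|^{(n-2j)/2}$ is non-algebraic when $n-2j$ is odd, the product $\gamma\|\cdot\|^{(n-2j)/2}$ \emph{is} an algebraic Hecke character of $\A_\K^\times$ in every case, with integer infinity exponents; hence so is the whole slot $BC(\chi_j) \cdot \gamma \|\cdot\|^{(n-2j)/2}$.

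Next, I would apply the functor ${}^\sigma$ to both sides of the unnormalized identity. Two inputs are needed: first, from \cite{waldsp}, I.1 and the archimedean recipe recalled in Section \ref{sect:autc-act}, the operation ${}^\sigma$ commutes with unnormalized parabolic induction factor by factor; second, on algebraic Hecke characters of $\A_\K^\times$ the action of ${}^\sigma$ is multiplicative, so
$${}^\sigma\big(BC(\chi_j) \cdot \gamma\|\cdot\|^{(n-2j)/2}\big) = {}^\sigma BC(\chi_j) \cdot {}^\sigma\!\big(\gamma \|\cdot\|^{(n-2j)/2}\big).$$
Converting back to normalized induction by extracting a factor of $\|\cdot\|^{(2j-n)/2}$ from each slot then yields the stated decomposition of ${}^\sigma\Pi'$; every factor on the right is a unitary Hecke character, which exhibits ${}^\sigma\Pi'$ as an isobaric sum of Hecke characters in the required form.

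The assertion that ${}^\sigma\Pi$ is again cuspidal automorphic is the easier half: $\Pi$ is a regular algebraic cuspidal cohomological representation of $GL_n(\A_\K)$, and the stability of this class under $\textrm{Aut}(\C)$ is standard, via Clozel's purity lemma together with the realization of $\Pi_f$ inside the cuspidal cohomology of the locally symmetric space attached to $G$ (together with Theorem \ref{thm:base_change} to see that the archimedean constraint is preserved). The main obstacle is therefore the local bookkeeping in the first step, namely confirming simultaneously that the shifted slots $\tau_j\|\cdot\|^{(n-2j)/2}$ are algebraic, that Waldspurger's ${}^\sigma$ commutes with unnormalized induction at every place, and that it is multiplicative on these algebraic characters; once these verifications are granted, the identification of ${}^\sigma\Pi'$ is a purely formal manipulation.
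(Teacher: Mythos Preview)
Your proposal is correct and follows essentially the same approach as the paper. The paper's proof is terser but rests on the identical observation that $BC(\chi_j)$ and $\gamma\|\cdot\|^{(n-2j)/2}$ are both algebraic Hecke characters (hence stable under ${}^\sigma$), and for the cuspidal part invokes Clozel's Theorem 3.13 together with the classification of cohomological representations at infinity; your unnormalized-induction bookkeeping is exactly the ``straight-forward calculation'' the paper leaves implicit.
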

\begin{proof}
The first assertion results from a straight-forward calculation, using the fact that since $BC(\chi_j)$ and $\gamma\|\cdot\|^{\frac{n-2j}{2}}$ are both algebraic Hecke characters, so are ${}^\sigma\!BC(\chi_j)$ and ${}^\sigma\!\!\left(\gamma\|\cdot\|^{\frac{n-2j}{2}}\right)$. The last assertion on $\Pi$ can be shown as follows: By \cite{clozel} Thm.\ 3.13, there is a cuspidal automorphic representation $\Xi$ of $G(\A_\K)$, cohomological with respect to ${}^\sigma\! E_\mu$ and such that $\Xi_f\cong{}^\sigma\Pi_f$. As a cohomological cuspidal automorphic representation has an essentially tempered archimedean component, it follows from the classification of cohomological representations of $G(\C)$ that necesssarily $\Xi_\infty\cong{}^\sigma\Pi_\infty$.
\end{proof}

\begin{prop}\label{prop:regalg}
The finite part $\Pi_f$ of $\Pi=BC(\pi)\|\cdot\|^{\textbf{\sf m}}$ (resp.\ $\Pi'_f$ of $\Pi'=BC(\pi')$) is defined over its rationality field $\Q(\Pi_f)$ (resp.\ $\Q(\Pi'_f)$), which is an extension of $\Q(E_\mu)$ (resp.\ of $\Q(E_\lambda)$). This structure is unique up to homotheties. Both rationality fields are number fields. As a representation of $GL_k(\K)\subset GL_k(\C)$, an irreducible, finite-dimensional, algebraic representation $E$ of $GL_k(\C)$ is defined over $\Q(E)$.
\end{prop}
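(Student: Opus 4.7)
The plan is to handle the three assertions (rationality of the finite part, the containment of rationality fields, and defining models being unique up to homothety) separately for the cuspidal case ($\Pi$) and the isobaric case ($\Pi'$), then dispatch the closing statement about algebraic representations directly.

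For $\Pi = BC(\pi)\|\cdot\|^{\textbf{\sf m}}$, which is cuspidal and regular algebraic by construction, I would invoke Clozel's theorem (\cite{clozel}, Thm.~3.13) exactly as already used in the proof of Proposition~\ref{prop:sigmatwistabelian}: for every $\sigma\in\mathrm{Aut}(\C)$, there exists a cuspidal automorphic representation $\Xi$ cohomological with respect to ${}^\sigma E_\mu$ and with $\Xi_f\cong {}^\sigma\Pi_f$. From the Waldspurger formalism (\cite{waldsp}, I.1), one obtains that the $\mathrm{Aut}(\C)$-orbit of $\Pi_f$ is finite, so $\Q(\Pi_f)$ is a number field; moreover, one gets a canonical $\Q(\Pi_f)$-rational structure on $\Pi_f$, unique up to $\Q(\Pi_f)^\times$ by Schur's lemma applied to the irreducible model at each finite place. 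To show $\Q(E_\mu)\subset \Q(\Pi_f)$, I would argue: if $\sigma$ fixes $\Pi_f$, then by Clozel's result above, $\Pi$ and $\Xi$ share the same finite part and both are cuspidal cohomological; since a cohomological cuspidal automorphic representation of $GL_n(\A_\K)$ is essentially tempered at infinity, the classification of cohomological representations of $G(\C)$ together with strong multiplicity one forces $\Xi_\infty\cong \Pi_\infty$, hence $\Pi_\infty$ is simultaneously cohomological with respect to $E_\mu$ and ${}^\sigma E_\mu$, which implies ${}^\sigma E_\mu\cong E_\mu$, i.e.\ $\sigma\in \mathfrak{S}(E_\mu)$.

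For $\Pi' = BC(\pi') = \boxplus_j BC(\chi_j)\cdot\gamma$, I would use Proposition~\ref{prop:sigmatwistabelian} directly: since the isobaric summands are algebraic Hecke characters (twists of base changes of unitary Hecke characters by $\gamma\|\cdot\|^{(n-2j)/2}$), each one is individually defined over a number field by the classical theory of algebraic Hecke characters (their $\mathrm{Aut}(\C)$-orbit being finite). The $\mathrm{Aut}(\C)$-action on $\Pi'_f$ is the termwise action, so $\Q(\Pi'_f)$ is the compositum of the rationality fields of the summands, hence again a number field. The unique rational structure on each abelian summand gives, by taking the restricted tensor product inside the parabolically induced model, a canonical $\Q(\Pi'_f)$-rational structure on $\Pi'_f$, unique up to scalar by irreducibility. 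The inclusion $\Q(E_\lambda)\subset \Q(\Pi'_f)$ comes from reading off the archimedean infinity types of the $\chi_j$ via the formula $k_j=-\lambda_{n-j}+\tfrac{n}{2}-j-c$: if $\sigma$ fixes all the $BC(\chi_j)_f$, it permutes the infinity types trivially, which pins down $\lambda$ up to the $\mathrm{Aut}(\C)$-action on $\Q(E_\lambda)$.

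For the final sentence, the irreducible algebraic representation $E$ of $GL_k(\C)$ with highest weight $(\nu,\nu^{\sf v})$ is the restriction of scalars (from $\K$ to $\Q$) of an algebraic representation of the algebraic group $GL_k/\K$; it is a tensor product of two algebraic representations of $GL_k$, one for each embedding $\K\hookrightarrow \C$, both determined by integer-valued highest weights. Each factor admits a standard $\Q$-rational model (Weyl modules are defined over $\Z$), and combining them yields a $\Q(E)$-rational model of $E$ as a representation of $GL_k(\K)\subset GL_k(\C)$; its uniqueness up to homothety is Schur.

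The main obstacle is the archimedean comparison in the cuspidal case: making the passage from ``$\Xi_f\cong {}^\sigma\Pi_f$ with $\Xi$ cohomological'' to ``$\Xi_\infty\cong {}^\sigma\Pi_\infty$'' fully rigorous requires the classification of cohomological representations of $GL_n(\C)$ and the (by now standard, via Jacquet--Shalika) strong multiplicity one input, together with the care needed to track the twist by $\|\cdot\|^{\textbf{\sf m}}$. All remaining steps are essentially bookkeeping inside the Waldspurger formalism.
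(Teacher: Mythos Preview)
Your approach is essentially the same as the paper's: Clozel's Thm.\ 3.13 for the cuspidal $\Pi$, Strong Multiplicity One to get the containment $\Q(E_\mu)\subset\Q(\Pi_f)$ (and likewise for $\Pi'$), the algebraicity of the Hecke characters $BC(\chi_j)\gamma\|\cdot\|^{(n-2j)/2}$ to control $\Q(\Pi'_f)$, and finally Clozel's Prop.\ 3.1 (your Weyl-module argument is the content of that reference) for the rational model of $E$.

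One imprecision: you assert that $\Q(\Pi'_f)$ \emph{equals} the compositum of the rationality fields of the summands. The paper is more careful and only proves the containment
\[
\bigcap_{j=1}^{n-1}\mathfrak S\!\left(BC(\chi_j)_f\gamma_f\|\cdot\|_f^{(n-2j)/2}\right)\subseteq \mathfrak S(\Pi'_f),
\]
i.e.\ $\Q(\Pi'_f)$ is a \emph{subfield} of the compositum. Your equality would require arguing that a $\sigma$ fixing $\Pi'_f$ cannot permute the isobaric summands nontrivially; this is plausible given the distinct infinity types but needs justification. Since only the containment is needed to conclude that $\Q(\Pi'_f)$ is a number field, you should weaken your claim accordingly.
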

\begin{proof}
The fields $\Q(\Pi_f)$ and $\Q(\Pi'_f)$ extend $\Q(E_\mu)$, respectively $\Q(E_\lambda)$, because of Strong Multiplicity One for isobaric automorphic representations. Here we use Prop.\ \ref{prop:sigmatwistabelian}. Moreover, as a consequence of \cite{clozel} Thm.\ 3.13, $\Q(\Pi_f)$ is a number field. (For a detailed proof, one may also have a look at \cite{grob-rag-I}, Thm.\ 8.1.) By Prop.\ \ref{prop:sigmatwistabelian}
$$\bigcap_{j=1}^{n-1} \mathfrak S\left(BC(\chi_{j})_f\gamma_f\|\cdot\|_f^{\frac{n-2j}{2}}\right) \subseteq \mathfrak S(\Pi'_f),$$
so $\Q(\Pi'_f)\subseteq \prod_{j=1}^{n-1} \Q\left(BC(\chi_{j})_f\gamma_f\|\cdot\|_f^{\frac{n-2j}{2}}\right)$. As all $BC(\chi_{j})$ and $\gamma\|\cdot\|^{\frac{n-2j}{2}}$ are algebraic Hecke characters, $\prod_{j=1}^{n-1} \Q\left(BC(\chi_{j})_f\gamma_f\|\cdot\|_f^{\frac{n-2j}{2}}\right)$ is a number field, whence so is $\Q(\Pi'_f)$. The rest follows now from Clozel \cite{clozel}, Prop.\ 3.1 and p.\ 122.
\end{proof}

\begin{rem}\label{rem:induced}  The representation $\Pi'_f$ is induced from the Hecke character
$\beta := \Pi'_1\otimes \dots \otimes \Pi'_r$ of the (Levi quotient $T'(\A_f)$ of the) Borel subgroup $B'(\A_f)$, and can thus be
identified with a space of functions on $G'(\A_f)$:
$$\Pi'_f = \{\phi:  G'(\A_f) \ra \C~|~ \phi(b'g') = \beta(b')\cdot \delta_{B'}^{\frac{1}{2}}(b')\cdot\phi(g'), ~b' \in B'(\A_f), g' \in G'(\A_f)\}.$$
Here $\delta_{B'}$ is the modulus character of $B'(\A_f)$.   Define $\gamma^+(x) = \gamma(x)\cdot \|x\|^{\frac{1}{2}}$ if $n$ is odd,
$\gamma^+(x) = 1$ if $n$ is even.   For any $n$, $(\Pi'_j)^+ = BC(\chi_j)\cdot \gamma^+$ is an algebraic Hecke character, defined over
the number field $\Q((\Pi'_j)^+)$.  On the other hand, the $j$-th component of $\beta\cdot \delta_{B'}^{\frac{1}{2}}$ equals
$BC(\chi_j)\cdot\gamma\|\cdot\|^{\frac{n-2j}{2}}$ which is $(\Pi'_j)^+$ multiplied by an integral power of the norm character.
Thus $\beta\cdot\delta_{B'}^{\frac12}$ takes values in a number field, say $\Q(\beta,\gamma^+)$.  It follows that the subspace
$$\Pi'_f[\Q(\beta,\gamma^+)] = \{\phi:  G'(\A_f) \ra \Q(\beta,\gamma^+) ~|~ \phi(b'g') = \beta(b')\cdot \delta_{B'}^{\frac{1}{2}}(b')\cdot\phi(g'), ~b' \in B'(\A_f), g' \in G'(\A_f)\}$$
is a $G'(\A_f)$-invariant $\Q(\beta,\gamma^+)$-rational structure on $\Pi'_f$,
and it is clear that the action of Aut$(\C)$ on $\Pi'_f$ can be read directly in terms of the actions on the values
of functions.
\end{rem}

Next, let ${\bf t}_{\sigma,k}$ be the diagonal matrix $diag(t_{\sigma}^{-(k-1)}, t_{\sigma}^{-(k-2)},...,1)\in GL_k(\A_f)$, as in \cite{raghuram-shahidi-imrn}, 3.2 or \cite{mahnk}, 3.3. Here, $t_\sigma$ is an element of $\widehat\O^*$, assigned to $\sigma\in$ Aut$(\C)$ by the map Aut$(\C)\ra$ Gal$(\Q(\mu_\infty)/\Q)\ra\widehat\Z^*\hra\widehat\O^*$. Let $\xi_f\in {\rm Ind}_{N_k(\A_f)}^{GL_k(\A_f)}[\phi_f]$ (unnormalized induction), $\phi$ an additive, continuous character $\K\backslash\A_\K\ra\C^\times$. Then, ${}^\sigma\!\xi_f(g_f):=\sigma(\xi({\bf t}_{\sigma,k} g_f))$ is again an element in ${\rm Ind}_{N_k(\A_f)}^{GL_k(\A_f)}[\phi_f]$. The analogous definition applies locally at a finite place $w\in S(\K)_f$.

\begin{prop}\label{lem:rational-whittaker}
The map $\xi_f\mapsto {}^\sigma\xi_f$ defines a $\sigma$-linear $G(\A_f)$-equivariant isomorphism from $W(\Pi_f)$ onto $W({}^\sigma\Pi_f)$ as well as a $\sigma$-linear $G'(\A_f)$-equivariant isomorphism from $W(\Pi'_f)$ onto $W({}^\sigma\Pi'_f)$. For any finite extension $F$ of the rationality field in question, we have
an $F$-structure on $W(\Pi_f)$ and $W(\Pi'_f)$ by taking the ${\rm Aut}(\C/F)$-invariants.
\end{prop}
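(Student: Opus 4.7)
The plan is to split the statement into two parts and treat each Whittaker model separately. The cuspidal case for $\Pi_f$ is essentially already in the literature: Raghuram--Shahidi \cite{raghuram-shahidi-imrn} prove exactly this kind of statement for cuspidal automorphic representations of $GL_n$ over a number field, and together with Proposition \ref{prop:regalg} (which provides the rationality field $\Q(\Pi_f)$ as a number field over which $\Pi_f$ admits a rational structure) their argument carries over verbatim to our imaginary quadratic $\K$. I would therefore simply cite it and concentrate on the new case $\Pi'_f$.

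For the induced representation $\Pi'_f$ I would use the presentation of Remark \ref{rem:induced}, realizing $\Pi'_f = {\rm Ind}_{B'(\A_f)}^{G'(\A_f)}[\beta \cdot \delta_{B'}^{1/2}]$ as an explicit space of $\C$-valued functions on $G'(\A_f)$, and then realize its $\psi$-Whittaker model through the convergent Jacquet integral
\[
\xi_f \ \longmapsto \ W_{\xi_f}(g) \ = \ \int_{N'(\A_f)} \xi_f(w_0\, n\, g)\,\psi(n)\,dn,
\]
with $w_0$ the long Weyl element; local convergence is standard, and uniqueness of the Whittaker model identifies $W(\Pi'_f)$ with the image. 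The key step is then to check that the twist $\xi_f \mapsto {}^\sigma\xi_f = \sigma\bigl(\xi_f({\bf t}_{\sigma,n-1}\,\cdot\,)\bigr)$, pulled through the Jacquet integral, lands in $W({}^\sigma\Pi'_f)$. Two compatibilities have to be matched: first, the Galois action on the roots of unity appearing as values of $\psi_f$ must be absorbed by the substitution $n \mapsto {\bf t}_{\sigma,n-1}\,n\,{\bf t}_{\sigma,n-1}^{-1}$ inside the integral, which is precisely the reason the cyclotomic twist ${\bf t}_{\sigma,n-1}$ is introduced (\cite{raghuram-shahidi-imrn}, \S 3.2); second, the $\sigma$-twist of the inducing datum $\beta\cdot\delta_{B'}^{1/2}$, read directly off the values of functions as in Remark \ref{rem:induced}, must coincide with the inducing datum of ${}^\sigma\Pi'_f$, which is exactly the content of Proposition \ref{prop:sigmatwistabelian}.

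The main subtlety I expect is the bookkeeping in this second matching: the half-integral powers of the norm implicit in $\gamma$ (when $n$ is odd) and in the modulus $\delta_{B'}^{1/2}$ are not Galois-stable individually, but their combined effect is, and the device of Remark \ref{rem:induced} (passing to the genuinely algebraic Hecke characters $(\Pi'_j)^+$) was arranged precisely to encode this cancellation. I would therefore lean on Remark \ref{rem:induced} and Proposition \ref{prop:sigmatwistabelian} to finish the matching rather than redoing it from scratch. The $G'(\A_f)$-equivariance and $\sigma$-linearity of the resulting map are then immediate from the definition, and bijectivity follows by applying the same construction to $\sigma^{-1}$.

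For the final assertion, by Proposition \ref{prop:regalg} the rationality fields $\Q(\Pi_f)$ and $\Q(\Pi'_f)$ are number fields, so for any finite extension $F$ of either, the group ${\rm Aut}(\C/F)$ acts on the corresponding Whittaker model through a finite quotient via the $\sigma$-linear isomorphisms just constructed. Standard Galois descent for a $\C$-vector space equipped with a continuous $\sigma$-linear ${\rm Aut}(\C/F)$-action (equivalently, faithfully flat descent from $\C$ down to $F$) then produces the claimed $F$-structure as the module of ${\rm Aut}(\C/F)$-invariants, and the tensor product with $\C$ recovers the original Whittaker model.
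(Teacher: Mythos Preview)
Your treatment of the isomorphism $W(\Pi'_f)\to W({}^\sigma\Pi'_f)$ via the Jacquet integral is more elaborate than the paper's argument but correct: the two compatibilities you isolate (absorbing the $\sigma$-twist of $\psi_f$ by the substitution $n\mapsto {\bf t}_{\sigma,n-1}\,n\,{\bf t}_{\sigma,n-1}^{-1}$, and matching the inducing data via Proposition~\ref{prop:sigmatwistabelian} and Remark~\ref{rem:induced}) are exactly the right ones. The paper, by contrast, simply observes that the proof of \cite{raghuram-shahidi-imrn}, Lemma~3.2, uses nothing specific to cuspidality beyond local irreducibility, genericity, and the theory of the essential vector, and that the last of these is supplied for the principal series $\Pi'_w$ by \cite{jac-ps-shalika-mathann}, Th\'eor\`eme~(5.1)(i). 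So for the first assertion both routes work; yours is just more hands-on.

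The gap is in your argument for the $F$-structure. The claim that ${\rm Aut}(\C/F)$ acts ``through a finite quotient'' is false: the action is genuinely $\sigma$-semilinear for every $\sigma$, and no open subgroup acts by the identity (it still scales $c\cdot\xi$ by $\sigma(c)/c$ for transcendental $c$). Moreover $\C/F$ is neither algebraic nor Galois, so a semilinear ${\rm Aut}(\C/F)$-action is \emph{not} the same datum as a faithfully-flat descent datum over $\C\otimes_F\C$, and nothing automatic tells you the invariants span. One has to exhibit enough invariants by hand, and this is precisely the role of the essential vector: normalized so that its value at a fixed rational point is $1$, it is visibly ${\rm Aut}(\C/F)$-fixed (compare the computation in Lemma~\ref{lem:sigma-xi}), and its $G'(\A_f)$-translates over $F$ generate the whole space by irreducibility. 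The paper's invocation of \cite{jac-ps-shalika-mathann}, (5.1)(i), is exactly this step for $\Pi'$, and you should add it rather than appeal to generic descent.
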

\begin{proof}
For the cuspidal representation $\Pi$, this is \cite{raghuram-shahidi-imrn}, Lem.\ 3.2. The same proof goes through for $\Pi'$, using Jacquet--Piatestski-Shapiro--Shalika's theorem (5.1.(i)) in \cite{jac-ps-shalika-mathann}.
\end{proof}

\subsection{Rational structures on cohomological Harish-Chandra modules} \label{rationalgk}
The universal enveloping algebra $\mathcal U(\g')$ has a canonical
$\Q$-rational (vector space-)structure determined by the $\Q$-reductive group $R_{\K/\Q}(G')$.  Moreover, $K'$ has a compatible
$\Q$-rational structure, and determines a $\Q$-rational Cartan decomposition $\g' = \k'\oplus \p'$.
It thus makes sense to define the field of rationality
of the $(\mathcal U(\g'),K')$-module $\Pi'_\infty$ as the field $\Q(\Pi'_{\infty})$ fixed by the action of
the subgroup of Aut$(\C)$ that fixes $\Pi'_\infty$ up to isomorphism.  Now $\Pi'_{\infty}$ is the $\mathcal U(\g')$-module
induced from a $\Q(E_\lambda)$-rational character of the Lie algebra of the Borel subgroup $B'(\C)$.  In fact, it is given by
non-normalized induction from an algebraic
character of the torus $T'(\C)$, by the arguments already seen in \ref{rem:induced}. It follows that the subspace of $\Pi'_{\infty}$ isotypic
for the representation $E_\lambda$ of $K'$ is defined over the number field $\Q(E_\lambda)$.  Using the $\Q$-rational structure on
$\p'$, we find that the relative Lie algebra cohomology complex
$C^{\bullet}(\g',K', \Pi'_\infty\otimes E_\lambda) = {\rm Hom}_{K'}(\Lambda^{\bullet}(\p'), \Pi'_\infty\otimes E_\lambda)$ has a natural
$\Q(E_{\lambda})$-rational (vector space-)structure. Hence, the same holds true for $(\g',K')$-cohomology.

Although the cuspidal automorphic representation $\Pi$ is not globally induced, it follows from
the classification of cohomological representations that the archimedean component
$\Pi_{\infty}$ is again isomorphic to the non-normalized induction to $G(\C)$ of an algebraic character of the maximal torus $T(\C)$, see Sect.\ \ref{sect:Pi}.  We find again that $C^{\bullet}(\g,K, \Pi_\infty\otimes E_\mu)= {\rm Hom}_{K}(\Lambda^{\bullet}(\p), \Pi_\infty\otimes E_\mu)$ -- and so also the space of $(\g,K)$-cohomology -- has a natural $\Q(E_{\mu})$-rational (vector space-)structure.

In subsequent chapters we will be working with the Whittaker models $W(\Pi_{\infty})$ and $W(\Pi'_{\infty})$.  It follows
from the above discussion that their subspaces of $K$ (resp. $K'$)-finite vectors have rational models over the appropriate fields.
Choices of complex isomorphisms $i_{\infty}: \Pi_{\infty} \overset{\sim}\ra W(\Pi_{\infty})$ and $i'_{\infty}: \Pi'_{\infty} \overset{\sim}\ra W(\Pi'_{\infty})$ identify
the rational models of the two sides, up to complex factors of proportionality that depend on the choices.  In the proof of Proposition \ref{WhitEis2}, the
factor of proportionality is denoted $\Omega(\Pi'_{\infty})$; it is attached to the explicit choice of  $i'_{\infty}$ defined by the Whittaker integral.
It is tempting to use the Whittaker integral to define $i_{\infty}$ as well.  This would provide natural normalizations for all the archimedean constants
that appear in our final formulas.  However, other normalizations  -- for example, normalizations in terms of the archimedean local zeta integrals --
may turn out to be more natural.

\section{Whittaker periods for the general linear group}
\subsection{Automorphic cohomology of locally symmetric spaces}\label{sect:autcoh}
We define
$$S_n:= G(\K)\backslash G(\A_\K) / K\quad\quad\textrm{and}\quad\quad S_{n-1}:= G'(\K)\backslash G'(\A_\K)/ K'$$
Moreover, we let
$$\tilde S_{n-1}:= G'(\K)\backslash G'(\A_\K)/ U(n-1)$$
Observe that $K\cong U(n)\C^\times=U(n)\R_+$ and similarly $K'\cong U(n-1)\C^\times=U(n-1)\R_+$, where $\R_+$ denotes the topological connected component of the identity of the split component of the center of $G(\C)$ and $G'(\C)$. In this way, the group $U(n-1)$ in the definition of $\tilde S_{n-1}$ is $K'$ without the contribution of the center. Consider the map $\iota: G'\hra G$, realizing an element $g'\in G'$ as block diagonal matrix $diag(g',1)\in G$. If $K_f$ is an open compact subgroup of $G(\A_f)$, $K'_f:=\iota^{-1}(K_f)$ is open compact in $G'(\A_f)$ and we define
$$S_n(K_f):= G(\K)\backslash G(\A_\K) /K K_f\quad\quad\textrm{and}\quad\quad S_{n-1}(K'_f):= G'(\K)\backslash G'(\A_\K)/ K' K'_f$$
and $\tilde S_{n-1}(K'_f):=G'(\K)\backslash G'(\A_\K)/ U(n-1) K'_f$.
Our finite-dimensional modules $E_\mu$ and $E_\lambda$ naturally define sheaves $\E_\mu$ on $S_n$ and $\E_\lambda$ on $S_{n-1}$, respectively, and we let
$$H^q(S_n,\E_\mu)\quad\quad\textrm{and}\quad\quad H^q(S_{n-1},\E_\lambda)$$
be the corresponding cohomology spaces. They carry a $G(\A_f)$-, resp.\ a $G'(\A_f)$-module structure. With respect to this module-structure, there are isomorphisms
$$H^q(S_n,\E_\mu)\cong H^q(\g,K,\mathcal A_{\J}(G)\otimes E_\mu)$$
and
$$H^q(S_{n-1},\E_\lambda)\cong H^q(\g',K',\mathcal A_{\J'}(G')\otimes E_\lambda).$$
This needs some explanation: First, the space $\mathcal A_{\J}(G)$ (resp.\ $\mathcal A_{\J'}(G')$) denotes the space of all automorphic forms of $G(\A_\K)$ (resp.\ $G'(\A_\K)$) which are annihilated by some power of $\J$ (resp.\ $\J'$). Here, $\J$ (resp.\ $\J'$) is the ideal of the center of the universal enveloping algebra $\mathcal U(\g_\C)$ (resp.\ $\mathcal U(\g'_\C)$), which is annihilated by the contragredient representation of $E_\mu$ (resp.\ $E_\lambda$). Now, since $U(n)\C^\times=U(n)\R_+$ and $U(n-1)\C^\times=U(n-1)\R_+$, the assertion follows from Franke, \cite{franke}, Thm.\ 18. \\

From this we obtain decompositions
$$H^q(S_n,\E_\mu)\cong H^q_{cusp}(S_n,\E_\mu)\oplus H^q_{Eis}(S_n,\E_\mu)$$
and
$$H^q(S_{n-1},\E_\lambda)\cong H^q_{cusp}(S_{n-1},\E_\lambda)\oplus H^q_{Eis}(S_{n-1},\E_\lambda)$$
as follows:
The space $\mathcal A_{\J}(G)$ may be decomposed along the associate classes $\{P\}$ of parabolic $\K$-subgroups $P=LN$ of $G$ and the cuspidal supports $\varphi_P$, i.e., associate classes of irreducible cuspidal automorphic representations $\tau=\tilde\tau e^{\<d\Lambda,H_P(.)\>}$ of $L(\A_\K)$, with $\tilde\tau$ a unitary cuspidal automorphic representation and $\Lambda:\R_+^{{\sf rank}(P)}\rightarrow\C^\times$ a Lie group character of the split component $\R_+^{{\sf rank}(P)}$ of $L(\C)$. Indeed, if $\mathcal A_{\J,\{P\}}(G)$ denotes the space of all automorphic forms in $\mathcal A_{\J}(G)$, which are negligible along every parabolic subgroup $Q\notin\{P\}$, and, moreover, if $\mathcal A_{\J,\{P\},\varphi_P}(G)$ denotes the subspace of $\mathcal A_{\J,\{P\}}(G)$, which is generated as a $G(\A)$-module by all possible holomorphic values or residues of all Eisenstein series attached to $\tilde\tau$, evaluated at the point $d\Lambda$, together with all their derivatives, then
$$\mathcal A_{\J}(G) = \bigoplus_{\{P\}}\mathcal A_{\J,\{P\}}(G) =\bigoplus_{\{P\},\varphi_P}\mathcal A_{\J,\{P\},\varphi_P}(G).$$
For a detailed description of this decomposition, we refer the reader to the original paper, Franke-Schwermer \cite{schwfr}, 1.1--1.4.
If we set
$$H^q_{cusp}(S_n,\E_\mu):=H^q(\g,K,\mathcal A_{\J,\{G\}}(G)\otimes E_\mu)$$
$$H^q_{Eis}(S_{n},\E_\mu):=\bigoplus_{\{P\}\neq\{G\}}H^q(\g',K',\mathcal A_{\J,\{P\}}(G)\otimes E_\mu)$$
we obtain the above decomposition of cohomology for $G$. We remark that because $\mathcal A_{\J,\{G\}}(G)$ consists precisely of all cuspidal automorphic forms in $\mathcal A_{\J}(G)$, $H^q_{cusp}(S_n,\E_\mu)$ is called the cuspidal cohomology of $G$ (with respect to $E_\mu$), and since $\mathcal A_{\J,\{P\}}(G)$ is defined by means of Eisenstein series, supported in $\{P\}$, $H^q_{Eis}(S_{n},\E_\mu)$ is called the Eisenstein cohomology of $G$ (with respect to $E_\mu$). Clearly, putting a prime everywhere, gives the analogous result for the cohomology of $G'$.\\

Let $H^q_c(S_n,\E_\mu)$ (resp.\ $H^q_c(\tilde S_{n-1},\E_\lambda)$) be the cohomology with compact support. As cusp forms are rapidly decreasing, one has $H^q_{cusp}(S_n,\E_\mu)\subseteq H^q_c(S_n,\E_\mu)$. Put $b_k:=\frac{k(k-1)}{2}$. Then $b_k$ is the smallest degree in which a cohomological generic automorphic representation of $GL_k(\A_\K)$ has non-zero cohomology. Note that this degree is independent of the given representation, as well as of the finite-dimensional coefficient module. Furthermore,
$$H^{b_{n}}(\g,K,W(\Pi_\infty)\otimes E_\mu)\cong \C$$
and also
$$H^{b_{n-1}}(\g',K',W(\Pi'_\infty)\otimes E_\lambda)\cong \C.$$

\subsection{A diagram}
Similar to the ideas in Mahnkopf \cite{mahnk}, Raghuram \cite{raghuram-imrn}, \cite{ragh-gln} and Grobner-Raghuram \cite{grob-ragh}, we are going to consider the following diagram:

$$
\xymatrix{
H^{b_n}_c(S_n,\E_\mu)\times H^{b_{n-1}}(S_{n-1},\E_\lambda)\ar[r]^{\iota\times p} & H^{b_n}_c(\tilde S_{n-1},\E_\mu)\times H^{b_{n-1}}(\tilde S_{n-1},\E_\lambda)\ar[d]^{\wedge}\\
H^{b_n}_{cusp}(S_n,\E_\mu)\times H^{b_{n-1}}_{Eis}(S_{n-1},\E_\lambda)\ar@{^{(}->}[u] & H^{b_n+b_{n-1}}_c(\tilde S_{n-1},\E_\mu\otimes\E_\lambda)\ar[d]^{\mathcal T^*}\\
H^{b_{n}}(\g,K,\Pi\otimes E_\mu)\times H^{b_{n-1}}(\g',K',\Pi'\otimes E_\lambda) \ar[u]^{\Psi=\Psi^{\sf cusp}_{\Pi}\times\Psi^{\sf Eis}_{\Pi'}}& H^{b_n+b_{n-1}}_c(\tilde S_{n-1},\C)\ar[d]^{\int}\\
W(\Pi_f)\times W(\Pi'_f)\ar[u]^{\Theta_0=\Theta^{\sf cusp}_0\times\Theta^{\sf Eis}_0}\ar[r]^{\rm Dia} & \C}
$$
We are going to define the various maps, which appear in this diagram, in the next sections. Observe that ``Dia'' denotes the composition of all these maps.

\subsection{The map $\Theta_0$}\label{sect:Theta}
This section is in analogy with Mahnkopf \cite{mahnk}, Raghuram-Shahidi, \cite{raghuram-shahidi-imrn}, Raghuram \cite{raghuram-imrn}, \cite{ragh-gln} and Grobner-Raghuram \cite{grob-ragh}.

As a first step, we choose and fix generators of the one-dimensional spaces $H^{b_{n}}(\g,K,W(\Pi_\infty)\otimes E_\mu)$ and $H^{b_{n-1}}(\g',K',W(\Pi'_\infty)\otimes E_\lambda)$. They are of the form
$$
[\Pi_\infty]:=\sum_{\underline i=(i_1,...,i_{b_n})}\sum_{\alpha=1}^{\dim E_\mu} X^*_{\underline i}\otimes\xi_{\Pi_\infty,\underline i, \alpha}\otimes e_\alpha,
$$
and
$$
[\Pi'_\infty]:=\sum_{\underline j=(j_1,...,j_{b_{n-1}})}\sum_{\beta=1}^{\dim E_\lambda} X'^*_{\underline j}\otimes\xi_{\Pi'_\infty,\underline j, \beta}\otimes e'_\beta,
$$
where the following data has been fixed:
\begin{enumerate}
\item A basis $\{X_j\}$ of $\g/\k$, which fixes the dual-basis $\{X_j^*\}$ for $\left(\g/\k\right)^*$. By our concrete choice of $K$, $\k$ is defined over $\Q$, whence we may assume that $\{X_j\}$ is a $\Q$-basis.
For $\underline i=(i_1,...,i_{b_n})$, let $X^*_{\underline i}=X^*_{i_1}\wedge...\wedge X^*_{i_{b_n}}\in \bigwedge^{b_n} \left(\g/\k\right)^*.$
\item Elements $e_1,...,e_{\dim E_\mu}$, which form a $\Q(E_\mu)$-basis of $E_\mu$.
\item To each $\underline i$ and $\alpha$, $\xi_{\Pi_\infty,\underline i, \alpha}\in W(\Pi_\infty)$.
\end{enumerate}
and
\begin{enumerate}
\item A basis $\{X'_j\}$ of $\g'/\k'$, which fixes the dual-basis $\{X_j'^*\}$ for $\left(\g'/\k'\right)^*$. Similar to the case above, by our concrete choice of $K'$, $\k'$ is defined over $\Q$, whence we may assume that $\{X'_j\}$ is a $\Q$-basis.
For $\underline j=(j_1,...,j_{b_{n-1}})$, let $X'^*_{\underline j}=X'^*_{j_1}\wedge...\wedge X'^*_{j_{b_{n-1}}}\in \bigwedge^{b_{n-1}} \left(\g'/\k'\right)^*.$
\item Elements $e'_1,...,e'_{\dim E_\lambda}$, which form a $\Q(E_\lambda)$-basis of $E_\lambda$.
\item To each $\underline j$ and $\beta$, $\xi_{\Pi'_\infty,\underline j, \beta}\in W(\Pi'_\infty)$.
\end{enumerate}

We may assume that the bases are compatible in the way, that $\{X_j\}$ extends $\{X'_j\}$ (along the embedding $\g'/\k'\hra\g/\k$ defined by $\iota$) and that the non-compact part of the center of $\g'$ is spanned by $X_r$, $r=\dim_\R\tilde S_{n-1}$.
The choice of the generators $[\Pi_\infty]$ and $[\Pi'_\infty]$ fixes generators $[{}^\sigma\Pi_\infty]$ and $[{}^\sigma\Pi'_\infty]$ of the one-dimensional spaces $H^{b_{n}}(\g',K', W({}^\sigma\Pi_\infty) \otimes {}^\sigma\!E_\mu)$ and $H^{b_{n-1}}(\g',K', W({}^\sigma\Pi'_\infty) \otimes {}^\sigma\!E_\lambda)$ for all $\sigma\in$ Aut$(\C)$ as well as isomorphisms $\Theta^{\sf cusp}$ and $\Theta^{\sf Eis}$:
\begin{eqnarray*}
 W(\Pi_f) &\ira & W(\Pi_f) \otimes H^{b_n}(\g,K, W(\Pi_\infty) \otimes E_\mu)\\
&\ira & H^{b_n}(\g,K, W(\Pi) \otimes E_\mu)\\
&\ira & H^{b_n}(\g,K, \Pi \otimes E_\mu),
\end{eqnarray*}
and
\begin{eqnarray*}
W(\Pi'_f)&\ira & W(\Pi'_f)\otimes H^{b_{n-1}}(\g',K',W(\Pi'_\infty)\otimes E_\lambda) \\
&\ira & H^{b_{n-1}}(\g',K',W(\Pi')\otimes E_\lambda)\\
&\ira & H^{b_{n-1}}(\g',K',\Pi'\otimes E_\lambda),
\end{eqnarray*}
where the last map in the respective diagram denotes the inverse of the corresponding Fourier coefficient. Next, recall from Prop.\ \ref{prop:regalg} that the $G(\A_f)$-module $H^{b_n}(\g,K, \Pi \otimes E_\mu)$ is defined over $\Q(\Pi_f)$ and that the $G'(\A_f)$-module $H^{b_{n-1}}(\g',K',\Pi'\otimes E_\lambda)$ is defined over $\Q(\Pi'_f)$. Both structures are unique up to multiplication by non-zero complex numbers. This leads us to the following

\begin{prop}[The Whittaker-periods]
\label{defprop}
There are non--zero complex numbers $p(\Pi)=p(\Pi_f,[\Pi_\infty])$ and $p(\Pi')=p(\Pi'_f,[\Pi'_\infty])$, such that the normalized maps
\begin{equation}\label{eq:Theta_0}
\Theta^{\sf cusp}_0:=p(\Pi)^{-1}\cdot\Theta^{\sf cusp}\quad\quad\textrm{and }\quad\quad \Theta^{\sf Eis}_0:=p(\Pi')^{-1}\cdot\Theta^{\sf Eis}
\end{equation}
are ${\rm Aut}({\mathbb C})$-equivariant, i.e.,
$$
\xymatrix{
W(\Pi_f) \ar[rrrr]^{\Theta^{\sf cusp}_0}\ar[d] & & & &
H^{b_n}(\g,K, \Pi \otimes E_\mu)
\ar[d]\\
W({}^\sigma\Pi_f)
\ar[rrrr]^{\Theta^{\sf cusp}_0}
& & & &
H^{b_n}(\g,K, {}^\sigma\Pi \otimes {}^\sigma\!E_\mu)
}
$$
and
$$
\xymatrix{
W(\Pi'_f) \ar[rrrr]^{\Theta^{\sf Eis}_0}\ar[d] & & & &
H^{b_{n-1}}(\g',K', \Pi' \otimes E_\lambda)
\ar[d]\\
W({}^\sigma\Pi'_f)
\ar[rrrr]^{\Theta^{\sf Eis}_0}
& & & &
H^{b_{n-1}}(\g',K', {}^\sigma\Pi' \otimes {}^\sigma\!E_\lambda)
}
$$
commute. The complex number $p(\Pi)$ (resp.\ $p(\Pi')$) is well-defined only up to multiplication by non-zero elements of the number field $\Q(\Pi_f)$ (resp.\ $\Q(\Pi'_f)$).
\end{prop}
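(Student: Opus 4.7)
The proof is a Schur's-lemma argument, comparing two natural $\Q(\Pi_f)$-rational (resp.\ $\Q(\Pi'_f)$-rational) structures on the same irreducible admissible $G(\A_f)$-module (resp.\ $G'(\A_f)$-module). This is the template used in the cuspidal cases of \cite{mahnk, raghuram-shahidi-imrn, raghuram-imrn, ragh-gln, grob-ragh}; the only genuine novelty concerns the Eisenstein representation $\Pi'$.

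First I record the two rational structures on $\Pi_f$. On the Whittaker side, Prop.\ \ref{lem:rational-whittaker} exhibits $W(\Pi_f)$ as defined over $\Q(\Pi_f)$ via the Waldspurger-type twist $\xi_f \mapsto {}^\sigma\xi_f$. On the cohomological side, the identification $H^{b_n}(\g,K,\Pi\otimes E_\mu) \hookrightarrow H^{b_n}_{cusp}(S_n,\E_\mu)$ and Prop.\ \ref{prop:regalg} show that the $\Pi_f$-isotypic summand inherits a $\Q(\Pi_f)$-rational structure from sheaf cohomology, unique up to homotheties. Now $\Theta^{\sf cusp}$, built out of the fixed generator $[\Pi_\infty]$ of the one-dimensional space $H^{b_n}(\g,K,W(\Pi_\infty)\otimes E_\mu)$, is a complex-linear $G(\A_f)$-intertwiner between these two realizations of the irreducible module $\Pi_f$. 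Since the $\C$-vector space of such intertwiners is one-dimensional by Schur, the image $\Theta^{\sf cusp}(W(\Pi_f)[\Q(\Pi_f)])$ is a second $\Q(\Pi_f)$-rational structure on $H^{b_n}(\g,K,\Pi\otimes E_\mu)$, and hence differs from the cohomological one by multiplication by a scalar $p(\Pi)\in \C^\times$. By the uniqueness-up-to-homotheties statement in Prop.\ \ref{prop:regalg} together with the freedom to rescale $[\Pi_\infty]$, this $p(\Pi)$ is well-defined modulo $\Q(\Pi_f)^\times$. By construction, $\Theta^{\sf cusp}_0 := p(\Pi)^{-1}\Theta^{\sf cusp}$ sends one rational structure to the other, which is precisely the asserted Aut$(\C)$-equivariance of the first diagram.

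For $\Pi'$ the same template applies, and the only real obstacle is producing a $\Q(\Pi'_f)$-rational $G'(\A_f)$-submodule of $H^{b_{n-1}}_{Eis}(S_{n-1},\E_\lambda)$ isomorphic to $\Pi'_f$. Here I would invoke Remark \ref{rem:induced}: since $\Pi'_f$ is non-normalized induced from the algebraic Hecke character $\beta\cdot\delta_{B'}^{1/2}$ of $B'(\A_f)$, which takes values in the number field $\Q(\beta,\gamma^+)$, the explicit functional model $\Pi'_f[\Q(\beta,\gamma^+)]$ gives a canonical $G'(\A_f)$-stable $\Q(\Pi'_f)$-rational structure on $\Pi'_f$. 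Franke's decomposition of $\mathcal{A}_{\J',\{B'\}}(G')$ then transports this rational structure to the $\Pi'_f$-isotypic summand of $H^{b_{n-1}}_{Eis}(S_{n-1},\E_\lambda)$, using that the inducing characters $k_1 > k_2 > \dots > k_{n-1}$ are regular so that $\Pi'_f$ is irreducible (and appears with multiplicity one). With this rational structure in hand, $\Theta^{\sf Eis}$ becomes a $\C$-linear $G'(\A_f)$-intertwiner between two rational models of the same irreducible module, and Schur's lemma produces $p(\Pi') \in \C^\times/\Q(\Pi'_f)^\times$ exactly as in the cuspidal case. The second diagram then commutes by the very definition of $p(\Pi')$.
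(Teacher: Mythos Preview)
Your argument is correct and is precisely the Schur's-lemma template the paper invokes by citing Raghuram--Shahidi's Definition/Proposition 3.3 (for $\Pi$) and observing that the same proof carries over verbatim for $\Pi'$ once one has Prop.\ \ref{lem:rational-whittaker} and JPSS Thm.\ 5.1(i). One small point: for the Eisenstein case you bring in more than is needed---the rational structure on $H^{b_{n-1}}(\g',K',\Pi'\otimes E_\lambda)\cong \Pi'_f$ is already supplied by Prop.\ \ref{prop:regalg} (uniqueness up to homotheties), and there is no need at this stage to invoke Franke's decomposition or to embed into $H^{b_{n-1}}_{Eis}(S_{n-1},\E_\lambda)$; that embedding is the content of the \emph{next} map $\Psi^{\sf Eis}_{\Pi'}$ and plays no role in defining $p(\Pi')$ itself.
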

\begin{proof}
For $\Pi$ this is shown in \cite{raghuram-shahidi-imrn}, Definition/Proposition 3.3. In order to obtain the result for $\Pi'$, we observe that the proof of \cite{raghuram-shahidi-imrn}, Definition/Proposition 3.3, resp.\ \cite{grob-ragh} Definition/Proposition 4.2.1 goes over word for word, keeping in mind our Prop.\ \ref{lem:rational-whittaker} and Jacquet--Piatestski-Shapiro's Thm.\ (5.1.(i)) in \cite{jac-ps-shalika-mathann}.
\end{proof}

Finally, we set
$$\Theta_0:=\Theta^{\sf cusp}_0\times \Theta^{\sf Eis}_0.$$

\subsection{The map $\Psi=\Psi^{\sf cusp}_{\Pi}\times\Psi^{\sf Eis}_{\Pi'}$}\label{sect:cuspEisinj}
It is well-known and follows from our r\'esum\'e in section \ref{sect:autcoh} together with Multiplicity One that
$$H^{b_n}(\g,K,\Pi\otimes E_\mu) = H^{b_n}(\g,K,\mathcal A_{\J,\{G\},\varphi_\Pi}\otimes E_\mu),$$
where $\varphi_\Pi$ is a singleton, represented by the cuspidal automorphic representation $\Pi$. Hence, there is a natural embedding
$$\Psi^{\sf cusp}_{\Pi}: H^{b_n}(\g,K,\Pi\otimes E_\mu)\hra H^{b_n}_{cusp}(S_n,\E_\mu).$$
It is the purpose of this section to construct an embedding
$$\Psi^{\sf Eis}_{\Pi'}: H^{b_{n-1}}(\g',K',\Pi'\otimes E_\lambda)\hra H^{b_{n-1}}_{Eis}(S_{n-1},\E_\lambda),$$
as well. This is more delicate. First, as a short remark, let us point out that -- in contrast to the case of cuspidal cohomology -- this is also a question of degrees of cohomology: For a {\it non-trivial} $G'(\A_f)$-morphism $H^{b_{n-1}}(\g',K',\Pi'\otimes E_\lambda)\ra H^{b_{n-1}}_{Eis}(S_{n-1},\E_\lambda)$ to exist, it is necessary that
$$q_{\textbf{\sf max}}:=\min_{\substack{P'= \textrm{ \emph{a max.}}\\ \emph{parabolic}/\K}}\left(\tfrac12 \dim_\R N_{P'}(\C)\right)\leq b_{n-1}.$$
This follows from Grobner \cite{grobner-EisRes}, Thm.\ 18, since $\Pi'$ is not square-integrable. Since for any maximal parabolic subgroup $P'$ of $G'$, the unipotent radical $N_{P'}$ is of dimension $\dim_\C N_{P'}(\C)=m_1\cdot m_2$, for some non-trivial partition $m_1+m_2=n-1$, we have without loss of generality $m_1\leq \tfrac{n-1}{2}$ and $m_2\leq n-2$. Hence,
$$q_{\textbf{\sf max}}=\tfrac12 \dim_\R N_{P'}(\C)= m_1\cdot m_2\leq \tfrac{(n-1)(n-2)}{2}=b_{n-1},$$
and therefore the above condition is always satisfied. We now construct such a non-trivial map and show that is an injection.\\\\

Since $b_{n-1}$ is the minimal degree, in which $\Pi'_\infty\cong {\rm Ind}_{B'(\C)}^{G'(\C)}[z_1^{k_1+c}\bar z_1^{-k_1-c}\otimes...\otimes z_{n-1}^{k_{n-1}+c}\bar z_{n-1}^{-k_{n-1}-c}]$ has non-vanishing $(\g',K')$-cohomology with respect to $E_\lambda$, there is an isomorphism of one-dimensional $\C$-vector-spaces:
$$H^{b_{n-1}}(\g',K',{\rm Ind}_{B'(\C)}^{G'(\C)}[z_1^{k_1+c}\bar z_1^{-k_1-c}\otimes...\otimes z_{n-1}^{k_{n-1}+c}\bar z_{n-1}^{-k_{n-1}-c}]\otimes E_\lambda)$$
$$\ira H^{b_{n-1}}(\g',K',{\rm Ind}_{B'(\C)}^{G'(\C)}[z_1^{k_1+c}\bar z_1^{-k_1-c}\otimes...\otimes z_{n-1}^{k_{n-1}+c}\bar z_{n-1}^{-k_{n-1}-c}\otimes S(\check\a^{G'}_{B',\C})]\otimes E_\lambda),$$
where $S(\check\a^{G'}_{B',\C})$ is the symmetric algebra of the orthogonal complement $\check\a^{G'}_{B',\C}$ of $\check\a_{G',\C}=X^*(G')\otimes_\Z\C$ in $\check\a_{B',\C}=X^*(T')\otimes_\Z\C$. Hence, if we write $\tilde\tau:=BC(\chi_1)\gamma\otimes...\otimes BC(\chi_{n-1})\gamma$, then
$$H^{b_{n-1}}(\g',K',\Pi'\otimes E_\lambda)\cong H^{b_{n-1}}(\g',K',{\rm Ind}_{B'(\A_\K)}^{G'(\A_\K)}[\tilde\tau\otimes S(\check\a^{G'}_{B',\C})]\otimes E_\lambda).$$
The algebra $\a^{G'}_{B',\C}$ operates trivially on $\tilde\tau$. Hence, one may check that $(B',\tilde\tau,0,0)$ is one of the quadruples, constructed in Grobner \cite{grobner-EisRes}, 3.3. Let $\varphi_{B'}$ be the associate class of unitary cuspidal automorphic representations of $T'(\A_\K)$, represented by $\tilde\tau$, so the summand $\mathcal A_{\J',\{B'\},\varphi_{B'}}(G')$ of the space of automorphic forms $\mathcal A_{\J'}(G')$, cf.\ Sect.\ \ref{sect:autcoh}, is well-defined. Interpreting the elements of the symmetric algebra as differential operators $\frac{\partial^m}{\partial\Lambda^m}$, we obtain an intertwining operator
$$Eis_{\Pi'}:{\rm Ind}_{B'(\A_\K)}^{G'(\A_\K)}[\tilde\tau\otimes S(\check\a^{G'}_{B',\C})]\ra \mathcal A_{\J',\{B'\},\varphi_{B'}}(G')$$
$$f\otimes \frac{\partial^m}{\partial\Lambda^m}\mapsto \frac{\partial^m}{\partial\Lambda^m}\left(E(f,\Lambda)\right)|_{\Lambda=0},$$
where $E(f,\Lambda)$ is the Eisenstein series attached to a $K'$-finite section $f\in {\rm Ind}_{B'(\A_\K)}^{G'(\A_\K)}[\tilde\tau]$ at $\Lambda\in\a^{G'}_{B',\C}$. Observe that this intertwining operator is well-defined, since all Eisenstein series are holomorphic at $\Lambda=0$.

\begin{prop}
The $G'(\A_f)$-homomorphism
$$\Psi_{\Pi'}: H^{b_{n-1}}(\g',K',{\rm Ind}_{B'(\A_\K)}^{G'(\A_\K)}[\tilde\tau\otimes S(\check\a^{G'}_{B',\C})]\otimes E_\lambda) \ra H^{b_{n-1}}(\g',K',\mathcal A_{\J',\{B'\},\varphi_{B'}}(G')\otimes E_\lambda)$$
induced from $Eis_{\Pi'}$ is an injection.
\end{prop}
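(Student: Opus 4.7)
The strategy is to first show that $Eis_{\Pi'}$ is injective as a map of $(\g',K',G'(\A_f))$-modules and then descend to $(\g',K')$-cohomology.

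First, the strict inequalities $k_1 > k_2 > \cdots > k_{n-1}$ established in Section \ref{sect:Pi'} place the cuspidal parameter $\tilde\tau$ in general position, which implies that the unnormalized induced representation ${\rm Ind}_{B'(\A_\K)}^{G'(\A_\K)}[\tilde\tau]$ is irreducible at $\Lambda = 0$. Moreover, the same section shows that $\Pi'$ is globally $\psi$-generic. Combining this with Shahidi's formula, the $\psi$-Whittaker coefficient of $E(f,\Lambda)$ at $\Lambda = 0$ is a finite product of non-vanishing local Shahidi coefficients and regular values of completed standard $L$-functions, hence $E(f,0) \neq 0$ for a suitable $K'$-finite section $f$. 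Irreducibility then forces the intertwiner $f \mapsto E(f,0)$ to be injective on ${\rm Ind}_{B'(\A_\K)}^{G'(\A_\K)}[\tilde\tau]$.

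Second, to extend to the full source involving the symmetric algebra $S(\check\a^{G'}_{B',\C})$, I would invoke Franke's structural theorem (\cite{franke}, Thm.\ 14; cf.\ \cite{schwfr}, 1.1--1.4): for a regular cuspidal datum $\varphi_{B'}$, the Taylor expansion map $Eis_{\Pi'}$ realizes an isomorphism of $(\g',K',G'(\A_f))$-modules
$$
{\rm Ind}_{B'(\A_\K)}^{G'(\A_\K)}[\tilde\tau \otimes S(\check\a^{G'}_{B',\C})] \ \ira\ \mathcal A_{\J',\{B'\},\varphi_{B'}}(G').
$$
Applying the functor $H^{b_{n-1}}(\g',K', - \otimes E_\lambda)$ to this isomorphism yields an isomorphism of cohomology spaces, and in particular $\Psi_{\Pi'}$ is injective (indeed, an isomorphism).

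The main obstacle lies beyond the degree-zero graded piece of the symmetric algebra: the higher Taylor coefficients $\frac{\partial^m}{\partial\Lambda^m} E(f,\Lambda)|_{\Lambda = 0}$ could a priori satisfy non-trivial relations coming from the functional equations $E(f,\Lambda) = E(M(w,\Lambda)f, w\Lambda)$. The regularity of $\tilde\tau$ is precisely what forces the operators $M(w,0)$ to be holomorphic non-zero scalar intertwiners which are then absorbed into the symmetric-algebra structure on the source via Franke's construction, thereby ensuring the Taylor coefficients remain linearly independent. Should Franke's full isomorphism be more than strictly necessary for our application, one could alternatively verify injectivity one $\Pi'_f$-isotypic component at a time, using the graded $S(\check\a^{G'}_{B',\C})$-module structure of each such component to reduce to the already-established injectivity at the degree-zero level.
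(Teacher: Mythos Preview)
Your argument is essentially correct but takes a substantially longer route than the paper, and one citation is imprecise.

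The paper's proof is a single sentence: since all the Eisenstein series $E(f,\Lambda)$ are holomorphic at $\Lambda = 0$ (this was noted just before the proposition), injectivity of $\Psi_{\Pi'}$ follows directly from Schwermer \cite{schwLNM}, Satz~4.11, or Li--Schwermer \cite{lischw}, Thm.~3.3, which assert precisely that the Eisenstein map on cohomology is injective whenever the relevant Eisenstein series are holomorphic at the evaluation point.

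Your Step~1 (injectivity on the degree-zero piece via irreducibility plus non-vanishing of the Whittaker coefficient) is valid but heavier than needed: it suffices to observe that the $\tilde\tau$-component of the constant term of $E(f,0)$ along $B'$ is $f$ itself (the summand $M(1,0)f = f$), so $f \mapsto E(f,0)$ is visibly injective. This constant-term argument is what underlies Schwermer's result and extends painlessly to the derivatives.

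Your Step~2 claims that Franke's Thm.~14 yields an \emph{isomorphism} ${\rm Ind}[\tilde\tau\otimes S(\check\a^{G'}_{B',\C})] \ira \mathcal{A}_{\J',\{B'\},\varphi_{B'}}(G')$. This is true in the present situation, but it is not what Franke's theorem states directly: Franke constructs a filtration on the automorphic spectrum whose graded pieces are induced modules of this shape. Collapsing the filtration to a single term requires that the cuspidal datum have trivial $W(G')$-stabilizer---which does hold here because the $k_j$ are pairwise distinct, as you observe---but that deduction is not carried out in your proof. Your final paragraph gestures at the functional-equation issue without resolving it rigorously. Since the proposition only asks for injectivity, and the Schwermer/Li--Schwermer results deliver exactly that from holomorphy alone, the detour through Franke's structural theory (and the stronger isomorphism claim) is unnecessary.
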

\begin{proof}
As all Eisenstein series $E(f,\Lambda)$ attached to a $K'$-finite section $f\in {\rm Ind}_{B'(\A_\K)}^{G'(\A_\K)}[\tilde\tau]$ are holomorphic at $\Lambda=0$, $\Psi_{\Pi'}$ is injective by Schwermer \cite{schwLNM}, Satz 4.11 or Li-Schwermer \cite{lischw}, Thm.\ 3.3.
\end{proof}

As a consequence of our discussion in Sect.\ \ref{sect:autcoh}, we obtain an injection of $G'(\A_f)$-modules
$$\Psi^{\sf Eis}_{\Pi'}: H^{b_{n-1}}(\g',K',\Pi'\otimes E_\lambda) = H^{b_{n-1}}(\g',K',\Pi'_{\infty}\otimes E_\lambda)\otimes \Pi'_f \hra H^{b_{n-1}}_{Eis}(S_{n-1},\E_\lambda).$$
With respect to the $\Q(E_{\lambda})$-rational structure on relative Lie algebra cohomology defined in \ref{rationalgk}, this defines a $\Q(\Pi'_f)$-rational injection
$$\Pi'_f = {\rm Ind}_{B'(\A_f)}^{G'(\A_f)}[ \tilde{\tau}_f]  \hra H^{b_{n-1}}_{Eis}(S_{n-1},\E_\lambda)$$
where, for any field $L \supseteq \Q(\Pi'_f)$, the $L$-rational vectors on the left-hand side are just the $L$-valued functions on $G'(\A_f)$ that transform
on the left under the character $\delta^{\frac{1}{2}}_{B',f}\cdot \tilde{\tau}_f$.

\subsection{The map $\iota\times\phi$}
Recall the map $\iota: G'\hra G$, realizing an element $g'\in G'$ as block diagonal matrix $diag(g',1)\in G$.

\begin{lem}
The map $\iota: \tilde S_{n-1}(K'_f)\ra S_n(K_f)$ is proper.
\end{lem}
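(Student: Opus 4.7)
The plan is to deduce properness from reduction theory. Since $G'$ is a closed algebraic $\K$-subgroup of $G$, the map $\iota:G'(\A_\K)\hookrightarrow G(\A_\K)$ is a closed topological embedding; by the sequential criterion for properness on locally compact Hausdorff spaces, it is enough to show that if $\{g'_k\}\subset G'(\A_\K)$ is a sequence whose images $\{[\iota(g'_k)]\}$ are relatively compact in $S_n(K_f)$, then $\{[g'_k]\}$ is relatively compact in $\tilde S_{n-1}(K'_f)$.

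First I would apply the Borel--Harish-Chandra reduction theorem to $G'=GL_{n-1}/\K$: replacing each $g'_k$ by a $G'(\K)\times U(n-1)K'_f$-translate, I may assume
$g'_k=n'_k\,a'_k\,\tilde k'_k\in \omega'\cdot A'\cdot\tilde K'=:\mathfrak S'$,
with $\omega'\subset N'(\A_\K)$ and $\tilde K'\subset U(n-1)K'_f$ compact, and $a'_k=\mathrm{diag}(a_{k,1},\dots,a_{k,n-1})$ lying in the Siegel cone $A'=\{a\in T'(\R)^0 : a_i/a_{i+1}\geq T\}$. Crucially, because $\tilde S_{n-1}(K'_f)$ is defined \emph{without} dividing by the central $\R_+$-direction $Z'_\infty$ of $G'(\R)$, the cone $A'$ is unbounded not only in the cusp directions $a_i/a_{i+1}$ but also in the overall scaling direction. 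If $\{[g'_k]\}$ has no convergent subsequence, then after passing to a subsequence at least one of $a_{k,i}/a_{k,i+1}$ ($1\leq i\leq n-2$) or the common scale of the $a_{k,i}$'s tends to $0$ or $\infty$.

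Next I transfer to $G=GL_n/\K$. The definitions force $\iota(N'(\A_\K))\subset N(\A_\K)$, $\iota(U(n-1))\subset U(n)\subset K$, and $\iota(K'_f)\subset K_f$ (the last by the very definition $K'_f=\iota^{-1}(K_f)$), so $\iota(\mathfrak S')$ is contained in a product of compacts in $N(\A_\K)$ and in $KK_f$ times $\iota(A')$, with $\iota(a'_k)=\mathrm{diag}(a_{k,1},\dots,a_{k,n-1},1)\in T(\R)^0$. The simple roots $\alpha_1,\dots,\alpha_{n-1}$ of $G$ restrict on $\iota(T')$ to the simple roots of $G'$, namely $\alpha_i(\iota(a'))=a_i/a_{i+1}$ for $1\leq i\leq n-2$, together with the extra root $\alpha_{n-1}(\iota(a'))=a_{n-1}$; this extra root records precisely the central $\R_+$-direction of $G'$, which is not central in $G$ but generates the Levi center of the maximal parabolic $P_{(n-1,1)}$. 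The reduction-theoretic characterization of compact subsets of $S_n(K_f)$ says that if $[\iota(g'_k)]$ remains in a compact $C\subset S_n(K_f)$, then every simple root $\alpha_i$ of $G$, evaluated on a suitably normalized torus component, must be bounded both above and away from $0$. This forces $a_{k,n-1}$ and every $a_{k,i}/a_{k,i+1}$ to lie in a compact subinterval of $(0,\infty)$; inductively from $i=n-1$ downward, each $a_{k,i}$ then lies in a compact subset of $\R_+$, contradicting divergence.

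The main obstacle is reconciling the two reduction normalizations: a priori, the $G(\K)\times KK_f$-translates needed to bring $\iota(g'_k)$ into a bounded Siegel set for $G$ lie outside $\iota(G'(\K))\times \iota(U(n-1)K'_f)$, so the Siegel representative chosen inside $G'(\A_\K)$ may be perturbed when passing to $G$. This is handled by the finiteness of the double coset space $G'(\K)\backslash G(\K)/\bigl(G(\K)\cap \iota(G')(\A_\K)\cdot KK_f\bigr)$, a consequence of the fact that the rational parabolic subgroups of $G$ meeting $\iota(G')$ non-trivially form finitely many $\iota(G'(\K))$-conjugacy classes, being the stabilizers of flags that either contain or are contained in $\iota(\K^{n-1})$. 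Splitting the argument over these finitely many double cosets one may choose the two Siegel normalizations compatibly, reducing the problem to the case analyzed above.
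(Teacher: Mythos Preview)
Your intuition is correct: the central $\R_+$-direction of $G'$, which survives in $\tilde S_{n-1}(K'_f)$, becomes the $\alpha_{n-1}$-root direction in $G$ under $\iota$, so divergence in $\tilde S_{n-1}(K'_f)$ should force divergence in $S_n(K_f)$. However, the execution has a real gap at precisely the point you flag as the ``main obstacle.''

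The problem is this. Once you put $g'_k$ into a Siegel set for $G'$, you have $\iota(g'_k)$ in Iwasawa $NAK$-form for $G$ with torus part $\iota(a'_k)$, but $\iota(a'_k)$ need not lie in the Siegel cone for $G$: the condition $\alpha_{n-1}(\iota(a')) = a_{n-1} \geq T$ is not imposed by the $G'$-Siegel condition. The compactness criterion you invoke applies to the torus part of a representative \emph{lying in a Siegel set for $G$}; it says nothing directly about the root values of $\iota(a'_k)$ when that element lies outside. (Concretely, for $n=3$ take $a'_k = \mathrm{diag}(k^{-1},k^{-1})$; then $\iota(a'_k) = \mathrm{diag}(k^{-1},k^{-1},1)$ has $\alpha_2 = k^{-1} \to 0$, and one must first translate by a Weyl element in $G(\K)$ before the criterion bites.) Your proposed fix via the finiteness of $G'(\K)\backslash G(\K)/\bigl(G(\K)\cap \iota(G')(\A_\K)\cdot KK_f\bigr)$ is not well-formulated: the right-hand denominator is not a subgroup of $G(\K)$, so the double coset space is not defined as written, and the justification through ``parabolics meeting $\iota(G')$'' is a heuristic rather than an argument.

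The paper sidesteps this entirely. It factors $\iota$ as
\[
\tilde S_{n-1}(K'_f) \xrightarrow{\ j\ } S_{n-1,n}(K_f) \xrightarrow{\ u\ } S_n(K_f),
\]
where $S_{n-1,n}(K_f) = M(\K)\backslash M(\A)/(K\cap M(\C))(K_f\cap M(\A_f))$ is the locally symmetric space for the Levi $M \cong GL_{n-1}\times GL_1$ sitting block-diagonally in $G$. The first map $j$, sending $g' \mapsto \mathrm{diag}(g',1)$, is easily proper: the extra $GL_1$-coordinate is pinned at $1$, and the quotient on the $M$-side does not collapse the central $\R_+$-direction of $G'$. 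The second map $u$ is the standard inclusion of a Levi's locally symmetric space into that of $G$, and its properness is exactly Ash, \emph{Non-square-integrable cohomology classes of arithmetic groups}, Lemma~2.7. That lemma is what your direct reduction-theory argument is effectively trying to reprove; citing it is both shorter and bypasses the Siegel-compatibility difficulty you identified.
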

\begin{proof}
For the general linear group over $\Q$, this is stated in Mahnkopf \cite{mahnk}, p.\ 615, however, without proof. For sake of completeness, we sketch an argument here. Let $$S_{n-1,n}(K_f):=M(F)\backslash M(\A)/(K\cap M(\C))(K_f\cap M(\A_f)),$$
where $M\cong GL_{n-1}\times GL_1$, viewed as block diagonal matrices. The map $\iota$ factors as
$$\xymatrix{\tilde S_{n-1}(K'_f) \ar[r]^j &  S_{n-1,n}(K_f)\ar[r]^u & S_n(K_f)}.$$
Clearly, $j$ is proper. Hence, it suffices to show that $u$ is proper. This follows from \cite{ash}, Lem.\ 2.7.
\end{proof}

As a consequence, we obtain a map in cohomology with compact support:
$$\iota^q: H^q_c(S_n(K_f),\E_\mu)\ra H^q_c(\tilde S_{n-1}(K'_f),\E_\mu).$$
Similar, the projection $p: \tilde S_{n-1}(K'_f)\twoheadrightarrow S_{n-1}(K'_f)$ induces a map
$$p^q: H^q(S_{n-1}(K'_f),\E_\lambda)\ra H^q(\tilde S_{n-1}(K'_f),\E_\lambda).$$
In our diagram, we let $\iota\times p$ be the direct limit (over all open compact subgroups $K_f$ of $G(\A_f)$) of the maps $\iota^{b_n}\times p^{b_{n-1}}$.

\subsection{Critical points}\label{sect:critical}
Recall the definition of a point $s=\tfrac12+m$, $m\in\Z$ being {\it critical} for $L(s,\Pi\times\Pi')$ from Deligne, \cite{deligne} Prop.\--D\'ef.\ 2.3. We let
$$\textrm{Crit}(\Pi\times\Pi')\subset\tfrac12+\Z$$
be the set of critical points of $L(s,\Pi\times\Pi')$.

\begin{lem}\label{lem:1/2critical}
If $s = \tfrac12 \pm m$,  $m\geq 0$, is a critical point of $L(s,\Pi\times\Pi')$, then \emph{Hom}$_{H'(\C)}(E^{\sf unt}_{\mu- m-\textbf{\sf m}}\otimes E^{\sf unt}_\lambda,\C)\neq 0$ and \emph{Hom}$_{H'(\C)}(E^{\sf unt}_{\mu^{\sf v}- m-\textbf{\sf m}}\otimes E^{\sf unt}_{\lambda^{\sf v}},\C)\neq 0$. The set of critical points is $\textrm{\emph{Aut}}(\C)$-invariant, i.e., $\textrm{\emph{Crit}}(\Pi\times\Pi')=\textrm{\emph{Crit}}({}^\sigma\Pi\times {}^\sigma\Pi')$.
\end{lem}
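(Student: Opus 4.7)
The plan is to invoke Deligne's archimedean criterion for criticality: $s_0 \in \tfrac12 + \Z$ belongs to $\mathrm{Crit}(\Pi \times \Pi')$ exactly when neither $L_\infty(s, \Pi_\infty \times \Pi'_\infty)$ nor $L_\infty(1-s, \Pi^{\sf v}_\infty \times \Pi'^{\sf v}_\infty)$ has a pole at $s_0$. Since $\K$ has a unique archimedean place (which is complex) and both $\Pi_\infty$ and $\Pi'_\infty$ are fully induced from the Borel by the data of Sects.\ \ref{sect:Pi}--\ref{sect:Pi'}, this $L$-factor is an explicit product of $n(n-1)$ Gamma factors $\Gamma_{\C}(s + A_{ij})$, where each shift $A_{ij}$ is a half-integer built from the inducing parameters $\ell_i, k_j, {\textbf{\sf m}}, c$.

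Next I would translate ``no pole at $s_0 = \tfrac12 + m$'' and ``no pole at $1 - s_0 = \tfrac12 - m$'' into two symmetric systems of half-integer inequalities on the $A_{ij}$: one coming from the ``$z$-contribution'' $\ell_i + k_j + c + {\textbf{\sf m}}$ and one from the ``$\bar z$-contribution'' $-\ell_i - k_j - c + {\textbf{\sf m}}$. Substituting $\ell_i = -\mu_{n-i+1} + (n+1)/2 - i$ and $k_j = -\lambda_{n-j} + n/2 - j - c$ and reindexing should turn the first system into precisely the interlacing chain
\[
\mu_1 - m - {\textbf{\sf m}} \ \geq\ -\lambda_{n-1}\ \geq\ \mu_2 - m - {\textbf{\sf m}}\ \geq\ \cdots\ \geq\ -\lambda_1\ \geq\ \mu_n - m - {\textbf{\sf m}}
\]
required by Lem.\ \ref{lem:coeff}(1) for the pair $(E^{\sf unt}_{\mu - m - {\textbf{\sf m}}}, E^{\sf unt}_\lambda)$, and the second system into the analogous chain for $(E^{\sf unt}_{\mu^{\sf v} - m - {\textbf{\sf m}}}, E^{\sf unt}_{\lambda^{\sf v}})$. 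The equivalence (1)$\Leftrightarrow$(2) of that lemma will then deliver the two required non-vanishings. The case $s_0 = \tfrac12 - m$ reduces to $s_0 = \tfrac12 + m$ by the functional equation $s \mapsto 1-s$.

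For the Aut$(\C)$-invariance, $\mathrm{Crit}(\Pi \times \Pi')$ is determined by $\Pi_\infty$ and $\Pi'_\infty$, so it will suffice to compare their archimedean $L$-factors. By Prop.\ \ref{prop:sigmatwistabelian} and the classification of cohomological representations of $G(\C)$ and $G'(\C)$ recalled in Sects.\ \ref{sect:Pi}--\ref{sect:Pi'}, the components ${}^\sigma\Pi_\infty$ and ${}^\sigma\Pi'_\infty$ either coincide with $\Pi_\infty, \Pi'_\infty$ (if $\sigma\circ\iota = \iota$) or are obtained by swapping $z \leftrightarrow \bar z$ in every inducing character (if $\sigma\circ\iota = \bar\iota$); since $L_\infty$ is manifestly symmetric in $z$ and $\bar z$, the set of critical points is unchanged.

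The main obstacle will be the careful bookkeeping in the second paragraph: matching Deligne's pole-avoidance inequalities to the exact interlacing pattern of Lem.\ \ref{lem:coeff} while tracking the shifts by $m$ and ${\textbf{\sf m}}$ and the index reversals $i \mapsto n-i+1$, $j \mapsto n-j$. Once that combinatorial matching is nailed down, everything else is a direct appeal to Lem.\ \ref{lem:coeff} and the symmetry of $\Gamma_\C$-factors.
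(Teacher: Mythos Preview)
Your proposal is correct and follows essentially the same strategy as the paper: compute the archimedean Gamma factor of the Rankin--Selberg $L$-function, translate the pole-avoidance criterion for criticality into a system of inequalities on the inducing parameters, and recognize those inequalities as the interlacing pattern of Lem.\ \ref{lem:coeff}. The paper carries this out in Sect.\ \ref{sect:motives} (specifically the discussion leading to Lem.\ \ref{critpoints}) using the \emph{motivic} normalization --- Hodge types $(p_i,q_i)=(n-i+\mu_i,\,n-1-p_i)$ and Serre's recipe from \cite{deligne} --- whereas you propose to work directly in the \emph{automorphic} normalization with the Langlands parameters $\ell_i,k_j,c,{\textbf{\sf m}}$. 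These are the same computation in two coordinate systems related by the shift $s\mapsto s+\tfrac{2n-3}{2}$ and the substitutions $p_i=n-i+\mu_i$, $r_j=n-1-j+\lambda_j$.

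One minor advantage of your direct route: the paper imposes ${\textbf{\sf m}}=0$ at the start of Sect.\ \ref{sect:motives}, so strictly speaking its proof of Lem.\ \ref{critpoints} covers only that case (the general ${\textbf{\sf m}}$ being recovered by twisting); your bookkeeping keeps ${\textbf{\sf m}}$ visible throughout. Your argument for Aut$(\C)$-invariance --- that ${}^\sigma\Pi_\infty,{}^\sigma\Pi'_\infty$ are obtained from $\Pi_\infty,\Pi'_\infty$ by at worst swapping $z\leftrightarrow\bar z$, which leaves each $\Gamma_\C$-factor unchanged --- is cleaner than extracting it from the motivic symmetry of Hodge types, though both amount to the same observation.
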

\begin{proof}
A proof of these facts will be given in Sect.\ \ref{sect:motives} in the motivic context. See in particular the proof of Lem.\ \ref{critpoints}.
\end{proof}

\subsection{The map $\mathcal T^*$}\label{sect:T*}
Let $s=\tfrac12 + m\in\textrm{Crit}(\Pi\times\Pi')$, $m\geq 0$. Hence, by Lem.\ \ref{lem:1/2critical}, we obtain a non-trivial map
$$\mathcal T^{(m)}\in\textrm{Hom}_{G'(\C)}(E_{\mu-m}\otimes E_\lambda,\C),$$ which we will fix in a compatible way (i.e., $\mathcal T^{(m)}$ for $\Pi\otimes\Pi'$ shall be identical to $\mathcal T^{(0)}$ for $(\Pi\|\cdot\|^m)\otimes\Pi'$). Again by Lem.\ \ref{lem:1/2critical}, it factors as $\mathcal T^{(m)} =\mathcal T^{(m)}_{\iota}\otimes \mathcal T^{(m)}_{\bar\iota}$, and if $\sigma\in$ Aut$(\C)$, then we define ${}^\sigma(\mathcal T^{(m)})$ in the obvious way, i.e.,
${}^\sigma (\mathcal T^{(m)}) =\mathcal T^{(m)}_{\sigma\circ\iota}\otimes \mathcal T^{(m)}_{\overline{\sigma\circ\iota}}$. Finally, if $s=\tfrac12\in\textrm{Crit}(\Pi\times\Pi')$, we let $\mathcal T = \mathcal T^{(0)}$ and obtain a morphism in cohomology
$$\mathcal T^*: H^{b_n+b_{n-1}}_c(\tilde S_{n-1},\E_\mu\otimes\E_\lambda)\ra H^{b_n+b_{n-1}}_c(\tilde S_{n-1},\C),$$
as in our diagram.

\subsection{Poincar\'e duality}\label{sect:poinc}
It remains to define the last map, denoted ``$\int$'' in our diagram. Therefore, observe that
$$b_n+b_{n-1}=\dim_\R \tilde S_{n-1}.$$
Moreover, for any open compact subgroup $K_f$ of $G(\A_f)$, the set of connected components of the orbifold $\tilde S_{n-1}(K'_f)$ is parametrized by the finite set $\mathcal X:=\A^\times_f/\K^\times\det(K'_f)$. We assume to have fixed orientations on the various connected components $\tilde S_{n-1}(K'_f)_x$, $x\in\mathcal X$, given by the orientation on $G'(\C)/U(n-1)$, which is defined by $X_1\wedge...\wedge X_{r}$, $r=\dim_\R\tilde S_{n-1}$, cf.\ Sect.\ \ref{sect:Theta}, so
$$\int_{\tilde S_{n-1}(K'_f)}=\sum_{x\in\mathcal X}\int_{\tilde S_{n-1}(K'_f)_x}.$$
Let $\int_{\tilde S_{n-1}}$ stand for the direct limit (over all open compact subgroups $K_f$ of $G(\A_f)$) of the maps $\int_{\tilde S_{n-1}(K'_f)}$.
Next, observe that the trivial character $\triv$ of $G'(\A_\K)$ defines a non-trivial cohomology class in $H^0(\tilde S_{n-1},\C)$. Using this class, Poincar\'e--Duality between $H^{b_n+b_{n-1}}_c(\tilde S_{n-1},\C)$ and $H^0(\tilde S_{n-1},\C)$ gives rise to a surjection

\begin{equation}\label{eq:poincare}
\begin{array}{cccccl}
H^{b_n+b_{n-1}}_c(\tilde S_{n-1},\C) & \longrightarrow & \C &\\
\theta & \longmapsto  & \int(\theta):=\int_{\tilde S_{n-1}} \theta.
\end{array}
\end{equation}

\subsection{A non-archimedean, particular vector}
We will now choose a special vector in the product of Whittaker models $W(\Pi_f)\times W(\Pi'_f)$, which has the property that it transforms nicely, when plugged into our diagram. This vector will be fixed as in Raghuram \cite{raghuram-imrn}, 3.1.4, which is itself inspired by Mahnkopf \cite{mahnk}, 2.1.1. \\\\
First, we remark that for $w\in S(\K)_f$, $\Pi'_w$ is tempered. Hence, Jacquet--Shalika \cite{jac-shal-pjm}, (3.2) Proposition still holds for $\Pi'_w$. Therefore, any non-zero Whittaker functional $\xi'_w\in W(\Pi'_w)$ is non-vanishing on $T'(\K_w)^+=\{t\in T'(\K_w)| t_i t_{i+1}^{-1}\in\mathcal{O}_w, t_{n-1,n-1}=1 \}$. We let $K(m_w)$ (resp. $K'(m'_w)$) be the mirahoric subgroup of $G(\K_w)$ (resp.\ $G'(\K_w)$) of level $m_w$ (resp.\ $m'_w$). This is the subgroup of $G(\O_w)$ (resp.\ $G'(\O_w)$), consisting of those matrices, whose last row is congruent to $(0,...,0,\star)$ modulo $\wp^{m_w}_w$, where $\wp_w$ is the unique maximal ideal in $\mathcal{O}_w$. Suppose from now on that $m_w$ (resp.\ $m'_w$) is the conductor of $\Pi_w$ (resp.\ $\Pi'_w$). Then, by Jacquet--Piatestski-Shapiro--Shalika, \cite{jac-ps-shalika-mathann} (5.1) Theorem, the space of Whittaker vectors, transforming by the central character $\omega_{\Pi_w}$ of $\Pi_w$ (resp.\ $\omega_{\Pi'_w}$ of $\Pi'_w$) under the action of the mirahoric subgroup is one-dimensional and its elements are usually called new vectors. \\\\
Now, for $\Pi'_w$ fix an element $t_{\Pi'_w}\in T'(\K_w)^+$ on which the nontrivial new vectors of $\Pi'_w$ do not vanish. Observe that we may choose the same element for all $\sigma$-twists, i.e., such that $t_{\Pi'_w}=t_{{}^\sigma\Pi'_w}$. If $w\notin S(\Pi')$, then we may take $t_{\Pi'_w}:=id$. Depending on these choices, for all $w\in S(\K)_f$, we let
$$\xi_{\Pi'_w} := \textrm{ the unique new vector such that } \xi_{\Pi'_w}(t_{\Pi'_w})=1$$
This pins down a special Whittaker vector $\xi_{\Pi'_f}:=\otimes'_{w\in S(\K)_f}\xi_{\Pi'_w}\in W(\Pi'_f)$.\\\\
Our choice for $\Pi_w$ will depend on the data fixed for $\Pi'_w$. First, we fix an element $t_{\Pi_w}\in T(\K_w)^+$ analogously as for $G'(\K_w)$. Now, for $w\notin S:=S(\Pi')\cup S(\psi)$, we let $\xi_{\Pi_w}$ be the unique new vector of $\Pi_w$, which satisfies $\xi_{\Pi_w}(t_{\Pi_w})=1$. It is a certain non-zero multiple $c_{\Pi_w}$ of the essential vector, cf.\ \cite{jac-ps-shalika-mathann} (4.1) Th\'eor\`eme. If $w\in S_f$, we take $\xi_{\Pi_w}$ to be the unique Whittaker vector, whose restriction to $\iota(G'(\K_w))$ is supported on $N'(\K_w)t_{\Pi'_w} K'(m'_w)$ and there equal to $\psi_w\omega^{-1}_{\Pi'_w}$. This gives a special Whittaker vector $\xi_{\Pi_f}:=\otimes'_{w\in S(\K)_f}\xi_{\Pi_w}\in W(\Pi_f)$.

\begin{lem}\label{lem:sigma-xi}
Let $\xi_{\Pi_f}$ and $\xi_{\Pi'_f}$ be the above Whittaker vectors. For a non-archimedean place $w$ of $\K$, the integral
$$\Psi(s,\xi_{\Pi_w},\xi_{\Pi'_w})=\int_{N'(\K_w)\backslash G'(\K_w)} \xi_{\Pi_w}(\iota(g))\xi_{\Pi'_w}(g)\|\!\det(g)\!\|_w^{s-\tfrac12}dg$$
converges for Re$(s)\geq 1-{\textbf{\sf m}}$ and has a meromorphic continuation to all of $\C$. It equals
$$\Psi(s,\xi_{\Pi_w},\xi_{\Pi'_w})=\left\{
\begin{array}{cc}
c_{\Pi_w}\cdot L(s,\Pi_w\times\Pi'_w) & \textrm{if } w\notin S\\
\|\!\det(t_{\Pi'_w})\!\|_w^{s-\tfrac12} \cdot vol(K'(m'_w)) & \textrm{if } w\in S,
\end{array}\right.$$
For all $\sigma\in$\emph{Aut}$(\C)$, ${}^\sigma\!(\xi_{\Pi'_w})=\xi_{{}^\sigma\Pi'_w}$ for all non-archimedean places $w$ of $\K$, whereas
$${}^\sigma\!(\xi_{\Pi_w})=\left\{
\begin{array}{cc}
\xi_{{}^\sigma\Pi_w} & \textrm{if } w\notin S\\
\omega_{{}^\sigma\Pi'_w}(t_\sigma)\cdot\xi_{{}^\sigma\Pi_w} & \textrm{if } w\in S.
\end{array}\right.$$
\end{lem}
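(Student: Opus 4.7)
The statement splits cleanly into three tasks: (a) convergence and meromorphic continuation of the local zeta integral, (b) explicit evaluation in the unramified and ramified cases, and (c) Galois equivariance of the chosen test vectors.  For (a), recall that $\Pi_w$ is a twist by $\|\cdot\|_w^{\mathbf{m}}$ of the (essentially tempered) base change from a cuspidal representation of the definite unitary group $H$, and that $\Pi'_w$ is fully induced from unitary characters and hence tempered (as already noted just before the lemma).  The convergence of the Jacquet--Piatetski-Shapiro--Shalika zeta integral for $GL_n \times GL_{n-1}$ in the (shifted) right half-plane and its meromorphic continuation to all of $\C$ then follow from the standard arguments in \cite{jac-ps-shalika-mathann}; the shift by $\mathbf{m}$ accounts for the auxiliary norm-twist built into $\Pi$.

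For (b), the unramified case is classical.  At $w \notin S$ both $\Pi_w$ and $\Pi'_w$ are spherical, $t_{\Pi'_w} = 1$, and $\xi_{\Pi'_w}$ is the normalized spherical Whittaker vector.  By uniqueness of the mirahoric new vector, $\xi_{\Pi_w}$ differs from the essential vector of \cite{jac-ps-shalika-mathann}, (4.1) only by the scalar $c_{\Pi_w}$ determined by our normalization $\xi_{\Pi_w}(t_{\Pi_w})=1$, so the JPSS essential-vector formula gives $\Psi(s,\xi_{\Pi_w},\xi_{\Pi'_w}) = c_{\Pi_w} \cdot L(s,\Pi_w \times \Pi'_w)$.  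In the ramified case $w \in S$, the support condition on $\xi_{\Pi_w}$ collapses the integral to the single double coset $N'(\K_w) t_{\Pi'_w} K'(m'_w)$.  Parametrising $g = t_{\Pi'_w} k'$ modulo $N'(\K_w)$, we use $\|\det(t_{\Pi'_w} k')\|_w = \|\det(t_{\Pi'_w})\|_w$ (since $k' \in G'(\mathcal O_w)$), the new-vector transformation $\xi_{\Pi'_w}(t_{\Pi'_w} k') = \omega_{\Pi'_w}(k'_{n-1,n-1})$, and the prescribed value $\xi_{\Pi_w}(\iota(t_{\Pi'_w} k')) = \omega_{\Pi'_w}^{-1}(k'_{n-1,n-1})$; the central characters cancel and the remaining $K'(m'_w)$-integration produces $\mathrm{vol}(K'(m'_w))$.

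The last task (c) is the most delicate, but it reduces to a computation on a single element.  The $\sigma$-twist in Proposition \ref{lem:rational-whittaker} sends new vectors to new vectors, so on each side we already know that ${}^\sigma \xi_{\Pi'_w}$ and ${}^\sigma \xi_{\Pi_w}$ are scalar multiples of $\xi_{{}^\sigma \Pi'_w}$ and $\xi_{{}^\sigma \Pi_w}$ respectively; only the scalars must be determined.  Since $t_{\Pi'_w} = t_{{}^\sigma\Pi'_w}$ by our choice, the scalar comparing ${}^\sigma\xi_{\Pi'_w}$ with $\xi_{{}^\sigma\Pi'_w}$ equals $\sigma(\xi_{\Pi'_w}(\mathbf{t}_{\sigma,n-1} t_{\Pi'_w}))$.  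Because $\mathbf{t}_{\sigma,n-1} \in K'(m'_w)$ is diagonal with last entry $1$, it commutes with the diagonal $t_{\Pi'_w}$ and the mirahoric transformation rule shows this equals $\sigma(1)=1$; hence ${}^\sigma\xi_{\Pi'_w}=\xi_{{}^\sigma\Pi'_w}$.  The argument for $\xi_{\Pi_w}$ at $w \notin S$ is identical via the essential-vector characterization.  At $w \in S$ the same recipe applies, but now $\mathbf{t}_{\sigma,n}\iota(t_{\Pi'_w}) = \iota\bigl(t_{\Pi'_w} \cdot (t_\sigma^{-1}\mathbf{t}_{\sigma,n-1})\bigr)$, and the element $t_\sigma^{-1}\mathbf{t}_{\sigma,n-1} \in K'(m'_w)$ now has last diagonal entry $t_\sigma^{-1}$.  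Feeding this into the explicit support formula for $\xi_{\Pi_w}$ introduces a factor $\omega_{\Pi'_w}^{-1}(t_\sigma^{-1}) = \omega_{\Pi'_w}(t_\sigma)$, whose $\sigma$-image is $\omega_{{}^\sigma\Pi'_w}(t_\sigma)$, exactly the claimed twist.  The main care is bookkeeping with the $\psi$ versus $\psi^{-1}$ convention and the mirahoric character; the temperedness in (a) and the Jacquet--Shalika non-vanishing at $t_{\Pi'_w} \in T'(\K_w)^+$ are the only substantive inputs.
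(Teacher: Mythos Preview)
Your proposal is correct and follows exactly the route the paper intends: the paper's own proof is the single sentence ``Any one of the above assertions is either well--known or follows from a direct calculation using the definition of the local Whittaker vectors,'' and what you have written is precisely that direct calculation spelled out in full. Your handling of the three parts---JPSS convergence/continuation via temperedness, the essential-vector identity at $w\notin S$ together with the support collapse at $w\in S$, and the determination of the $\sigma$-twist scalars by evaluating at $t_{\Pi'_w}$ (resp.\ $t_{\Pi_w}$) using the key identity $\mathbf{t}_{\sigma,n}\,\iota(g)=\iota\bigl((t_\sigma^{-1}\mathbf{t}_{\sigma,n-1})\,g\bigr)$---is exactly what the paper has in mind and contains no gaps.
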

\begin{proof}
Any one of the above assertions is either well--known or follows from a direct calculation using the definition of the local Whittaker vectors.
\end{proof}

\subsection{An archimedean non-vanishing result}

Recall our choices of cohomology classes $[\Pi_\infty]$ and $[\Pi'_\infty]$ from Sect.\ \ref{sect:Theta}. Since $\xi_{\Pi_\infty,\underline i, \alpha}$ is $K$-finite and $\xi_{\Pi'_\infty,\underline j, \beta}$ is $K'$-finite, the integral

$$\Psi(s,\xi_{\Pi_\infty,\underline i, \alpha},\xi_{\Pi'_\infty,\underline j, \beta})=\int_{N'(\C)\backslash G'(\C)} \xi_{\Pi_\infty,\underline i, \alpha}(\iota(g)) \xi_{\Pi'_\infty,\underline j, \beta}(g) \|\!\det(g)\!\|_\infty^{s-\tfrac12}dg$$
converges for Re$(s)\gg 0$. By \cite{cogdell-ps}, Thm.\ 1.2.(i) $\Psi(s,\xi_{\Pi_\infty,\underline i, \alpha},\xi_{\Pi'_\infty,\underline j, \beta})$ is holomorphic at all critical values of $L(s,\Pi\times\Pi')$.\\
Assume that $\tfrac12\in\textrm{Crit}(\Pi\times\Pi')$. In this case, we define
$$c(\tfrac12,\Pi_\infty,\Pi'_\infty):=\sum_{\alpha,\beta}\sum_{\underline i, \underline j}s(\underline i,\underline j)\cdot\mathcal T(e_\alpha \otimes e'_\beta)\cdot\Psi(\tfrac12,\xi_{\Pi_\infty,\underline i, \alpha},\xi_{\Pi'_\infty,\underline j, \beta}),$$
where $s(\underline i,\underline j)$ is given by $\iota(X^*_{\underline i})\wedge p(X'^*_{\underline j})= s(\underline i,\underline j)\cdot X^*_1\wedge...\wedge X^*_{r}$, $r=\dim_\R \tilde S_{n-1}$. By what we said above, $c(\tfrac12,\Pi_\infty,\Pi'_\infty)$ is well-defined, i.e., finite.\\
Now, drop the assumption that $\tfrac12\in\textrm{Crit}(\Pi\times\Pi')$ and let $s=\tfrac12+m\in\textrm{Crit}(\Pi\times\Pi')$ be an arbitrary critical point of $L(s,\Pi\times\Pi')$ with $m\geq 0$. Then, $\tfrac12\in\textrm{Crit}((\Pi\|\cdot\|^m)\times\Pi')$ and we define
$$c(\tfrac12+m,\Pi_\infty,\Pi'_\infty):=c(\tfrac12,\Pi_\infty\|\cdot\|_\infty^m,\Pi'_\infty).$$
By our compatible choice of $\mathcal T^{(m)}$, this is well-defined.  The following theorem recently proved
by Binyong Sun \cite[Thm.\ A]{sun} is of the highest importance for the present paper:

\begin{thm}\label{thm:sun}
For all $s=\tfrac12+m\in\textrm{\emph{Crit}}(\Pi\times\Pi')$ with $m\geq 0$, $c(\tfrac12+m,\Pi_\infty,\Pi'_\infty)\neq 0$.
\end{thm}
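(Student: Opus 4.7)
The plan is as follows. First, using the defining identity $c(\tfrac12+m,\Pi_\infty,\Pi'_\infty) = c(\tfrac12,\Pi_\infty\|\cdot\|_\infty^m,\Pi'_\infty)$, the problem reduces to showing non-vanishing at $s=\tfrac12$ whenever $\tfrac12\in\textrm{Crit}(\Pi\times\Pi')$; this amounts to Hypothesis \ref{hyp:coeff} with $\textbf{\sf m}=0$ after the twist. In this reduced setting the highest weights are in the ``standard relative position'' of Lem.\ \ref{lem:coeff}, so in particular the branching functional $\mathcal T\colon E_\mu\otimes E_\lambda\to\C$ is non-zero.

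Next, I would reinterpret the defining sum cohomologically. The spaces $H^{b_n}(\g,K,W(\Pi_\infty)\otimes E_\mu)$ and $H^{b_{n-1}}(\g',K',W(\Pi'_\infty)\otimes E_\lambda)$ are each one-dimensional with fixed generators $[\Pi_\infty]$ and $[\Pi'_\infty]$. Restricting via $\iota\colon\g'\hra\g$, composing with $\mathcal T$, and using the numerical coincidence $b_n+b_{n-1}=\dim_\R\tilde S_{n-1}$, the cup product produces a class in the one-dimensional top relative Lie algebra cohomology of $W(\Pi_\infty)\otimes W(\Pi'_\infty)$, viewed as a $(\g',K')$-module via $\iota$. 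The archimedean Rankin--Selberg zeta integral $\Psi(\tfrac12,\cdot,\cdot)$ is a continuous $(\g',K')$-invariant functional on this tensor product, and, unwinding the signs $s(\underline i,\underline j)$ and the branching, $c(\tfrac12,\Pi_\infty,\Pi'_\infty)$ is precisely the value of this functional on the cohomological cup product. Holomorphy of $\Psi(s,\cdot,\cdot)$ at the critical points, already guaranteed by \cite{cogdell-ps}, Thm.\ 1.2.(i), ensures this value is well-defined.

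The main obstacle is showing this value is non-zero. For nearly forty years, such a non-vanishing statement was handled only in low rank by ad hoc constructions of test vectors and was otherwise left as a hypothesis. The strategy, following Binyong Sun's Theorem A in \cite{sun}, is to combine archimedean Rankin--Selberg uniqueness --- which asserts that the space of continuous $(\g',K')$-invariant functionals on the relevant tensor product, suitably twisted by $\|\det(\cdot)\|_\infty^{s-\tfrac12}$ near $s=\tfrac12$, is at most one-dimensional --- with an abstract cohomological construction of a second, manifestly non-zero invariant functional on the same space. The two must then be proportional, and a direct verification --- performed by restricting to the minimal $K'$-type through which the cup product factors, where the question reduces to a non-degenerate pairing of finite-dimensional representations of a compact group --- shows that the abstract functional is non-zero on $[\Pi_\infty]\cup[\Pi'_\infty]$. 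The resulting non-zero constant of proportionality then forces $c(\tfrac12,\Pi_\infty,\Pi'_\infty)\neq 0$, and combined with the reduction of Step 1, gives the statement for every critical $\tfrac12+m$ with $m\geq 0$.
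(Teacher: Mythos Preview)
Your proposal is correct and takes essentially the same approach as the paper: both reduce the statement to Sun's Theorem A in \cite{sun} after the cohomological reinterpretation of $c(\tfrac12+m,\Pi_\infty,\Pi'_\infty)$. However, your framing of Sun's argument is somewhat misleading. You describe constructing ``a second, manifestly non-zero invariant functional'' and then comparing it to the Rankin--Selberg functional via proportionality. This detour is unnecessary and obscures the actual logic. The paper's formulation is more direct: Sun's theorem asserts that \emph{any} generator $u$ of the one-dimensional space ${\rm Hom}_{G'(\C)}([\Pi_\infty\otimes\Pi'_\infty]\hat{~},\C)$ restricts non-trivially to $\rho\otimes\rho'$, where $\rho={\rm Im}(h_n)$ and $\rho'={\rm Im}(h_{n-1})$ are the irreducible $K$- and $K'$-types through which the cohomology generators $[\Pi_\infty]$ and $[\Pi'_\infty]$ factor. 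One then applies this directly to $u=RS_m$, which is such a generator because the archimedean Rankin--Selberg integral is not identically zero --- a fact due to Jacquet, Piatetski-Shapiro, and Shalika that you should have stated explicitly, since without it the ``non-zero constant of proportionality'' in your last sentence is unjustified. Finally, your ``direct verification on the minimal $K'$-type, where the question reduces to a non-degenerate pairing of finite-dimensional representations'' is precisely the hard content of Sun's theorem, not a routine consequence of it; presenting it as a verification step understates the depth of the input.
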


\begin{proof}  Since Sun's theorem concerns properties of a non-trivial pairing ${\rm Hom}_{G'(\R)}(\Pi_{\infty}\otimes \Pi'_{\infty},\C)$, and makes no reference to
zeta integrals, Whittaker functions, or coefficient systems, we explain how to translate his theorem into our non-vanishing statement.   We return to the
notation of \ref{sect:Theta}.   Write $\p = \g/\k$ and $\p' = \g'/\k'$.  The expressions there for the generators
$[\Pi_{\infty}] \in H^{b_{n}}(\g,K,\Pi_\infty\otimes E_\mu)$ and $[\Pi'_{\infty}] \in H^{b_{n-1}}(\g',K',\Pi'_\infty\otimes E_\lambda)$ are
based on the following identifications:
$$H_n:= H^{b_{n}}(\g,K,\Pi_\infty\otimes E_\mu) = {\rm Hom}_K(\wedge^{b_n}\p\otimes E_{\mu}^*,\Pi_{\infty}), $$
$$H_{n-1}:= H^{b_{n-1}}(\g',K',\Pi'_\infty\otimes E_\lambda) = {\rm Hom}_{K'}(\wedge^{b_{n-1}}\p\otimes E_{\lambda}^*,\Pi'_{\infty}).$$
Letting $h_n$, $h_{n-1}$ denote generators of the one-dimensional spaces $H_n$ and $H_{n-1}$, respectively,
it is known that $\rho = Im(h_n)$ (resp. $\rho' = Im(h_{n-1})$) is an irreducible $K$-type in $\Pi_{\infty}$ (resp. $K'$-type in $\Pi'_{\infty}$).
Sun's theorem states that, if $u$  is a generator of the
one-dimensional space ${\rm Hom}_{G'(\C)}([\Pi_{\infty}\otimes \Pi'_{\infty}]\hat{},\C)$, where $\hat{}$ denotes the corresponding
Casselman-Wallach (Frechet) completion, then $u|_{\rho\otimes \rho'} \neq 0$.   This implies in particular that if we define
$RS_m:  \Pi_{\infty}\otimes \Pi'_{\infty} \ra \C$ as the composite of the arrows in the
diagram
$$\Pi_{\infty}\otimes \Pi'_{\infty} \longrightarrow  W(\Pi_{\infty})\otimes W(\Pi'_{\infty})~~~\overset{\Psi_m}\longrightarrow  \C,$$
where
$$\Psi_m(\xi\otimes \xi'): =\Psi(\tfrac12+m,\xi,\xi')= \left[\int_{N'(\C)\backslash G'(\C)} \xi(\iota(g))\,\xi'(g)\,\|\!\det(g)\!\|_w^{s-\tfrac12}dg\right]_{s = \tfrac12+m}$$
is the Rankin-Selberg integral, then $RS_m$ defines a non-zero linear form on $\rho\otimes \rho'$.  Here we are using the fact,
due to Jacquet, Shalika, and Piatetski-Shapiro, that the Rankin-Selberg integral is not identically zero.

Now if we let $e_{\alpha}^*$ and $e_{\beta}^{\prime,*}$ denote the dual bases to the bases $e_{\alpha}$ and $e'_{\beta}$ of
$E_{\mu}$ and $E_{\lambda}$ introduced in  \ref{sect:Theta}, we find that (in the obvious notation)
$$\xi_{\Pi_\infty,\underline i, \alpha} = h_n(X_{\underline i}\otimes e_{\alpha}^*) \in \rho \quad\quad\quad \xi_{\Pi'_\infty,\underline j, \beta} = h_{n-1}(X_{\underline j}\otimes e_{\beta}^{\prime,*}) \in \rho'$$
are the matrix coefficients of $h_n$ and $h_{n-1}$ in the chosen bases.   Then the non-vanishing of $c(\tfrac12+m,\Pi_\infty,\Pi'_\infty)$ comes
down to the non-vanishing of $RS_m$ on $\rho\otimes \rho'$.

\end{proof}

Observe that Theorem\ \ref{thm:sun} implies that $c(\tfrac12+m,{}^\sigma\Pi_\infty,{}^\sigma\Pi'_\infty)\neq 0$, for all $\sigma\in$Aut$(\C)$. We denote by $p(m,{}^\sigma\Pi_\infty,{}^\sigma\Pi'_\infty)$ the inverse of $c(\tfrac12+m,{}^\sigma\Pi_\infty,{}^\sigma\Pi'_\infty)$. In the special case when $m=0$, we will abbreviate $p({}^\sigma\Pi_\infty,{}^\sigma\Pi'_\infty):=p(0,{}^\sigma\Pi_\infty,{}^\sigma\Pi'_\infty)$.

\subsection{A theorem on Whittaker periods}

Let us recall the Gau\ss -sum of a Hecke character $\omega :\Q^\times\backslash\A^\times_\Q\ra\C^\times$. Let $\mathfrak{c}$ stand for the conductor ideal of $\omega_f$.
We take $y = (y_p)_{p \in S_f} \in {\mathbb A}_f^{\times}$ such that ${\rm ord}_p(y_p) = -{\rm ord}_p(\mathfrak{c})$. Then, the Gau\ss ~sum of $\omega_f$ is
defined as $\mathcal{G}(\omega_f,\psi_f,y) = \prod_{p \in S_f} \mathcal{G}(\omega_p,\psi_p,y_p)$, where the local Gau\ss ~sum $\mathcal{G}(\omega_p,\psi_p,y_p)$ is defined as
$$
\mathcal{G}(\omega_p,\psi_p,y_p) = \int_{\Z_p^{\times}} \omega_p(u_p)^{-1}\psi_p(y_pu_p) du_p.
$$
If all inputs are unramified at $p$, then $\mathcal{G}(\omega_p,\psi_p,y_p) =1$, so the above product is finite. Suppressing the dependence on $\psi$ and $y$, we denote $\mathcal{G}(\omega_f,\psi_f,y)$ simply by $\mathcal{G}(\omega_f)$.

If $\omega: \K^\times\backslash\A^\times_\K\ra\C^\times$ is a Hecke character of $\K$, let
$\omega_0$ denote its restriction to the id\`eles of $\Q$. We are now ready to state and prove the main result of this section, which is analogous to the main result in \cite{raghuram-imrn}.

\begin{thm}\label{thm:whittaker-periods}
Let $\Pi=BC(\pi)\|\cdot\|^{\textbf{\sf m}}$ be a cuspidal automorphic representation of $G(\A_\K)$, ${\textbf{\sf m}}\in\Z$, and let $\Pi'=BC(\pi')$ be an abelian automorphic representation of $G'(\A_\K)$ obtained by base change from unitary groups as in Sections \ref{sect:Pi} and \ref{sect:Pi'}. In particular, $\Pi$ is cohomological with respect to $E_\mu$ and $\Pi'$ is cohomological with respect to $E_\lambda$. We assume that the highest weights $\mu$ and $\lambda$ satisfy hypothesis \ref{hyp:coeff}. In view of the archimedean non-vanishing result, cf.\ Theorem \ \ref{thm:sun}, the following holds:
\begin{enumerate}
\item For all critical values $\tfrac12+m\in\textrm{\emph{Crit}}(\Pi\times\Pi')$ with $m\geq 0$ and every $\sigma\in {\rm Aut}(\C)$,
$$
\sigma\left(
\frac{L(\tfrac 12+m,\Pi_f \times \Pi'_f)}{p(\Pi)p(\Pi')p(m,\Pi_\infty,\Pi'_\infty) \mathcal{G}(\omega_{\Pi'_{f,0}})}\right) \ = \
\frac{L(\tfrac 12+m,{}^\sigma\Pi_f \times {}^\sigma\Pi'_f)}{p({}^\sigma\Pi)p({}^\sigma\Pi')p(m,{}^\sigma\Pi_\infty,{}^\sigma\Pi'_\infty) \mathcal{G}(\omega_{{}^\sigma\Pi'_{f,0}})}.
$$
\item
$$
L(\tfrac 12+m,\Pi_f \times \Pi'_f) \ \sim_{\Q(\Pi_f)\Q(\Pi'_f)} p(\Pi)p(\Pi')p(m,\Pi_\infty,\Pi'_\infty) \mathcal{G}(\omega_{\Pi'_{f,0}}),
$$
where ``$\sim_{\Q(\Pi_f)\Q(\Pi'_f)}$'' means up to multiplication by an element in the composition of number fields $\Q(\Pi_f)\Q(\Pi'_f)$.
\end{enumerate}
\end{thm}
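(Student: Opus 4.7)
The proof will proceed by chasing the commutative diagram ``Dia'' of Section 3.2 applied to the distinguished vector $\xi_{\Pi_f}\otimes\xi_{\Pi'_f}$ of Lem.\ \ref{lem:sigma-xi}, and comparing two descriptions of its output.

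The first description is \emph{arithmetic}. By Proposition \ref{defprop}, once rescaled by $p(\Pi)^{-1}p(\Pi')^{-1}$, the map $\Theta_0$ is ${\rm Aut}(\C)$-equivariant. The cuspidal embedding $\Psi^{\sf cusp}_\Pi$ is $\sigma$-equivariant because the cuspidal summand of cohomology is ${\rm Aut}(\C)$-stable; the geometric arrows $\iota\times p$, the cup product, the contraction $\mathcal T^{(m),*}$, and the Poincar\'e pairing are all defined over $\Q(E_\mu)\Q(E_\lambda)$. The delicate point is the Eisenstein embedding $\Psi^{\sf Eis}_{\Pi'}$: using Remark \ref{rem:induced}, $\Pi'_f$ carries a canonical rational structure as a space of functions on $G'(\A_f)$ transforming under $\beta\cdot\delta_{B'}^{1/2}$, and together with the rationality of the Eisenstein series at $\Lambda=0$ (Harder--Franke--Schwermer), the induced $\Psi^{\sf Eis}_{\Pi'}$ is manifestly $\Q(\Pi'_f)$-rational. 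The comparison between this induced rational structure and the Whittaker rational structure used to define $\Theta_0^{\sf Eis}$ must then be carried out place by place via Shahidi's explicit formula for Whittaker coefficients of Eisenstein series induced from the Borel: at each finite $w$, the chosen new vector $\xi_{\Pi'_w}$ differs from a rational induced section by local abelian $L$-values (algebraic, since $\chi_j$ and $\gamma$ are algebraic Hecke characters) and a local Gauss sum. Assembling these local discrepancies over all finite places and using that the off-diagonal inducing characters along $T'$ are paired up, the global residual factor is, modulo $\Q(\Pi'_f)$, precisely the Gauss sum $\mathcal{G}(\omega_{\Pi'_{f,0}})$.

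The second description is \emph{analytic}. Unfolding the Rankin-Selberg integral via Jacquet--Piatetski-Shapiro--Shalika produces, up to an explicit combinatorial constant $C(\mathcal T^{(m)})$ involving the signs $s(\underline i,\underline j)$ and the scalars $\mathcal T^{(m)}(e_\alpha\otimes e'_\beta)$ (algebraic over $\Q(E_\mu)\Q(E_\lambda)$), the identity
$$
{\rm Dia}(\xi_{\Pi_f}\otimes\xi_{\Pi'_f})\ =\ C(\mathcal T^{(m)})\cdot\prod_v \Psi(\tfrac12+m,\xi_{\Pi_v},\xi_{\Pi'_v}).
$$
By Lem.\ \ref{lem:sigma-xi} the non-archimedean factor equals, up to a non-zero algebraic scalar, the special value $L(\tfrac12+m,\Pi_f\times\Pi'_f)$, while by definition the archimedean factor equals $c(\tfrac12+m,\Pi_\infty,\Pi'_\infty)=p(m,\Pi_\infty,\Pi'_\infty)^{-1}$.

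Equating the two descriptions and dividing by $p(\Pi)\,p(\Pi')\,p(m,\Pi_\infty,\Pi'_\infty)\,\mathcal G(\omega_{\Pi'_{f,0}})$ produces a quantity that is ${\rm Aut}(\C)$-equivariant; this is assertion (1). Assertion (2) then follows by specializing $\sigma$, the division being legitimate since Sun's Theorem \ref{thm:sun} guarantees $c(\tfrac12+m,\Pi_\infty,\Pi'_\infty)\neq 0$. The principal obstacle, and the essential difference from the cuspidal $\Pi'$ case treated in \cite{raghuram-imrn}, is the careful tracking of Shahidi's local Whittaker-of-Eisenstein formula: one must verify that the $(n-1)$ local Gauss sums of the individual inducing Hecke characters, after combining with the abelian local $L$-values, collapse to exactly the single Gauss sum of the central character restricted to $\A^\times$ that appears in the statement.
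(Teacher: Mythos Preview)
Your overall architecture---chase the diagram on the special vector and compare analytic versus Galois-equivariant descriptions---is the same as the paper's. But your account of where the Gau\ss{} sum $\mathcal{G}(\omega_{\Pi'_{f,0}})$ enters is wrong, and this is the heart of the argument.

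You claim the Gau\ss{} sum arises from Shahidi's Whittaker-of-Eisenstein formula, as the discrepancy between the induced rational structure on $\Pi'_f$ (Remark~\ref{rem:induced}) and the Whittaker rational structure used in $\Theta_0^{\sf Eis}$. This is not what happens. That discrepancy is, by definition, absorbed entirely into the abstract period $p(\Pi')$ via Proposition~\ref{defprop}; once $\Theta_0^{\sf Eis}$ is rescaled by $p(\Pi')^{-1}$ it is already ${\rm Aut}(\C)$-equivariant, and no further bookkeeping on the $\Pi'$ side is needed. (The explicit evaluation of $p(\Pi')$ via Shahidi is carried out later, in Corollary~\ref{WhitEis2}, and is a separate result.) The Gau\ss{} sum in the theorem comes from the \emph{cuspidal} side: by Lemma~\ref{lem:sigma-xi}, at places $w\in S_f$ the special vector $\xi_{\Pi_w}$ (built from $\psi_w\omega_{\Pi'_w}^{-1}$ on a mirahoric coset) satisfies ${}^\sigma(\xi_{\Pi_w})=\omega_{{}^\sigma\Pi'_w}(t_\sigma)\cdot\xi_{{}^\sigma\Pi_w}$, while ${}^\sigma(\xi_{\Pi'_w})=\xi_{{}^\sigma\Pi'_w}$ exactly. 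The product of the factors $\omega_{{}^\sigma\Pi'_w}(t_\sigma)$ is $\omega_{{}^\sigma\Pi'_f}(t_\sigma)$, and the elementary identity
\[
\omega_{{}^\sigma\Pi'_f}(t_\sigma)=\frac{\sigma(\mathcal{G}(\omega_{\Pi'_{f,0}}))}{\mathcal{G}(\omega_{{}^\sigma\Pi'_{f,0}})}
\]
converts this into the Gau\ss{} sum ratio. There is no ``collapse of $(n-1)$ Gau\ss{} sums of inducing characters to one'': only the central character of $\Pi'$ ever appears, and it appears because of how $\xi_{\Pi_w}$ was chosen.

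Two further gaps: your ``analytic description'' of ${\rm Dia}(\xi_{\Pi_f}\otimes\xi_{\Pi'_f})$ omits the factors $p(\Pi)^{-1}p(\Pi')^{-1}$ that are built into $\Theta_0$; and you do not address the reduction from general $m$ to $m=0$. The diagram and the map $\mathcal T^*$ are set up only for $m=0$; for arbitrary critical $\tfrac12+m$ one replaces $\Pi$ by $\Pi\|\cdot\|^m$, applies the $m=0$ case, and then invokes the period relation $\sigma\bigl(p(\Pi\|\cdot\|^m)/p(\Pi)\bigr)=p({}^\sigma\Pi\|\cdot\|^m)/p({}^\sigma\Pi)$ from \cite{raghuram-shahidi-imrn}, Thm.~4.1.
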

\begin{rem}\label{nogauss}  In fact, the central character of $\Pi'$ is the twisted base change of the central character of $\pi'$, and therefore its restriction to
the id\`eles of $\Q$ is trivial!   The term $\mathcal{G}(\omega_{{}^\sigma\Pi'_{f,0}})$ is nevertheless retained in the statement of the above theorem
because it applies, with minor modifications, to cohomological automorphic representations not obtained by base change, in which case the presence of the Gau\ss {} sum, as in \cite{raghuram-imrn}, is indispensable.
\end{rem}
\begin{proof}
Since (1) implies (2) by Strong Multiplicity One for isobaric automorphic representations, we only prove (1). We will proceed in two steps.\\\\
\emph{Step 1: Assume $\tfrac12$ is critical.}\\\\
For any $\underline i$, $\underline j$, $\alpha$ and $\beta$, denote
$$\varphi_{\Pi,\underline i,\alpha}:=W^{-1}(\xi_{\Pi_\infty,\underline i,\alpha}\otimes\xi_{\Pi_f})\in\Pi\quad\quad\textrm{and}\quad\quad \varphi_{\Pi',\underline j,\beta}:=W^{-1}(\xi_{\Pi'_\infty,\underline j,\beta}\otimes\xi_{\Pi'_f})\in\Pi',$$
i.e., the inverse image of our particular Whittaker vectors. It follows directly from the definition of the various maps in our diagram, that if we chase $\xi_{\Pi_f}\times \xi_{\Pi'_f}$ through the diagram, we obtain
$${\rm Dia}(\xi_{\Pi_f}\times \xi_{\Pi'_f})=\sum_{\alpha,\beta}\sum_{\underline i, \underline j}p(\Pi)^{-1}p(\Pi')^{-1}\cdot s(\underline i,\underline j)\cdot\mathcal T(e_\alpha \otimes e'_\beta)\cdot\int_{G'(\K)\backslash G'(\A_\K)} \varphi_{\Pi,\underline i,\alpha}|_{G'(\A_\K)}\cdot \varphi_{\Pi',\underline j,\beta} dg.$$
We are left to compute the latter integral. Since $\varphi_{\Pi,\underline i,\alpha}$ is cuspidal, it is well-known that
$$\int_{G'(\K)\backslash G'(\A_\K)} \varphi_{\Pi,\underline i,\alpha}(\iota(g))\cdot \varphi_{\Pi',\underline j,\beta}(g) \|\!\det(g)\!\|^{s-\tfrac12}dg$$
converges for all $s\in\C$ and equals
$$\int_{N'(\A_\K)\backslash G'(\A_\K)} (\xi_{\Pi_\infty,\underline i,\alpha}\otimes\xi_{\Pi_f})(\iota(g))\cdot (\xi_{\Pi'_\infty,\underline j,\beta}\otimes\xi_{\Pi'_f})(g) \|\!\det(g)\!\|^{s-\tfrac12} dg$$
for Re$(s)\gg 0$. For such $s$, this is furthermore equal to
$$\Psi(s,\xi_{\Pi_\infty,\underline i,\alpha},\xi_{\Pi'_\infty,\underline j,\beta})\cdot L^{S}(s,\Pi\times\Pi') \prod_{w\notin S} c_{\Pi_w}\prod_{w\in S_f} \|\!\det(t_{\Pi'_w})\!\|_w^{s-\tfrac12} vol(K'(m'_w))$$
with $S= S(\Pi')\cup S(\psi)$ by Lemma \ref{lem:sigma-xi}. By analytic continuation, we obtain
$${\rm Dia}(\xi_{\Pi_f}\times \xi_{\Pi'_f})=\frac{L(\tfrac12,\Pi_f\times \Pi'_f)}{p(\Pi)p(\Pi')p(\Pi_\infty,\Pi'_\infty)}\cdot\frac{\prod_{w\notin S} c_{\Pi_w}\prod_{w\in S_f} vol(K'(m'_w))}{\prod_{w\in S} L(\tfrac12,\Pi_w\times \Pi'_w)},$$
so
$$\sigma({\rm Dia}(\xi_{\Pi_f}\times \xi_{\Pi'_f}))=\sigma\left(\frac{L(\tfrac12,\Pi_f\times \Pi'_f)}{p(\Pi)p(\Pi')p(\Pi_\infty,\Pi'_\infty)}\right)\cdot\frac{\prod_{w\notin S} \sigma(c_{\Pi_w})\prod_{w\in S_f} vol(K'(m'_w))}{\prod_{w\in S_f} \sigma(L(\tfrac12,\Pi_w\times \Pi'_w))}.$$
Here we note that $\prod_{w\in S_f} vol(K'(m'_w))$ is a rational number. Next, observe that $\sigma(L(\tfrac12,\Pi_w\times \Pi'_w))=L(\tfrac12,{}^\sigma\Pi_w\times {}^\sigma\Pi'_w)$, which is proved in Raghuram \cite{raghuram-imrn}, Prop.\ 3.17. Moreover, since the results of Jacquet--Piatetski-Shapiro--Shalika in \cite{jac-ps-shalika-rsc} and \cite{jac-ps-shalika-mathann} as well as Clozel \cite{clozel} Lemme 4.6 are valid for $\Pi_w$, $w\notin S(\psi)$, the proof of Mahnkopf \cite{mahnk-crelle} Prop.\ 2.3.(c) may be carried over to the situation considered here. In other words, $ \sigma(c_{\Pi_w})= c_{{}^\sigma\Pi_w}$ for all $w\notin S$, cf.\ \cite{mahnk} p.\ 621 or \cite{raghuram-imrn} Prop.\ 3.21. Finally, we obtain
$$\sigma({\rm Dia}(\xi_{\Pi_f}\times \xi_{\Pi'_f}))=\sigma\left(\frac{L(\tfrac12,\Pi_f\times \Pi'_f)}{p(\Pi)p(\Pi')p(\Pi_\infty,\Pi'_\infty)}\right)\cdot\frac{\prod_{w\notin S} c_{{}^\sigma\Pi_w}\prod_{w\in S_f} vol(K'(m'_w))}{\prod_{w\in S_f} L(\tfrac12,{}^\sigma\Pi_w\times {}^\sigma\Pi'_w)}.$$
On the other hand, as all maps in the definition of our diagram are $\sigma$-equivariant, we see $\sigma({\rm Dia}(\xi_{\Pi_f}\times \xi_{\Pi'_f}))={\rm Dia}({}^\sigma\!\xi_{\Pi_f}\times {}^\sigma\!\xi_{\Pi'_f})$. Therefore, Lem.\ \ref{lem:sigma-xi} implies
$$\sigma({\rm Dia}(\xi_{\Pi_f}\times \xi_{\Pi'_f}))=\left(\prod_{w\in S_f} \omega_{{}^\sigma\Pi'_w}(t_\sigma)\right)\cdot{\rm Dia}(\xi_{{}^\sigma\Pi_f}\times \xi_{{}^\sigma\Pi'_f}).$$
As $\omega_{{}^\sigma\Pi'_w}$ is unramified outside $S({}^\sigma\Pi')$ and since $t_\sigma\in\widehat{\mathcal{O}}^*$, we get $\omega_{{}^\sigma\Pi'_w}(t_\sigma)=1$ for $w\notin S({}^\sigma\Pi')$. Hence, observing that $S(\Pi')=S({}^\sigma\Pi')$, the finite product equals $\prod_{w\in S_f} \omega_{{}^\sigma\Pi'_w}(t_\sigma)=\omega_{{}^\sigma\Pi'_f}(t_\sigma)$. We claim that there is the identity
$$\omega_{{}^\sigma\Pi'_f}(t_\sigma)=\frac{\sigma(\mathcal{G}(\omega_{\Pi'_{f,0}}))}{\mathcal{G}(\omega_{{}^\sigma\Pi'_{f,0}})}.$$
Indeed, at a place $p \in S_f$,
\begin{eqnarray*}
\mathcal G(\omega_{{}^\sigma\!\Pi'_{p,0}}) & = & \int_{\mathcal \Z^\times_p}\omega_{{}^\sigma\!\Pi'_{p,0}}(u_p)^{-1}\psi_p(y_p u_p)du_p\\
 & = & \int_{\mathcal \Z^\times_p}\sigma(\omega_{\Pi'_{p,0}}(u_p))^{-1}\sigma(\psi_p(y_p u_p t_{\sigma}^{-1}))du_p\\
 & = & \sigma(\omega_{\Pi'_{p,0}}(t_{\sigma}))^{-1} \int_{\mathcal \Z^\times_p}\sigma\left(\omega_{\Pi'_{p,0}}(u_p)^{-1}\psi_p(y_p u_p)\right)du_p\\
& = & \omega_{{}^\sigma\!\Pi'_{p,0}}(t_{\sigma})^{-1} \cdot \sigma\left(\int_{\mathcal \Z^\times_p}\omega_{\Pi'_{p,0}}(u_p)^{-1}\psi_p(y_p u_p)du_p\right)\\
& = & \omega_{{}^\sigma\!\Pi'_{p,0}}(t_{\sigma})^{-1} \cdot \sigma(\mathcal G(\omega_{\Pi'_{p,0}})),
\end{eqnarray*}
where the second last equation follows form the fact that -- by the very choice of $\psi$ -- the above integral is the finite sum over the classes modulo the conductor ideal of $\omega_{\Pi'_p}$ (on which the integrand is constant). Therefore,
$$\sigma\left(\frac{L(\tfrac12,\Pi_f\times \Pi'_f)}{p(\Pi)p(\Pi')p(\Pi_\infty,\Pi'_\infty)}\right)\cdot\frac{\prod_{w\notin S} c_{{}^\sigma\Pi_w}\prod_{w\in S_f} vol(K'(m'_w))}{\prod_{w\in S_f} L(\tfrac12,{}^\sigma\Pi_w\times {}^\sigma\Pi'_w)}=\frac{\sigma(\mathcal{G}(\omega_{\Pi'_{f,0}}))}{\mathcal{G}(\omega_{{}^\sigma\Pi'_{f,0}})}\cdot {\rm Dia}(\xi_{{}^\sigma\Pi_f}\times \xi_{{}^\sigma\Pi'_f}).$$
Recalling that $S(\Pi')=S({}^\sigma\Pi')$ and $m'_w$ is the same for $\Pi'_w$ and ${}^\sigma\Pi'_w$, we obtain by what we have seen above, that
$${\rm Dia}(\xi_{{}^\sigma\Pi_f}\times \xi_{{}^\sigma\Pi'_f})=\frac{L(\tfrac12,{}^\sigma\Pi_f\times {}^\sigma\Pi'_f)}{p({}^\sigma\Pi)p({}^\sigma\Pi')p({}^\sigma\Pi_\infty,{}^\sigma\Pi'_\infty)}\cdot\frac{\prod_{w\notin S} c_{{}^\sigma\Pi_w}\prod_{w\in S_f} vol(K'(m'_w))}{\prod_{w\in S_f} L(\tfrac12,{}^\sigma\Pi_w\times {}^\sigma\Pi'_w)}.$$
Comparing the last two equations shows (1) if $\tfrac12$ is critical.\\\\
\emph{Step 2: The general case.}\\\\
We drop the assumption that $\tfrac12$ is critical now. Let $s=\tfrac12+m$ be an arbitrary critical value of $L(s,\Pi\times\Pi')$, $m\geq0$. Then $\tfrac12$ is critical for $L(s,(\Pi\|\cdot\|^m)\times\Pi')$ and we are in the situation considered above. Hence, by what we have just observed,
\begin{eqnarray*}
\sigma\left(\frac{L(\tfrac 12,(\Pi\|\cdot\|^m)_f\times\Pi_f')}{p(\Pi\|\cdot\|^m)p(\Pi')p(\Pi_\infty\|\cdot\|_\infty^m,\Pi'_\infty)\mathcal{G}(\omega_{\Pi'_{f,0}})}\right)  & = & \frac{L(\tfrac 12,{}^\sigma(\Pi\|\cdot\|^m)_f\times{}^\sigma\Pi'_f)}{p({}^\sigma(\Pi\|\cdot\|^m))p({}^\sigma\Pi')p({}^\sigma(\Pi_\infty\|\cdot\|_\infty^m),{}^\sigma\Pi'_\infty)\mathcal{G}(\omega_{{}^\sigma\Pi'_{f,0}})}\\
& = & \frac{L(\tfrac 12+m, {}^\sigma\Pi_f \times {}^\sigma\Pi'_f)} {p({}^\sigma\Pi\|\cdot\|^m)p({}^\sigma\Pi')p(m,{}^\sigma\Pi_\infty,{}^\sigma\Pi'_\infty)\mathcal{G}(\omega_{{}^\sigma\Pi'_{f,0}})}.
\end{eqnarray*}
By \cite{raghuram-shahidi-imrn}, Thm.\ 4.1, the period for $\Pi_f$ satisfies the relation
$$\sigma\left(\frac{p(\Pi\|\cdot\|^m)}{p(\Pi)}\right)=\frac{p({}^\sigma\Pi\|\cdot\|^m)}{p({}^\sigma\Pi)},$$
where we used that $\mathcal G(\|\cdot\|_f^m)=1$. This proves the theorem.

\end{proof}


\begin{rem}  At no time in the proof did we use the assumption that $\Pi'$ is of abelian type, other than to avoid introducing additional notation.  It is therefore clear that our result also holds for cuspidal automorphic representations $\Pi'$ of $G'(\A_\K)$; in other words, the proof of Thm.\ \ref{thm:whittaker-periods}
also yields Thm.\ \ref{thm:whittaker-periods-general}. As mentioned in the introduction, in this case, i.e., if $\Pi'$ is cuspidal, it has recently also been proved by Raghuram in \cite{ragh-gln}, Thm.\ 1.1 -- even over any number field.
Moreover, it is clear that we did not need that $\Pi$ and $\Pi'$ are constructed via base change, in order to prove Thm.\ \ref{thm:whittaker-periods}. However, for later use, and to simplify the determination of the set of critical points, we already assumed this to be the case.
\end{rem}

\begin{rem}
Let us finally also remark, that we did not need to fix some open compact subgroup $K_f\subset G(\A_f)$ within the entire process of showing Thm.\ \ref{thm:whittaker-periods}.

\end{rem}

\section{Motives -- An Interlude}\label{sect:motives}

This section reviews and extends the results of \cite{harcrelle} and \cite{haradj} that are needed to prove the
main results of this paper.  The crucial definitions and calculations are scattered in different parts of the two
papers in question, allowing space for several confusing sign changes, and their conventions are moreover
not quite compatible.  We hope that the present section will
provide a more convenient reference.  Moreover, the results of the earlier papers were proved under
unnecessarily restrictive hypotheses; here we have striven for maximum generality, always assuming
that the base field is an imaginary quadratic field.

The main purpose of this section is to interpret the results on special values proved by automorphic
methods to Deligne's conjecture, stated in setting of motives for absolute Hodge
cycles.  This setting is partially hypothetical.  It is not known, for example, that two geometric realizations in the
cohomology of Shimura varieties of  the Galois
representations attached to a (motivic) algebraic Hecke character are isomorphic as motives, and in particular
that they define the same periods.  We are therefore led to introduce automorphic analogues of the invariants
that arise in the (motivic) calculation of Deligne's periods over imaginary quadratic fields.  See also \cite{yoshida2} for calculations of
Deligne periods of motives over $\Q$.

From now on we let $\textbf{\sf m}=0$, i.e., the cuspidal automorphic representation $\Pi$ is simply the unitary base change from $\pi$.

\subsection{Tensor products of motives}
The $L$-functions of the automorphic representations $\Pi$ and $\Pi'$ of the previous section are conjecturally attached to motives over
$M(\Pi)$ and $M(\Pi')$ over $\K$ of ranks $n$ and $n-1$, respectively, with coefficients in finite, possibly non-trivial extensions
\begin{center}
\framebox{$E(\Pi)/\Q(\Pi_f) \quad\quad\textrm{and}\quad\quad E(\Pi')/\Q(\Pi_f).$}
\end{center}
The passage from $\Pi$ to $M(\Pi)$ involves a standard shift, so that
$$L(s,M(\Pi)) = L(s+ \tfrac{1-n}{2},\Pi)$$
whose center of symmetry is at $s = \frac{n}{2}$ rather than at $s = \frac{1}{2}$.  One can in any case attach Hodge structures
and compatible families of $\ell$-adic representations to $\Pi$ and $\Pi'$,  pure of
weight $n-1$ and $n-2$, respectively, so the tensor product $M(\Pi)\otimes_{\K}M(\Pi')$ is pure of weight
$2n-3$, and the center of symmetry of the functional equation of its $L$-function is at the point $s = n-1$.

More generally, we can take $\Pi'$ to be an
automorphic representation of $GL_{n'}(\A_\K)$. We will be particularly interested in the cases $n' = n-1$ and $n' = 1$,
but the methods of the present paper allow us to say quite a lot about the general case.  In particular, the center
of symmetry of the functional equation is the point $\frac{n+n'-1}{2}$, which is not necessarily an integer; however, we will also be interested in critical
values to the right of the center of symmetry.
The fact that $\Pi$ and $\Pi'$ arise by base change from unitary groups is reflected in the polarization property
\begin{equation}\label{pol} M(\Pi)^c \ira M(\Pi)^{\vee}(1-n) \quad\quad\textrm{and}\quad\quad M(\Pi')^c \ira M(\Pi')^{\vee}(1-n').  \end{equation}
A formalism relating the various realizations of motives satisfying \eqref{pol} and defining their period invariants is
developed in \S 1.1 of \cite{haradj}, and the Deligne periods of the associated adjoint and tensor product motives
are computed in \S 1.3-1.4 of \cite{haradj}.

Recall that the formula of Thm.\ \ref{thm:whittaker-periods} is valid provided the initial representations $\pi$ and $\pi'$ of
the definite unitary groups $H$ and $H'$ are cohomological with respect to finite-dimensional representations satisfying
the equivalent conditions of Lem.\ \ref{lem:coeff}.  Exactly the same conditions appear in (2.3.1) of \cite{haradj} where they are
used to evaluate the Deligne period of $M(\Pi)\otimes_{\K}M(\Pi')$ -- here $n' = n-1$ -- when the Ichino-Ikeda conjecture
computes the central value $L(\tfrac 12,\Pi_f \times \Pi'_f)$ in terms of periods of integrals of automorphic forms
on $H\times H'$.  This is the same central value that appears on the left-hand side of the formulas in \ref{thm:whittaker-periods}.
We exploit this identification in subsequent sections.  Here we complete \cite{haradj} by determining the
set of all critical values and their corresponding Deligne periods, when the inequalities of Lem.\ \ref{lem:coeff} are satisfied.

Write $M = M(\Pi)$, $M' = M(\Pi')$.  As in \cite{haradj}, the Hodge types for $M$ (resp. $M'$) are
denoted
\begin{equation}\label{polwt} (p_i,q_i) := (p_i, n-1 - p_i) \quad\textrm{and}\quad (p^c_i,q^c_i):= (p_i^c,n-1-p_i^c) = (q_{n+1-i},p_{n+1-i}) \end{equation}
(resp.\ $(r_j,n'-1-r_j)$ and  $(r_j^c,n'-1-r_j^c)$).
Bearing in mind that the cohomological parameters of $\Pi$ and $\Pi'$ are respectively
given by the tuples $\mu$ and $\lambda$ as in \ref{thm:base_change} (and not by $a_i$'s and $b_j$'s as in \cite{haradj}),
we have
\begin{equation}\label{param} p_i = n-i + \mu_i, ~~~r_j = n' - j + \lambda_j \end{equation}
For any integer $m$, the Hodge types for the Tate twist $M(m)$ are
\begin{equation}\label{pol1} (p_i-m, n-1 - p_i-m) \quad\textrm{and}\quad (p_i^c-m, n-1-p_i^c -m) =   (n-1- p_{n+1-i} - m, p_{n+1-i} - m), \end{equation}
where the equality on the right is a consequence of \ref{pol}.
We extend the definition of \cite{haradj} by letting
$T(M(m),M')$ be the set of pairs  $(t, u)$ of indices, $1 \leq t \leq n$, $1 \leq u \leq n'$,
 such that  $p^c_t  - m + r^c_u \geq \frac{w+1}{2}$, and write $T(M,M') = T(M(0),M')$.
 Using \ref{pol1}, we transform the left hand side of this equation to
$$  n-1 - p_{n+1-t} + n'-1 - r_{n'+1-u} - m \geq \frac{n+n'-1-2m}{2}$$$$
 \Leftrightarrow  \frac{n+n'-3}{2}  \geq  p_{n+1-t} +  r_{n'+1-u}   $$
and this is true if and only if
\begin{equation}\label{ineq1} \frac{n+n'+1}{2}  \geq t + u + \mu_{n+1-t} +  \lambda_{n'+1-u}  .\end{equation}

\subsection{Critical values when $n'=n-1$}\label{nnminus1}
For the moment we let $n' = n-1$.  The tensor product motive we consider is $R(M\otimes M') =  R_{\K/\Q}(M\otimes M')$, of weight $w = 2n-3$.  
Now in view of \ref{lem:coeff},
\begin{rem}\label{tmm} When $n' = n-1$, $T(M,M')$ consists exactly of the set of pairs
$\{(t,u)~|~ t+u \leq n \}$, of cardinality $\frac{n(n-1)}{2}$.
\end{rem}
To calculate the Gamma factor, we follow Serre's recipe (see \cite{deligne} Sect.\ 5.2--5.3) and find
$$\prod_{a,b}\Gamma_{\C}(s - p_a - r_b) \times \prod_{t,u}\Gamma_{\C}(s - p^c_t - r^c_u)$$
where the
product is taken over pairs such that $p_a + r_b \leq \frac{w-1}{2} = n- 2$ and $p^c_t + r^c_u \leq n- 2$.
For the first set we have
$$2n-1-a - b + \mu_a + \lambda_b \leq n-2 \Leftrightarrow n+1 + \mu_a + \lambda_b \leq a + b.$$
By  the branching rules this holds if and only if $a+b \geq n+1$.  
 For the second set the polarization gives us
$$\begin{aligned}
&  p^c_t + r^c_u = 2n-3 - (p_{n+1-t} +  r_{n-u}) \leq n-2 \\ \Leftrightarrow & \,
n-1 \leq p_{n+1-t} +  r_{n-u} \\ \Leftrightarrow & \, n - 1 \leq t + u - 2 + \mu_{n+1-t}  + \lambda_{n-u}
\end{aligned}$$
which is true if and only if $n + 1 \leq t+u$.

There is therefore a pole at the integer $m$ unless
$m \geq 1 + \sup(\sup_{a+b \geq n+1} p_a + r_b, \sup_{t+u \geq n + 1} p^c_t + r^c_u)$.
The inequalities \ref{lem:coeff} imply that it suffices to consider the pairs $(a,b)$ and $(t,u)$ with $a+b = n+1$, $t+u = n + 1$.
The minimum is thus
 $$m_{min} = 1 + \sup(\sup_{a+b = n+1} n-2 + \mu_a + \lambda_b, \sup_{t+u = n+1} n- 2 - \mu_{n+1-t}  - \lambda_{n-u}).$$
 This gives the lower bound of the critical set, and the functional equation that exchanges $s$ with $w+1 - s$ implies the upper bound is

$$\begin{aligned}
m_{max} = w + 1 - m_{min} &= 2n-2 - m_{min}  \\ &= \inf(n - 1 - \sup_{1 \leq a \leq n} (\mu_a + \lambda_{n+1-a}),  n - 1+ \inf_{1 \leq t \leq n} ( \mu_{u}  + \lambda_{n - u})) \\ &= n - 1 +  \sup(-\sup_{2 \leq a \leq n} (\mu_a + \lambda_{n+1-a}),   \inf_{1 \leq u \leq n-1} ( \mu_{u}  + \lambda_{n-u})).
 \end{aligned}$$



In the following lemma, $\mu(m)$ denotes the $n$-tuple
 $\mu_1 - m \geq \mu_2 - m \geq \dots \geq \mu_n - m$, the highest weight of the representation
 $E^{\sf unt}_{\mu}\otimes \det^{-m}$.

 \begin{lem}\label{critpoints}   The set of critical points of the $L$-function $L(s,R(M\otimes M'))$ is the set of integers $s_0 = n-1 \pm m$, where $m$ runs through the non-negative integers such that
\emph{Hom}$_{G'(\C)}((E^{\sf unt}_{\mu(m)}\otimes E^{\sf unt}_{{\mu^{\sf v}}(m)})\otimes (E^{\sf unt}_\lambda\otimes E^{\sf unt}_{\lambda^{\sf v}}),\C)\neq 0.$  The maximum value of $m$ is also the minimum of the distances between successive entries in the sequence of inequalities
$$\mu_1\geq -\lambda_{n-1}\geq \mu_2\geq -\lambda_{n-2}\geq...\geq -\lambda_1\geq \mu_n.$$
 \end{lem}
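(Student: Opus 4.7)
The Gamma-factor calculation carried out in the paragraphs preceding the lemma identifies the critical integers of $L(s,R(M\otimes M'))$ with the symmetric integer interval $[m_{\min},m_{\max}]$ centered at $n-1$, where $m_{\min}=n-1+\max(A,-B)$ and $m_{\max}=n-1+\min(-A,B)$, with
\[
A:=\sup_{2\le a\le n}(\mu_a+\lambda_{n+1-a}),\qquad B:=\inf_{1\le u\le n-1}(\mu_u+\lambda_{n-u}).
\]
Hypothesis \ref{hyp:coeff} (for $\textbf{\sf m}=0$) forces $A\le 0\le B$, so writing a critical integer as $s_0=n-1\pm m$ with $m\ge 0$ characterizes it by $0\le m\le\min(-A,B)$. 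The plan is to match this range against the Hom-non-vanishing criterion in the statement.

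Using the factorization of complex algebraic representations of $G'(\C)$, viewed as a real Lie group, along the two embeddings $\iota,\bar\iota:\K\hra\C$ discussed in Sect.\ \ref{sect:autc-act}, the $G'(\C)$-Hom-space in the lemma decomposes as a tensor product
\[
\text{Hom}_{H'(\C)}(E^{\sf unt}_{\mu(m)}\otimes E^{\sf unt}_\lambda,\C)\;\otimes\;\text{Hom}_{H'(\C)}(E^{\sf unt}_{\mu^{\sf v}(m)}\otimes E^{\sf unt}_{\lambda^{\sf v}},\C),
\]
so it is non-zero iff both factors are. Applying Lemma \ref{lem:coeff} to the first factor gives the chain $\mu_1-m\ge-\lambda_{n-1}\ge\mu_2-m\ge\cdots\ge-\lambda_1\ge\mu_n-m$, which solves to $A\le m\le B$. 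Applying Lemma \ref{lem:coeff} to the second factor, after the substitutions $(\mu^{\sf v}(m))_i=-\mu_{n+1-i}-m$ and $(\lambda^{\sf v})_j=-\lambda_{n-j}$, yields $-\mu_n-m\ge\lambda_1\ge-\mu_{n-1}-m\ge\cdots\ge\lambda_{n-1}\ge-\mu_1-m$, which solves to $-B\le m\le -A$. Intersecting both with $m\ge 0$ produces precisely $0\le m\le\min(-A,B)$, which matches the Gamma-factor range and yields the first assertion.

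For the final claim about gap distances, the chain $\mu_1\ge-\lambda_{n-1}\ge\mu_2\ge-\lambda_{n-2}\ge\cdots\ge-\lambda_1\ge\mu_n$ has $2n-2$ successive gaps split into two interleaved families: the $n-1$ gaps of the form $\mu_a-(-\lambda_{n-a})=\mu_a+\lambda_{n-a}$ for $a=1,\dots,n-1$, whose infimum is $B$; and the $n-1$ gaps of the form $(-\lambda_{n+1-a})-\mu_a=-(\mu_a+\lambda_{n+1-a})$ for $a=2,\dots,n$, whose infimum is $-A$. Consequently, $\min(-A,B)$ is exactly the minimum of all $2n-2$ successive differences in the chain.

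The main technical obstacle is the careful bookkeeping required to unwind $\mu^{\sf v}$ and $\lambda^{\sf v}$ in the second application of Lemma \ref{lem:coeff}; once the interleaving chain is rewritten for the dual highest weights, the resulting inequalities in $m$ are immediate, and the combinatorial identification of gaps is just a direct inspection of the chain.
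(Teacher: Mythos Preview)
Your proof is correct and follows essentially the same approach as the paper: both apply the branching rule (Lemma~\ref{lem:coeff}) separately to the two tensor factors $E^{\sf unt}_{\mu(m)}\otimes E^{\sf unt}_\lambda$ and $E^{\sf unt}_{\mu^{\sf v}(m)}\otimes E^{\sf unt}_{\lambda^{\sf v}}$, obtain the two interlacing chains of inequalities, and compare with the $m_{\max}$ computed from the Gamma factors. Your version is simply more explicit --- you solve the inequality chains down to $A\le m\le B$ and $-B\le m\le -A$, intersect, and spell out the gap-counting argument for the last sentence --- whereas the paper states the two chains and then remarks that the conclusion is equivalent to the earlier determination of $m_{\max}$ and that the final sentence is clear.
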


\begin{proof}  The branching law implies that the existence of $G'(\C)$-equivariant homomorphisms
$$E^{\sf unt}_{\mu(m)}\otimes E^{\sf unt}_\lambda\otimes \ra \C, ~~ E^{\sf unt}_{{\mu^{\sf v}}(m)}\otimes E^{\sf unt}_{\lambda^{\sf v}} \ra \C$$
is equivalent to the two series of inequalities
$$\mu_1 - m \geq -\lambda_{n-1}\geq \mu_2 - m \geq -\lambda_{n-2}\geq...\geq -\lambda_1\geq \mu_n - m,$$
$$-\mu_1 - m \leq \lambda_{n-1} \leq -\mu_2 - m \leq \lambda_{n-2} \leq...\leq \lambda_1\leq -\mu_n - m,$$
which is equivalent to the determination of $m_{max}$ above.  The final sentence is clear.
\end{proof}

We now derive an expression for $c^+(R(M\otimes M')(m))$, following \S 1.4 of \cite{haradj}, when $n' = n-1$.
Recall the formula in Lemma 1.4.1 of {\it loc.\ cit.}:

\begin{eqnarray}\label{cplus}  c^+(R(M\otimes M')^{\vee}) &=&  c^-(R(M\otimes M')^{\vee}) \nonumber \\
&\sim& \prod_{(t,u) \in T(M,M')} Q_{n+1-t}(M)^{-1}Q_{n-u}(M')^{-1} \cdot \delta(M\otimes M')^{-1} \nonumber \\
&\sim& \prod_{t+u \leq n} Q_{n+1-t}(M)^{-1}Q_{n-u}(M')^{-1} \cdot \delta(M)^{1-n}\cdot \delta(M')^{-n}
\end{eqnarray}

Here $\delta(M)$ and $\delta(M')$ are the full period determinants studied in \S 1.2 of \cite{haradj}, specifically in Lemma 1.2.7, which
we recall here. We let $E(M)$ and $E(M')$ be the respective coefficient fields of $M$ and $M'$.

\begin{lem}\label{delta}  There are elements $d(M) \in E(M)$ and $d(M') \in E(M')$ such that
$$\delta(M)^{-1} \sim d(M)^{\frac 12}(2\pi i)^{n(n-1)/2}Q_{\det M}^{1/2} \quad\textrm{and}\quad \delta(M')^{-1} \sim d(M')^{\frac 12}(2\pi i)^{(n-1)(n-2)/2}Q_{\det M'}^{1/2},$$
where $Q_{\det M} = \prod_{t = 1}^n Q_t(M)$ and  $Q_{\det M'} = \prod_{u= 1}^{n-1} Q_u(M')$.
\end{lem}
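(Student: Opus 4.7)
The plan is to reduce the statement to a computation on the rank-one determinant motive $\det M$, and to exploit the polarization $M^c \cong M^{\vee}(1-n)$ to pin down $\delta(M)^2$ up to an element of $E(M)$. I will treat only $M$; the argument for $M'$ is identical with $n$ replaced by $n-1$.

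First I would unwind the definitions from \S1.2 of \cite{haradj}. By construction, $\delta(M)$ is the determinant of the de Rham/Betti comparison isomorphism for $M$, computed in a fixed $E(M)$-rational basis of $M_{\mathrm{dR}}$ and a fixed $E(M)$-rational basis of $M_B$, while the individual periods $Q_t(M)$ are the Hodge-graded periods attached to the step $F^{p_t}M_{\mathrm{dR}}/F^{p_t+1}M_{\mathrm{dR}}$. In particular, choosing bases adapted to the Hodge filtration one finds
$$
\delta(M) \;\sim_{E(M)}\; \prod_{t=1}^{n} Q_t(M) \cdot \det(\,\text{basis change on Betti side}\,),
$$
and the second factor is, up to $E(M)^\times$, a period of $\det M$ coming purely from the Betti/comparison side, i.e.\ a power of $2\pi i$ times a Betti period of $\det M$.

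Second, I would feed in the polarization. Taking top exterior powers in $M^c \cong M^\vee(1-n)$ gives an isomorphism
$$
(\det M)^c \;\cong\; (\det M)^{\vee}\bigl(-n(n-1)\bigr)
$$
of rank-one motives over $\K$ with coefficients in $E(M)$. On periods, this forces an identity of the form
$$
\delta(M)\cdot\overline{\delta(M)} \;\sim_{E(M)}\; (2\pi i)^{n(n-1)}\cdot Q_{\det M}\cdot \overline{Q_{\det M}},
$$
where the bar denotes the second embedding of $\K$ into $\C$. Because $\K$ is imaginary quadratic, complex conjugation permutes the two archimedean realizations, so this identity actually descends to an equality of the shape
$$
\delta(M)^{-2} \;\sim_{E(M)}\; d(M)\cdot (2\pi i)^{n(n-1)}\cdot Q_{\det M},
$$
for a uniquely determined element $d(M)\in E(M)^{\times}/(E(M)^\times)^2$. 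Taking a square root yields the stated formula.

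The main obstacle, and the only non-formal point, is the extraction of the square root: one must verify that the ambiguity introduced in passing from $\delta(M)^2$ to $\delta(M)$ is absorbed by $E(M)$ rather than by a quadratic extension. This is where the imaginary quadratic hypothesis on $\K$ is crucial — it forces the two archimedean comparison isomorphisms to be interchanged by $c$, so that the naive square root of $d(M)\cdot(2\pi i)^{n(n-1)}Q_{\det M}$ is already $E(M)$-rational up to a sign choice, and any sign is absorbed into $d(M)^{1/2}$. Everything else is bookkeeping of Hodge numbers, which is automatic from \eqref{polwt}--\eqref{param}.
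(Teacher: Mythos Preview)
The paper does not give a proof of this lemma: it is simply quoted from Lemma~1.2.7 of \cite{haradj}, where the period invariants $\delta(M)$ and $Q_t(M)$ are defined and the computation is carried out. So there is no ``paper's own proof'' to compare against beyond that citation.

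Your outline captures the right mechanism: pass to $\det M$, use the polarization $M^c \cong M^{\vee}(1-n)$ to obtain $(\det M)^c \cong (\det M)^{\vee}(-n(n-1))$, and read off the power of $2\pi i$. This is presumably exactly the argument in \cite{haradj}. But your write-up has a genuine soft spot at the ``descent'' step. From a relation of the form
\[
\delta(M)\cdot\overline{\delta(M)} \;\sim_{E(M)}\; (2\pi i)^{n(n-1)}\,Q_{\det M}\,\overline{Q_{\det M}}
\]
you cannot conclude $\delta(M)^{-2} \sim_{E(M)} d(M)\,(2\pi i)^{n(n-1)}\,Q_{\det M}$ without further input: knowing a product up to $E(M)^{\times}$ does not determine the individual factors, nor does it tell you that $\overline{\delta(M)}/\delta(M)$ and $\overline{Q_{\det M}}/Q_{\det M}$ lie in $E(M)^{\times}$. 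The sentence ``complex conjugation permutes the two archimedean realizations, so this identity actually descends'' is doing work you have not justified.

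The likely resolution is that in \cite{haradj} the invariant $\delta(M)$ is already defined as the period determinant of the \emph{restricted} motive $R_{\K/\Q}(M)$ over $\Q$, so both embeddings of $\K$ are built in from the start and there is no separate $\overline{\delta(M)}$ to contend with. The polarization then acts directly on this single determinant and the square-root ambiguity is exactly the $d(M)^{1/2}$ in the statement. If you rewrite your argument with $\delta(M)$ interpreted this way --- as the full comparison determinant for $R_{\K/\Q}(\det M)$ rather than a one-embedding period --- the descent step disappears and the computation becomes clean.
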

We introduce the new invariants
\begin{equation}\label{qm} q(M) = d(M)^{\frac 12}Q_{\det M}^{1/2} \quad\quad\textrm{and}\quad\quad Q_{\leq r}(M) = \prod_{t \leq r} Q_t(M), ~1 \leq r \leq n
 \end{equation}
and define $q(M')$ and $Q_{\leq r}(M')$ analogously. Then, combining \ref{cplus}, \ref{delta}, \ref{qm}, we obtain
\begin{eqnarray}\label{cplus1}
c^{\pm}(R(M\otimes M')^{\vee})
&\sim&  (2\pi i)^{n(n-1)^2/2}(2 \pi i)^{n(n-1)(n-2)/2}\prod_{t = 1}^{n-1}Q_t(M)^{n-t}\prod_{u = 1}^{n-1} Q_u(M)^{n-u}\cdot q(M)^{1-n}q(M')^{-n} \nonumber \\
&\sim& (2\pi i)^{n(n-1)(2n-3)/2}\prod_{r = 1}^{n-1} (Q_{\leq r}(M)\cdot Q_{\leq r}(M'))\cdot q(M)^{1-n}q(M')^{-n} \\
&\sim& (2\pi i)^{n(n-1)(2n-3)/2}\prod_{r = 1}^{n-1} [Q_{\leq r}(M)\cdot q(M)^{-1}] \prod_{r' = 1}^{n-2} [Q_{\leq r'}(M'))\cdot q(M')^{-1}] \nonumber
\end{eqnarray}
where we have used the relation $Q_{\leq n-1}(M') = Q_{\det M'} \sim q(M')^2$ to obtain the more symmetric expression in the last line. Finally, the polarization gives us an isomorphism for each integer $m$:
$$R(M\otimes M')(m) \cong R(M\otimes M')^{\vee}(m+3-2n).$$
We write $P_{\leq r}(M) = Q_{\leq r}(M)\cdot q(M)^{-1}$, and likewise for $M'$.
It thus follows from \cite{deligne}, Formula (5.1.8) that
\begin{eqnarray}\label{cplus2}
c^+(R(M\otimes M')(m)) &\sim& (2\pi i)^{n(n-1)(m+3-2n)} c^{\pm}(R(M\otimes M')^{\vee}) \nonumber \\
&\sim& (2\pi i)^{\frac{n(n-1)}{2}(2m-2n+3)}\prod_{r = 1}^{n-1} P_{\leq r}(M)\prod_{r' = 1}^{n-2} P_{\leq r'}(M')
\end{eqnarray}


\subsection{Critical values when $n' = 1$}\label{character}

We repeat the above calculation but now assume $n' = 1$ and weaken the polarization hypothesis
for $M' = M(\Pi')$.  We assume that $M'$ is of weight $-\kappa$  and that there is
 a Dirichlet character $\alpha_0$
and a non-degenerate pairing as in (1.6.2) of \cite{harcrelle}:
\begin{equation}\label{chipol} R(M')\otimes R(M') \ra \Q(\alpha_0\cdot\varepsilon_{\K})(\kappa). \end{equation}
We assume moreover that there are Hecke characters $\chi$ and $\alpha$, as in
\cite{harcrelle} (especially \S 2.9 and \S 3.5), so that $\Pi' = BC(\chi)\cdot\alpha$,
with $\chi_{\infty}(z) = z^{-k}$, $\alpha_{\infty}(z) = z^{\kappa}$, so that $\Pi'_{\infty}(z) = (z/\bar{z})^{-k}\cdot z^{\kappa}$.
The motive $R(M')$ then has Hodge types $(k-\kappa,-k); (-k,k-\kappa)$.  (The parameter $\kappa$ is needed for
parity considerations and will in practice either be $0$ or $1$.)
The restriction of $\Pi'$ to the id\`eles of $\Q$ equals $\alpha_0$ multiplied by a power of the norm; thus
the notation of  \ref{chipol} is consistent with that introduced on p.\ 92 of
\cite{harcrelle}.

The indices $b$ and $u$ only take the value $1$; we have
$r_1 = k-\kappa$ and $r_1^c = -k$, and the weight of $R(M\otimes M')$ is $n-1-\kappa$.   By definition $T(M(m),M')$ is the set of
pairs $(t,u) = (t,1)$ such that $p^c_t - m - k \geq \frac{n-\kappa - 2m}{2}$.  In other words

\begin{rem}\label{tmm1} When $n' = 1$, $T(M(m),M')$ consists exactly of the set of pairs $(t,1)$
such that
$$n - 2p^c_t  \leq -2k + \kappa.$$
Since the $p^c_t$ are decreasing,  $T(M(m),M')$ is therefore the set of $t \leq r(M')$ where
$$r(M') = \sup\{t ~|~ n - 2p^c_t  \leq -2k + \kappa\} $$
and it is of cardinality $r(M')$.   Let $s(M') = n - r(M')$.  If we assume $\kappa \equiv n \pmod{2}$, then
\begin{equation}\label{interval} -2k + \kappa \in [n - 2p^c_{r(M')}, n - 2p^c_{r(M')+1} - 2] = [n - 2q_{s(M')+1}, 2p_{s(M')} - n],
\end{equation}
where we are using \ref{polwt}.
\end{rem}

The set of critical values is determined in \cite{harcrelle}, (3.3.8.1).  When $n > 1$ we choose $\kappa$ such that the weight $w$
of $R(M)\otimes R(M')$ is odd; then $\frac{w+1}{2}$ is always critical.  When $n = n' = 1$ we take $\kappa = 0$ (resp.\ $\kappa = 1$)
and $k \neq 0$; then the near-central point $s = 1$ (resp.\ the central point $s = 0$)  is always critical.
As in the previous section, we obtain the formula

\begin{eqnarray}\label{cplus3}  c^{\pm}(R(M\otimes M')^{\vee}) &\sim& \prod_{(t \leq r(M'))} Q_{n+1-t}(M)^{-1}Q_{1}(M')^{-1} \cdot \delta(M\otimes M')^{-1} \nonumber \\
&\sim& \prod_{t' > s(M')} Q_{t}(M)^{-1}Q_{1}(M')^{-1} \cdot \delta(M)^{-1}\cdot \delta(M')^{-n} \\
&\sim & (2\pi i)^{n(n-1)/2}\cdot P_{\leq s(M')}(M) \cdot  Q_{1}(M')^{-s(M')}\delta(M')^{-n} \nonumber
\end{eqnarray}

The polarizations define an isomorphism
$$R(M\otimes M') \cong R(M\otimes M')^{\vee}(1-n+\kappa)\otimes \Q(\alpha_0\cdot\varepsilon_{\K})$$
and so as before
\begin{equation}\label{cplus4}
\begin{aligned}
c^+(R(M\otimes M')(m)) &\sim (2\pi i)^{n(1-n+\kappa) + n(n-1)/2 + nm}Q(\alpha_0\cdot \varepsilon_{\K})^n\cdot P_{\leq s(M')}(M) \cdot  Q_{1}(M')^{-s(M')}\delta(M')^{-n} \\
&\sim (2\pi i)^{mn - n(n-1)/2}Q(\alpha_0\cdot \varepsilon_{\K})^n\cdot P_{\leq s(M')}(M) \cdot  (2\pi i)^{n\kappa} Q_{1}(M')^{-s(M')}\delta(M')^{-n}
\end{aligned}
\end{equation}

\subsection{Holomorphic automorphic forms on unitary groups and automorphic critical intervals when $n' = 1$}\label{unitaryperiods}

In \cite{harcrelle} and its successor \cite{har2007}, expressions for the critical values
of $L$-functions of unitary groups are derived from the determination of fields
of rationality of Eisenstein series for the Siegel parabolic, on the one hand, and from
the construction of holomorphic differential operators.  The latter are used to specify the
signature of the unitary similitude group whose associated Shimura variety realizes
the critical values as the Petersson square norm of a (rational) holomorphic automorphic
form, up to elementary factors.   Unfortunately the definitions of the
parameters are scattered in several places with confusing sign changes between successive appearances.
For the reader's convenience, the definitions of all relevant parameters and the
main results on the existence of differential operators are reproduced here.

{\it Page numbers refer to \cite{harcrelle}.}  The parameter $(\mu_1,\dots, \mu_n)$ of $\Pi$ is denoted $(a_1,\dots,a_n)$, starting on p.\ 104;
we replace the $a$'s with $\mu$'s in what follows.   There is also a central parameter $\mathbf{c}$ (denoted $c$ in \cite{harcrelle}; we use boldface
to distinguish the parameter from complex conjugation) and assumed (on p.\ 103) to
be of the same parity as $\sum_{i = 1}^n \mu_i$, and such that the motives realized on the degree $i$ cohomology of unitary group
Shimura varieties are of weight $i - \mc$ (see p.\ 104).   The main results of \cite{harcrelle} apply to self-dual $\Pi$ and
with our conventions there it follows (though this is not adequately explained) that $\mc = 0$.   Here we assume for simplicity that $\mc = \sum_{i = 1}^n \mu_i = C(\mu)$
in the notation of p.\ 105.   This implies that the integers $\mathcal{P}(\mu)$ and $\mathcal{Q}(\mu)$ defined on p.\ 105 are given by
$$\mathcal{P}(\mu) = -\mc  \quad\textrm{and}\quad \mathcal{Q}(\mu) = 0.$$

We fix a signature $(r,s)$, with $n = r + s$.  For any integer $k$ one defines
$$\mu(k) := (\mu_1 - k,\mu_2 - k,\dots, \mu_n - k;-nk+\mc),$$
$$\Lambda(\mu(k);r,s) := (\mu_{s+1} - k - s, \dots, \mu_{n} - k - s; \mu_1 - k + r, \dots, \mu_s - k + r; -nk+\mc).$$
(buried on p.\ 136 between (2.9.8) and (2.9.9), with $r$ and $s$ switched).

The motive $M(\Pi)$ attached abstractly to $\Pi$ has the Hodge types given by \ref{polwt}.  These cannot always be
realized as cohomological motives, for several reasons.  If $\Pi$ descends to an ($L$-packet of) automorphic
representations $\pi$ of a unitary group $U(V)$, $V$ is a Hermitian space
over $\K$ of signature $(r,s)$, then we obtain a motive with coefficients in $\K$, denoted $M'(\pi,V;r)$ on p.\ 118, in the cohomology of the
corresponding Shimura variety,  denoted $Sh(V)$ in \cite{harcrelle}.  The $L$-packet contains the holomorphic
type representation denoted $\pi_{r,s}$ above.  When $\Pi$ is self-dual, as in \cite{harcrelle},
this is sufficient \footnote{More precisely, to get (the Betti realization of) a legitimate motive $M'(\pi,V;r)$ we need to take the sum $M'(\pi,V;r) \oplus M'(\pi,-V;s)$,
as on p.\ 118, and this is necessary to define the archimedean Frobenius as an operator on automorphic forms; but for the de Rham structure
it suffices to work with $M'(\pi,V;r)$.}, provided there is a unitary group $U(V)$ of signature $(n-1,1)$ to which $\Pi$ descends.  When $n$ is even there
is a local obstruction and one may only be able to realize {\it even} exterior powers of the desired $M(\Pi)$, even in the self-dual case.
We therefore make the following hypothesis:

\begin{hyp}\label{descent}  For every signature $(r,s)$ with $r+s = n$, there is a unitary group $U(V)$ of signature $(r,s)$ such
that the cuspidal automorphic representation $\Pi$ of $G(\A_\K)$ descends to an $L$-packet of automorphic representations
$\{\pi\}$ of $U(V)$.  Moreover, the $L$-packet $\{\pi\}$ contains a member whose archimedean component is in the holomorphic
discrete series.
\end{hyp}

This hypothesis will be assumed implicitly in the remainder of this section.
In the statements of the main theorems in Section 6 the hypothesis will be included explicitly;
there will also be a discussion of substitutes.
\\

When $\Pi$ is conjugate self-dual but not self-dual -- more precisely, when the central character $\omega_{\Pi}$ of $\Pi$ is
not trivial, in particular when the integer $C(\mu)$ defined above is not even --  there may be an additional obstruction.
We need to choose a Hecke character $\xi$ such that
\begin{equation}\label{xii}  \tilde{\xi} := \xi/\xi^{c} =  \omega_{\Pi}^{-1}.  \end{equation}
Such a choice is always possible, cf.\ Proposition 1.2.4 of  \cite{CHL} (which cites the book of Harris and Taylor).
This is because the Shimura
variety is defined in terms of $GU(V)$ and not $U(V)$, and an automorphic representation $\pi$ of $GU(V)$ gives rise by base change
to a pair of automorphic representations $(\Pi,\xi)$ of $GL_n(\A_{\K}) \times GL_1(\A_{\K})$.  The parameter $\mc$ is determined by the
archimedean component $\xi_{\infty}$ of $\xi$:
$$\xi_{\infty}(t) = t^{-\mc}.$$
The Hodge type of the motive realized in the cohomology of $Sh(V_1)$, with $(r,s) = (n-1,1)$ (see \cite{harcrelle}, Example 2.2.5) -- call it $M'(\pi,V_1;n-1)$ (and don't
confuse this $M'$ with the $M'$ for $GL_{n-1}$) --
is called the {\it principal Hodge type} in \S 2.2.9 of \cite{harcrelle}; it is given by
\begin{equation}\label{prinhodge} (p_i(\mu),q_i(\mu)) = (p_i + \mathcal{P}(\mu),q_i + \mathcal{Q}(\mu)) = (p_i - \mc,q_i); \end{equation}
where $(p_i,q_i)$ are attached to $\Pi$ as in \ref{polwt}.
More generally, if $V_s$ has signature $(r,s)$, the motive $M'(\pi_{r,s},V_s;r)$ attached to the (holomorphic)
representation $\pi_{r,s}$ of $GU(V)$ that is nearly equivalent to $\pi$, in the
sense that $\pi$ and $\pi_{r,s}$ have the same base change $(\Pi,\xi)$  to  $GL_n(\A_{\K}) \times GL_1(\A_{\K})$, is expected to bear the following relation
to the desired $M(\Pi)$:
\begin{equation}\label{wedge}  M'(\pi_s,V_s;r) \cong \wedge^r(M(\Pi)^{\vee}) \otimes M(\xi^c) \end{equation}
This is known to be true in most cases at the level of Galois representations (cf.\ \cite{CHL}, Proposition 4.3.8, as well as the more complete results
of Shin and Scholze--Shin), but in the absence of the Tate conjecture one can't even give a precise definition to the rank one motive $M(\xi^c)$, much
less prove a result such as \ref{wedge}.  The best we can do is to show how the expressions for special values obtained by analytic methods are
products of expressions that resemble those that would be expected if there were a complete theory of motives.   The choice of $\xi$ should have
no bearing on the expression of the special value of the $L$-function of $\Pi$; we return to this point
below.\footnote{Actually, the Tate conjecture is more than we need.
It would be enough to show that an isomorphism at the level of Galois representations implies an isomorphism at the level of motives for
absolute Hodge cycles.}

 We can now restate Corollary 3.3.8 of \cite{harcrelle}, which is the main result
 on the existence of holomorphic differential operators.  In what follows,
 notation is as on p.\ 145.   In particular, $\mathcal{E}_{m,\kappa}$ is a line bundle
 on the Shimura variety $Sh(n,n)$ (attached to a quasi-split unitary similitude
 group of size $2n$ and $E_{\Lambda,\Lambda^{\sharp}(\kappa)}$ is an automorphic
 vector bundle on the Shimura variety $Sh(V,-V)$ (essentially $Sh(V) \times Sh(-V)$).

\begin{lem}\label{diffop}   Let $n' = 1$ and $\Pi'$, $M' = M(\Pi')$, $k$, and $\kappa$ be as in \ref{character}.  Let $(r,s) = (r(M'),s(M'))$ and
let $V$ be a Hermitian space of signature $(r,s)$.  Let $m$ be a critical value of $R(M\otimes M')$ to the right
of (the central point) $\frac{1}{2}(n-\kappa)$. There is a differential operator
$$\Delta(m,\kappa,\Lambda): \mathcal{E}_{m,\kappa} \rightarrow E_{\Lambda,\Lambda^{\sharp}(\kappa)}$$
where $\Lambda = \Lambda(\mu(k);r,s)$ on the Shimura variety $Sh(V,-V)$.
\end{lem}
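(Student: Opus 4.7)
The plan is to deduce the lemma from Corollary 3.3.8 of \cite{harcrelle}, which gives the general construction of holomorphic differential operators of Shimura--Maass type on unitary Shimura varieties; the task then reduces to confirming that the parameters $(m,\kappa,\Lambda)$ arising above satisfy the admissibility conditions of that corollary.

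First I would recall the geometric setup. The line bundle $\mathcal{E}_{m,\kappa}$ lives on $Sh(n,n)$, the Shimura variety of the quasi-split unitary similitude group $GU(n,n)$, and its sections of the appropriate weight support the scalar-weight Eisenstein series used in \cite{harcrelle} to compute critical values of $L(s,R(M\otimes M'))$. The target $E_{\Lambda,\Lambda^{\sharp}(\kappa)}$ lives on $Sh(V,-V)$, which embeds in $Sh(n,n)$ via the doubling $V\oplus(-V)\simeq H_n$ of $V$ into the standard split Hermitian space of dimension $2n$. The operator $\Delta(m,\kappa,\Lambda)$ is then defined by composing pullback along this embedding with holomorphic differentiation in the normal directions, the latter being a $K$-equivariant polynomial map valued in an explicit finite-dimensional representation controlled by $\Lambda$.

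Second, I would verify that the critical-value condition translates into the existence of this operator. Representation-theoretically, the existence of $\Delta(m,\kappa,\Lambda)$ is equivalent to the condition that the $K$-type of highest weight $(\Lambda,\Lambda^{\sharp}(\kappa))$ occurs in a symmetric power of the holomorphic normal bundle tensored with $\mathcal{E}_{m,\kappa}$. Using the explicit formula $\Lambda(\mu(k);r,s)=(\mu_{s+1}-k-s,\dots,\mu_{n}-k-s;\mu_{1}-k+r,\dots,\mu_{s}-k+r;-nk+\mathbf{c})$ from \ref{unitaryperiods}, the multiplicity condition unwinds to a system of inequalities on $m$, $k$, $\kappa$ and the $\mu_{i}$'s. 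After reorganization these inequalities are precisely the statement that $m$ is critical for $L(s,R(M\otimes M'))$ and lies to the right of the central point $\tfrac{1}{2}(n-\kappa)$; the relevant arithmetic interval for $-2k+\kappa$ is exactly the one recorded in Remark \ref{tmm1}, namely $[n-2q_{s(M')+1},2p_{s(M')}-n]$, and this is what selects the signature $(r,s)=(r(M'),s(M'))$.

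The main obstacle will be the bookkeeping: one has to navigate several sign conventions and shifts simultaneously (between the automorphic parameter $k$ of $\chi$, the Hodge type $(k-\kappa,-k)$ of $R(M')$, the central parameter $\mathbf{c}$ on the unitary side, and the highest weight tuples $\mu(k)$ and $\Lambda(\mu(k);r,s)$), and show that the signature $(r,s)$ forced by Remark \ref{tmm1} is the same one for which Corollary 3.3.8 of \cite{harcrelle} applies. Once this parameter reconciliation is in place, the existence of $\Delta(m,\kappa,\Lambda)$ is immediate, and no further analytic input is required beyond what is already in \cite{harcrelle}.
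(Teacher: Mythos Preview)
Your proposal is correct and follows essentially the same approach as the paper: reduce to Corollary 3.3.8 of \cite{harcrelle} and verify that the admissibility inequalities there single out the signature $(r(M'),s(M'))$ via the interval condition of Remark~\ref{tmm1}. The paper's proof is simply more direct---it writes down the explicit inequality $\tfrac{1}{2}(n-\kappa)\leq m\leq \min(q_{s+1}+k-\kappa,\,p_s-k)$ from that corollary (after simplifying with $\mathcal{P}(\mu)=-\mathbf{c}$, $\mathcal{Q}(\mu)=0$) and checks it against \eqref{interval}, rather than re-deriving the $K$-type multiplicity condition you describe.
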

\begin{proof} Corollary 3.3.8 of \cite{harcrelle} asserts that such a differential operator exists for the pair $(r,s)$ provided
$$\frac{1}{2}(n-\kappa) \leq m \leq \min(q_{s+1}(\mu) + k-\kappa - \mathcal{Q}(\mu)$$
and
$$p_s(\mu) - k - \mathcal{P}(\mu)) = \min(q_{s+1} + k - \kappa, p_s - k),$$
where the second equality is \ref{prinhodge}.
In particular, we have $2q_{s+1} - 2k \geq n - \kappa; 2p_s - 2k \geq n - \kappa$; in other words
$$-2k + \kappa \in [n - 2q_{s+1}, 2p_{s} - n].$$
By \ref{interval} this is true exactly when $s = s(M')$.
\end{proof}

We now summarize informally the constructions of \S 3 of \cite{harcrelle} and the corresponding
sections of \cite{har2007}.  We let $\alpha$ and $\chi$ be Hecke characters of $\A_\K$ and $U(1)(\A)$,
respectively, with parameters $\kappa$ and $k$ as above.  Let $\pi$ be
an anti-holomorphic representation of  $GU(V)(\A)$ and identify its contragredient $\pi^{\vee}$ with
an anti-holomorphic representation of $GU(-V)(\A)$ as in \cite{harcrelle} (this identification is
spread out over several sections; the most important statements are Corollary 2.5.11.2, Lemma 2.8.8,  and (2.8.9.5)).
If $Eis \in \Gamma(Sh(n,n),\mathcal{E}_{m,\kappa})$ -- in practice a holomorphic Eisenstein series --
and  $f \in \pi$, $f' \in \pi^{\vee}$, and $f_{\chi} = f\otimes \chi\circ \det$, $f'_{\chi} = f'\otimes \chi^{-1}\circ \det$, then
the value of the standard $L$-function at $s = m$ can be identified, up to elementary and local terms, with
the cup product (Serre duality) pairing
\begin{equation}\label{cup} \Delta(m,\kappa,\Lambda) \cup [f_{\chi}\otimes (f'_{\chi}\otimes \alpha^{-1}\circ \det)]  \ra \C \end{equation}
This pairing is well defined provided (in the notation of p.\ 147)
\begin{equation}\label{vee} f_{\chi}\otimes (f'_{\chi}\otimes \alpha^{-1}\circ \det) \in \overline{H}^{rs}(Sh(V,-V), \Omega^{top}_{Sh}\otimes E_{\Lambda,\Lambda^{\sharp}(\kappa)}^{\vee}).
\end{equation}
where $\Omega^{top}_{Sh}$ is the canonical bundle on $Sh(V,-V)$ and $\overline{H}$ is the version of coherent cohomology
used in \cite{harcrelle}. Now there is an isomorphism of automorphic vector bundles:
\begin{equation}\label{serre}
\Omega^{top}_{Sh}\otimes E_{\Lambda,\Lambda^{\sharp}(\kappa)}^{\vee} \cong E_{\mu^{\vee}(-k),\mu^{\vee}(k-\kappa)}
\end{equation}
Here $\mu^{\vee} = (-\mu_n,-\mu_{n-1},\dots,-\mu_1;-\mc)$.  This isomorphism is stated in the third displayed formula on p.\ 150
of \cite{harcrelle} with $\mu$ in place of $\mu^{\vee}$, which is legitimate in that setting because of the self-duality hypothesis.
Now \ref{serre} implies that \ref{vee} holds provided $\Pi$ has cohomology with respect to $(E^{\sf unt}_{\mu^{\sf v}}\otimes E^{\sf unt}_{\mu})$, which is dual to the hypotheses we have been using thus far.  However,
$$R(M(\Pi)\otimes M(\Pi')) \equiv R(M(\Pi^c)\otimes M(\Pi^{\prime,c})) = R(M(\Pi^{\vee})\otimes M(\Pi^{\prime,c})).$$
Rather than change notation for $\Pi$, we replace $\Pi'$ by $\Pi^{\prime,c}$ and apply the results of \cite{harcrelle} to $\Pi^{\vee}$. Note in particular that
$$-2k + \kappa \in [n- 2q_{s+1}(\Pi^{\vee}), 2p_s(\Pi^{\vee}) - n] = [n-2q_{s+1}^c(\Pi),2p_s^c(\Pi) - n]$$
in the obvious notation.

Thus we now reverse the notation of \ref{character} and write
$$\Pi^{\prime,c} = BC(\chi)\cdot\alpha, ~~M^{\prime,c}= M(\Pi^{\prime,c}). $$
where the parameters $k$ and $\kappa$ are attached to $\chi$ and $\alpha$ rather than to $\Pi'$. Of course $M(\Pi^{\vee})$ has cohomology with respect to $E^{\sf unt}_{\mu^{\sf v}}\otimes E^{\sf unt}_{\mu}$. The signature $(r(M^{\prime,c}),s(M^{\prime,c}))$ is attached to the pair $(M(\Pi^{\vee}) = M(\Pi)^c, M^{\prime,c})$,
so we will write
$$(r(M^{\prime,c}),s(M^{\prime,c})) = (r(M^{\prime,c};\Pi^{\vee}),s(M^{\prime,c};\Pi^{\vee})).$$
Replacing $M^{\prime,c}$ by $M'$ replaces $-2k+ \kappa \in [n-2q_{s+1}^c(\Pi),2p_s^c(\Pi) - n]$ by
$2k-\kappa \in [n-2p^c_s(\Pi),2q^c_{s+1}(\Pi)-n] = [n- 2q_{r+1}(\Pi), 2p_r(\Pi) - n]$.   Thus
$$(r(M^{\prime,c};\Pi^{\vee}),s(M^{\prime,c};\Pi^{\vee})) = (s(M';\Pi), r(M';\Pi)).$$

The above calculations apply and give us the following version of Theorem 3.5.13 of \cite{harcrelle} and Theorem 4.3. Recall that when $\beta$ is a Hecke character, we have defined $\check{\beta} = (\beta^c)^{-1}$ on p.\ 82 of \cite{harcrelle}. This should not be confused with $\beta^{\vee} = \beta^{-1}$. In what follows, we let $\pi = \pi(\Pi^{\vee},\xi)$  be an anti-holomorphic automorphic representation of $GU(V)(\A)$, for some Hermitian space $V$ to be specified, whose base change equals $(\Pi^{\vee},\xi)$. For any integer $j \in [0,n]$ define
$P^{(j)}(\Pi^{\vee},\xi)$ to be the period denoted $P^{(j)}(\Pi,\star,\beta)$ in \cite{harcrelle}, (2.8.2): It is the Petersson inner product with itself of a rationally normalized holomorphic form on $GU(V)$ whose restriction to $U(V)$ is weakly equivalent to $\pi$ on a Hermitian space of signature $(n-j,j)$. We set
\begin{equation}\label{petersson}
\textrm{\framebox{$P^{(j)}(\Pi) :=
(2\pi)^{-\mc}P^{(j)}(\Pi^{\vee},\xi).$}}
\end{equation}
This can be shown to be independent of the choice of $\xi$ when appropriate $L$-functions have non-vanishing critical values (see \ref{similitudechar} for an explanation).  In what follows, we will apply this when $j = s(M^{\prime,c};\Pi^{\vee}) = r(M';\Pi)$. We write
$$G(\Pi^{\prime,c}) =  (2\pi i)^k p(\check{\chi}^{(2)}\cdot \check{\alpha},1).$$
The notation $\alpha_{f,0}$ for the restriction of (the finite part of) a Hecke character to the id\`eles of $\Q$, and $\mathcal{G}$ for the Gau\ss {} sum, are as in \S 3.

\begin{thm}\label{crelle}  Let $m \geq \frac{1}{2}(n-\kappa)$ be a critical value of $L(s,R(M(\Pi)\otimes M(\Pi'))) = L(s,R(M(\Pi^{\vee})\otimes M(\Pi^{\prime,c})))$. Let $(r,s) = (r(M^{\prime,c}),s(M^{\prime,c}))$ and let $V$ be a Hermitian space of dimension $n$ over $\K$ with signature $(r,s)$. Let $\xi$ be a Hecke character of $\K$ satisfying \ref{xii}. Let $G(\Pi^{\prime,c}) =  (2\pi i)^k p((\check{\chi}^{(2)}\cdot \check{\alpha}),1)$.  Then
  \begin{equation*}\begin{aligned}
 L(m,R(M(\Pi)\otimes M(\Pi'))) &\sim_{\K E(\Pi) E(\Pi')}
  (2 \pi i)^{mn - \frac{n(n-1)}{2}} \mathcal{G}(\varepsilon_{\K,f})^{\lceil\frac{n}{2}\rceil}P^{(s)}(\Pi) [(2\pi i)^{\kappa}\mathcal{G}(\alpha_{f,0})]^s G(\Pi^{\prime,c})^{n-2s} \\
 \sim_{\K E(\Pi) E(\Pi')}
  &(2 \pi i)^{mn - \frac{n(n-1)}{2}} \mathcal{G}(\varepsilon_{\K,f})^{\lceil\frac{n}{2}\rceil}P^{(r(M';\Pi))}(\Pi) [(2\pi i)^{\kappa}\mathcal{G}(\alpha_{f,0})]^{r(M';\Pi)} G(\Pi^{\prime,c})^{n-2r(M';\Pi)}
 \end{aligned}
 \end{equation*}
 \end{thm}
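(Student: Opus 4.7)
The plan is to deduce this theorem from the machinery of \cite{harcrelle} and its sequel \cite{har2007}, recast in the more general conjugate self-dual setting (where $\Pi$ need not be self-dual) and for arbitrary critical $m$ to the right of the center. By Hypothesis \ref{descent} together with the discussion of \ref{xii}--\ref{wedge}, I would first fix $V$ of signature $(r,s) = (r(M^{\prime,c}),s(M^{\prime,c}))$ and realize $\Pi^\vee$ as a component of the base change of an anti-holomorphic representation $\pi = \pi(\Pi^\vee,\xi)$ of $GU(V)(\A)$. The two forms of the asserted identity are then interchangeable via $(r(M^{\prime,c};\Pi^\vee),s(M^{\prime,c};\Pi^\vee)) = (s(M';\Pi),r(M';\Pi))$, a combinatorial swap recorded in the interval \ref{interval}.

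Next, following the Garrett--Piatetski-Shapiro--Rallis doubling method, I would represent $L(m,\Pi^\vee\otimes\Pi^{\prime,c})$ by a zeta integral against a Siegel--Eisenstein section for $GU(n,n)$ restricted to $Sh(V,-V)$ and evaluated at $s=m$. At a critical point $m \geq \tfrac12(n-\kappa)$, Lemma \ref{diffop} provides a holomorphic differential operator $\Delta(m,\kappa,\Lambda)$ converting the nearly holomorphic Eisenstein section into a holomorphic section of $E_{\Lambda,\Lambda^\sharp(\kappa)}$. Via \ref{cup} and the Serre-duality identification \ref{serre} (using $\mu^\vee$ in place of $\mu$, which is the point at which we must work with $\Pi^\vee$ rather than $\Pi$), the zeta integral rewrites as a cup product of rational coherent cohomology classes on $Sh(V,-V)$. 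Invoking the arithmetic normalization of the Siegel--Eisenstein series (the central input of \S 3 of \cite{harcrelle}, supplemented by \cite{har2007}), together with the rationality of its Fourier expansion, produces the archimedean power $(2\pi i)^{mn-n(n-1)/2}$ and the Gau\ss{} sums $\mathcal G(\varepsilon_{\K,f})^{\lceil n/2\rceil}\cdot[(2\pi i)^\kappa \mathcal G(\alpha_{f,0})]^s$, all up to $\K E(\Pi)E(\Pi')$-factors.

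The remaining contribution from the cohomology classes $f_\chi\otimes(f'_\chi\otimes\alpha^{-1}\circ\det)$ is the arithmetically normalized Petersson square norm of the holomorphic form on $GU(V)$ matching $\pi$: by \ref{petersson} this is exactly $P^{(s)}(\Pi^\vee,\xi) = (2\pi)^{\mc}P^{(s)}(\Pi)$. The residual twist by $\chi\cdot\alpha$ and the passage from $GU(V)$ to $U(V)$ (using the identifications $\check\chi = (\chi^c)^{-1}$ as on p.~82 of \cite{harcrelle}) yields the factor $G(\Pi^{\prime,c})^{n-2s} = [(2\pi i)^k p(\check\chi^{(2)}\cdot\check\alpha,1)]^{n-2s}$, accounting for the discrepancy of central characters between $\pi$ and $\pi^\vee$. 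Assembling the three contributions completes the formula.

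The main obstacle is purely bookkeeping: I must verify that the final expression is independent of the auxiliary Hecke character $\xi$ of \ref{xii}, because only then does the left hand side make sense without that choice. Replacing $\xi$ by $\xi\cdot\nu$ with $\tilde\nu = 1$ twists $P^{(s)}(\Pi^\vee,\xi)$ and $G(\Pi^{\prime,c})$ in compensating ways, and this cancellation is precisely what is built into the normalization \ref{petersson}. A second care-point is tracking the $\mu \leftrightarrow \mu^\vee$ substitution in \ref{serre}, which forces the swap $\Pi' \leftrightarrow \Pi^{\prime,c}$ in the middle of the argument; it is for this reason that the parameters $k$, $\kappa$, and the signature $(r,s)$ in the final statement are attached to $M^{\prime,c}$ rather than to $M'$. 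Neither issue introduces any automorphic input beyond that of \cite{harcrelle} and \cite{har2007}, so once the dictionary is in place the proof reduces to a careful transcription of Theorem~3.5.13 of \cite{harcrelle}.
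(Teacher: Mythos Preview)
Your proposal is correct and follows the same route as the paper. Indeed, the paper does not supply a self-contained proof of this theorem at all: it is stated as ``the following version of Theorem 3.5.13 of \cite{harcrelle} and Theorem 4.3 [of \cite{har2007}]'', with the preceding paragraphs of \S\ref{unitaryperiods} doing exactly the bookkeeping you describe (the cup product \ref{cup}, Serre duality \ref{serre}, the forced passage $\Pi' \rightsquigarrow \Pi^{\prime,c}$, and the signature swap $(r(M^{\prime,c};\Pi^\vee),s(M^{\prime,c};\Pi^\vee)) = (s(M';\Pi),r(M';\Pi))$). Your sketch of the doubling method and the role of Lemma \ref{diffop} is a faithful unpacking of what those cited theorems actually contain.

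One small correction on the $\xi$-independence discussion. You write that changing $\xi$ to $\xi\cdot\nu$ twists $P^{(s)}(\Pi^\vee,\xi)$ and $G(\Pi^{\prime,c})$ ``in compensating ways,'' but $G(\Pi^{\prime,c}) = (2\pi i)^k p(\check\chi^{(2)}\cdot\check\alpha,1)$ depends only on $\chi$ and $\alpha$, not on $\xi$. The paper's actual argument (Remark \ref{similitudechar}) is {\it a posteriori}: since the left side of the theorem is visibly independent of $\xi$, and every other term on the right is as well, the Petersson norms $(2\pi)^{-\mc}P^{(s)}(\Pi^\vee,\xi)$ for different admissible $\xi$ must agree up to $\K E(\Pi)E(\Pi')^\times$, provided the critical value is nonzero. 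This is not a gap in your proof---the theorem as stated already has $\xi$ on both sides implicitly through $P^{(s)}(\Pi)$---but your explanation of why the dependence disappears is not quite the right mechanism.
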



\begin{op}\label{comparison} Now let us compare this to the critical value predicted by Deligne's conjecture,
namely \ref{cplus4}:
$$(2\pi i)^{mn - \frac{n(n-1)}{2}}Q(\alpha_0\cdot \varepsilon_{\K})^n\cdot P_{\leq s(M')}(M) \cdot  (2\pi i)^{n\kappa} Q_{1}(M')^{-s(M')}\delta(M')^{-n} .$$
Or rather compare
\begin{equation}\mathcal{G}(\varepsilon_{\K,f})^{\lceil\frac{n}{2}\rceil}(2\pi)^{-\mc}P^{(j)}(\Pi^{\vee},\xi) [(2\pi i)^{\kappa}\mathcal{G}(\alpha_{0,f})]^{r(M';\Pi)} G(\Pi^{\prime,c})^{n-2r(M';\Pi)} \tag{A}
\end{equation}
to
\begin{equation} Q(\alpha_0\cdot \varepsilon_{\K})^n\cdot P_{\leq s(M')}(M) \cdot  (2\pi i)^{n\kappa} Q_{1}(M')^{-s(M')}\delta(M')^{-n}. \tag{M} \end{equation}
Applying \ref{wedge}, and bearing in mind that the base change of $\pi$ is $(\Pi^{\vee},\xi^{-1})$, we see that $i^{\mc}(2\pi)^{-\mc}P^{(j)}(\Pi^{\vee},\xi) $ is the correct normalization for the Petersson inner product of a rational class in the bottom stage of the Hodge filtration of
$\wedge^{s(M',\Pi)}M(\Pi)\otimes M(\xi^c)$.  (The presence of the power $i^{\mc}$, which will disappear in our applications in any case,
is  explained in \cite{harcrelle}, Lemma 2.8.8.)  On the other hand, $P_{\leq s(M')}(M) = Q_{\leq s(M')}(M)\cdot q(M)^{-1}$ is the product of the Petersson norm
of a rational class in the bottom stage of the Hodge filtration of $\wedge^{s(M',\Pi)}M(\Pi)$, and plausible arguments can be made to justify
identifying $(2\pi i)^{-\mc}q(M)^{-1}$ with the Petersson norm of $M(\xi^c)$ (cf. \cite{harcrelle}, (3.7.9.3)-(3.7.9.5), where however it is assumed
that $\mc = 0$).

\begin{rem}\label{similitudechar}  Note that the automorphic side (A) of the comparison involves a Petersson norm on the unitary similitude group, and therefore
depends in principle on the auxiliary Hecke character $\xi$ defined in \ref{xii}; we should really write out the expression
$(2\pi)^{-\mc}P^{(r(M';\Pi))}(\Pi,\xi)$ rather than the abbreviation $P^{(r(M';\Pi))}(\Pi)$.  The motivic side (M), on the other hand, is independent of this choice.
Now Theorem \ref{crelle} shows that replacing $\xi$ with $\xi'$, also assumed to satisfy \ref{xii}, gives another expression for the same critical value;
in particular, if the critical value does not vanish, then we see that $(2\pi)^{-\mc}P^{(r(M';\Pi))}(\Pi,\xi) \sim (2\pi)^{-\mc'}P^{(r(M';\Pi))}(\Pi,\xi')$ (where $\sim$
is taken over a simultaneous field of definition for $\Pi, \xi, \xi'$ and $c'$ bears the same relation to $\xi'$ that $c$ bears to $\xi$.  The ``plausible
arguments" mentioned above give a motivic interpretation for this identity.
\end{rem}

It thus remains to justify an identification
\begin{equation}\label{n1}  Q(\alpha_0\cdot \varepsilon_{\K})^n\cdot (2\pi i)^{n\kappa} Q_{1}(M')^{-s(M')}\delta(M')^{-n} \sim
\mathcal{G}(\varepsilon_{\K,f})^{\lceil\frac{n}{2}\rceil}[(2\pi i)^{\kappa}\mathcal{G}(\alpha_{0,f})]^{r(M';\Pi)} G(\Pi^{\prime,c})^{n-2r(M';\Pi)}.\end{equation}
We provide such a justification, at least up to Galois conjugation over $\K$, in the next section.


\end{op}


\subsection{The case of a Hecke character}

We now assume $n = 1$ and $\Pi$ the trivial character $\triv$, so $M = \Q(0)$.
We have $r(M') = r(M';\triv) = 1$ if $-2k+\kappa  \geq 1$, $r(M') = 0$ if $-2k+\kappa \leq 0$.  Then if $R(M')(m)$ is critical, \ref{cplus4} becomes
\begin{equation}\label{cplus11}
c^+(R(M')(m)) \sim \begin{cases} (2\pi i)^{m+\kappa} Q(\alpha_0\cdot \varepsilon_{\K}) \cdot \delta(M')^{-1}, & \text{if } 2k - \kappa \leq -1 \\
 (2\pi i)^{m + \kappa} Q(\alpha_0\cdot \varepsilon_{\K}) \cdot Q_1(M')^{-1}\delta(M')^{-1} &\text{if } 2k - \kappa  \geq 0
\end{cases}
\end{equation}

Since Deligne's conjecture is known for critical values of $L(s,R(M'))$, we thus have
\begin{equation}\label{damarell}  L(m,R(M')) \sim c^+(R(M')(m)) \sim \begin{cases} (2\pi i)^{m+\kappa} Q(\alpha_0\cdot \varepsilon_{\K}) \cdot \delta(M')^{-1}, & \text{if }
2k - \kappa \leq -1 \\
 (2\pi i)^{m+\kappa} Q(\alpha_0\cdot \varepsilon_{\K}) \cdot Q_1(M')^{-1}\delta(M')^{-1} &\text{if } 2k - \kappa  \geq 0
\end{cases}
\end{equation}
Here $\sim$ means $\sim_{\Q(M');\K}$ as in \cite{harcrelle}, because we are going to compare with the result of \ref{crelle}:
\begin{equation}\label{hecke}
L(m,R(M')) \sim (2\pi i)^m [(2\pi i)^{\kappa}\mathcal{G}(\alpha_{0,f})]^{r(M')}G(\Pi^{\prime,c})^{1-2r(M')}
\end{equation}
Assume $r(M') = 0$, so $2k - \kappa \geq 0$.  Then comparing \ref{hecke} and \ref{damarell} we find
$$Q(\alpha_0\cdot \varepsilon_{\K})\cdot Q_1(M')^{-1}\delta(M')^{-1} \sim G(\Pi^{\prime,c}).$$

Suppose for the moment $\alpha = 1$, $\kappa = 0$.    So we
compare $$Q(\varepsilon_{\K})^nQ_{1}(M')^{-s(M')}\delta(M')^{-n}$$ on the Deligne side with
$$\mathcal{G}(\varepsilon_{\K,f})^{\lceil\frac{n}{2}\rceil}G(\Pi^{\prime,c})^{n-2r(M';\Pi)} \sim \mathcal{G}(\varepsilon_{\K,f})^{\lceil\frac{n}{2}\rceil}
\cdot [Q(\varepsilon_{\K})\cdot Q_1(M')^{-1}\delta(M')^{-1}]^{n-2r(M')}$$
on the automorphic side.   Substituting $Q_1(M') \sim \delta(M')^{-2}$, we find
$Q(\varepsilon_{\K})^n \delta(M')^{2s-n}$ on the Deligne side and
$\mathcal{G}(\varepsilon_{\K,f})^{\lceil\frac{n}{2}\rceil}Q(\varepsilon_{\K})^{n-2r} \cdot \delta(M')^{n-2r}$, which coincide up to a power
of the Gau\ss {} sum.

The case of non-trivial $\alpha$ is only slightly more complicated, and will not be used in the sequel.



\section{Rational Eisenstein classes of abelian type}
\subsection{Boundary cohomology}
Recall the abelian representation $\Pi'$ from Sect.\ \ref{sect:Pi'} and the space $S_{n-1}$ from Sect.\ \ref{sect:autcoh}. We denote by $\overline S_{n-1}$ the Borel-Serre compactification of $S_{n-1}$ and by $\partial\overline S_{n-1}$ its boundary, cf.\ \cite{boserre}, \cite{roh}. The face coming from the Borel subgroup $B'$ of $G'$ is denoted $\partial_{B'}\overline S_{n-1}$. It is given by
$$\partial_{B'}\overline S_{n-1}=B'(\K)\backslash G'(\A_\K)/K'.$$
Since the only automorphic representations of the torus $T'\subset B'$ are its Hecke characters $\chi$ it follows that
$$H^q(\partial_{B'}\overline S_{n-1},\E_\lambda)=\bigoplus_{\chi }H^q(\g',K',\textrm{Ind}^{G'(\A_\K)}_{B'(\A_\K)}[\chi]\otimes E_\lambda),$$
see e.g. Harder \cite{harder1447}. Observe that induction is unitary here and that $\pi_0(G'(\C))=\pi_0(B'(\C))=0$, which simplifies things. Recall the character $\tilde\tau=BC(\chi_1)\gamma\otimes...\otimes BC(\chi_{n-1})\gamma$ from Sect.\ \ref{sect:cuspEisinj} and let $W(G')$ be the Weyl group of $G'$. For $w\in W(G')$, we let $\tilde\tau^w$ be the representation
$$\tilde\tau^w:=BC(\chi_{w^{-1}(1)})\gamma\otimes...\otimes BC(\chi_{w^{-1}(n-1)})\gamma$$
of $T'(\A_\K)$, where $w$ is viewed as a permutation. In particular, we may write
$$H^q(\partial_{B'}\overline S_{n-1},\E_\lambda)=\bigoplus_{w\in W(G')}H^q(\g',K',\textrm{Ind}^{G'(\A_\K)}_{B'(\A_\K)}[\tilde\tau^w]\otimes E_\lambda)\oplus\bigoplus_{\substack{\chi\neq\tilde\tau^w\\ \forall w\in W(G')} }H^q(\g',K',\textrm{Ind}^{G'(\A_\K)}_{B'(\A_\K)}[\chi]\otimes E_\lambda),$$
and there is no weak $G'(\A_f)$-intertwining between the two sums. Hence, for degree $q=b_{n-1}$ we obtain
\begin{equation}\label{eq:boundary_coh}
H^{b_{n-1}}(\partial_{B'}\overline S_{n-1},\E_\lambda)=\bigoplus_{w\in W(G')}\textrm{Ind}^{G'(\A_f)}_{B'(\A_f)}[\tilde\tau_f^w]\oplus\bigoplus_{\substack{\chi\neq\tilde\tau^w\\ \forall w\in W(G')} }H^{b_{n-1}}(\g',K',\textrm{Ind}^{G'(\A_\K)}_{B'(\A_\K)}[\chi]\otimes E_\lambda),
\end{equation}
and the summands in the first sum are all irreducible.

\begin{prop}\label{prop:rational_Eisenstein}
For all $\sigma\in$ \textrm{\emph{Aut}}$(\C)$, the following diagram of $G'(\A_f)$-equivariant homomorphisms commutes:

\begin{displaymath}
\xymatrix{
H^q(\g',K',\Pi'\otimes E_\lambda) \ar@{^{(}->}[r]\ar[d]^{\sigma^*} & H^q(\overline S_{n-1},\mathcal E_\lambda) \ar[d]^{\sigma^*} \ar[r]^{res^q} &
H^q(\partial\overline S_{n-1},\mathcal E_\lambda) \ar[d]^{\sigma^*} \ar[r]^{res_{B'}^q} & H^q(\partial_{B'}\overline S_{n-1},\mathcal E_\lambda) \ar[d]^{\sigma^*}\\
H^q(\g',K',{}^\sigma\Pi'\otimes {}^\sigma\! E_\lambda) \ar@{^{(}->}[r] & H^q(\overline S_{n-1},{}^\sigma\!\mathcal E_\lambda) \ar[r]^{res^q} &
H^q(\partial\overline S_{n-1},{}^\sigma\!\mathcal E_\lambda) \ar[r]^{res_{B'}^q} & H^q(\partial_{B'}\overline S_{n-1},{}^\sigma\!\mathcal E_\lambda)}
\end{displaymath}
Furthermore, there are sections $s_{B'}$ to the composition $r_{B'}$ of the morphisms in the rows of the diagram,
$$s_{B'}:H^{b_{n-1}}(\partial_{B'}\overline S_{n-1},{}^\sigma\!\mathcal E_\lambda) \longrightarrow H^{b_{n-1}}(\g',K',{}^\sigma\Pi'\otimes {}^\sigma\! E_\lambda).$$
\end{prop}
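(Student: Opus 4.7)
The plan is as follows. The commutativity of the two rightmost squares is essentially automatic: the restriction maps $res^q$ and $res^q_{B'}$ are induced by inclusions of $\Q$-rationally defined strata in the Borel-Serre compactification, and therefore commute with the $\text{Aut}(\C)$-action on sheaf cohomology by functoriality. For the leftmost square, the map $H^q(\g',K',\Pi'\otimes E_\lambda) \hookrightarrow H^q(\overline S_{n-1}, \mathcal{E}_\lambda)$ is the composition of $\Psi^{\sf Eis}_{\Pi'}$ with the canonical isomorphism of interior and compactified cohomology. Its $\text{Aut}(\C)$-equivariance was already observed at the end of Section \ref{sect:cuspEisinj}: since $\tilde\tau$ is regular, the Eisenstein series $E(f,\Lambda)$ are holomorphic at $\Lambda=0$, and $Eis_{\Pi'}$ sends the $\Q(\Pi'_f)$-rational structure on $\text{Ind}_{B'(\A_f)}^{G'(\A_f)}[\tilde\tau_f]$ described in Remark \ref{rem:induced} to the arithmetic rational structure on Eisenstein cohomology.

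To construct the section $s_{B'}$, I would compute $r_{B'}$ explicitly via Langlands' constant term formula. For a $K'$-finite section $\phi \in \text{Ind}_{B'(\A_\K)}^{G'(\A_\K)}[\tilde\tau]$, the constant term of $E(\phi,0)$ along $B'$ equals $\sum_{w \in W(G')} M(w,0)\phi$, where $M(w,\Lambda)\colon \text{Ind}[\tilde\tau_\Lambda] \to \text{Ind}[w\tilde\tau^w_{w\Lambda}]$ is the standard intertwining operator. Regularity of $\tilde\tau$ (equivalently, the strict inequalities $k_1 > k_2 > \cdots > k_{n-1}$ from Section \ref{sect:Pi'}) guarantees that each $M(w,0)$ is a well-defined isomorphism and that $M(e,0) = \text{id}$. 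Passing to cohomology and using that $H^{b_{n-1}}(\g',K',\Pi'_\infty\otimes E_\lambda)$ is one-dimensional with fixed generator $[\Pi'_\infty]$, this identifies $r_{B'}$ with the map $\Pi'_f \to \bigoplus_{w \in W(G')} \text{Ind}^{G'(\A_f)}_{B'(\A_f)}[\tilde\tau_f^w]$ given by $\phi \mapsto (M(w,0)\phi)_w$. The section $s_{B'}$ is then obtained by projecting onto the $w=e$ summand in the decomposition \eqref{eq:boundary_coh} and reinterpreting $\text{Ind}^{G'(\A_f)}_{B'(\A_f)}[\tilde\tau_f] = \Pi'_f$ as an element of $H^{b_{n-1}}(\g',K',\Pi'\otimes E_\lambda)$ via tensoring with $[\Pi'_\infty]$. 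The same construction, applied to ${}^\sigma\Pi'$ for each $\sigma \in \text{Aut}(\C)$, yields the section landing in $H^{b_{n-1}}(\g',K',{}^\sigma\Pi'\otimes {}^\sigma\! E_\lambda)$.

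The main technical obstacle will be checking that the family $\{s_{B'}\}_\sigma$ is coherent, i.e., that the projection to the $w=e$ summand intertwines the vertical $\sigma^*$ maps. This reduces to showing that the decomposition \eqref{eq:boundary_coh} is $\text{Aut}(\C)$-stable summand-by-summand, which follows from Proposition \ref{prop:sigmatwistabelian} applied componentwise (so that ${}^\sigma(\tilde\tau^w) = ({}^\sigma\tilde\tau)^w$) together with the explicit rational structure on induced representations of Remark \ref{rem:induced}. Once this is verified, the identity $s_{B'} \circ r_{B'} = \text{id}$ is immediate from $M(e,0) = \text{id}$, completing the proof.
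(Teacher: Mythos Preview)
Your proposal is correct and follows essentially the same route as the paper. Both arguments reduce the existence of the section to the Langlands constant term formula $E(f,0)|_{B'} = \sum_{w\in W(G')} M(w,\tilde\tau)f$ and the decomposition \eqref{eq:boundary_coh}; the paper also explicitly skips the commutativity of the diagram (``We only prove the last assertion''), just as you treat it as functorial.

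The one genuine difference is in how the section is extracted. The paper argues abstractly: since the Eisenstein series are holomorphic at $\Lambda=0$, the restricted class is non-zero; the source $\Pi'_f$ is irreducible, so $r_{B'}$ is injective; each summand $\textrm{Ind}^{G'(\A_f)}_{B'(\A_f)}[\tilde\tau_f^w]$ is irreducible, so the image projects onto some summand isomorphically, and any such projection yields a left inverse. You instead single out the $w=e$ summand and use $M(e,0)=\mathrm{id}$ to see that $\mathrm{proj}_{w=e}\circ r_{B'}$ is literally the identity. Your version is more explicit and immediately gives the Aut$(\C)$-coherence of the sections (which the paper does not address at this point), while the paper's version avoids having to check that the archimedean identification of the $w=e$ summand with $H^{b_{n-1}}(\g',K',\Pi'\otimes E_\lambda)$ via $[\Pi'_\infty]$ is the correct one. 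One small caution: your appeal to the end of Section~\ref{sect:cuspEisinj} for the leftmost square is slightly forward-referential, since the $\Q(\Pi'_f)$-rationality of $\Psi^{\sf Eis}_{\Pi'}$ asserted there is precisely what Corollary~\ref{cor:rational_Eisenstein} deduces from the present proposition; but since the paper itself declines to justify that square, this is not a defect relative to the original.
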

\begin{proof}
We only prove the last assertion. Recall the archimedean vectors $\xi_{\Pi'_\infty,\underline{j},\beta}\in W(\Pi'_\infty)$ from Sect.\ \ref{sect:Theta}. Then, for every non-zero $\xi_f\in W(\Pi'_f)$, we let
$$E_{\Pi',\underline{j},\beta}:=W^{-1}(\xi_{\Pi'_\infty,\underline{j},\beta}\otimes\xi_f)\in\Pi'$$
be the corresponding Eisenstein series. Hence,
$$\sum_{\underline j,\beta}\left(X'^*_{\underline j}\otimes E_{\Pi',\underline{j},\beta}\otimes e'_\beta\right)\in H^{b_{n-1}}(\g',K',\Pi'\otimes E_\lambda)$$
is a non-zero cohomology class by the choice of $\xi_{\Pi'_\infty,\underline{j},\beta}$. Via the inclusion
$$H^{b_{n-1}}(\g',K',\Pi'\otimes E_\lambda) \hookrightarrow H^{b_{n-1}}(\overline S_{n-1},\mathcal E_\lambda)$$
it is mapped onto
$$\sum_{\underline j,\beta}\left(E_{\Pi',\underline{j},\beta}\otimes e'_\beta\right) dx_{\underline j}\in H^{b_{n-1}}(\overline S_{n-1},\mathcal E_\lambda) $$
and finally, when restricting the the Borel-stratum, we obtain
$$\sum_{\underline j,\beta}\left(E_{\Pi',\underline{j},\beta}|_{B'}\otimes e'_\beta\right) dx_{\underline j}\in H^{b_{n-1}}(\partial_{B'}\overline S_{n-1},\mathcal E_\lambda) .$$
Here, $E_{\Pi',\underline{j},\beta}|_{B'}$ is the constant term of the Eisenstein series $E_{\Pi',\underline{j},\beta}$ with respect to the Borel subgroup $B'$, see \cite{schwLNM} Satz 1.10. According to \cite{moewal} Prop.\ II.7 this constant term can be written as
$$E_{\Pi',\underline{j},\beta}|_{B'}=\sum_{w\in W(G')} M(w,\tilde\tau) f_{\Pi',\underline{j},\beta},$$
where $f_{\Pi',\underline{j},\beta}$ is a $K'$-finite section in $\textrm{Ind}^{G'(\A_\K)}_{B'(\A_\K)}[\tilde\tau]$, used to define the Eisenstein series $E_{\Pi',\underline{j},\beta}$ and $M(w,\tilde\tau)$ is the corresponding intertwining operator,
$$M(w,\tilde\tau): \textrm{Ind}^{G'(\A_\K)}_{B'(\A_\K)}[\tilde\tau]\ra \textrm{Ind}^{G'(\A_\K)}_{B'(\A_\K)}[\tilde\tau^w]$$
cf.\ \cite{moewal} II.6. As a consequence, the Eisenstein cohomology class, when restricted to the Borel-stratum, yields the class
$$\sum_{w\in W(G')}\sum_{\underline j,\beta}\left(M(w,\tilde\tau)f_{\Pi',\underline{j},\beta}\otimes e'_\beta\right) dx_{\underline j},$$
which, via the isomorphism \eqref{eq:boundary_coh}, lies in $\bigoplus_{w\in W(G')}\textrm{Ind}^{G'(\A_f)}_{B'(\A_f)}[\tilde\tau_f^w]$. Here we observe that there is no weak intertwining of the two sums in \eqref{eq:boundary_coh}. This class is non-zero, since all Eisenstein series $E_{\Pi',\underline{j},\beta}$ are holomorphic at their point of evaluation $s=0$, cf.\ the proof of Thm.\ 4.11 in \cite{schwLNM}. Hence, since all representations $\textrm{Ind}^{G'(\A_f)}_{B'(\A_f)}[\tilde\tau_f^w]$ are irreducible, the various summands of the image of $r_{B'}$ are either trivial or the whole target space. The assertion follows now from the irreducibility of the source $\Pi'_f=H^{b_{n-1}}(\g',K',\Pi'\otimes E_\lambda)$.
\end{proof}

\subsection{A rationality result}
Recall Prop.\ \ref{prop:regalg}. It is clear that $H^{b_{n-1}}(\partial_{B'}\overline S_{n-1},\mathcal E_\lambda)$ inherits from $E_\lambda$ a natural $\Q(E_\lambda)$-structure. By  Prop.\ \ref{prop:sigmatwistabelian} and the temperedness of $\Pi'_\infty$, there is the equality of number fields $\Q(E_\lambda)=\Q(\Pi'_\infty)$. Hence, by strong multiplicity one for isobaric automorphic representations, $\Q(E_\lambda)\subseteq \Q(\Pi'_f)$. Next, observe that by Prop.\ \ref{prop:regalg}, $\Pi'_f=H^{b_{n-1}}(\g',K',\Pi'\otimes E_\lambda)$ carries a natural $\Q(\Pi'_f)$-structure. We deduce the following corollary.

\begin{cor}\label{cor:rational_Eisenstein}
The morphism $r_{B'}$ is $\Q(\Pi'_f)$-rational, i.e., it maps the natural $\Q(\Pi'_f)$-structure of the cohomology $H^{b_{n-1}}(\g',K',\Pi'\otimes E_\lambda)$ to the natural $\Q(\Pi'_f)$-structure of $H^{b_{n-1}}(\partial_{B'}\overline S_{n-1},\mathcal E_\lambda)$. The same holds true for the section $s_{B'}$.
\end{cor}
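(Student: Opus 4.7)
The strategy is to reduce the corollary to the Aut$(\C)$-equivariance already established in Proposition \ref{prop:rational_Eisenstein}. Following Waldspurger \cite{waldsp} I.1 and the discussion in Section \ref{sect:autc-act}, for a $G'(\A_f)$-module $V$ equipped with a compatible $\sigma$-linear collection of isomorphisms $V \overset{\sim}\ra {}^\sigma V$, a $\Q(\Pi'_f)$-rational structure compatible with this action is canonically identified (up to $\Q(\Pi'_f)^\times$) with the subspace of vectors fixed by ${\rm Aut}(\C/\Q(\Pi'_f))$. Therefore, proving $\Q(\Pi'_f)$-rationality of a $G'(\A_f)$-equivariant map amounts to proving its equivariance under ${\rm Aut}(\C/\Q(\Pi'_f))$ on source and target.

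For $r_{B'}$, Proposition \ref{prop:rational_Eisenstein} yields the identity $\sigma^{*} \circ r_{B'} = r_{B'} \circ \sigma^{*}$ for every $\sigma \in {\rm Aut}(\C)$. Specializing to $\sigma \in {\rm Aut}(\C/\Q(\Pi'_f))$, the fixed subspace in the source is, by Proposition \ref{prop:regalg}, precisely the natural $\Q(\Pi'_f)$-rational structure on $H^{b_{n-1}}(\g',K',\Pi'\otimes E_\lambda) \cong \Pi'_f$. On the target side, since $\Q(E_\lambda) \subseteq \Q(\Pi'_f)$, the $\Q(\Pi'_f)$-rational structure on $H^{b_{n-1}}(\partial_{B'}\overline S_{n-1}, \E_\lambda)$ is obtained by extending scalars from the natural $\Q(E_\lambda)$-structure, and is again carved out by ${\rm Aut}(\C/\Q(\Pi'_f))$-invariants. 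The commutativity above then forces $r_{B'}$ to send the one into the other.

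For the section $s_{B'}$, the proof of Proposition \ref{prop:rational_Eisenstein} exhibits $r_{B'}$ as an injection of the irreducible module $H^{b_{n-1}}(\g',K',\Pi'\otimes E_\lambda) \cong \Pi'_f$ into the first summand $\bigoplus_{w \in W(G')} {\rm Ind}^{G'(\A_f)}_{B'(\A_f)}[\tilde\tau_f^w]$ of \eqref{eq:boundary_coh}. By admissibility and semisimplicity, the image is a direct $G'(\A_f)$-summand, and it is pinned down invariantly by its isomorphism class together with the diagonal pattern of constant-term intertwiners $M(w,\tilde\tau)|_{\Lambda=0}$ (which by Proposition \ref{prop:sigmatwistabelian} is permuted compatibly by ${\rm Aut}(\C)$, corresponding Galois-equivariantly to the analogous image for ${}^\sigma\Pi'$). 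Hence the projection onto this image, composed with the inverse of $r_{B'}$ restricted to its image, gives a section $s_{B'}$ that commutes with ${\rm Aut}(\C)$; the first paragraph's criterion then yields its $\Q(\Pi'_f)$-rationality.

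The main subtlety I foresee lies in the section: namely, justifying that the diagonal embedding of $\Pi'_f$ inside $\bigoplus_w {\rm Ind}^{G'(\A_f)}_{B'(\A_f)}[\tilde\tau_f^w]$ cut out by the constant-term formula is Galois-stable (rather than being permuted in an unpredictable way with other embeddings of isomorphic summands). This reduces to the fact that the intertwining operators $M(w,\tilde\tau)$ at $\Lambda = 0$ are determined by the local Langlands--Shahidi normalizing factors attached to algebraic Hecke characters, which behave rationally under ${\rm Aut}(\C)$; once this compatibility is in hand, both assertions of the corollary follow by the same abstract argument.
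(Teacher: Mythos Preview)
Your argument for $r_{B'}$ is exactly the paper's: the paper simply says ``This is clear by the commutativity of the diagram in Prop.\ \ref{prop:rational_Eisenstein},'' and you have unpacked that sentence correctly, invoking the characterization of the rational structure as ${\rm Aut}(\C/\Q(\Pi'_f))$-invariants.

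For $s_{B'}$ you are actually more careful than the paper, which subsumes the section under the same one-line appeal to the diagram. Your construction of $s_{B'}$ as projection onto the Galois-stable image followed by the inverse of $r_{B'}$ is a legitimate way to make this precise, and the subtlety you flag---Galois-stability of the specific copy of $\Pi'_f$ inside the boundary cohomology---is real but resolvable more simply than via Langlands--Shahidi factors. Since $r_{B'}$ itself is ${\rm Aut}(\C)$-equivariant by the diagram, and for $\sigma \in {\rm Aut}(\C/\Q(\Pi'_f))$ the two rows of the diagram coincide, the image of $r_{B'}$ is automatically $\sigma^*$-stable; one can then project to the $w = id$ summand ${\rm Ind}^{G'(\A_f)}_{B'(\A_f)}[\tilde\tau_f]$ (on which $r_{B'}$ followed by projection is an isomorphism, since $M(id,\tilde\tau) = id$) and invert. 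The rationality of that projection is what requires knowing that $\sigma$ fixing $\Pi'_f$ also fixes $\tilde\tau_f$, which is the content of your remark about Prop.\ \ref{prop:sigmatwistabelian}. So your proof is sound and your caution well placed; the paper simply takes this for granted.
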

\begin{proof}
This is clear by the commutativity of the diagram in Prop.\ \ref{prop:rational_Eisenstein}.
\end{proof}

\begin{rem}
We would like to point out that for regular coefficients $E_\lambda$, Thm.\ 7.23 in Grobner--Raghuram \cite{grob-rag-I} should provide an alternative approach to Cor.\ \ref{cor:rational_Eisenstein}.
\end{rem}

Recall from Section \ref{sect:cuspEisinj} that the rational structure on $\Pi'_f$ is given by the natural
rational structure on the induced representation.  Let $F \mapsto F|_{B'}$
denote the constant term on functions (Eisenstein series) $F \in \Pi'$, and let
$F|_{B'}(\tilde{\tau})$ denote its $\tilde{\tau}$-component with respect to the action of the maximal
torus in $B'$.  It follows from the proof of Proposition \ref{prop:rational_Eisenstein} that

\begin{lem}\label{constterm}
Let $F = F_{\infty}\otimes F_f  \in \Pi'$ be a function with $F_{\infty}$ in the $K'$-type contributing to the cohomology space
$H^{b_{n-1}}(\g',K',\Pi'_{\infty}\otimes E_\lambda)$. Let $L \supseteq \Q(\Pi'_f)$ be a field extension. Then $F$ defines an $L$-rational
cohomology class $[F]$ if and only if $F_f$, viewed as an element of
${\rm Ind}_{B'(\A_f)}^{G'(\A_f)} [\tilde{\tau}_f] $, takes values in $L$, which is furthermore equivalent to  $F|_{B'}(\tilde{\tau})$,
viewed as an element of
${\rm Ind}_{B'(\A_f)}^{G'(\A_f)}[\tilde{\tau}_f] $, taking values in $L$.  Moreover, for all $\sigma \in {\rm Aut}(\C)$,
$\sigma[F] = {}^{\sigma}F_{\infty}\otimes \sigma(F_f)$ (the notation having the obvious meaning).
\end{lem}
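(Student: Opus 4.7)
The plan is to combine the explicit $\Q(\Pi'_f)$-rational structure on $\Pi'_f$ described in Remark \ref{rem:induced} with the fact, established at the end of Section \ref{sect:cuspEisinj}, that $\Psi^{\sf Eis}_{\Pi'}$ is a $\Q(\Pi'_f)$-rational injection. Since $F_{\infty}$ lies in the unique $K'$-type contributing to the one-dimensional cohomology space $H^{b_{n-1}}(\g',K',\Pi'_{\infty}\otimes E_\lambda)$, which itself is defined over $\Q(E_\lambda)\subseteq \Q(\Pi'_f)$ by the discussion in \ref{rationalgk}, the $L$-rationality of the class $[F]$ is equivalent to the $L$-rationality of $F_f$ viewed as a vector in $\Pi'_f$. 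By Remark \ref{rem:induced}, this last condition is in turn equivalent to $F_f$ taking values in $L$ when realized as a section of ${\rm Ind}_{B'(\A_f)}^{G'(\A_f)}[\tilde{\tau}_f]$, since the twist $\delta_{B'}^{1/2}$ takes values in $\Q$ at every finite place and only shifts $\tilde\tau_f$ by an integral power of the norm character.

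For the second equivalence I would return to the constant-term calculation carried out in the proof of Proposition \ref{prop:rational_Eisenstein}. There the constant term of the Eisenstein series $F$ along $B'$ is expressed as $\sum_{w\in W(G')} M(w,\tilde\tau) f_{\Pi'}$, where $f_{\Pi'}$ is the section of ${\rm Ind}_{B'(\A_\K)}^{G'(\A_\K)}[\tilde\tau]$ underlying $F$. The strict inequalities $k_1 > k_2 > \dots > k_{n-1}$ observed in Section \ref{sect:Pi'} imply that the Weyl-translated characters $\tilde\tau^w$ are pairwise distinct as $w$ ranges over $W(G')$. Consequently the $\tilde\tau$-isotypic component of the constant term isolates precisely the $w = {\rm id}$ summand, so $F|_{B'}(\tilde\tau)$ coincides with $f_{\Pi'}$ and hence, in the induced model, with $F_f$ itself. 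The $L$-rationality conditions on $F_f$ and on $F|_{B'}(\tilde\tau)$ therefore coincide.

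The Aut$(\C)$-equivariance $\sigma[F] = {}^{\sigma}F_{\infty}\otimes \sigma(F_f)$ then follows from the commutativity of the diagram in Proposition \ref{prop:rational_Eisenstein}, combined with the observation in Remark \ref{rem:induced} that the Aut$(\C)$-action on $\Pi'_f$ reads off directly from the action on the values of functions in ${\rm Ind}[\tilde\tau_f]$. The archimedean factor ${}^{\sigma}F_{\infty}$ arises because the chosen cohomological generator is defined over $\Q(E_\lambda)$, so applying $\sigma$ sends it to the corresponding generator for ${}^{\sigma}\Pi'_\infty$ with $E_\lambda$ replaced by ${}^\sigma\! E_\lambda$.

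The main obstacle I anticipate is verifying that the Waldspurger-style action of Aut$(\C)$ on $\Pi'_f$, defined abstractly via the element ${\bf t}_{\sigma,n-1}$ as in Proposition \ref{lem:rational-whittaker}, really does agree on the nose with the naive Galois action on function values in the induced model used in Remark \ref{rem:induced}; this amounts to checking that no stray Gauss-sum or analogous transcendental factor is introduced when one passes between the abstract and the induced descriptions of the rational structure. This compatibility is essentially the content of Remark \ref{rem:induced} together with the strong multiplicity one argument appearing in Proposition \ref{prop:regalg}, so carrying it out is more a matter of careful bookkeeping than of genuine difficulty.
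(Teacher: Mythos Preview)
Your proposal is correct and follows exactly the approach the paper intends: the paper gives no separate proof for this lemma but simply prefaces its statement with ``It follows from the proof of Proposition \ref{prop:rational_Eisenstein} that\ldots'', and you have supplied precisely those details---the $\Q(\Pi'_f)$-rational injection at the end of Section \ref{sect:cuspEisinj}, the constant-term formula $\sum_w M(w,\tilde\tau) f_{\Pi'}$ from the proof of Proposition \ref{prop:rational_Eisenstein}, and the observation that the distinctness of the $\tilde\tau^w$ isolates the $w=\mathrm{id}$ summand as the $\tilde\tau$-component.

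One small remark: your ``anticipated obstacle'' about the Waldspurger action via ${\bf t}_{\sigma,n-1}$ is a slight distraction. That action defines the rational structure on the \emph{Whittaker} model (Proposition \ref{lem:rational-whittaker}), whereas here the relevant structure is the cohomological one on $\Pi'_f$, which Remark \ref{rem:induced} and the end of Section \ref{sect:cuspEisinj} identify directly with the induced-model structure. No compatibility check with the Whittaker-model action is needed for this lemma; that comparison is precisely what the Whittaker period $p(\Pi')$ measures and is handled separately in Corollary \ref{WhitEis2}.
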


\subsection{Whittaker coefficients}
Let $f=\otimes'_w f_w$ be a $K'$-finite, decomposable section in $\textrm{Ind}^{G'(\A_\K)}_{B'(\A_\K)}[\tilde\tau]$. We let $E(f)=E(f,0)$ be the attached Eisenstein series, evaluated at $0$, where it is always holomorphic. We will consider its global $\psi$-Whittaker functional ($\psi$-Fourier coefficient):
$$E_\psi(f)(g):=\int_{U'(\K)\backslash U'(\A_\K)} E(f)(u'g)\psi^{-1}(u')du'\in W(\Pi').$$
To each ``function'' $\varphi\in\tilde\tau$ (which is of course simply a constant), we can associate a $K'$-finite, decomposable section $f_\varphi=\otimes'_w f_{\varphi,w} \in\textrm{Ind}^{G'(\A_\K)}_{B'(\A_\K)}[\tilde\tau]$ as in \cite{shahidi-book} 6.3. Recall the choice of an element $w_0\in G'(\K)$, representing the longest element in the Weyl group of $G'/\K$. Then we obtain the following result

\begin{prop}\label{WhitEis1}
The value at the identity $id\in G'(\A_\K)$ of the $\psi$-Whittaker functional $E_\psi(f_\varphi)$ equals
$$E_\psi(f_\varphi)(id)=\prod_{w\in S(\Pi')} W_w(id_w) \cdot \prod_{\substack{w\notin S(\Pi')\\ 1\leq i < j\leq n-1}}L(1,BC(\chi_i)_wBC(\chi_j^{-1})_w)^{-1},$$
where
$$W_w(id_w)=\int_{U'(\K_w)}f_{\varphi,w}(w_0^{-1}n'_w)\psi_w^{-1}(n'_w)dn'_w.$$
In particular, $E_\psi(f_\varphi)(id)$ has an Euler factorization.
\end{prop}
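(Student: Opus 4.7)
The plan is to compute $E_\psi(f_\varphi)(id)$ by the classical unfolding argument, reducing to a single local Jacquet-type integral over the opposite unipotent, and then invoking the Gindikin--Karpelevich / Casselman--Shalika evaluation at the unramified places. First, I would write
$$E_\psi(f_\varphi)(id) \ = \ \int_{U'(\K)\backslash U'(\A_\K)} \left[\sum_{\gamma \in B'(\K)\backslash G'(\K)} f_\varphi(\gamma u')\right]\psi^{-1}(u')\,du',$$
and use the Bruhat decomposition $G'(\K) = \bigsqcup_{w\in W(G')} B'(\K) w U'(\K)$ to decompose the sum by Weyl cells. A standard argument (the character $\psi$ is non-degenerate on every simple root, while on any cell other than the one indexed by $w_0$ there is at least one simple root on which $\psi$ is trivial when restricted to the stabilizer) shows that only the open cell contributes. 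After this cancellation one obtains the adelic Jacquet integral
$$E_\psi(f_\varphi)(id) \ = \ \int_{U'(\A_\K)} f_\varphi(w_0^{-1}n')\psi^{-1}(n')\,dn'.$$

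Second, since $f_\varphi = \otimes'_w f_{\varphi,w}$ is a decomposable section and $\psi = \otimes_w \psi_w$, this integral factors as an Eulerian product $\prod_w W_w(id_w)$ of the local integrals in the statement. At places $w \in S(\Pi')$ there is nothing to evaluate further; the factor $W_w(id_w)$ is simply the local Jacquet integral.

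Third, at unramified places $w \notin S(\Pi')$, the section $f_{\varphi,w}$ is the normalized spherical vector, and the local integral $W_w(id_w)$ is the value at the identity of the unramified Whittaker function of the principal series $\mathrm{Ind}_{B'(\K_w)}^{G'(\K_w)}[BC(\chi_1)_w\gamma_w \otimes\cdots\otimes BC(\chi_{n-1})_w\gamma_w]$. By the Gindikin--Karpelevich formula (equivalently, the evaluation of the Casselman--Shalika formula at $id$), this value equals
$$W_w(id_w) \ = \ \prod_{1\leq i < j \leq n-1} L(1, BC(\chi_i)_w BC(\chi_j^{-1})_w)^{-1},$$
where the characters $\gamma_w$ cancel in each ratio $(BC(\chi_i)\gamma)_w(BC(\chi_j)\gamma)_w^{-1}$. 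Combining with the ramified factors produces exactly the claimed formula.

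The principal obstacle is purely bookkeeping: checking that the normalizations of $f_\varphi$, the measure $dn'_w$, the character $\psi_w$, and the Haar measure on $B'\backslash G'$ are chosen consistently so that the Gindikin--Karpelevich output is literally $\prod_{i<j}L(1,\cdot)^{-1}$ with no stray constants. All the substantive analytic input -- convergence of the Jacquet integral at unramified places, vanishing of the contributions from non-open Bruhat cells, and the Gindikin--Karpelevich evaluation -- is already contained in Shahidi's book \cite{shahidi-book} (cf.\ the computations leading to Propositions 6.3.1 and 7.1.3 loc.\ cit.), so the proof is essentially a citation once these conventions are fixed.
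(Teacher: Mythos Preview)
Your proposal is correct and is essentially the same as the paper's own proof: the paper simply cites \cite{shahidi-book}, Thm.\ 7.1.2, noting that Shahidi's local functional $\lambda^0_w$ returns exactly $f_{\varphi,w}(w_0^{-1}n'_w)$ in this abelian setting. You have unpacked the content of that citation (unfolding via Bruhat, vanishing of the non-open cells by non-degeneracy of $\psi$, Euler factorization, Gindikin--Karpelevich at unramified places), which is fine, but there is no difference in strategy.
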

\begin{proof}
This is \cite{shahidi-book} Thm.\ 7.1.2. One simply observes that Shahidi's functional $\lambda^0_w$
(cf.\ \cite{shahidi-book}, p.\ 122) gives the complex number $f_{\varphi,w}(w_0^{-1}n'_w)$, viewed
as an element in $\tilde\tau$.
\end{proof}

We recall the $\Q$-rational (and hence by our assumption also $\Q(E_\lambda)$-rational) basis vectors $X'^*_{\underline j}$, resp.\ the $\Q(E_\lambda)$-basis $\{ e'_\beta\}$ of $E_\lambda$ from Sect.\ \ref{sect:Theta}. Moreover, we assume from now on that $\xi_{\Pi'_\infty,\underline j, \beta}$ was chosen in a way such that the generator $[\Pi'_\infty]$ is $\Q(E_\lambda)$-rational in the sense of Sect.\ \ref{rationalgk}.

We now choose a section $f_\varphi$ such that $f_{\varphi,\infty}$ is one of the vectors $W^{-1}(\xi_{\Pi'_\infty,\underline j, \beta})$ appearing in the definition of $[\Pi'_\infty]$ (or rather its transfer by $W^{-1}$) and that the non-archimedean part of $f_{\varphi}$ lies in the given $\Q(\Pi'_f)$-structure of $\Pi'_f$. Then, the evaluated Eisenstein series $E(f_\varphi)$ appears as a factor in the tensor product $\sum_{\underline j,\beta}\left(X'^*_{\underline j}\otimes E_{\Pi',\underline{j},\beta}\otimes e'_\beta\right)$, which defines a $\Q(\Pi'_f)$-rational Eisenstein cohomology class.

\begin{cor}\label{WhitEis2}  There is a constant $\Omega(\Pi'_{\infty}) \in \C^{\times}$, depending only on $\Pi'_{\infty}$ and
on the choice of basis vectors $\xi_{\Pi'_\infty,\underline{j},\beta}$, such that
$$p(\Pi') \sim_{\Q(\Pi'_f)} \Omega(\Pi'_{\infty})\cdot \prod_{1\leq i < j\leq n-1}L(1,BC(\chi_i\cdot\chi_j^{-1})_f).$$
\end{cor}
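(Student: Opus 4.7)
The plan is to trace a rationally normalized Eisenstein cohomology class through the inverse of $\Theta^{\sf Eis}$, and to read off $p(\Pi')$ using the explicit Whittaker coefficient formula of Prop.~\ref{WhitEis1}. To begin, I fix a $K'$-finite section $f_\varphi=f_{\varphi,\infty}\otimes f_{\varphi,f}$ in ${\rm Ind}^{G'(\A_\K)}_{B'(\A_\K)}[\tilde\tau]$ such that (i) at infinity, $f_{\varphi,\infty}$ corresponds under the fixed Whittaker isomorphism $i'_\infty$ to one of the archimedean vectors $\xi_{\Pi'_\infty,\underline{j},\beta}$ used to define $[\Pi'_\infty]$; (ii) at each unramified finite place $w\notin S(\Pi')$, $f_{\varphi,w}$ is the normalized spherical section taking value $1$ at the identity; (iii) at each finite place $w\in S(\Pi')$, $f_{\varphi,w}$ lies in the canonical $\Q(\Pi'_f)$-rational structure on the local induced representation. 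By Lem.~\ref{constterm} together with Cor.~\ref{cor:rational_Eisenstein}, the associated Eisenstein cohomology class $[E(f_\varphi)]$ is then $\Q(\Pi'_f)$-rational.

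By the very construction of $\Theta^{\sf Eis}$ in Section~\ref{sect:Theta} via the inverse of the global Whittaker (Fourier) coefficient, the pre-image $(\Theta^{\sf Eis})^{-1}[E(f_\varphi)]$ is the finite part of the global Whittaker functional $E_\psi(f_\varphi)$. Evaluating at the identity and applying Prop.~\ref{WhitEis1},
$$E_\psi(f_\varphi)(id)=W_\infty(id)\cdot\prod_{\substack{w\in S(\Pi')\\ w\text{ finite}}}W_w(id_w)\cdot\prod_{w\notin S(\Pi')}\prod_{1\leq i<j\leq n-1}L(1,BC(\chi_i)_wBC(\chi_j^{-1})_w)^{-1}.$$
Since a Whittaker vector is determined by its value at the identity up to $G'(\A_f)$-translation, this identity realizes $(\Theta^{\sf Eis})^{-1}[E(f_\varphi)]$ as an explicit scalar multiple of the ``rational'' finite Whittaker vector $\xi_{\Pi'_f}$ -- the tensor product of the normalized local new vectors with $\xi_{\Pi'_w}(t_{\Pi'_w})=1$ -- the scalar being the archimedean contribution $W_\infty(id)$ times the partial inverse $L$-function $\prod_{i<j}L^{S(\Pi')}(1,BC(\chi_i\chi_j^{-1}))^{-1}$.

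The defining property $\Theta^{\sf Eis}_0=p(\Pi')^{-1}\Theta^{\sf Eis}$ of Prop.~\ref{defprop} identifies $p(\Pi')$ as the ratio converting the Whittaker-model $\Q(\Pi'_f)$-structure to the cohomological $\Q(\Pi'_f)$-structure. Combining this with the preceding paragraph and defining $\Omega(\Pi'_\infty)\in\C^\times$ to be the archimedean ratio between $W_\infty(id)$ and the identity-value of $\xi_{\Pi'_\infty,\underline{j},\beta}$, one obtains
$$p(\Pi')\sim_{\Q(\Pi'_f)}\Omega(\Pi'_\infty)\cdot\prod_{1\leq i<j\leq n-1}L^{S(\Pi')}(1,BC(\chi_i\chi_j^{-1})).$$
Since each $BC(\chi_i\chi_j^{-1})$ is an algebraic Hecke character, its local $L$-factors at finite places in $S(\Pi')$ lie in $\Q(\Pi'_f)$ at $s=1$; thus the partial $L$-function on the right may be replaced by the full finite $L$-function without changing the $\sim_{\Q(\Pi'_f)}$-class, yielding the stated formula.

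The principal technical obstacle is the control of the ramified local Whittaker integrals $W_w(id_w)$ at finite $w\in S(\Pi')$, and more generally the compatibility of the twisted ${\rm Aut}(\C)$-action on $W(\Pi'_f)$ via $\mathbf{t}_{\sigma,n-1}\in\widehat{\mathcal O}^{\times}$ from Prop.~\ref{lem:rational-whittaker} with the natural ${\rm Aut}(\C)$-action on the induced model through Shahidi's integral (which also involves the roots-of-unity values of $\psi_w$). This compatibility is resolved by the same local analysis that underlies Lem.~\ref{lem:sigma-xi}, adapted to the abelian Borel-induced setting.
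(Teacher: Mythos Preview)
Your overall strategy matches the paper's: start from a cohomologically $\Q(\Pi'_f)$-rational Eisenstein class, compute its $\psi$-Whittaker coefficient via Prop.~\ref{WhitEis1}, and read off $p(\Pi')$ as (essentially) the inverse of $E_\psi(f_\varphi)(id)$. The archimedean contribution $W_\infty(id_\infty)^{-1}$ becomes $\Omega(\Pi'_\infty)$, and the unramified Euler product gives the $L$-value.

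There is, however, a genuine gap at the ramified finite places. You write that the local Whittaker integrals $W_w(id_w)$ for $w\in S(\Pi')\setminus\{\infty\}$ are controlled ``by the same local analysis that underlies Lem.~\ref{lem:sigma-xi}''. But that lemma concerns local Rankin--Selberg zeta integrals for $GL_n\times GL_{n-1}$, not Jacquet--Whittaker integrals of a single principal series; the arguments do not carry over directly. For a generic $\Q(\Pi'_f)$-rational section $f_{\varphi,w}$, the integral $\int_{U'(\K_w)}f_{\varphi,w}(w_0^{-1}n'_w)\psi_w^{-1}(n'_w)\,dn'_w$ involves the root-of-unity values of $\psi_w$ over the support, and there is no a priori reason it lies in $\Q(\Pi'_f)$ or is Galois-equivariant in the required way. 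The paper resolves this by \emph{choosing} $f_{\varphi,w}$ more carefully: take a small open compact subgroup $U_1\subset U'(\K_w)$ contained in the kernel of $\psi_w$, and choose $f_{\varphi,w}$ (in the rational structure of the induced model) so that $n'_w\mapsto f_{\varphi,w}(w_0^{-1}n'_w)$ is supported on $U_1$ and equals $1$ there. Then $\psi_w^{-1}\equiv 1$ on the support, and $W_w(id_w)$ is simply $\mathrm{vol}(U_1)\in\Q$. This is the missing step in your argument.

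A secondary point: your detour through the new vectors $\xi_{\Pi'_f}$ is unnecessary and somewhat confuses the two rational structures in play. The paper argues more directly: since $f_\varphi$ is chosen so that $E(f_\varphi)$ is $\Q(\Pi'_f)$-rational in cohomology, $p(\Pi')$ is by definition the ratio between this cohomological normalization and the Whittaker normalization, which is represented concretely by $E_\psi(f_\varphi)(id)^{-1}$ (once the ramified integrals are forced to be rational as above). There is no need to compare with the new-vector Whittaker function.
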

\begin{proof}  It follows from Corollary \ref{cor:rational_Eisenstein}, \ref{constterm}, and the definitions that the period $p(\Pi')$ can be represented
by the ratio between the value at the identity of the $\tilde{\tau}$-component of the constant term and the value
at the identity of the $\psi$-Whittaker functional $E_\psi(f_\varphi)$, if the latter doesn't vanish.
We have chosen $f_\varphi$ so that $E(f_\varphi)$ defines a $\Q(\Pi'_f)$-rational class, which means that $p(\Pi')$ is the inverse of
of $E_\psi(f_\varphi)(id)$, which by Proposition \ref{WhitEis1} equals
$$W_{\infty}(id_\infty)\cdot \prod_{\substack{w \in S(\Pi') \setminus \{\infty\}\\1\leq i < j\leq n-1}} W_w(id_w)\cdot L(1,BC(\chi_i)_wBC(\chi_j^{-1})_w) \cdot
\prod_{1\leq i < j\leq n-1}L(1,BC(\chi_i)_fBC(\chi_j^{-1})_f)^{-1}.$$

We take $\Omega(\Pi'_{\infty}) = W_{\infty}(id_\infty)^{-1}$.  Since the local $L$-factors belong to $\Q(\Pi'_f)$ and transform under the Galois group
along with $\Pi'$, we will finish the proof if we can show that $f_{\varphi,w}$ can be chosen so that the integral
defining $W_w(id_w)$ is a rational constant.  Now, as in Lemma \ref{constterm}, the $\Q(\Pi'_w)$-rational structure on the principal series representation $\Pi'_w$ is given by
that on the boundary cohomology, and coincides with the one defined (adelically) in \ref{rem:induced}, namely the
space of functions in the principal series whose restrictions to a
fixed maximal compact subgroup $K'_w$ take values in $\Q(\Pi'_w)$.  We take $U_1 \subset U'(\K_w)$ to be a subgroup of the
kernel of $\psi_w$.  Possibly shrinking $U_1$ further, we can assume that $f_{\varphi,w}(w_0n'_w)$ is supported in $U_1$
as a function of $n'_w$ and takes value $1$ there.  Then the integral is just a volume factor and belongs to $\Q$.
\end{proof}


\section{Period relations - The main results}


In this section we always assume $\Pi$ and $\Pi'$ to be obtained by base change
from unitary groups of all signatures; in other words, that they both satisfy Hypothesis \ref{descent}.

\subsection{Critical values of tensor products when one of the representations is of abelian type}
Let $\Pi'$ be the abelian automorphic representation of \ref{sect:Pi'}, attached to
the $(n-1)$-tuple $\mathbf{\chi} = (\chi_1,\dots,\chi_{n-1})$ and the auxiliary character
$\gamma$;  we write $\Pi' = \Pi'(\mathbf{\chi},\gamma)$.
Recall that  $\chi_{j,\infty}(e^{i\theta}) = e^{i k_j\theta}$ and $\gamma_{\infty}(z) = (z/\bar{z})^t$ for $k_j \in \ZZ$
and $t \in \tfrac{n-1}{2} + \ZZ$.   We henceforth
assume $t = \frac{1}{2}$, to apply the formulas of \cite{har2007}.  Define $\gamma^+(x) = \gamma(x)\cdot \|x\|^{\frac{1}{2}}$ as in Rem.\ \ref{rem:induced}.
Then $\gamma^+_{\infty}(z) = z$; thus $\gamma^+$ is of type $\eta_{\kappa}$ with $\kappa  = 1$, in the notation
of \cite{harcrelle} and (especially) Theorem 4.3 of \cite{har2007}.
As in \cite{haradj}, we introduce period invariants
$$G(j,\mathbf{\chi}) = [(2\pi i)^{k_j}\cdot p(\check{\chi}_j^{(2)},1)]^{2j-n} \quad\textrm{and}\quad G(j,\mathbf{\chi},\gamma) = [(2\pi i)^{k_j}\cdot p(\check{\chi}_j^{(2)}\check{\gamma}^+,1)]^{2j-n}.$$

Then, if $n$ is odd:
\begin{equation}\label{fact}  \begin{aligned} L(s,R(M(\Pi)\otimes M(\Pi'))) &= \prod_{j = 1}^{n-1} L(s+\tfrac{3-2n}{2},\Pi\otimes BC(\chi_j)\cdot \gamma) \\
&=  \prod_{j = 1}^{n-1} L(s - \tfrac{n-1}{2},M(\Pi)\otimes BC(\chi_j)\cdot \gamma^+).
\end{aligned}
\end{equation}

In what follows, $\alpha = \gamma^+$ and so $\alpha_0 = \varepsilon_{\K}^{n}$ in all cases.  Thus the term
$[(2\pi i)^{\kappa}\mathcal{G}(\alpha_{0,f})]^{r(M';\Pi)}$ in expression \ref{crelle} is just $ \sim (2 \pi i)^{\kappa r(M';\Pi)}\cdot \mathcal{G}(\varepsilon_{\K,f})^{nr(M';\Pi)}$.
Let $s_0 = n-1 + m$, where $m$ is as in \ref{critpoints} above.  We apply \ref{crelle} to the critical values of $L(s,R(M(\Pi)\otimes M(\Pi'))$.  The results are as
follows.  If $n$ is even, $\alpha = \gamma^+$ is the trivial character and we have

\begin{equation}\label{abelianeven} \begin{aligned}
&L(n-1+m,R(M(\Pi)\otimes M(\Pi'))) =  \prod_{j = 1}^{n-1} L( \tfrac{n}{2} + m,M(\Pi)\otimes BC(\chi_j)) \\
&\sim_{\K E(\Pi) E(\Pi')} \prod_{j = 1}^{n-1} [(2 \pi i)^{( \tfrac{n}{2} + m)n - \frac{n(n-1)}{2}}\cdot \mathcal{G}(\varepsilon_{\K,f})^{\lceil\frac{n}{2}\rceil}\cdot P^{(r(\chi_j;\Pi))}(\Pi) G(\Pi_j^{\prime,c})^{n-2r(\chi_j;\Pi)} \\
&\sim_{\K E(\Pi) E(\Pi')}  (2 \pi i)^{(n-1)(mn) + \frac{n(n-1)}{2}}
\mathcal{G}(\varepsilon_{\K,f})^{\lceil\frac{n}{2}\rceil(n-1)}\prod_{j = 1}^{n-1} P^{(r(\chi_j;\Pi))}(\Pi) G(\Pi_j^{\prime,c})^{n-2r(\chi_j;\Pi)}
\end{aligned}
\end{equation}
with $\Pi_j^{\prime,c} = \chi_j^{-1}$.

If $n$ is odd, then $\alpha = \gamma^+$ and we have 
\begin{equation}\label{abelianodd} \begin{aligned}
L(n-1+m,R(M(\Pi)\otimes M(\Pi'))) &= \prod_{j = 1}^{n-1} L( \tfrac{n-1}{2} + m,M(\Pi)\otimes BC(\chi_j)\cdot \gamma^+)  \\
&\sim_{\K E(\Pi) E(\Pi')} \prod_{j=1}^{n-1} (2\pi i)^{(\frac{n-1}{2} + m)n - \frac{n(n-1)}{2}} (2\pi i)^{n-j}G(j,\mathbf{\chi},\gamma) P^{(n-j)}(\Pi) \\
&\sim_{\K E(\Pi) E(\Pi')}  (2\pi i)^{(n-1)(mn)+\frac{n(n-1)}{2}} \prod_{j = 1}^{n-1}  G(j,\mathbf{\chi}) P^{(n-j)}(\Pi)
\end{aligned}
\end{equation}
Here $\Pi_j^{\prime,c} = \chi_j^{-1}\cdot(\gamma^+)^c$.  The contribution of $\gamma^+$ disappears
because the product of the periods of $\gamma^+$ to the $2j-n$ is just $1$.  In particular the
expressions \ref{abelianeven} and \ref{abelianodd} are identical.

\subsection{Whittaker periods and Petersson norms}

In this section we combine the results of the previous sections  to express
the invariants $p(\Pi)$ in terms of the $P^{(s)}(\Pi)$.  Notation is as above; in particular, $\Pi$ is
cohomological and obtained by base change from unitary groups and
$\Pi' = \Pi'(\mathbf{\chi},\gamma)$.

\begin{prop}  Suppose $\Pi$ satisfies Hypothesis \ref{descent}. Let $s_0=n-1+m$, $m\geq 0$, be a critical value of $L(s,R(M(\Pi)\otimes M(\Pi')))$ satisfying $L(n-1+m,R(M(\Pi)\otimes M(\Pi'))) \neq 0$. Then there are constants $p(m,\Pi_{\infty},\Pi'_{\infty}) \in \C^\times$ and $\Omega(\Pi'_{\infty}) \in \C^{\times}$ such that
\begin{equation}\label{comparison2}
\begin{aligned}
 p(m,\Pi_{\infty},\Pi'_{\infty})\, \Omega(\Pi'_{\infty}) \, p(\Pi) & \prod_{1 \leq i < j \leq n-1} L(1,BC(\chi_i\cdot \chi_j^{-1})_f)   \sim_{\K E(\Pi) E(\Pi')}  \\
& (2 \pi i)^{(n-1)(mn)+\frac{n(n-1)}{2}}
\mathcal{G}(\varepsilon_{\K,f})^{\lceil\frac{n}{2}\rceil(n-1)}\prod_{j = 1}^{n-1} P^{(r(\chi_j;\Pi))}(\Pi) G(\Pi^{\prime,c})^{n-2r(\chi_j;\Pi)}.
\end{aligned}
\end{equation}
A critical point $s_0=n-1+m$, $m\geq 0$, giving $L(n-1+m,R(M(\Pi)\otimes M(\Pi'))) \neq 0$ exists in particular, if the inequalities separating the archimedean
parameters of $\Pi$ and $\Pi'$ are strict:
$$\mu_1> -\lambda_{n-1} > \mu_2 > -\lambda_{n-2} >... >  -\lambda_1 > \mu_n.$$
\end{prop}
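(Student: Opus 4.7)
The plan is to derive the stated relation by combining three identities established earlier in the paper. First, apply Theorem~\ref{thm:whittaker-periods} at the critical point $s = \tfrac{1}{2}+m$. Since $\Pi' = BC(\pi')$ descends from a unitary group, the central character of $\Pi'$ restricted to $\A^{\times}$ is trivial, so the Gauss-sum factor $\mathcal{G}(\omega_{\Pi'_{f,0}})$ is algebraic (Remark~\ref{nogauss}) and can be absorbed into $\sim_{\K E(\Pi)E(\Pi')}$; consequently
$$L(\tfrac12 + m,\Pi_f\times\Pi'_f) \;\sim_{\K E(\Pi)E(\Pi')}\; p(\Pi)\, p(\Pi')\, p(m,\Pi_\infty,\Pi'_\infty).$$
Next, substitute into this the formula of Corollary~\ref{WhitEis2},
$$p(\Pi') \;\sim\; \Omega(\Pi'_\infty)\prod_{1\le i<j\le n-1} L(1,BC(\chi_i\chi_j^{-1})_f),$$
to obtain an expression for $L(\tfrac12+m,\Pi_f\times\Pi'_f)$ in terms of the quantities that appear on the left-hand side of the proposition.

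On the other hand, combining the shift $L(n-1+m,R(M(\Pi)\otimes M(\Pi'))) = L(\tfrac12+m,\Pi\times\Pi')$ with the isobaric factorization $L(s,\Pi\times\Pi') = \prod_{j=1}^{n-1} L(s,\Pi\otimes BC(\chi_j)\gamma)$ induced by the abelian type of $\Pi'$, the computations (\ref{abelianeven})--(\ref{abelianodd}) derived from Theorem~\ref{crelle} identify the same $L$-value with the right-hand side of the proposition, namely
$(2\pi i)^{(n-1)mn+\frac{n(n-1)}{2}}\mathcal{G}(\varepsilon_{\K,f})^{\lceil n/2\rceil(n-1)}\prod_j P^{(r(\chi_j;\Pi))}(\Pi)\,G(\Pi^{\prime,c})^{n-2r(\chi_j;\Pi)}$. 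Equating the two expressions and dividing through by the (assumed non-zero) $L$-value yields the asserted relation.

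For the final non-vanishing claim, Lemma~\ref{critpoints} identifies the maximal admissible $m$ with the minimum gap between successive entries in the chain $\mu_1\ge -\lambda_{n-1}\ge \mu_2\ge \cdots \ge -\lambda_1\ge \mu_n$; when all the inequalities are strict and the entries are integers, this gap is at least $1$, so $s = \tfrac{1}{2}+m_{max}\ge \tfrac{3}{2}$ falls strictly inside the region of absolute convergence of the Rankin--Selberg $L$-function, where non-vanishing is immediate from the Euler product. The main bookkeeping step is to verify that the parity-dependent formulas (\ref{abelianeven}) (for $n$ even) and (\ref{abelianodd}) (for $n$ odd) produce the \emph{same} right-hand side; this relies on the cancellation of the $\gamma^+$-contribution observed after (\ref{abelianodd}), together with the coincidence of the exponents of $(2\pi i)$ and $\mathcal{G}(\varepsilon_{\K,f})$ in both cases. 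No deeper obstacle is expected, since every ingredient has been assembled in the preceding sections.
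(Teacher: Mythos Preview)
Your proof is correct and follows essentially the same approach as the paper: combine Theorem~\ref{thm:whittaker-periods}, Corollary~\ref{WhitEis2}, and the formulas (\ref{abelianeven})--(\ref{abelianodd}) for the same non-vanishing critical $L$-value, using Remark~\ref{nogauss} to drop the Gau\ss{} sum. The only slight difference is in the non-vanishing step: the paper invokes the theorem of Shahidi and Jacquet--Shalika (non-vanishing of Rankin--Selberg $L$-functions for $\mathrm{Re}(s)\ge 1$), whereas you argue directly from absolute convergence of the Euler product at $s\ge \tfrac{3}{2}$; both suffice since strict inequalities force $m_{\max}\ge 1$.
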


\begin{proof}  This is obtained by comparing  the expressions of \ref{thm:whittaker-periods}, \ref{WhitEis2}, \ref{abelianeven}, and
\ref{abelianodd} for the non-vanishing value of the $L$-function, and bearing in mind
that, by \ref{nogauss}, the term $\mathcal{G}(\omega_{\Pi'_{f,0}})$ in \ref{thm:whittaker-periods}
is trivial.  The last assertion follows from the description of \ref{critpoints}:  It implies there
is a critical value $s_0$ strictly to the right of the center of symmetry. Since $\Pi$ is cuspidal, it is
a theorem of Shahidi and Jacquet-Shalika that the value $L(s_0,R(M(\Pi)\otimes M(\Pi'))) \neq 0$.
\end{proof}

Now to continue, note that the character $\chi_j$ has infinity type $(z/\bar{z})^{k_j}$ which in the conventions of 4.2 means
that the parameter is $-k_j$.   Then the set $T(M,M'(\chi_j))$ is the set of $t$ such that
$2k_j \geq n- 2p_t^c$.  On the other hand, we are given that $2k_j \in [n-2q_{j+1},2p_j - n]$, i.e.
$j$ is the largest integer such that $2k_j \geq n-2q_{j+1} = n - 2p^c_{n-j}$.
 Thus $r(M'(\chi_j);M) = n-j$.  The factor multiplying $P^{(r)}(\Pi)$ will then be $G((\chi_j)^c)^{2j-n}$.

On the other hand, if $M'_{ij} = M(\chi_{ij}) = M(\chi_i\chi_j^{-1})$ with $i < j$,
the parameter of $M'_{ij}$ is $2(k_j - k_i) < 0$.   So $r(M'_{ij}) = 0$ and we get
$$L(1,M'_{ij}) \sim (2\pi i)G(\chi^c_i(\chi^c_j)^{-1}) = (2\pi i) G(\chi^c_i)G(\chi^c_j).$$
Then
$$\prod_{i < j} L(1,M'_{ij}) \sim (2\pi i)^{\frac{n(n-1)}{2}} G(\chi^c_j)^{(j-1) - (n-j)} =  (2\pi i)^{\frac{n(n-1)}{2}} G(\chi^c_j)^{2j-n}.$$

Putting this together we find that, assuming the hypotheses of
the preceding proposition are verified, we have


\begin{equation*}
\begin{aligned}
 p(m,\Pi_{\infty},\Pi'_{\infty})\, \Omega(\Pi'_{\infty}) \, p(\Pi) & (2\pi i)^{\frac{n(n-1)^2}{2}} \prod_{j = 1}^{n-1} G(\chi^c_j)^{2j-n}   \sim_{\K E(\Pi) E(\Pi')}  \\
& (2 \pi i)^{(n-1)(mn)+\frac{n(n-1)}{2}}
\mathcal{G}(\varepsilon_{\K,f})^{\lceil\frac{n}{2}\rceil(n-1)}\prod_{j = 1}^{n-1} P^{(n-j)}(\Pi)  \prod_{j = 1}^{n-1} G(\chi^c_j)^{2j-n}
\end{aligned}
\end{equation*}
or more simply
\begin{equation}\label{pW}
p(m,\Pi_{\infty},\Pi'_{\infty})\, \Omega(\Pi'_{\infty}) \, p(\Pi)  \sim_{\K E(\Pi) E(\Pi')}(2 \pi i)^{n(n-1)(m-n+2)} \prod_{j = 1}^{n-1} P^{(j)}(\Pi).
\end{equation}

Now, we can state our first main theorem:

\begin{thm}\label{heckethm}
Let $\mu$ be the parameter of $\Pi$, and suppose $\mu_i - \mu_{i+1} \geq 2$ for all $i$, so that there is a parameter $\lambda$ for $G'$ and an integer $m > 0$ such that $s_0 = n-1 +m$ is a critical value of the tensor product $L$-function $L(s,R(M(\Pi) \otimes M(\Pi')))$  (in the motivic normalization) for any $\Pi'$ with parameter $\lambda$.  Suppose $\Pi$ satisfies Hypothesis \ref{descent}. Then there is a non-zero constant $Z(\Pi_{\infty})$ which depends only on the local representation $\Pi_{\infty}$ and such that
$$p(\Pi)  \sim_{\K E(\Pi)}
Z(\Pi_{\infty})\prod_{j = 1}^{n-1} P^{(j)}(\Pi).$$
\end{thm}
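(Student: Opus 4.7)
The plan is to deduce Theorem \ref{heckethm} directly from equation \ref{pW}, the conclusion of the preceding proposition, which states
$$p(m,\Pi_{\infty},\Pi'_{\infty})\,\Omega(\Pi'_{\infty})\,p(\Pi) \ \sim_{\K E(\Pi) E(\Pi')} \ (2\pi i)^{n(n-1)(m-n+2)} \prod_{j=1}^{n-1} P^{(j)}(\Pi),$$
valid for any abelian $\Pi' = \Pi'(\mathbf{\chi},\gamma)$ with parameter $\lambda$ such that the critical value $L(n-1+m, R(M(\Pi)\otimes M(\Pi')))$ does not vanish. The proof of the theorem is then essentially bookkeeping: one only needs to choose $\Pi'$ appropriately, package the archimedean factors into $Z(\Pi_\infty)$, and argue that the field of rationality can be cut down from $\K E(\Pi) E(\Pi')$ to $\K E(\Pi)$.

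The first step is to produce such a $\Pi'$. The regularity assumption $\mu_i - \mu_{i+1} \geq 2$, together with Lemma \ref{critpoints}, guarantees the existence of a highest weight $\lambda$ for $G'$ and an integer $m>0$ such that $s_0 = n-1+m$ is critical for $L(s, R(M(\Pi)\otimes M(\Pi')))$; since $m > 0$, this point lies strictly to the right of the center of symmetry. An abelian automorphic representation $\Pi' = \Pi'(\mathbf{\chi},\gamma)$ cohomological with respect to $E_\lambda$ is then constructed by choosing algebraic Hecke characters $\chi_1,\dots,\chi_{n-1}$ whose archimedean types are prescribed by $\lambda$ (and by $\gamma$, as dictated by the parity of $n$), exactly as in Section \ref{sect:Pi'}. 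Cuspidality of $\Pi$ combined with the non-vanishing theorems of Shahidi and Jacquet--Shalika in the region $\Re(s) > (n-1)/2 + 1/2$ gives the required non-vanishing of the critical value.

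One now defines
$$Z(\Pi_\infty) \ := \ \frac{(2\pi i)^{n(n-1)(m-n+2)}}{p(m,\Pi_\infty,\Pi'_\infty)\,\Omega(\Pi'_\infty)},$$
so that rearranging \ref{pW} gives $p(\Pi) \sim_{\K E(\Pi) E(\Pi')} Z(\Pi_\infty)\prod_{j=1}^{n-1} P^{(j)}(\Pi)$. That $Z(\Pi_\infty)$ depends only on $\Pi_\infty$ (up to the usual $\K^\times$-ambiguity inherent in the definitions of the archimedean factors) is then immediate: the $(\g',K')$-module $\Pi'_\infty$ is determined by $\lambda$ and the parity parameter $c$; $\lambda$ itself is determined by $\mu$ and the canonical (say smallest positive) choice of $m$ provided by Lemma \ref{critpoints}; and both $p(m,\Pi_\infty,\Pi'_\infty)$ and $\Omega(\Pi'_\infty)$ are by construction functions of the archimedean components alone.

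The main obstacle, and the only non-formal point, is to promote the equivalence from $\sim_{\K E(\Pi) E(\Pi')}$ to $\sim_{\K E(\Pi)}$. Both sides of the desired relation depend only on $\Pi$, so the implicit proportionality constant is a single complex number independent of the auxiliary choice of $\Pi'$. Applying \ref{pW} for a sufficiently rich family of abelian $\Pi'$'s -- obtained by twisting the $\chi_j$'s by finite-order Hecke characters so that the fields $E(\Pi')$ traverse an unbounded tower of finite extensions of $\K E(\Pi)$ -- one concludes that this constant lies in $\bigcap_{\Pi'} \K E(\Pi) E(\Pi') = \K E(\Pi)$. The technical care needed here is to ensure that the twists can be arranged compatibly with the cohomologicality of $\Pi'$ with respect to $E_\lambda$ and with the non-vanishing of the relevant critical $L$-value, but both requirements are preserved by finite-order twists, so the argument goes through.
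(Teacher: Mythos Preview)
Your proof is correct and follows essentially the same strategy as the paper's: both deduce the theorem from relation \ref{pW} by (i) packaging the archimedean constants into $Z(\Pi_\infty)$ and (ii) eliminating the dependence on $E(\Pi')$ by varying $\Pi'$ over abelian representations with fixed archimedean type.

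There is a minor difference in the order of operations. The paper first performs the field reduction to obtain the relation over $\K E(\Pi)$ with a constant it writes as $Z(m,\Pi_\infty,\Pi'_\infty) = [\Omega(\Pi'_\infty)p(m,\Pi_\infty,\Pi'_\infty)]^{-1}(2\pi i)^{n(n-1)(m-n+2)}$, and then argues \emph{a posteriori} that since $p(\Pi)$ and $\prod_j P^{(j)}(\Pi)$ are independent of $m$ and $\Pi'_\infty$, so must $Z$ be (up to $(\K E(\Pi))^\times$). You instead fix a canonical choice of $(\lambda,m)$ determined by $\mu$ from the outset, so that $Z(\Pi_\infty)$ is well-defined before the field reduction. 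Both arguments are valid. Your phrasing ``$\lambda$ itself is determined by $\mu$ and the canonical (say smallest positive) choice of $m$'' is slightly garbled, though: the logical order is to first choose $\lambda$ interlacing with $\mu$ (the gap hypothesis $\mu_i - \mu_{i+1} \geq 2$ guarantees such $\lambda$ exist), and only then is the range of critical $m$ determined by $(\mu,\lambda)$ via Lemma \ref{critpoints}. But the intent is clear and the conclusion unaffected.

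On the field reduction, your intersection argument is the right idea and somewhat more explicit than the paper's one-line ``by letting $\Pi'$ vary.'' The phrase ``unbounded tower of finite extensions'' is not quite what you want, however: a tower has a smallest element, and the intersection would simply be that element. What you actually need is a family of $\Pi'$ (finite-order twists of the $\chi_j$, preserving the archimedean type and hence the non-vanishing of the critical value) such that the compositums $\K E(\Pi) E(\Pi')$ intersect down to $\K E(\Pi)$. This is arranged, for instance, by twisting by characters of prime order $p$ for varying primes $p$ unramified in $\K E(\Pi)$.
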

\begin{proof}
The hypothesis on $\mu$ implies that there exist $\Pi'$ of abelian type such that the comparison of \ref{pW} is valid.  By letting $\Pi'$ vary among abelian type representations with parameter $\lambda$, we can remove the $E(\Pi')$ from the equivalence relation.   Initially one obtains the relation
\begin{equation}\label{withm} p(\Pi)  \sim_{\K E(\Pi)}
Z(m,\Pi_{\infty},\Pi'_{\infty})\prod_{j = 1}^{n-1} P^{(j)}(\Pi), \end{equation}
where $Z(m,\Pi_{\infty},\Pi'_{\infty}) = [\Omega(\Pi'_{\infty})p(m,\Pi_{\infty},\Pi'_{\infty})]^{-1}\cdot (2 \pi i)^{n(n-1)(m-n+2)}$.  But
since the left-hand side  of \ref{withm} is independent of $m$ and $\Pi'_{\infty}$, so is the right-hand side.
\end{proof}

\begin{rem}\label{substitutes}  Two substitutes are possible for Hypothesis \ref{descent} when it is not satisfied.  As noted above,
there is only an obstruction if $n$ is even.   Under the regularity hypothesis on the parameter
$\mu$, it can be shown that the standard $L$-function of $\Pi$ can be realized, up to an abelian
twist, as the $L$-function of a holomorphic automorphic representation of $U(n+1-j,j)$ for any $j$; this was done in most cases using the
theta correspondence in \cite{harHowe}.   The Petersson norm of an arithmetically normalized form in this representation
can then be used in place of the missing $P^{(j)}(\Pi)$.

Even in the absence of the regularity hypothesis, one can use quadratic base change, as in \cite{yoshida1}, to define
versions of the missing $P^{(j)}(\Pi)$.  These are only well-defined up to square roots of elements in the coefficient
field, so are less precise than the ones defined motivically.
\end{rem}

\subsection{General tensor products}

Combining the comparison in Thm.\ \ref{heckethm} with Thm.\ \ref{thm:whittaker-periods-general}, we obtain the following general
result, when $\Pi_{\infty}$ and $\Pi'_{\infty}$ are both sufficiently regular.

\begin{thm}\label{tensorproduct}  Let $\Pi=BC(\pi)$ and $\Pi'=BC(\pi')$ be cuspidal automorphic representations of $G(\A_{\K})$ and
$G'(\A_{\K})$ which are cohomological with respect to $E_{\mu}$ and $E_{\lambda}$, respectively.  Suppose
\begin{enumerate}
\item  $\mu_i - \mu_{i+1} \geq 2$ for all $i$ and $\lambda_j - \lambda_{j+1} \geq 2$ for all $j$.
\item Both $\Pi$ and $\Pi'$ satisfy Hypothesis \ref{descent}.
\end{enumerate}
Then for every critical point $s_0 = \tfrac12 + m$ of $L(s,\Pi\times\Pi')$ with $m \geq 0$,
$$L(\tfrac12+m,\Pi_f \times \Pi'_f)  \sim_{\K E(\Pi) E(\Pi')} p(m,\Pi_\infty,\Pi'_\infty) Z(\Pi_{\infty})Z(\Pi'_{\infty})\prod_{j = 1}^{n-1} P^{(j)}(\Pi)
\prod_{k = 1}^{n-2} P^{(k)}(\Pi').$$
Equivalently, for every critical point $s_0 = n-1 + m$ of $L(s,R(M(\Pi)\otimes M(\Pi')))$ with $m \geq 0$,
$$
L(n-1+m,R(M(\Pi) \otimes M(\Pi'))) \sim_{\K E(\Pi) E(\Pi')} p(m,\Pi_\infty,\Pi'_\infty) Z(\Pi_{\infty})Z(\Pi'_{\infty})\prod_{j = 1}^{n-1} P^{(j)}(\Pi)\prod_{k = 1}^{n-2}  P^{(k)}(\Pi'). $$
\end{thm}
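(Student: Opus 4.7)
The proof is essentially a routine concatenation of the two main technical theorems already established, namely Theorem \ref{thm:whittaker-periods-general} and Theorem \ref{heckethm}. The plan is to first express the critical $L$-value in terms of the two Whittaker periods $p(\Pi)$ and $p(\Pi')$, and then independently rewrite each Whittaker period as a product of Petersson norms on the appropriate unitary Shimura varieties.

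\smallskip

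First, I would apply Theorem \ref{thm:whittaker-periods-general} directly. Hypothesis \ref{hyp:coeff} (for $\textbf{\sf m}=0$) is precisely the regularity condition (1) restricted to Lem.\ \ref{lem:coeff}, so the theorem yields
\[
L(\tfrac12+m,\Pi_f \times \Pi'_f) \ \sim_{\Q(\Pi_f)\Q(\Pi'_f)}\ p(\Pi)\, p(\Pi')\, p(m,\Pi_\infty,\Pi'_\infty)\, \mathcal{G}(\omega_{\Pi'_{f,0}}).
\]
Since $\Pi'=BC(\pi')$ is obtained from a unitary group, Remark \ref{nogauss} tells us that the restriction $\omega_{\Pi'_{f,0}}$ of the central character to the id\`eles of $\Q$ is trivial (or at worst a power of $\varepsilon_{\K}$ when accounting for the twist), so the factor $\mathcal G(\omega_{\Pi'_{f,0}})$ contributes only an element of $\K E(\Pi)E(\Pi')$ and can be absorbed into the equivalence.

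\smallskip

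Next, I would apply Theorem \ref{heckethm} to $\Pi$: the hypothesis $\mu_i-\mu_{i+1}\geq 2$ in (1) and Hypothesis \ref{descent} in (2) are exactly what is required, yielding
\[
p(\Pi)\ \sim_{\K E(\Pi)}\ Z(\Pi_\infty)\prod_{j=1}^{n-1} P^{(j)}(\Pi).
\]
The same theorem, applied verbatim with $n$ replaced by $n-1$ and $\Pi$ by $\Pi'$, yields the analogous factorization
\[
p(\Pi')\ \sim_{\K E(\Pi')}\ Z(\Pi'_\infty)\prod_{k=1}^{n-2} P^{(k)}(\Pi').
\]
Here the regularity $\lambda_j - \lambda_{j+1} \geq 2$ from hypothesis (1) plays the role of the regularity of $\mu$ in the statement of Theorem \ref{heckethm}, and guarantees the existence of an abelian partner on $GL_{n-2}$ together with a critical value of the corresponding Rankin--Selberg $L$-function, so that the comparison \eqref{pW} underlying the proof of Theorem \ref{heckethm} can be carried out for $\Pi'$.

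\smallskip

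Finally, multiplying the three equivalences and noting that all period constants live in the appropriate number fields, the result follows, with the $\Q(\Pi_f)\Q(\Pi'_f)$ equivalence relation enlarged to $\K E(\Pi)E(\Pi')$ to absorb the Gauss sum factor and the factors coming from the two applications of Theorem \ref{heckethm}. The equivalent motivic formulation is obtained by the standard shift $L(s,M(\Pi)\otimes M(\Pi')) = L(s + \tfrac{3-2n}{2}, \Pi\times \Pi')$, sending the critical point $\tfrac12 + m$ on the automorphic side to $n-1+m$ on the motivic side.

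\smallskip

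The substantive content is entirely in Theorems \ref{thm:whittaker-periods-general} and \ref{heckethm}; the only subtle point is really the invocation of Theorem \ref{heckethm} for $\Pi'$. Strictly speaking Theorem \ref{heckethm} was formulated for a cuspidal representation of $GL_n$, so one must verify that its proof applies equally to a cuspidal representation of $GL_{n-1}$ satisfying the analogous regularity and descent hypotheses. This is immediate from the proof, but it should be stated explicitly, since that is the only step that is not simply a matter of multiplying previously established relations.
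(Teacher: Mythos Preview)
Your proposal is correct and is exactly the approach the paper takes: the paper simply states that the theorem follows by combining Theorem~\ref{heckethm} (applied to both $\Pi$ and $\Pi'$) with Theorem~\ref{thm:whittaker-periods-general}, giving no further proof. One small correction to your justification: Hypothesis~\ref{hyp:coeff} is the \emph{interlacing} condition of Lemma~\ref{lem:coeff}, not the regularity condition~(1) of the theorem; however, since the existence of a critical point $\tfrac12+m$ with $m\geq 0$ already forces the interlacing (Lemma~\ref{critpoints}), your invocation of Theorem~\ref{thm:whittaker-periods-general} remains valid.
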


\subsection{The archimedean constant}  The constants $\Omega(\Pi'_{\infty})$,  $p(m,\Pi_{\infty},\Pi'_{\infty})$, and $p(\Pi)$
all depend on choices of test vectors in the induced and Whittaker models of $\Pi_{\infty}$ and $\Pi'_{\infty}$.
We have decided to choose vectors rational over $\K$, but we could also choose test vectors so that
$W_{\infty}(\Pi') = 1$.  Since this choice depends only on $\Pi'_{\infty}$, it makes sense for cuspidal as
well as Eisenstein representations.  This provides a unique normalization of $p(\Pi)$ and thus a
unique value for $p(m,\Pi_{\infty},\Pi'_{\infty})$ (as does the choice of rational vectors).

In \cite{linjie}, Lin Jie proves a period relation for $\Pi$ and $\Pi'$ obtained by automorphic induction from Hecke
characters.  Using this relation, she deduces the following refinement of Theorem \ref{tensorproduct}.

\begin{thm}[Lin Jie]\label{lin}  Under the hypotheses of Theorem \ref{tensorproduct}, we have the following relations:
$$L(\tfrac12+m,\Pi_f \times \Pi'_f)  \sim_{\K E(\Pi) E(\Pi')} (2\pi i)^{(m + \frac{1}{2})n(n-1)}\prod_{j = 1}^{n-1} P^{(j)}(\Pi)
\prod_{k = 1}^{n-2} P^{(k)}(\Pi').$$
or equivalently,
$$
L(n-1+m,R(M(\Pi) \otimes M(\Pi'))) \sim_{\K E(\Pi) E(\Pi')} (2\pi i)^{(m + \frac{1}{2})n(n-1)}\prod_{j = 1}^{n-1} P^{(j)}(\Pi)\prod_{k = 1}^{n-2}  P^{(k)}(\Pi'), $$
provided $m > 0$.
\end{thm}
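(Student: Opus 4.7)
The plan is to exploit Theorem \ref{tensorproduct} in a special case where both sides of the asymptotic relation can be computed independently, so that the archimedean constant $u(m,\Pi_{\infty},\Pi'_{\infty}) = p(m,\Pi_\infty,\Pi'_\infty) Z(\Pi_{\infty})Z(\Pi'_{\infty})$ is pinned down. The natural candidates are the pairs $(\Pi,\Pi')$ obtained by automorphic induction from algebraic Hecke characters of CM extensions of $\K$, since in this case both the $L$-function of $R(M(\Pi)\otimes M(\Pi'))$ factors as a product of $L$-functions of Hecke characters and the Petersson invariants $P^{(j)}(\Pi)$, $P^{(k)}(\Pi')$ can be rewritten, via suitable period relations, in terms of CM-periods attached to those characters.

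First, I would specialize Theorem \ref{tensorproduct} to a pair $(\Pi,\Pi')$ with $\Pi = AI(\phi)$ and $\Pi' = AI(\phi')$ for algebraic Hecke characters $\phi$, $\phi'$ of CM fields of the appropriate degrees containing $\K$, chosen so that the regularity hypotheses (1) and (2) of Theorem \ref{tensorproduct} are satisfied. On the $L$-function side, $L(n-1+m,R(M(\Pi)\otimes M(\Pi')))$ decomposes as a product of $L(n-1+m,R(M(\phi\cdot\phi')))$ over the conjugates of $\phi'$, and Blasius's theorem (the known case of Deligne's conjecture for Hecke characters of CM fields, proved using \cite{damarell}-type integrals as in the Hecke-character case treated in Sect.~\ref{character}) evaluates each factor explicitly as a product of CM-periods times an explicit power of $2\pi i$.

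Second, I would invoke Lin Jie's period relations from \cite{linjie}: these express each factor $P^{(j)}(\Pi)$ attached to the automorphically induced representation $\Pi = AI(\phi)$ as a product of CM-periods of $\phi$ (and its conjugates) up to $\overline{\Q}$-factors in the appropriate coefficient field, and likewise for $P^{(k)}(\Pi')$. Substituting these into the right-hand side of Theorem~\ref{tensorproduct} and comparing with the expression produced by Blasius's theorem, the CM-periods cancel, and one is left with an equation that determines $u(m,\Pi_{\infty},\Pi'_{\infty})$ as an explicit power of $2\pi i$, up to $\K E(\Pi)E(\Pi')$. A bookkeeping calculation of the two sides, using the motivic archimedean computation in \ref{nnminus1} (in particular formula \eqref{cplus2} which produces $(2\pi i)^{\frac{n(n-1)}{2}(2m-2n+3)}$ on the Deligne side) and Lin Jie's relations, then yields the exponent $(m+\tfrac12)n(n-1)$.

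Finally, once $u(m,\Pi_{\infty},\Pi'_{\infty}) \sim_{\K E(\Pi)E(\Pi')} (2\pi i)^{(m+\frac12)n(n-1)}$ is established for this single family of test pairs, the crucial observation is that by its definition $u(m,\Pi_{\infty},\Pi'_{\infty})$ depends only on the archimedean components. Since $\Pi_{\infty}$ and $\Pi'_{\infty}$ in the automorphically induced family realize all the regular archimedean parameters allowed by hypotheses (1) and (2) of Theorem~\ref{tensorproduct}, the identification of $u(m,\Pi_{\infty},\Pi'_{\infty})$ transfers to the general case, and substituting into Theorem \ref{tensorproduct} gives the stated conclusion. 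The hard part will be the second step: one must verify that Lin Jie's period relations hold in the precise normalization compatible with the Whittaker-period normalization used to define $P^{(j)}(\Pi)$ here, so that no mysterious archimedean factor survives the comparison; this is where the restriction $m>0$ in the statement most likely enters, ensuring non-vanishing of the auxiliary critical values used to pin down the normalization.
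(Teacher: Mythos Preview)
Your proposal is correct and matches the approach the paper attributes to Lin Jie. The paper does not give its own proof of this theorem; it merely states that in \cite{linjie} Lin Jie proves period relations for $\Pi$ and $\Pi'$ obtained by automorphic induction from Hecke characters and uses them to deduce this refinement of Theorem~\ref{tensorproduct}, which is exactly the strategy you outline (specialize to automorphically induced pairs, identify both sides via CM-periods and the known Deligne conjecture for Hecke characters, read off the archimedean constant, then transfer back using that $u(m,\Pi_\infty,\Pi'_\infty)$ depends only on the archimedean data). Your remark that the restriction $m>0$ enters through a non-vanishing requirement is also consistent with the paper's comment that the case $m=0$ holds only under a global non-vanishing hypothesis.
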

If $m = 0$ her result remains true under a certain global non-vanishing hypothesis.
In view of formula \ref{cplus2}, this is exactly what is predicted by Deligne's conjecture,
assuming the validity of Optimistic Comparison \ref{comparison}.


\subsection{Whittaker periods and adjoint $L$-values}\label{wei}

The results of this section, in contrast to Thm.\ \ref{heckethm}, are unconditional, but for the moment they
only apply under slightly restrictive hypotheses.  This is because they are based on Wei Zhang's version
\cite{weizhang} of the Ichino-Ikeda-Neal Harris conjecture for automorphic forms on unitary groups, and for the moment neither the
local nor global properties of the relative trace formula are known in sufficient generality to allow a complete
comparison.  The simplifying hypotheses in \cite{weizhang} are analogous to those used in earlier
applications of the Arthur-Selberg trace formula, and the history of the latter gives us reason to be optimistic
that the restrictions will soon be unnecessary.

For ease of reference, the simplifying hypotheses are listed separately.

\subsubsection{Simplifying hypotheses}
The unitary groups $H$ and $H'$ are as in the earlier sections. We let $\pi$ and $\pi'$ be unitary cuspidal automorphic representations of $H(\A)$, resp.\ $H'(\A)$, which occur with multiplicity one in the discrete spectrum of $H$, resp.\ $H'$. Say a prime $p$ is {\it split} if it splits in the quadratic extension $\mathcal{K}$. Since we want to use Wei Zhang's results, we have to assume

\begin{hyp}\label{hypotheses}
\begin{itemize}
\item{(a)} There exists some split prime $p$ such that $\pi_p$ and $\pi'_p$ are both supercuspidal.
\item{(b)}  Every prime $p < M$ is split, where $M$ is the ``algorithmically computable'' constant that
arises in \cite{jgordon} in the transfer to characteristic zero of the Jacquet-Rallis fundamental lemma
(see below).
\item{(c)} If $p$ is not split, then $\pi_p$ and $\pi'_p$ are both unramified.
\end{itemize}
\end{hyp}

\begin{rem}
(a) Let $\Pi=BC(\pi)$ and $\Pi'=BC(\pi')$ denote the base change representations of $\pi$ and $\pi'$, to $G(\A_\mathcal{K})$
and $G'(\A_{\mathcal{K}})$, respectively.  Hypothesis (a) above implies that $\Pi$ and $\Pi'$ are both
cuspidal automorphic representations. Moreover, by Thm.\ \ref{thm:base_change}, $\Pi$ and $\Pi'$ are both of cohomological type. Hence, the Asai $L$-functions $L(s,\Pi,As^{(-1)^{n-1}})$ and $L(s,\Pi',As^{(-1)^{n-2}})$ have a pole at $s=1$, by \cite{harlab} or \cite{mok} Cor.\ 2.5.9 and (2.5.12). As a consequence, Wei Zhang's additional set of assumptions {\bf RH(I)} (cf.\ \cite{weizhang}, p.\ 544) is satisfied in our case. Moreover, Zhang's second set of assumptions {\bf RH(II)} (cf.\ \cite{weizhang}, pp.\ 544--545) is fulfilled by the work of Beuzart-Plessis, \cite{plessis}. (We should, however, mention that the latter paper depends on the (expected) properties of local $L$-packets for unitary groups. See \cite{plessis} Sect.\ 18.1 for the precise expectations.)

(b) In \cite{weizhang} it is also assumed that $\pi$ and $\pi'$ are locally tempered at all places. Due to Caraiani \cite{car}, Thm.\ 1.2, $\Pi=BC(\pi)$ and $\Pi'=BC(\pi')$ are tempered at all places (under hypothesis (a) it was also proved in the book of Harris and Taylor). We expect that by the forthcoming work of Kaletha, Minguez, Shin and White, it should automatically follow that hence also $\pi$ and $\pi'$ are tempered at all places, whence we did not assume this explicitly. However, the extremely careful reader may restrict himself/herself in what follows to the case where $\pi$ and $\pi'$ are locally tempered at all primes $p$.

(c) Zhiwei Yun proves the Jacquet-Rallis fundamental lemma in \cite{yun} for local fields
of characteristic $> n$.  In her appendix to Yun's paper, Julia Gordon derives the Jacquet-Rallis fundamental
lemma for a local field $K$ of characteristic $0$, but at present her methods (from motivic integration) require her
to assume that the residue characteristic of $K$ is greater than an unspecified positive constant $M$.  This
is the $M$ that appears in (b) above; it is effectively computable but no one has carried out the computation.
For split primes the Jacquet-Rallis fundamental lemma is vacuous.  The requirement that a prime $p$ be split in the results
of \cite{weizhang} can be removed whenever the Jacquet-Rallis fundamental lemma is known in residue characteristic $p$.
\end{rem}

Since $H(\R)$ and $H'(\R)$ are both assumed compact we are in case (2) of \cite{weizhang}, Theorem 1.2.  We define
$$\mathcal{L}(\pi,\pi'): = \Delta_n \cdot \frac{L(\frac{1}{2},\Pi\times \Pi')}{L(1,\pi,Ad)L(1,\pi',Ad)},$$
where $\Delta_n$ is the $L$-function of the {\it Gross motive}:
$$\Delta_n = \prod_{i = 1}^n L(i,\varepsilon_{\K,f}^i).$$

For $\phi \in \pi$ and $\phi' \in \pi'$, we let
$$P(\phi,\phi') := \int_{H'(\Q)\backslash H'(\A)} \phi(h')\phi'(h') dh';$$
$$\<\phi,\phi\> :=  \int_{H(\Q)\backslash H(\A)} \phi(h)\bar{\phi}(h) dh.$$
We use Tamagawa measures for $dh$ and $dh'$. Here is Wei Zhang's theorem.

\begin{thm}(\cite{weizhang})  Assume $\pi$, $\pi'$, and $\K$ satisfy hypothesis \ref{hypotheses}.   Then there is a
non-zero constant $c(\pi_{\infty},\pi'_{\infty})$, depending only on the archimedean components
of $\pi$ and $\pi'$, such that, for every factorizable $\phi \in \pi$ and $\phi' \in \pi'$,
$$2^{-2}c(\pi_{\infty},\pi'_{\infty})Z_{loc}(\phi,\phi') \mathcal{L}(\pi,\pi') = \frac{|P(\phi,\phi')|^2}{\<\phi,\phi\>\<\phi',\phi'\>}.$$
Here $Z_{loc} = \prod_{v \in S} Z_v$ is a product of normalized local integrals of matrix coefficients;
$Z_v$ is denoted
$$\frac{\alpha_v^{\prime\natural}(\Phi_v,\Phi_v)}{(\Phi_v,\Phi_v)_v}$$
in \cite{weizhang}, with $\Phi_v = \phi_v\otimes \phi'_v$ (Zhang uses $\phi_v$ for this tensor product).
\end{thm}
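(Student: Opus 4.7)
The plan is to derive this identity from a comparison of two global distributions via the Jacquet--Rallis relative trace formula. On the unitary side, I would form the distribution
$$J(f) = \int_{[H' \times H']} K_f(h'_1, h'_2)\, dh'_1\, dh'_2,$$
whose spectral expansion, after isolating the $(\pi,\pi')$-contribution, produces the Bessel-type ratio $|P(\phi,\phi')|^2/\<\phi,\phi\>\<\phi',\phi'\>$ times a product of local matrix-coefficient integrals of the Ichino--Ikeda type. On the general linear side I would set up the Flicker--Rallis--Jacquet--Rallis distribution $I(f')$ on $G(\A) \times G'(\A)$ for test functions $f'$, whose $(\Pi,\Pi')$-component unfolds into the Rankin--Selberg integral at $s=\tfrac12$ divided by the Flicker--Rallis regularized inner products. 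Identifying those inner products with $L(1,\pi,Ad)L(1,\pi',Ad)$ up to $\Delta_n$ (using that $\Pi,\Pi'$ are conjugate self-dual with poles of the expected Asai $L$-functions at $s=1$) yields $\mathcal{L}(\pi,\pi')$ on the $GL$-side.

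Next, I would establish a matching between test functions $f \leftrightarrow f'$, that is, smooth transfer of orbital integrals between regular semisimple conjugacy classes on the symmetric spaces governing $I$ and $J$. At split primes and for spherical data this reduces to the Jacquet--Rallis fundamental lemma, known by Yun in equal characteristic and transferred to characteristic zero by Gordon for residue characteristic larger than the constant $M$ in Hypothesis \ref{hypotheses}(b); at the remaining non-archimedean places, Hypothesis \ref{hypotheses}(c) (unramified outside split primes) and the existence of smooth transfer for the standard Schwartz--Bruhat test functions supply the required matching. The supercuspidality condition in Hypothesis \ref{hypotheses}(a) would then be used to truncate both spectral expansions: since a supercuspidal matrix coefficient at a split prime annihilates every non-isobaric automorphic spectrum it does not select, the global distributions reduce to a single term on each side, which may then be equated.

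Finally, I would reconcile the geometric sides. After the smooth transfer matches orbital integrals at all finite places, one is left with an identity of archimedean orbital integrals, but no canonical archimedean transfer exists; this forces the appearance of an undetermined archimedean proportionality constant depending only on $\pi_{\infty}$ and $\pi'_{\infty}$, which is exactly the factor $c(\pi_{\infty},\pi'_{\infty})$ in the statement. The factor $2^{-2}$ comes from centralizer/stabilizer counts in the comparison of unitary versus general-linear regular orbits (or equivalently from the order of $H^1(\Q, Z)$ for the relevant center).

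The main obstacle is the archimedean transfer together with the control of the spectral side: one must guarantee that only the tempered cuspidal $(\pi,\pi')$ contributes, which is where the temperedness conclusions for base change (Caraiani \cite{car}) and the supercuspidal/split-prime hypotheses become indispensable. Because of these analytic subtleties the archimedean constant $c(\pi_{\infty},\pi'_{\infty})$ cannot be pinned down in closed form by this argument; it is only through independent archimedean computations, in the spirit of the local Ichino--Ikeda conjecture, that one could hope to evaluate it.
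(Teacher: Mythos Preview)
The paper does not prove this theorem; it is quoted directly from \cite{weizhang} and used as a black box. So there is nothing to compare your argument against in this paper itself.

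That said, your sketch is a fair outline of Zhang's actual strategy: comparison of the Jacquet--Rallis relative trace formulas for the unitary and general linear sides, with the fundamental lemma (Yun/Gordon) and smooth transfer supplying the matching at finite places, and the supercuspidality hypothesis at a split prime used to isolate a single spectral contribution. Two small corrections are worth flagging. First, your explanation of the factor $2^{-2}$ is not quite right: in the Ichino--Ikeda--Neal Harris framework the power of $2$ is $2^{-\beta}$ where $\beta$ is the number of isobaric constituents of $BC(\pi)\boxplus BC(\pi')$ (here both base changes are cuspidal, so $\beta = 2$); it is not a centralizer count on the geometric side nor an $H^1$ of a center. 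Second, the archimedean constant $c(\pi_\infty,\pi'_\infty)$ in Zhang's formulation is not an artifact of a missing archimedean transfer; rather, Zhang's theorem gives the exact Ichino--Ikeda identity with the archimedean local period integral included in $Z_{loc}$, and the authors of the present paper have simply pulled that archimedean local factor out and renamed it $c(\pi_\infty,\pi'_\infty)$ because $H(\R)$ and $H'(\R)$ are compact, so the archimedean integral depends only on the finite-dimensional representations $\pi_\infty,\pi'_\infty$.
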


\begin{rem}  The theorem in \cite{weizhang} is stated under a more general version of hypothesis (c).  With a bit more work we
could draw conclusions below in this more general situation; however, hypothesis (c) is destined to disappear in the short term,
so this seems unnecessary.
\end{rem}

As in (4.1.2)  of \cite{haradj}, we have

\begin{equation}
\Delta_n \sim_{\Q(\varepsilon_{\K})} (2 \pi i)^{\frac{n(n+1)}{2}}\cdot \mathcal{G}(\varepsilon_{\K,f})^{\lceil\frac{n+1}{2}\rceil}.
\end{equation}

In {\it loc.\ cit.\ } $n$ is assumed even, but the same argument gives the above result.  Since
$H$ and $H'$ are both definite unitary groups, the archimedean local integral $Z_{\infty}$ is an algebraic number, rational
over the field of definition of (the finite-dimensional representation) $\pi_{\infty} \otimes \pi'_{\infty}$.  The
discussion leading to \S 4.1.6 of \cite{haradj} thus yields:

\begin{cor}  Assume $\pi$, $\pi'$, and $\K$  satisfy hypothesis \ref{hypotheses}.  Then there exists a non-zero constant $c_{\pi_\infty,\pi'_\infty}$ such that
\begin{itemize}
\item
\begin{equation}\label{IINH} \Lambda(\pi,\pi') := c_{\pi_{\infty},\pi'_{\infty}}\mathcal{G}(\varepsilon_{\K,f})^{\lceil\frac{n+1}{2}\rceil}(2\pi i)^{\frac{n(n+1)}{2}}\cdot \frac{L(\frac{1}{2},\Pi_f\times \Pi'_f)}{L(1,\pi_f,Ad)L(1,\pi'_f,Ad)} \in \bar{\Q}
\end{equation}
\item For all $\sigma \in {\rm Aut}(\C)$,
\begin{equation} \label{IINH2}
\sigma(\Lambda(\pi,\pi')) = \Lambda({}^\sigma\!\pi,{}^\sigma\!\pi') \end{equation}
\end{itemize}

\end{cor}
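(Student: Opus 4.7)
The plan is to derive the Corollary directly from Wei Zhang's theorem stated just above, extracting both the algebraicity and the Galois equivariance from the rational structures that are available because $H$ and $H'$ are compact at infinity. First I would exploit this compactness: the archimedean components $\pi_{\infty}, \pi'_{\infty}$ are irreducible finite-dimensional unitary representations, and $\pi_f, \pi'_f$ carry natural $\bar{\mathbb{Q}}$-rational structures over their fields of definition $\mathbb{Q}(\pi)$ and $\mathbb{Q}(\pi')$. Choosing decomposable test vectors $\phi = \phi_{\infty}\otimes \phi_f$ and $\phi' = \phi'_{\infty}\otimes \phi'_f$ that are rational in this sense, all three integrals $P(\phi,\phi')$, $\langle\phi,\phi\rangle$, $\langle\phi',\phi'\rangle$ take values in $\bar{\mathbb{Q}}$, and the normalized ratio $|P(\phi,\phi')|^2 / (\langle\phi,\phi\rangle\langle\phi',\phi'\rangle)$ is $\mathrm{Aut}(\mathbb{C})$-equivariant in the sense that applying $\sigma$ produces the analogous quantity for $({}^{\sigma}\pi, {}^{\sigma}\pi')$.

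Next I would plug these rational vectors into Zhang's identity to obtain
$$\mathcal{L}(\pi,\pi') \;=\; \frac{4}{c(\pi_{\infty},\pi'_{\infty})\,Z_{loc}(\phi,\phi')}\cdot\frac{|P(\phi,\phi')|^2}{\langle\phi,\phi\rangle\langle\phi',\phi'\rangle}.$$
The local factors $Z_v(\phi,\phi')$ at $v\in S\setminus\{\infty\}$ are matrix-coefficient integrals of tempered representations pairing rational vectors; by Hypothesis \ref{hypotheses}(a) and (c), together with the algebraicity of Hecke eigenvalues for $\pi$ and $\pi'$, they lie in $\bar{\mathbb{Q}}$ and transform $\sigma$-equivariantly. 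Absorbing these finite factors together with $c(\pi_{\infty},\pi'_{\infty})$ and harmless volume constants into a single constant $c_{\pi_{\infty},\pi'_{\infty}}$ depending only on the archimedean components shows $c_{\pi_{\infty},\pi'_{\infty}}\cdot\mathcal{L}(\pi,\pi') \in \bar{\mathbb{Q}}$ together with $\sigma(c_{\pi_{\infty},\pi'_{\infty}}\mathcal{L}(\pi,\pi')) = c_{{}^{\sigma}\pi_{\infty},{}^{\sigma}\pi'_{\infty}}\mathcal{L}({}^{\sigma}\pi,{}^{\sigma}\pi')$.

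Finally, I would pass from $\mathcal{L}$ to the stated expression for $\Lambda(\pi,\pi')$ by inserting the evaluation of the Gross motive factor $\Delta_n \sim_{\mathbb{Q}(\varepsilon_{\K})} (2\pi i)^{n(n+1)/2} \cdot \mathcal{G}(\varepsilon_{\K,f})^{\lceil (n+1)/2 \rceil}$, which is the extension of \cite{haradj}, (4.1.2) to odd $n$, and noting that the archimedean $\Gamma$-factor ratio $L_{\infty}(1,\pi_{\infty},\mathrm{Ad})L_{\infty}(1,\pi'_{\infty},\mathrm{Ad})/L_{\infty}(1/2,\Pi_{\infty}\times\Pi'_{\infty})$ is an elementary power of $\pi$ that can be absorbed into $c_{\pi_{\infty},\pi'_{\infty}}$. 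Combined with the definition of $\mathcal{L}$, this yields \eqref{IINH}. The equivariance \eqref{IINH2} then follows by tracking how each factor transforms: the finite $L$-ratios transform via the results of Raghuram (cf. Proposition \ref{prop:regalg} and Theorem \ref{thm:whittaker-periods}), the Gauss sum satisfies the standard identity $\sigma(\mathcal{G}(\varepsilon_{\K,f})) = \varepsilon_{\K,f}(t_{\sigma})^{-1}\mathcal{G}(\varepsilon_{\K,f})$, and $c_{{}^{\sigma}\pi_{\infty},{}^{\sigma}\pi'_{\infty}} = c_{\pi_{\infty},\pi'_{\infty}}$ since the archimedean data is Galois-fixed.

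The main obstacle will be the precise bookkeeping in the last step: one must verify that the cyclotomic character $\varepsilon_{\K,f}(t_{\sigma})^{\lceil (n+1)/2 \rceil}$ by which the Gauss-sum power rotates, together with any root-of-unity twist of $(2\pi i)^{n(n+1)/2}$, exactly cancels the corresponding twist in the transformation law of $L(\tfrac{1}{2},\Pi_f\times\Pi'_f)/(L(1,\pi_f,\mathrm{Ad})L(1,\pi'_f,\mathrm{Ad}))$, leaving no residual factor. This is a matching exercise analogous to the Gauss-sum computation at the end of the proof of Theorem \ref{thm:whittaker-periods}, and it is the step where one must take care not to introduce a spurious cyclotomic ambiguity into $c_{\pi_{\infty},\pi'_{\infty}}$.
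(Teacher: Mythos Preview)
Your approach is essentially the same as the paper's, which simply refers to the discussion leading to \S 4.1.6 of \cite{haradj}: use Zhang's formula, rationality of the global period ratio for definite unitary groups, algebraicity of the local factors $Z_v$, and the evaluation of $\Delta_n$. So the overall line is correct.

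There is one imprecision worth flagging. You write that you would absorb ``these finite factors together with $c(\pi_{\infty},\pi'_{\infty})$ \dots into a single constant $c_{\pi_{\infty},\pi'_{\infty}}$ depending only on the archimedean components.'' But the local integrals $Z_v$ at the finite places of $S$ depend on the ramification of $\pi$ and $\pi'$, not only on $\pi_{\infty},\pi'_{\infty}$; they cannot be absorbed into an archimedean constant without destroying the claimed dependence. The correct observation (and this is what the paper uses) is that each $Z_v$, including $Z_{\infty}$, is already an algebraic number rational over the field of definition of $\pi_v\otimes\pi'_v$ and transforms correctly under ${\rm Aut}(\C)$; hence the finite $Z_v$'s do not obstruct either the algebraicity of $\Lambda(\pi,\pi')$ or the equivariance \eqref{IINH2}, and they simply disappear from the statement rather than being absorbed. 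What must be absorbed into $c_{\pi_{\infty},\pi'_{\infty}}$ is only the genuinely archimedean data: Zhang's constant $c(\pi_{\infty},\pi'_{\infty})$ and the archimedean $\Gamma$-factor ratio. With that correction your argument goes through.
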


The only difference with (4.1.6.2) of \cite{haradj} is the inclusion of the factor $c_{\pi_{\infty},\pi'_{\infty}}$ -- since the Ichino-Ikeda-Neal Harris conjecture is
only known up to a factor $c(\pi_{\infty},\pi'_{\infty})$ -- and of the power of the Gau\ss {} sum is not omitted.

Theorem \ref{thm:whittaker-periods-general} gives a different expression for the numerator of (\ref{IINH}).  Before we compare the two expressions, we need the
following non-vanishing result, which is essentially due to Wei Zhang.

\begin{prop}\label{nozero}  Let $\pi$ be an automorphic representation of $H(\A)$ satisfying hypothesis \ref{hypotheses}. Then there exists an automorphic representation $\pi'$ of $H'(\A)$ satisfying hypothesis \ref{hypotheses} and $\phi \in \pi$, $\phi' \in \pi'$, such that $P(\phi,\phi') \neq 0$ and $L(\frac{1}{2},BC(\pi)\times BC(\pi')) \neq 0$.
\end{prop}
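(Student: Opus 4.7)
The plan is to deduce the result from Wei Zhang's identity together with a relative trace formula argument. First I would observe that Wei Zhang's theorem runs in both directions: if for some $\pi'$ satisfying hypothesis \ref{hypotheses} and some factorizable $\phi\in\pi$, $\phi'\in\pi'$ the global period $P(\phi,\phi')$ is non-zero, then the right-hand side of the identity
\[
2^{-2}c(\pi_\infty,\pi'_\infty)\,Z_{loc}(\phi,\phi')\,\mathcal L(\pi,\pi')
\;=\;\frac{|P(\phi,\phi')|^2}{\langle\phi,\phi\rangle\langle\phi',\phi'\rangle}
\]
is automatically non-zero, and because $c(\pi_\infty,\pi'_\infty)$ and the local integrals $Z_{loc}(\phi,\phi')$ are non-vanishing for generic choices of test vectors (the local integrals being non-degenerate pairings against the unique up to scalar Bessel functional, in the spirit of Sun's theorem at $\infty$), the normalized central value $\mathcal L(\pi,\pi')$, and hence $L(\tfrac12,BC(\pi)\times BC(\pi'))$, must be non-zero. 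So the proposition reduces to producing a single $\pi'$ satisfying hypothesis \ref{hypotheses} together with vectors $\phi,\phi'$ for which $P(\phi,\phi')\neq 0$.

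The production of such a $\pi'$ is precisely the output of the Jacquet--Rallis relative trace formula as developed in \cite{weizhang}. I would fix a test function $f=\otimes f_v$ on $H(\A)\times H'(\A)$ with the following local properties: at the split prime $p$ of hypothesis (a), $f_p$ is a matrix coefficient of $\pi_p$, so that by orthogonality of matrix coefficients the spectral side only picks up representations whose $p$-component is $\pi_p$ (in particular, supercuspidal of the right form on both factors); at non-split primes the test is unramified, enforcing hypothesis (c); at the remaining places the test is chosen generically, so that the corresponding orbital integrals on the geometric side do not all vanish. By the Jacquet--Rallis fundamental lemma, available in the required residue characteristics via Yun's theorem together with Julia Gordon's transfer (which accounts for the auxiliary hypothesis (b)), one transfers the geometric side to the Rankin--Selberg side on $GL_n\times GL_{n-1}$, where non-vanishing of orbital integrals for generic test data is standard. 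The spectral identity of \cite{weizhang} then forces the spectral side of the period RTF to be non-zero; this spectral side is a sum of terms of the form $|P(\phi,\phi')|^2/\langle\phi,\phi\rangle\langle\phi',\phi'\rangle$ indexed by pairs of cuspidal representations, and at least one summand is non-zero. The localization at $p$ ensures that in this non-vanishing summand the first representation is forced to be $\pi$ itself, and the local conditions at the remaining places guarantee that the accompanying $\pi'$ satisfies hypothesis \ref{hypotheses}.

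The hard part is the bookkeeping in the previous paragraph: one must arrange simultaneously that (i) the test function isolates \emph{this} $\pi$ in the spectral expansion, (ii) the matching partner $\pi'$ inherits the three local conditions of hypothesis \ref{hypotheses}, and (iii) the geometric side remains non-zero after all these local constraints have been imposed. Each of these is addressed in \cite{weizhang}, and the argument we have sketched is implicit there; all that remains is to verify that Zhang's choice of test function can be refined to satisfy (ii) without destroying the non-vanishing of the geometric side, which is routine because only finitely many local conditions are being imposed and the relevant orbital integrals depend continuously (in fact linearly) on the archimedean and residual data. Once such a $\pi'$ has been produced, the implication recorded in the first paragraph completes the proof.
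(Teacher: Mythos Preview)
Your overall strategy---produce a $\pi'$ with $P(\phi,\phi')\neq 0$ and then read off $L(\tfrac12,BC(\pi)\times BC(\pi'))\neq 0$ from Wei Zhang's identity---matches the paper's, but the mechanism you propose for the first step is both different from and less precise than the paper's.

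The paper does not use the full Jacquet--Rallis comparison with the $GL$ side at all here. It invokes Lemma~2.15 of \cite{weizhang1}, which is a Burger--Sarnak type density statement purely on the unitary side: given $\pi$, one first fixes a supercuspidal $\mu_p$ of $H'(\Q_p)\cong GL_{n-1}(\Q_p)$ (induced from a compact-mod-center subgroup, so that it is isolated by a type) and checks via multiplicity one for $GL_n\times GL_{n-1}$ together with the Sakellaridis--Venkatesh result (Theorem~A.1 in \cite{iz}) that the local $H'$-integral of matrix coefficients of $\pi_p\otimes\mu_p$ is non-zero. One also fixes an irreducible constituent $\mu_\infty$ of $\pi_\infty|_{H'(\R)}$. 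Lemma~2.15 of \cite{weizhang1} then produces a global cuspidal $\pi'$ on $H'$ with $\pi'_\infty=\mu_\infty$, with $\pi'_p$ a twist of $\mu_p$ (hence supercuspidal), unramified at non-split places, and with $P(\phi,\phi')\neq 0$ for some $\phi,\phi'$. No transfer to $GL_n\times GL_{n-1}$, no fundamental lemma, and no geometric-side computation is needed for this step.

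Your sketch has a genuine gap at the point where you say that taking $f_p$ a matrix coefficient of $\pi_p$ forces ``supercuspidal of the right form on both factors.'' It does not: $\pi_p$ lives on $H(\Q_p)$, and a matrix coefficient of $\pi_p$ on the $H$-factor of the test function isolates $\pi_p$ on the $H$-side of the spectral expansion but places no constraint whatsoever on the $H'$-component $\pi'_p$. To force $\pi'_p$ supercuspidal you would need the $H'$-factor of $f_p$ to be a matrix coefficient of some chosen supercuspidal $\mu_p$---and then you must check that the resulting relative orbital integrals are still non-zero, which is precisely the local non-vanishing input (Sakellaridis--Venkatesh plus $GL_n\times GL_{n-1}$ multiplicity one) that the paper makes explicit and you leave implicit. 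Once you add that ingredient and drop the unnecessary detour through the $GL$-side geometric expansion, your argument collapses to the paper's.
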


\begin{proof}  This is essentially Lemma 2.15 of \cite{weizhang1}.   Let $p$ be a split place such that $\pi_p$ is supercuspidal.  Since the center of
$H$ is anisotropic, the central character of $\pi_p$ is unitary.  Let
$\mu_p$ be a supercuspidal representation of $H'(\Q_p)$ with unitary central character.  Since $H(\Q_p) = GL_n(\Q_p)$ and $H'(\Q_p)$ is
$GL_{n-1}(\Q_p)$, it is known that Hom$_{H'(\Q_p)}(\Pi_p\otimes \mu_p),\C) \neq 0)$.  Since both $\pi_p$ and $\mu_p$ are tempered,
it follows from a result of Sakellaridis and Venkatesh (quoted as Theorem A.1 in \cite{iz}) that the local integral
$$\int_{H'(\Q_p)} f(h')f'(h') dh' \neq 0$$
for some matrix coefficients $f, f'$ of $\pi_p$ and $\mu_p$, respectively.  It follows that (the dual of) $\mu_p$ is weakly contained in
the restriction to $H'(\Q_p)$ of $\pi_p$.

Now we assume $\mu_p$ is induced from an irreducible representation of $GL_{n-1}(\Z_p)\cdot Z$,
where $Z$ is the center of $H'(\Q_p)$; by the theory of types, there are supercuspidal representations with this property.  Choose an irreducible component
$\mu_{\infty}$ of the restriction to $H'(\R)$ of (the finite-dimensional representation) $\pi_{\infty}$. Then we can apply
Lemma 2.15 of \cite{weizhang1} to obtain an automorphic representation $\pi'$ of $H'$, whose archimedean component is
$\mu_{\infty}$ and whose $p$-component is supercuspidal (a twist of $\mu_p$), for which there exist $\phi \in \pi$ and
$\phi' \in \pi'$ such that $P(\phi,\phi') \neq 0$.  This implies the non-vanishing of the $L$-value by the main result of \cite{weizhang},
provided we know that $\pi'$ is unramified at all non-split places.
\end{proof}

\begin{lem}\label{neq1}
Let $\pi$ be an irreducible representation of $H(\A)$ when $n = 1$; in other words, $\pi$ is a character of $U(1)(\A)$. Then $L(s,\pi,Ad) = L(s,\varepsilon_{\K})$. In particular, $L(1,\pi_f,Ad) \sim_{\Q(\varepsilon_\K)} (2\pi i) \mathcal{G}(\varepsilon_{\K,f})$.
\end{lem}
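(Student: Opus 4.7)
The plan is to first identify the adjoint $L$-function explicitly and then to reduce the second assertion to a classical computation with the quadratic character $\varepsilon_{\K}$.

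First, I would unwind the definition of $L(s,\pi,Ad)$ for $U(1)$. The $L$-group ${}^L U(1)$ has connected component $\GL_1(\C) = \C^{\times}$ and the nontrivial element of $\Gal(\K/\Q)$ acts by $z \mapsto z^{-1}$. The adjoint representation of ${}^L U(1)$ on its Lie algebra $\mathfrak{gl}_1(\C) = \C$ is therefore trivial on the (abelian) connected component and factors through $\Gal(\K/\Q)$, where the nontrivial element acts by differentiating $z\mapsto z^{-1}$, i.e.\ by $-1$. Thus this adjoint representation \emph{is} the sign character of $\Gal(\K/\Q)$, which via global class field theory corresponds to $\varepsilon_{\K}$. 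Since the Langlands $L$-function does not depend on $\pi$ (the adjoint representation being independent of $\pi$), and since $\pi$ automorphic on $U(1)$ just means $\pi$ is a Hecke character, we conclude $L(s,\pi,Ad) = L(s,\varepsilon_{\K})$ for any such $\pi$.

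Next, for the ``in particular'' part, I would separate the archimedean and finite factors: since $\varepsilon_{\K}$ is an odd Dirichlet character (as $\K$ is imaginary quadratic), $L(s,\pi_f,Ad) = L(s,\varepsilon_{\K,f})$ is the classical Dirichlet $L$-series of $\varepsilon_{\K}$. The Dirichlet class number formula (or equivalently Dirichlet's analytic class number formula for imaginary quadratic fields) gives
$$L(1,\varepsilon_{\K,f}) = \frac{2\pi h_{\K}}{w_{\K}\sqrt{|d_{\K}|}},$$
so that $L(1,\pi_f,Ad) \sim_{\Q} \pi / \sqrt{|d_{\K}|}$. On the other side, the classical identity $\mathcal{G}(\varepsilon_{\K,f})^2 = \varepsilon_{\K}(-1)\cdot |d_{\K}| = -|d_{\K}|$ (using $\varepsilon_{\K}(-1)=-1$) yields $\mathcal{G}(\varepsilon_{\K,f}) = i\sqrt{|d_{\K}|}$ up to sign, hence $(2\pi i)\mathcal{G}(\varepsilon_{\K,f}) \sim_{\Q} \pi\sqrt{|d_{\K}|}$. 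Taking the ratio gives $1/|d_{\K}|$ up to a rational constant, which lies in $\Q \subseteq \Q(\varepsilon_{\K})$, establishing the claimed equivalence $L(1,\pi_f,Ad) \sim_{\Q(\varepsilon_{\K})} (2\pi i)\mathcal{G}(\varepsilon_{\K,f})$.

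Neither step is a real obstacle: the mildly subtle point is just being careful that the adjoint $L$-function is meant in the Langlands sense (so the outer Galois action contributes even though $U(1)$ is abelian), after which the rest is classical. One should also verify that the sign/normalization choices for the Gau\ss{} sum and the character $\varepsilon_{\K}$ used throughout the paper produce the same answer, but since the final claim is only up to $\Q(\varepsilon_{\K})^{\times}$, these choices do not affect the conclusion.
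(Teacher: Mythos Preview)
Your proposal is correct and follows the same approach as the paper: identify the adjoint representation of ${}^L U(1)$ as trivial on the connected component $\C^{\times}$ and equal to the sign character of $\mathrm{Gal}(\K/\Q)$ via the outer action, hence $L(s,\pi,Ad) = L(s,\varepsilon_{\K})$. The paper leaves the ``in particular'' clause implicit, whereas you spell it out via the class number formula and the standard Gau\ss{} sum identity $\mathcal{G}(\varepsilon_{\K,f})^2 = -|d_{\K}|$; this extra detail is correct and helpful.
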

\begin{proof}
This is an obvious calculation: The Lie algebra of $H$ is one-dimensional and the adjoint
representation of ${}^LG$ is trivial on the Langlands dual group $\C^{\times}$ and is given by
the quadratic character $\varepsilon_\K$.
\end{proof}

\begin{thm}\label{whittakeradjoint}  Let $\pi$ be an irreducible unitary cuspidal automorphic representation of $H(\A)$ which is cohomological with respect to $E^{\sf unt}_\mu$. Assume $\pi$ satisfies hypothesis \ref{hypotheses}. Then there is a non-zero complex constant $a(\pi_{\infty})$, depending only on $\pi_{\infty}$, and an integer $g(n)\in\Z$, such that
$$p(BC(\pi)) \sim_{\Q(\varepsilon_\K) \Q(\pi)} a(\pi_{\infty})\mathcal{G}(\varepsilon_{\K,f})^{g(n)}L(1,\pi_f,Ad).$$
\end{thm}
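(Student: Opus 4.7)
The plan is to proceed by induction on $n$, combining the two independent expressions we have for the critical value $L(\tfrac12,\Pi_f\times \Pi'_f)$ of a well-chosen Rankin-Selberg product: the Whittaker formula of Theorem \ref{thm:whittaker-periods-general} and the Ichino-Ikeda-Neal Harris identity \ref{IINH} coming from Wei Zhang's theorem. The base case $n=1$ is a direct calculation: for $H = U(1)$ the Whittaker model degenerates to a Hecke character, so $p(BC(\pi))$ is essentially trivial, and Lemma \ref{neq1} identifies $L(1,\pi_f,Ad)$ with $(2\pi i)\mathcal{G}(\varepsilon_{\K,f})$ up to $\Q(\varepsilon_{\K})$, from which one reads off the desired relation with $g(1)=-1$ (and an explicit $a(\pi_\infty)$).

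For the inductive step, fix $\pi$ on $H = U(n)$ satisfying Hypothesis \ref{hypotheses} and invoke Proposition \ref{nozero} to choose a cuspidal automorphic representation $\pi'$ of $H' = U(n-1)$ which (i) satisfies Hypothesis \ref{hypotheses}, (ii) has archimedean component $\pi'_\infty$ equal to a fixed irreducible constituent of $\pi_\infty|_{H'(\R)}$ (so that every archimedean quantity we meet depends only on $\pi_\infty$), and (iii) produces $L(\tfrac12,BC(\pi)\times BC(\pi'))\neq 0$. Writing $\Pi = BC(\pi)$, $\Pi' = BC(\pi')$, Theorem \ref{thm:whittaker-periods-general} (applied at $m=0$) yields
$$L(\tfrac12,\Pi_f\times\Pi'_f)\ \sim_{\Q(\Pi_f)\Q(\Pi'_f)}\ p(\Pi)\,p(\Pi')\,p(\Pi_\infty,\Pi'_\infty)\,\mathcal{G}(\omega_{\Pi'_{f,0}}),$$
while \ref{IINH} together with \ref{IINH2} gives
$$L(\tfrac12,\Pi_f\times\Pi'_f)\ \sim_{\Q(\varepsilon_{\K})\Q(\pi)\Q(\pi')}\ c_{\pi_\infty,\pi'_\infty}^{-1}\mathcal{G}(\varepsilon_{\K,f})^{-\lceil\frac{n+1}{2}\rceil}(2\pi i)^{-\frac{n(n+1)}{2}}L(1,\pi_f,Ad)\,L(1,\pi'_f,Ad).$$
Equating the two, applying the inductive hypothesis
$p(\Pi')\sim_{\Q(\varepsilon_{\K})\Q(\pi')}a(\pi'_\infty)\,\mathcal{G}(\varepsilon_{\K,f})^{g(n-1)}L(1,\pi'_f,Ad)$, and cancelling $L(1,\pi'_f,Ad)$ on both sides, one solves for $p(\Pi)$:
$$p(\Pi)\ \sim\ \frac{(2\pi i)^{-n(n+1)/2}\,\mathcal{G}(\varepsilon_{\K,f})^{-\lceil(n+1)/2\rceil-g(n-1)}\,\mathcal{G}(\omega_{\Pi'_{f,0}})^{-1}}{c_{\pi_\infty,\pi'_\infty}\,p(\Pi_\infty,\Pi'_\infty)\,a(\pi'_\infty)}\,L(1,\pi_f,Ad).$$
All archimedean quantities on the right are functions of $\pi_\infty$ alone (by our choice of $\pi'_\infty$); defining $a(\pi_\infty)$ to be the archimedean part of this expression and $g(n)$ to be the resulting exponent of $\mathcal{G}(\varepsilon_{\K,f})$ finishes the induction.

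The main technical obstacles are two bookkeeping issues. First, the central character $\omega_{\Pi'}$ of a base-change representation restricts to $\A^{\times}$ as an explicit power of $\varepsilon_{\K}$ times a power of the norm character (because $\Pi'$ is conjugate self-dual up to a twist); one must therefore check that $\mathcal{G}(\omega_{\Pi'_{f,0}})$ agrees, up to $\Q(\varepsilon_{\K})\Q(\pi')$, with a power of $\mathcal{G}(\varepsilon_{\K,f})$ depending only on $n$, so that it can be absorbed into the exponent $g(n)$. Second, the two equivalence relations above are over $\Q(\Pi_f)\Q(\Pi'_f)$ and $\Q(\varepsilon_{\K})\Q(\pi)\Q(\pi')$ respectively; since $\Q(\Pi_f)\supseteq\Q(\pi)$ and $\Q(\Pi'_f)\supseteq\Q(\pi')$, both identities hold over the common field $\Q(\varepsilon_{\K})\Q(\Pi_f)\Q(\Pi'_f)$, and one needs to argue that the dependence on $\Q(\pi')$ in the final formula drops out because the equivalence classes on the two sides meet along the smaller field $\Q(\varepsilon_{\K})\Q(\pi)$ (all $\pi'$-dependent factors having already been identified and cancelled). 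This last step is routine but must be carried out carefully, and it is the only place where Galois equivariance of all the auxiliary invariants (Propositions \ref{prop:sigmatwistabelian}, \ref{prop:regalg}, \ref{lem:rational-whittaker}, and \ref{IINH2}) is used simultaneously.
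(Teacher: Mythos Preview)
Your approach is essentially the same as the paper's: induction on $n$, base case via Lemma \ref{neq1}, inductive step by choosing $\pi'$ through Proposition \ref{nozero} and equating the Whittaker-period expression of Theorem \ref{thm:whittaker-periods-general} with the Ichino--Ikeda--Neal Harris expression \ref{IINH}, then cancelling $L(1,\pi'_f,Ad)$ via the inductive hypothesis. Two small corrections will tighten the bookkeeping.

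First, the inclusion of rationality fields runs the other way: since $\Pi = BC(\pi)$ is determined by $\pi$, any $\sigma$ fixing $\pi$ fixes $\Pi$, so $\Q(\Pi_f) \subseteq \Q(\pi)$ (and likewise for $\Pi'$), not the reverse. This actually simplifies your ``common field'' step: both equivalences already hold over $\Q(\varepsilon_{\K})\Q(\pi)\Q(\pi')$, and the paper disposes of the residual $\Q(\pi')$-dependence exactly as you indicate, by noting that the two sides of the final relation depend only on $\pi$.

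Second, the Gau\ss\ sum $\mathcal{G}(\omega_{\Pi'_{f,0}})$ is not merely a power of $\mathcal{G}(\varepsilon_{\K,f})$: by Remark \ref{nogauss} the central character of $\Pi' = BC(\pi')$ restricts \emph{trivially} to the id\`eles of $\Q$, so this term simply drops out. You do not need to track an extra exponent here.
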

\begin{proof}
We prove this by induction on $n$.  When $n = 1$ the Whittaker period $p(BC(\pi))$ is rational and the claim follows
from \ref{neq1}.  \\Suppose the theorem is known for $n - 1$.  Choose $\pi$ and let $\pi'$ be an automorphic
representation of $H'(\A)$ satisfying \ref{nozero} relative to $\pi$. As $\pi$ and $\pi'$ both satisfy hypothesis \ref{hypotheses}, $\Pi=BC(\pi)$ and $\Pi'=BC(\pi')$ are cuspidal automorphic representations matching the conditions of Thm.\ \ref{thm:whittaker-periods-general}. In particular, $p(BC(\pi))$ and $p(BC(\pi'))$ exist. By the very construction of $\pi'$, Lem.\ \ref{lem:coeff} and Lem.\ \ref{critpoints}, $s_0=\tfrac12$ is critical for $L(s,BC(\pi)\times BC(\pi'))$, so $p(BC(\pi)_\infty,BC(\pi')_\infty)$ is well-defined. Now, let
$$\Lambda_W(\pi,\pi') := c_{\pi_{\infty},\pi'_{\infty}}\mathcal{G}(\varepsilon_{\K,f})^{\lceil\frac{n+1}{2}\rceil}(2\pi i)^{\frac{n(n+1)}{2}}\cdot \frac{p(BC(\pi))p(BC(\pi'))p(BC(\pi)_\infty,BC(\pi')_\infty)}{L(1,\pi_f,Ad)L(1,\pi'_f,Ad)}.$$

We apply \eqref{IINH} and Thm.\ \ref{thm:whittaker-periods-general} to the pair $(\pi,\pi')$ and conclude that $\Lambda_W(\pi,\pi') \in \bar{\Q}$. By \cite{haradj}, Prop.\ 2.6, ${}^\sigma\Pi$ (resp.\ ${}^\sigma\Pi'$) descends to ${}^\sigma\!\pi$ (resp.\ ${}^\sigma\!\pi'$), which are cuspidal automorphic representations satisfying hypothesis \ref{hypotheses}. Hence, using Prop.\ \ref{nozero}, \eqref{IINH2} and Thm.\ \ref{thm:whittaker-periods-general} again,
\begin{equation}\label{eq:Lambda}
\sigma(\Lambda_W(\pi,\pi')) = \Lambda_W({}^\sigma\!\pi,{}^\sigma\!\pi'),
\end{equation}
for all $\sigma \in {\rm Aut}(\C)$. But by induction, we can rewrite
$$\Lambda_W(\pi,\pi') \sim_{\Q(\varepsilon_\K)\Q(\pi')}  \frac{p(BC(\pi))}{L(1,\pi_f,Ad)}\cdot[a(\pi'_{\infty})c_{\pi_{\infty},\pi'_{\infty}}\mathcal{G}(\varepsilon_{\K,f})^{\lceil\frac{n+1}{2}\rceil+g(n-1)}(2\pi i)^{\frac{n(n+1)}{2}}\cdot p(BC(\pi)_\infty,BC(\pi')_\infty)].$$
It follows from \eqref{eq:Lambda} that
$$p(BC(\pi)) \sim_{\Q(\varepsilon_\K) \Q(\pi) \Q(\pi')} L(1,\pi_f,Ad)\cdot[a(\pi'_{\infty})c_{\pi_{\infty},\pi'_{\infty}}\mathcal{G}(\varepsilon_{\K,f})^{\lceil\frac{n+1}{2}\rceil+g(n-1)}(2\pi i)^{\frac{n(n+1)}{2}}\cdot p(BC(\pi)_\infty,BC(\pi')_\infty)].$$
The term in brackets on the right-hand side depends only on $\pi_{\infty}$ (since $\pi'_{\infty}$ is an irreducible component of the restriction to $H'(\R)$ of $\pi_\infty$) and the degree $n$, whereas the left-hand side depends only on $\pi$, so we conclude by induction.
\end{proof}

\begin{cor}\label{Ladjoint}
Let $\Pi$ and $\Pi'$ be cuspidal automorphic representations of $G(\A_{\K})$ and
$G'(\A_{\K})$ which are cohomological with respect to $E_{\mu}$ and $E_{\lambda}$, respectively.  We assume
$\Pi = BC(\pi)$ and $\Pi' = BC(\pi')$ where $\pi$ and $\pi'$ are irreducible unitary cuspidal automorphic representations of the definite
unitary groups $H(\A)$ and $H'(\A)$, respectively. Suppose moreover that both $\pi$ and $\pi'$ satisfy hypothesis \ref{hypotheses} and that the coefficients $E_\mu$ and $E_\lambda$ satisfy the equivalent conditions of Lem.\ \ref{lem:coeff}.
Then for every critical point $s_0 = \tfrac12 + m$ of $L(s,\Pi\times\Pi')$ with $m\geq 0$, there is a non-zero complex constant $a(m,\Pi_{\infty},\Pi'_{\infty})$,
depending only on the archimedean components of $\Pi$ and $\Pi'$, and an integer $a(n)$, depending only on $n$, such that,
$$L(\tfrac12 +m,\Pi_f \times \Pi'_f) \ \sim_{\Q(\varepsilon_\K) \Q(\pi)\Q(\pi')} a(m,\Pi_\infty,\Pi'_\infty) \mathcal{G}(\varepsilon_{\K,f})^{a(n)}L(1,\pi_f,Ad)L(1,\pi'_f,Ad).$$
Equivalently, for every critical point $s_0 = n-1 + m$ of $L(s,R(M(\Pi)\otimes M(\Pi')))$ with $m\geq 0$,
$$L(n-1+m,R(M(\Pi) \otimes M(\Pi'_f))) \ \sim_{\Q(\varepsilon_\K) \Q(\pi)\Q(\pi')} a(m,\Pi_\infty,\Pi'_\infty) \mathcal{G}(\varepsilon_{\K,f})^{a(n)}L(1,\pi_f,Ad)L(1,\pi'_f,Ad).$$
\end{cor}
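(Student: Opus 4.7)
The plan is to combine Theorem \ref{thm:whittaker-periods-general}, which expresses the critical value $L(\tfrac12+m,\Pi_f\times\Pi'_f)$ in terms of the Whittaker periods of $\Pi$ and $\Pi'$, with Theorem \ref{whittakeradjoint}, which in turn expresses each of these Whittaker periods in terms of an adjoint $L$-value (times an archimedean factor and a power of the Gauss sum $\mathcal{G}(\varepsilon_{\K,f})$). Since the hypotheses in the corollary imply that $\Pi$ and $\Pi'$ are both cuspidal (by part (a) of Hypothesis \ref{hypotheses}) and cohomological with parameters matching the equivalent conditions of Lemma \ref{lem:coeff}, Theorem \ref{thm:whittaker-periods-general} applies, yielding
\[
L(\tfrac12+m,\Pi_f \times \Pi'_f) \ \sim_{\Q(\Pi_f)\Q(\Pi'_f)} p(\Pi)\,p(\Pi')\,p(m,\Pi_\infty,\Pi'_\infty)\,\mathcal{G}(\omega_{\Pi'_{f,0}}).
\]
By Remark \ref{nogauss}, the restriction of the central character $\omega_{\Pi'}$ to the id\`eles of $\Q$ is trivial, so the Gauss sum factor equals $1$.

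Next I apply Theorem \ref{whittakeradjoint} to both $\pi$ and $\pi'$ (note that $\pi'$ is a cuspidal automorphic representation of $H'(\A)$, a definite unitary group in $n-1$ variables, and satisfies hypothesis \ref{hypotheses}, so the statement applies verbatim with $n$ replaced by $n-1$). This gives
\[
p(\Pi) \ \sim_{\Q(\varepsilon_\K)\Q(\pi)} a(\pi_\infty)\,\mathcal{G}(\varepsilon_{\K,f})^{g(n)}\,L(1,\pi_f,Ad), \quad p(\Pi')\ \sim_{\Q(\varepsilon_\K)\Q(\pi')} a(\pi'_\infty)\,\mathcal{G}(\varepsilon_{\K,f})^{g(n-1)}\,L(1,\pi'_f,Ad).
\]
Substituting into the previous formula and setting
\[
a(m,\Pi_\infty,\Pi'_\infty) := a(\pi_\infty)\,a(\pi'_\infty)\,p(m,\Pi_\infty,\Pi'_\infty), \qquad a(n) := g(n) + g(n-1),
\]
yields the desired identity, up to checking that the equivalence relation $\sim_{\Q(\Pi_f)\Q(\Pi'_f)}$ coming from Theorem \ref{thm:whittaker-periods-general} can be absorbed into $\sim_{\Q(\varepsilon_\K)\Q(\pi)\Q(\pi')}$. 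For this I will invoke the fact that, by strong base change functoriality, $\Q(\Pi_f)\subseteq \Q(\pi)$ and $\Q(\Pi'_f)\subseteq \Q(\pi')$, so the composition of these number fields is contained in $\Q(\varepsilon_\K)\Q(\pi)\Q(\pi')$.

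The equivalent motivic reformulation is obtained immediately from the normalization $L(s,R(M(\Pi)\otimes M(\Pi')))=L(s+\tfrac{3-2n}{2},\Pi\times\Pi')$ stated in Section \ref{sect:motives}: at the critical integer $s_0 = n-1+m$ this value coincides with $L(\tfrac12+m,\Pi_f\times\Pi'_f)$ (the archimedean factors having been absorbed into $a(m,\Pi_\infty,\Pi'_\infty)$), so the statement transfers directly.

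The only nontrivial point in this plan is the bookkeeping of fields of rationality. Theorem \ref{thm:whittaker-periods-general} and Theorem \ref{whittakeradjoint} are each formulated with respect to different rationality fields, and the absorption of $\Q(\Pi_f)\Q(\Pi'_f)$ into $\Q(\varepsilon_\K)\Q(\pi)\Q(\pi')$ relies on a base change compatibility that, while standard, should be verified carefully; this is the only genuine obstacle and the most careful step in the argument. Everything else is simply substituting the relations and regrouping constants into $a(m,\Pi_\infty,\Pi'_\infty)$ and $a(n)$.
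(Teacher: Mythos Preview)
Your proposal is correct and matches the paper's own proof essentially line by line: combine Theorem~\ref{thm:whittaker-periods-general} with Theorem~\ref{whittakeradjoint} applied to both $\pi$ and $\pi'$, set $a(m,\Pi_\infty,\Pi'_\infty)=a(\pi_\infty)a(\pi'_\infty)p(m,\Pi_\infty,\Pi'_\infty)$ and $a(n)=g(n)+g(n-1)$, and absorb the rationality fields using $\Q(\Pi_f)\subseteq\Q(\pi)$, $\Q(\Pi'_f)\subseteq\Q(\pi')$. Your treatment is in fact slightly more explicit than the paper's (you spell out the triviality of $\mathcal{G}(\omega_{\Pi'_{f,0}})$ via Remark~\ref{nogauss} and the motivic shift), but there is no substantive difference.
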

\begin{proof}
Indeed, it follows from the Theorems \ref{whittakeradjoint} and \ref{thm:whittaker-periods-general} and the fact that $\Q(\Pi_f)\subseteq\Q(\pi)$ (resp.\ $\Q(\Pi'_f)\subseteq\Q(\pi')$) that we can define $a(m,\Pi_{\infty},\Pi'_{\infty}): = a(\pi_{\infty})a(\pi'_{\infty})p(m,\Pi_{\infty},\Pi'_{\infty}) \in \C^{\times}$ and $a(n):=g(n)+g(n-1)\in\Z$.
\end{proof}

\subsection{Generalizations}\label{generalizations}

The methods of this paper apply to pairs $\Pi, \Pi'$ of representations when the
critical values of their $L$-functions can be related directly to cup products of
the cohomology classes they define.  This is only possible when the coefficients
satisfy the inequalities of Lemma \ \ref{lem:coeff}, a condition that is equivalent to relations
on the Hodge types of the corresponding motives that are summarized as \ref{tmm}.

We have seen in \ref{cplus2} that the Deligne periods of tensor products of motives satisfying these hypotheses
can be expressed, up to a certain power of $2 \pi i$, as products of terms denoted
$P_{\leq r}(M)$ and  $P_{\leq r'}(M')$, each occuring to the first power (for $1 \leq r \leq n-1$ and $1 \leq r' \leq n-2$).
Deligne periods of tensor products are calculated more generally in \S 1.4 of
 \cite{haradj}.  As explained in \cite{haradj}, the motivic periods that occur in this calculation
 belong to a {\it tableau}, and it follows without difficulty that in all cases these Deligne
 periods are products of certain integral powers of the same $P_{\leq r}(M)$ and  $P_{\leq r'}(M')$; the
 powers depend on the relative positions of the Hodge types of $M$ and $M'$.  In
 Optimistic Comparison \ \ref{comparison} we argue that $P_{\leq r}(M)$ and  $P_{\leq r'}(M')$
 can be identified with certain Petersson (square) norms of normalized holomorphic
 automorphic forms $P^{(i)}(\Pi)$ and $P^{(j)}(\Pi')$ .  Thus we would expect that the critical values of $L(s,\Pi\times \Pi')$ can always
 be expressed, up to algebraic factors, as powers of $2 \pi i$ multiplied by powers of these Petersson inner products.

 In fact, one can always find integers $N > n$ and Hecke characters $\chi_i, i = 1,\dots N-n$;
 $\chi'_j, j = 1,\dots, N - n$, all obtained by base change from $U(1)$, such that the (tempered) Eisenstein representations
 $$\Sigma = \Pi\boxplus \chi_1  \boxplus \chi_2 \boxplus \dots \boxplus \chi_{N-n};\quad\quad \Sigma' = \Pi'\boxplus \chi'_1  \boxplus \chi'_2 \boxplus \dots \boxplus \chi'_{N-n}$$
 are cohomological and have coefficients satisfying  the inequalities of Lemma \ \ref{lem:coeff}.
 (In alluding to ``base change" we are ignoring the parity issue that we have already seen in \ref{sect:Pi'}; all
 the parameters of the Eisenstein representations have to be either integers or half-integers, depending on the parity of $N$.
 But this can always be arranged.)
 Suppose for the moment that the assertion of Theorem \ \ref{thm:whittaker-periods} was valid for the
 pair $(\Sigma,\Sigma')$.  Then we would find that, up to algebraic factors, the critical values of
 $L(s,\Sigma\times \Sigma')$ were powers of $2 \pi i$ multiplied by
 $$\prod_{r = 1}^{N-1} P^{(r)}(\Sigma)\cdot \prod_{r' = 1}^{N-2}P^{(r')}(\Sigma).$$
 On the other hand, the $L$-function factors
 $$L(s,\Sigma\times \Sigma') = L(s,\Pi\times \Pi')\cdot \prod_{i = 1}^{N-n}L(s,\Pi'\otimes \chi_i)\cdot \prod_{j = 1}^{N-n}L(s,\Pi\otimes \chi'_j)
 \cdot  \prod_{i,j = 1}^{N-n}L(s,\chi_i\cdot\chi_j).$$
 Now we have expressions of the last three factors on the right-hand side in terms of
 $ P^{(i)}(\Pi)$ and $ P^{(j)}(\Pi')$ and CM periods of $\chi_i$ and $\chi'_j$, and we can
 expect that $P^{(r)}(\Sigma)$ and $P^{(r')}(\Sigma)$ can also be expressed in terms
 of the periods of $\Pi$, $\Pi'$, and the auxiliary Hecke characters.  In this way one would obtain
 an expression of the critical values of interest, namely those of the remaining term $L(s,\Pi\times \Pi')$,
 as powers of $ P^{(i)}(\Pi)$ and $ P^{(j)}(\Pi')$, as expected.

 The problem is that the integral representation used in Theorem \ \ref{thm:whittaker-periods} does
 not converge when the automorphic form on $GL_N$ is an Eisenstein series.  In \cite{iy}, Ichino and Yamana
 obtain the Rankin-Selberg product for Eisenstein representations of $GL_N\times GL_{N-1}$ as a regularized
 period integral.  It seems to be difficult  to interpret this regularized integral as a regularized cup product
 in rational cohomology.

Alternatively, assuming both $\Sigma$ and $\Sigma'$ descend to definite unitary groups, which must certainly
be possible for appropriate choices of $\chi_i$ and $\chi_j'$, we could apply the method described in Section \ \ref{wei}
to obtain an expression for the central critical value $L(\frac 12,\Pi\times \Pi')$ analogous to that in Corollary \ \ref{Ladjoint},
provided the $\chi_i$ and $\chi'_j$ could be chosen, with the given infinity types, so that the central critical values
$L(\frac 12,\Pi'\otimes \chi_i)$ and $L(\frac 12,\Pi\otimes \chi'_j)$ were all non-zero.  It would then be possible
to deduce the expected expressions for more general central critical values from the expected generalization of recent results of
Harder and Raghuram \cite{harrag}.  Unfortunately, proving the existence
of such $\chi_i$ and $\chi'_j$ seems to be an extremely difficult problem.

Yet another method would be to ignore the cup product altogether.  The Rankin-Selberg integral
\begin{equation}\label{integ}  \phi\otimes \phi' \mapsto I_m(\phi,\phi') := \int_{G'(\K)\backslash G'(\A_{\K})} \phi(g')\phi'(g')\|\det (g)\|^m dg \end{equation}
defines a $(\g,K,G(\A_f))$-invariant bilinear pairing on the space $\Pi_{(K)}\otimes \Pi'_{(K')}$ of $K\times K'$-finite elements in $\Pi\otimes \Pi'$, indexed by the integer $m$ (where this is defined by analytic continuation of the integral where necessary).  On the other hand, its realization in cohomology determines
a natural $\Q(\Pi_f)\Q(\Pi'_f)$-rational structure on $\Pi_f \otimes \Pi'_f$, and therefore
$L(\Pi,\Pi') := {\rm Hom}_{(\g,K,G(\A_f))}(\Pi_{(K)} \otimes \Pi'_{(K')},\C)$ also has a natural $\Q(\Pi_f)\Q(\Pi'_f)$-rational structure.  Under the so-called
automatic continuity hypothesis, any element of $L(\Pi,\Pi')$ extends continuously to the smooth Fr\'echet completions
of moderate growth. In this case, it is known that $L(\Pi,\Pi')$ is of dimension $1$.  Thus, there is a Rankin-Selberg
period invariant $P(\Pi,\Pi') \in \C^{\times}$ such that $P(\Pi,\Pi')^{-1}I_m$ is $\Q(\Pi_f)\Q(\Pi'_f)$-rational; moreover,
these invariants can be defined consistently for all Aut$(\C)$-conjugates of $\Pi$, $\Pi'$.  Verification of Deligne's
conjecture then comes down to identifying these $P(\Pi,\Pi')$, whether or not the pairing $I_m$ can be
related to a cohomological cup product.  If $\Pi'$ is abelian automorphic, the $P(\Pi,\Pi')$ can be related
as above to Petersson norms of holomorphic forms on unitary groups and periods of Hecke characters.
It would seem that $P(\Pi,\Pi')$ factors in general as a product of a period invariant attached to $\Pi$ and
one attached to $\Pi'$, each independent of the other factor, but we see no way to prove this in general.

In a forthcoming joint paper with S. Yamana, we avoid these difficulties by applying a different combinatorial
argument.  We anticipate that our method will give a version of Theorem \ref{tensorproduct} without assuming
the inequalities of Lemma \ \ref{lem:coeff}.

\bigskip

\end{document}